\renewcommand{\email}[2][]{%
  \ifx\emails\@empty\relax\else{\g@addto@macro\emails{,\space}}\fi%
  \@ifnotempty{#1}{\g@addto@macro\emails{\textrm{(#1)}\space}}%
  \g@addto@macro\emails{#2}%
}
\DeclareMathOperator*{\Euc}{Euc}
\DeclareMathOperator*{\sca}{sc}
\DeclareMathOperator*{\injrad}{injrad}
\DeclareMathOperator*{\radius}{radius}
\DeclareMathOperator*{\cent}{center}
\DeclareMathOperator*{\interior}{interior}
\DeclareMathOperator*{\genus}{genus}
\DeclareMathOperator*{\length}{length}
\DeclareMathOperator*{\id}{id}
\DeclareMathOperator*{\Area}{area}
\DeclareMathOperator*{\corner}{cor}
\DeclareMathOperator*{\sys}{sys}
\DeclareMathOperator*{\notflat}{notflat}
\DeclareMathOperator*{\Mod}{Mod}
\DeclareMathOperator*{\hor}{hor}
\DeclareMathOperator*{\skv}{skv}
\DeclareMathOperator*{\modulus}{mod}
\DeclareMathOperator*{\arcsinh}{arcsinh}
\DeclareMathOperator*{\Ext}{Ext}
\DeclareMathOperator*{\comb}{Comb}
\DeclareMathOperator*{\lb}{lb}
\newcommand*{\Comb}[2]{\comb^{#2}(\mathcal{M}_{#1})}
\newcommand*{\Ntri}{N^{\mathcal{M}}}
\newcommand*{\Ntran}{N^{\mathcal{H}}}
\newcommand*{\Ntric}{N^{\mathcal{M}}_{\lb}}
\newcommand*{\Ntranc}{N^{\mathcal{H}}_{\lb}}
\newcommand*{\Ncomb}{N^{\mathcal{M}}}
\newcommand*{\Tri}[2]{\comb^{#2}(\mathcal{M}_{#1})}
\newcommand*{\Tric}[2]{\comb^{#2}_{\lb}(\mathcal{M}_{#1})}
\newcommand*{\Tran}[2]{\comb^{#2}(\mathcal{H}_{#1})}
\newcommand*{\Tranc}[2]{\comb^{#2}_{\lb}(\mathcal{H}_{#1})}
\renewcommand{\epsilon}{\varepsilon}
\renewcommand{\Lambda}{\Uplambda}
\newcommand{\nocontentsline}[3]{}
\newcommand{\tocless}[2]{\bgroup\let\addcontentsline=\nocontentsline#1{#1}\egroup}
\newtheorem{theorem}{Theorem}[section]
\newtheorem{lemma}[theorem]{Lemma}
\newtheorem{cor}[theorem]{Corollary}
\newtheorem{prop}[theorem]{Proposition}
\newtheorem{question}{Question}
\theoremstyle{definition}
\newtheorem{remark}{Remark}
\newtheorem{definition}{Definition}
\begin{document}

\title[Triangulated Surfaces in Moduli Space]{Large genus bounds for the distribution of triangulated surfaces in moduli space}
\date{\today}
\author[Vasudevan]{Sahana Vasudevan}
\address[Sahana Vasudevan]{Department of Mathematics, Massachusetts Institute of Technology, Cambridge, MA 02139, USA}
\email{sahanav@ias.edu}

\fancyhead[CE]{Vasudevan}

\begin{abstract} Triangulated surfaces are compact Riemann surfaces equipped with a conformal triangulation by equilateral triangles. In 2004, Brooks and Makover asked how triangulated surfaces are distributed in the moduli space of Riemann surfaces as the genus tends to infinity. Mirzakhani raised this question in her 2010 ICM address. We show that in the large genus case, triangulated surfaces are well distributed in moduli space in a fairly strong sense. We do this by proving upper and lower bounds for the number of triangulated surfaces lying in a Teichm\"{u}ller ball in moduli space. In particular, we show that the number of triangulated surfaces lying in a Teichm\"{u}ller unit ball is at most exponential in the number of triangles, independent of the genus.
\end{abstract} 

\maketitle

\setcounter{tocdepth}{1}

\tableofcontents

\section{Introduction} 

In this paper, we consider compact Riemann surfaces obtained by gluing together equilateral triangles. We call such surfaces triangulated surfaces. We give genus independent bounds for the distribution of triangulated surfaces in the moduli space of Riemann surfaces. 

Brooks and Makover started the study of triangulated surfaces in the context of hyperbolic geometry. In \cite{BM04}, they asked the following question. 

\begin{question} What are the geometric properties of a large genus random triangulated surface? 
\end{question}

Here, the triangulated surface is assumed to have genus at least $2$, and is equipped with the hyperbolic metric. Randomness is with respect to the counting measure on the discrete set of triangulated surfaces. Brooks and Makover studied $T$-triangle genus $g$ triangulated surfaces in the range $T\sim 4g$, and showed that the systole of a random triangulated surface is asymptotically almost surely bounded below by a constant. The Cheeger constant and first eigenvalue of the Laplacian are also asymptotically almost surely bounded below by a constant. The diameter is asymptotically almost surely bounded above by around the logarithm of the genus. Subsequently Guth, Parlier and Young \cite{GPY11} studied Question 1 with respect to the canonical flat metric on triangulated surfaces and showed that a random triangulated surface asymptotically almost surely has large total pants length. Budzinski and Louf \cite{BL19} also studied the flat metric on triangulated surfaces when $T\sim \theta g$ (for constants $\theta>4$). They showed that a random point on a random triangulated surface asymptotically almost surely does not lie close to a short loop of nontrivial homotopy class.

A related question is: 

\begin{question} What are the geometric properties of a large genus random hyperbolic surface?
\end{question}

Here, randomness is with respect to the Weil-Petersson measure on moduli space $\mathcal{M}_g$. Mirzakhani studied Question 2 in \cite{Mir13} (following \cite{Mir07a} and \cite{Mir07b}). The systole of a surface is bounded below by a constant with positive probability in the large genus limit. The Cheeger constant and first eigenvalue of the Laplacian are asymptotically almost surely bounded below by a constant. The diameter is asymptotically almost surely bounded above by around the logarithm of the genus. Mirzakhani also proved in \cite{Mir13} that a random point on a random hyperbolic surface almost surely does not lie close to a short loop of nontrivial homotopy class. Guth, Parlier and Young \cite{GPY11} studied Question 2 and showed that a random hyperbolic surface asymptotically almost surely has large total pants length. 

Further results on the geometry of random surfaces have been obtained in \cite{BCP19a}, \cite{BCP19b}, \cite{BCP21}, \cite{LW21}, \cite{MP19}, \cite{Mon20}, \cite{MT20}, \cite{NWX20}, \cite{WX18} and \cite{WX21}. Similarities between answers to Question 1 and Question 2 motivate the following question.

\begin{question} How are triangulated surfaces distributed in the moduli space of Riemann surfaces, quantitatively? 
\end{question}

Question 3 was first asked by Brooks and Makover in \cite{BM04}. Subsequently it has been raised in Mirzakhani's 2010 ICM address \cite{Mir10} as well as in \cite{BCP19a}, \cite{CP12}, \cite{GPY11} and \cite{Mir13}.

We answer Question 3 by proving well distribution results for triangulated surfaces. Our answer is with respect to the Teichm\"{u}ller metric (see \cref{metrics}). One consequence of our main results (stated in \cref{statement of main results}) is the following simplified answer to Question 3. 

\begin{theorem}\label{simplified upper bound} In a Teichm\"{u}ller $1$-ball in $\mathcal{M}_g$, there are at most $C^T$ number of $T$-triangle genus $g$ triangulated surfaces where $C$ is a constant independent of $g$ and $T$.
\end{theorem}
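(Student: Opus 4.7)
The plan is to reduce the counting problem to a combinatorial one on a fixed topological surface, and then bound the number of admissible triangulations by a breadth-first exhaustion. First I would fix a marking on $X$ and lift it to $\widetilde{X} \in \mathcal{T}_g$. Each $T$-triangle triangulated surface $Y$ in the Teichm\"uller $1$-ball around $X$ in $\mathcal{M}_g$ corresponds to an isotopy class of triangulation $\tau$ of the underlying topological surface $S_g$: the conformal gluing of $T$ equilateral triangles according to $\tau$ produces a marked Riemann surface $X_\tau \in \mathcal{T}_g$, and $[X_\tau] \in B_1(X) \subset \mathcal{M}_g$ amounts to the existence of some $\Mod_g$-translate $\psi \cdot \tau$ with $d_{\mathcal{T}}(X_{\psi \cdot \tau}, \widetilde{X}) \le 1$. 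Passing from $\mathcal{M}_g$ to $\mathcal{T}_g$ costs only an $|\Aut(X)|$-factor and a bounded-multiplicity overcount from how many translates of a single orbit can land in one Teichm\"uller ball, so it suffices to bound the number of isotopy classes $\tau$ with $d_{\mathcal{T}}(X_\tau, \widetilde{X}) \le 1$.

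For each such $\tau$, the Teichm\"uller distance bound supplies a $K$-quasiconformal map $f : X \to X_\tau$ with $K \le e^2$ isotopic to the identity through the marking. Pulling back the equilateral structure, the faces of $f^{-1}(\tau) \subset X$ are ``quasi-equilateral'' triangles with side-length ratios bounded by $e^2$ and angles bounded away from $0$ and $\pi$ by a constant depending only on $K$; this shape control is uniform in $X$, $g$, and $T$. The key combinatorial lemma I would establish is a local finiteness statement: given any partial quasi-equilateral triangulation of $X$ and a boundary edge, there are at most a universal constant $C_1 = C_1(K)$ ways to extend across that edge, because the shape constraint confines the opposite vertex to a bounded pocket.

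Equipped with this, I would root the triangulation at an oriented edge (at most $3T$ choices) and grow the triangulation one face at a time in a canonical breadth-first order; by the local finiteness statement each step contributes at most $C_1$ choices, giving at most $3T \cdot C_1^T \le C^T$ rooted triangulations (and hence at most $C^T$ unrooted ones) after absorbing the polynomial factor. The main obstacle will be the local finiteness statement in the thin part of $X$: in a collar about a very short geodesic the injectivity radius is tiny, so a priori one could pack many small quasi-equilateral triangles into a narrow region. My proposed resolution is that a quasiconformal map of bounded dilatation sends short geodesics of $X$ to short geodesics of $X_\tau$ of comparable length, so the thin parts of the two surfaces correspond; the triangles meeting a thin part then organize into a controlled number of layers winding around the cylinder, each contributing bounded combinatorial choice. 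Making this precise via the thick--thin decomposition, and tracking that all constants depend only on $K$ rather than on $g$ or $T$, is the technical heart of the argument.
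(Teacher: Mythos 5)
The central difficulty with your approach is the ``quasi-equilateral'' claim, which does not follow from the hypothesis as you state it. A $K$-quasiconformal map controls infinitesimal angular distortion but not lengths or ratios of lengths over macroscopic regions, so the assertion that the faces of $f^{-1}(\tau)$ have ``side-length ratios bounded by $e^2$'' is not justified. Even if you upgrade the quasiconformal map to a bi-Lipschitz one (which the paper does in \cref{qc to lipschitz}, a genuinely nontrivial genus-uniform statement using McMullen's visual extension), the bi-Lipschitz control is between \emph{hyperbolic} metrics, whereas the triangles of $\tau$ are equilateral in the \emph{flat} metric $|\psi_{S_\tau}|^{1/6}$ on $X_\tau$. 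The local ratio between the flat and hyperbolic metrics on $X_\tau$ is governed by the $6$-differential $\psi_{S_\tau}$, which can vary wildly, particularly near zeros and poles, i.e.\ near vertices of degree $\neq 6$. So the pulled-back triangulation on $X$ has triangles whose hyperbolic size and eccentricity are not controlled, and the ``bounded pocket'' is ill-defined.

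A second, related gap is that the local finiteness statement fails at high-degree vertices, even conditionally on some form of shape control. If a vertex $p'$ of degree $d$ lies near the root edge, then all $d$ neighbors of $p'$ sit at comparable flat distance from $p'$, so the number of candidate third vertices in any pocket around the edge is at least $\sim d$, which is unbounded in $T$. The paper recognizes exactly this and devotes Section 6 to proving that any triangulated surface may be replaced by a $C$-quasiconformally equivalent bounded-degree triangulation at the cost of only a $C^{|V_{\neq 6}(S)|}$-to-one map; your proposal has no analogue of this reduction. Your thin-part worry is real too, and is addressed in the paper indirectly via \cref{tri systole} and \cref{covering lemma}, but it is secondary to these two. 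Even once both problems are fixed, you would still need to argue that a breadth-first exhaustion of a genus-$g$ surface does not leak an extra $g$-dependent factor from the closing-up/gluing choices; this is delicate enough that the paper opts for an entirely different strategy (pass to degree-$6$ combinatorial-translation-surface covers, use the Hodge norm of the associated cohomology class to quantize the count, decompose into flat parallelograms, and recurse on genus). Your route, if it can be rescued, would be shorter, but at present the key lemma is not established, and the two obstructions above are genuine rather than technical.
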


Henceforth, all generic universal constants will be denoted by $C$. In \cite{FKM13}, Fletcher, Kahn and Markovic showed that the number of Teichm\"{u}ller $1$-balls needed to cover the thick part of $\mathcal{M}_g$ is around $g^{2g}$. For $g=O(T)$, the number of $T$-triangle genus $g$ surfaces is around $g^{2g}C^T$, which was computed by Budzinski and Louf in \cite{BL19}. In this range, for triangulated surfaces to be well distributed in moduli space means there are around $C^T$ surfaces in each $1$-ball. \cref{simplified upper bound} gives such an upper bound. We also prove a lower bound (with different constants), which is stated in \cref{statement of main results}.

Note that the sphere admits a unique conformal structure, and the number of distinct $T$-triangle triangulations of the sphere is also around $C^T$. In this sense, \cref{simplified upper bound} is genus independent. When we fix the conformal class, the higher genus case behaves just like the genus $0$ case.

Triangulated surfaces are combinatorial objects, while moduli space parametrizes geometric objects. The difficulty in proving \cref{simplified upper bound} lies in the difficulty of distinguishing if two particular triangulated surfaces are close in moduli space. Moreover, given a triangulated surface, it is difficult to explicitly determine the hyperbolic metric on it. Given two marked hyperbolic surfaces, it is difficult to determine if they are close in moduli space. Indeed, the results of Mirzakhani on random hyperbolic surfaces show that the usual geometric quantities like systole, diameter and Cheeger constant fail to distinguish most hyperbolic surfaces. 

\subsection{Statement of main results}\label{statement of main results} We prove two results which describe the distribution of triangulated surfaces in $\mathcal{M}_g$. The following lower bound describes when a surface in moduli space can be approximated by a triangulated surface.

\begin{theorem}\label{lower bound} Let $g\geq 2$. Let $X\in \mathcal{M}_g$ equipped with the conformal hyperbolic metric. Let $$R=\sum_{\substack{\gamma \text{ \normalfont simple closed geodesic on } X\\\length(\gamma)<2\arcsinh(1)}}\length(\gamma)^{-1}.$$ Then for $r\in (0,1]$, there exists a $T(r)$-triangle triangulated surface inside a Teichm\"{u}ller $r$-ball around $X$, where $T(r)\leq C(R+g)r^{-2}$. Here, $C$ is a universal constant (independent of $g$, $T$, $r$ and $R$).
\end{theorem}

\begin{remark} In particular, if $X$ lies in the thick part of $\mathcal{M}_g$, meaning $\sys X\geq 2\arcsinh(1)$, then there exists a  $Cgr^{-2}$-triangle triangulated surface within Teichm\"{u}ller distance $r$ from $X$. 
\end{remark}

\begin{remark} The condition that there should not be too short geodesics on $X$ is necessary, as we shall see in \cref{tri systole}.
\end{remark}

We also have the following upper bound regarding the distribution of triangulated surfaces in $\mathcal{M}_g$. 

\begin{theorem}\label{upper bound} There exists at most $C^{T+rg}$ number of $T$-triangle triangulated surfaces in a Teichm\"{u}ller $r$-ball in $\mathcal{M}_g$. Here, $C$ is a universal constant (independent of $g$, $T$ and $r$).
\end{theorem}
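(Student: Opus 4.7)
The strategy is to lift the count from $\mathcal{M}_g$ to $\mathcal{T}_g$, cover the Teichm\"uller ball by small balls of fixed radius, and in each small ball use the Teichm\"uller map to reduce to a combinatorial count of quasiconformal triangulations of a fixed marked Riemann surface.

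Fix a lift $\tilde X_0 \in \mathcal T_g$ of $X_0$. Every triangulated surface $(Y, \tau)$ with $Y \in B_r(X_0) \subset \mathcal{M}_g$ admits at least one marking under which the marked Riemann surface $\tilde Y$ lies in $B_r(\tilde X_0) \subset \mathcal{T}_g$, and choosing one such marking for each triangulated surface defines an injective map into the set of marked triangulated surfaces in $B_r(\tilde X_0)$. Hence it suffices to bound the latter count. I would cover $B_r(\tilde X_0)$ by $N$ Teichm\"uller balls of a small universal radius $r_0$: one ball suffices when $r \le r_0$, and otherwise a packing argument using $\dim_{\mathbb R}\mathcal T_g = 6g-6$ together with standard geometric properties of the Teichm\"uller metric yields $N \le C^{rg}$ after adjusting constants.

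The local bound inside one such ball $B_{r_0}(\tilde X_1)$ is the following: for each marked triangulated surface $(\tilde Y, \tilde \tau)$ in that ball, the Teichm\"uller map $\phi : \tilde X_1 \to \tilde Y$ has dilatation $K \le e^{2r_0}$, and the pullback $\phi^{-1}(\tilde \tau)$ is an isotopy class of topological triangulations of $\tilde X_1$ with $T$ faces, each $K$-quasiconformal to a Euclidean equilateral triangle. Distinct marked triangulated surfaces produce distinct such isotopy classes, so it suffices to show that on any fixed marked Riemann surface there are at most $C^T$ isotopy classes of $K$-quasi-equilateral $T$-triangulations. Multiplying this local count by the covering bound $N \le C^{rg}$ then gives the desired $C^{T+rg}$.

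The main obstacle is the combinatorial bound on $K$-quasi-equilateral triangulations of a fixed surface. A direct enumeration over abstract combinatorial triangulations of genus $g$ with $T$ faces yields $g^{O(g)} C^T$ possibilities, which is far too many, so the bounded-dilatation constraint must be used essentially. My approach would be to straighten each qc triangulation by replacing every edge with the hyperbolic geodesic in its homotopy class on $\tilde X_1$, use the uniform modulus bound on each face to obtain control on the straightened edge lengths and on the local geometry around each vertex, and then encode the straightened triangulation by $O(T)$ pieces of discrete data (for instance via a rooted traversal recording face-adjacency combinatorics together with bounded local geometric data). Ensuring that distinct isotopy classes yield distinct encodings---in particular handling possible edge collapses or crossings under straightening---is the most delicate point.
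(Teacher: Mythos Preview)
Your reduction to $\mathcal{T}_g$ and the covering of the $r$-ball by $C^{rg}$ balls of fixed radius matches the paper's framework (this is \cref{ncomb vs ntri} together with \cref{Teichmuller bound 3}). The entire content of the theorem, however, is the step you correctly flag as the main obstacle: bounding the number of $K$-quasi-equilateral $T$-triangulations of a fixed marked surface by $C^T$. Your proposal for this step does not work, and the gap is not a matter of delicacy but of a missing idea.

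The straightening-plus-traversal encoding you sketch cannot close the gap, for the following reason. A rooted traversal of a triangulation records a bounded amount of adjacency data per face, but to reconstruct the \emph{isotopy class} of the triangulation on the surface one must also record, at each step, enough to pin down the homotopy class of each edge rel its endpoints. On a genus-$g$ surface this is not bounded data: even if every straightened edge has hyperbolic length $O(1)$, the number of homotopy classes of such arcs between two nearby points is not uniformly bounded (think of thin parts, or of arcs winding around many handles), and nothing in a $K$-qc bound on individual faces prevents the triangulation from threading through the topology in $g^{cg}$ inequivalent ways. The $K$-qc constraint is purely local to each face and does not by itself control global isotopy information; this is exactly why the abstract count is $g^{2g}C^T$ and why the problem is genuinely hard. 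Your final paragraph essentially acknowledges this without offering a mechanism.

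The paper's argument is structurally different and occupies Sections~6--8. It first replaces $S$ by a bounded-degree triangulation at bounded Teichm\"uller cost (\cref{bounded degree surface}), then passes to a degree-$6$ branched cover carrying a holomorphic $1$-form (\cref{covers}), and then uses Hodge-norm estimates (\cref{quantitative form determines triangulation}) to show that two combinatorial translation surfaces that are close in $\mathcal{T}_g$ \emph{and} have nearby period cohomology classes must agree simplicially except on a subsurface of genus at most $g/100$. This produces a recursion in the genus (\cref{tranc vs tri}, \cref{tric vs tranc}) whose solution gives the $C^T$ bound. The crucial mechanism---that closeness in $\mathcal{T}_g$ together with closeness of a single cohomology class forces the two triangulations to coincide on most of the surface---has no analogue in your sketch, and some device of this kind is needed to convert the local qc constraint into global combinatorial rigidity.
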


\begin{remark} By Euler characteristic conditions, $g=O(T)$. So substituting $r=1$ in \cref{upper bound} we obtain \cref{simplified upper bound}. 
\end{remark}

\begin{remark}\label{error} In a Teichm\"{u}ller $1$-ball in the thick part of $\mathcal{M}_g$, the lower and upper bounds for the number of triangulated surfaces given by \cref{lower bound} and \cref{upper bound} differ by a multiple of $\exp(O(T))$ for $T/g$ sufficiently large.
\end{remark}

\begin{remark} It is useful to first ask the combinatorial question: what is the number of $T$-triangle genus $g$ triangulated surfaces, as a function of $T$ and $g$? The best bounds for this question in the linear range are proved in \cite[Theorem 3]{BL19}. This bound has a multiplicative error term of $\exp(o(T))$, in contrast to our slightly worse error of $\exp(O(T))$ as stated in \cref{error}. (The constants in both error terms are independent of the genus.)
\end{remark}

\begin{remark} \cref{upper bound} is most interesting in the range $T\sim \theta g$ where $\theta\geq 4$ is a constant. In this range, we obtain that the number of triangulated surfaces in a Teichm\"{u}ller $1$-ball grows roughly exponentially in $g$ as $g\to \infty$. This is similar to how integer points are distributed in high dimensional Euclidean space (which is a useful toy example). In $\mathbb{R}^n$, a radius $\sim n^{1/2}$ ball has volume $\sim 1$. Such a ball contains around $C^n$ integer points. It is not possible to give a better bound for the number of integer points as a small translation of the ball can change this number by an exponential multiplicative factor. 
\end{remark}

\begin{remark}\label{metrics} There are several possible choices of metrics on moduli space. In Teichm\"{u}ller theory, the Teichm\"{u}ller metric is a natural choice. Since we are also interested in the hyperbolic geometry of individual surfaces in moduli space, we may also consider the bi-Lipschitz metric on moduli spaces. In this metric, the distance between two surfaces measures how far apart their hyperbolic metrics are. It turns out that the bi-Lipschitz metric is comparable to the Teichm\"{u}ller metric, with genus independent constants (see \cref{qc to lipschitz}). So \cref{lower bound} and \cref{upper bound} hold with respect to the bi-Lipschitz metric also. Another natural choice of metric on moduli space is the Weil-Petersson metric. Note that the Weil-Petersson volume of $\mathcal{M}_g$ is around $g^{2g}$, as computed by Penner in \cite{Pen92} and Grushevsky in \cite{Gru01}, and subsequently improved by Mirzakhani and Zograf in \cite{MZ15}. Up to an exponential factor of $T$ this number is also the approximate number of $T$-triangle genus $g$ triangulated surfaces. So we may ask if an analogue of our main results hold for the Weil-Petersson metric as well. However, we do not know the answer to this question because we do not yet understand the large genus local geometry of the Weil-Petersson metric in a sufficiently detailed manner. 
\end{remark}

\subsection{Key ideas in the proof of \cref{lower bound}}

Our approach is based on a characterization of the Teichm\"{u}ller metric in \cite{Ker80} using extremal length and Jenkins-Strebel differentials, which we explain in Section 3. To show \cref{lower bound}, given a surface in moduli space we construct a certain nicely behaved triangulation of it, take the associated triangulated surface. We use the characterization of the Teichm\"{u}ller metric above to show that the triangulated surface is close to the original surface in moduli space.  We do this in Section 4.

\subsection{Key ideas in the proof of \cref{upper bound}} \label{key ideas upper bound}

There are roughly four parts to our proof:

\begin{enumerate}

\item Riemann surfaces equipped with a holomorphic $1$-form are called translation surfaces. We first consider the subset of triangulated surfaces that are actually translation surfaces where the associated holomorphic $1$-form is compatible with the triangulation. We call such surfaces combinatorial translation surfaces, a term we define precisely in Section 2. In this situation, the $1$-form gives us a cohomology class, which we then deal with using Hodge theory. Suppose two combinatorial translation surfaces are close together in moduli space and so are their cohomology classes. Then, we show that constraints coming from Hodge norms imply geometric constraints on how close or far apart vertices, edges, and faces of the two surfaces must be to each other. A combinatorial argument shows that these geometric constraints imply that the two triangulations are close except on a part of the surface with much smaller genus. So we reduce the counting problem for combinatorial translation surfaces to the counting problem for triangulated surfaces in a lower dimensional moduli space. As a result, we get bounds on combinatorial translation surfaces in terms of bounds on triangulated surfaces. We do this in Section 6.

\item Given any triangulated surface, there exists a degree six branched cover which is a combinatorial translation surface. We enumerate the number of possibilities for such covers and study the possibilities for where the branch points lie, to get bounds on triangulated surfaces given bounds on combinatorial translation surfaces. Combining with our previous bounds described in (1), we obtain recursive upper bounds for the number of triangulated surfaces lying in a ball in moduli space. We solve these recursive bounds to show \cref{upper bound}. We do this in Section 7.

\item For the technicalities in (1) and (2) to work, we require the use of bounded degree triangulations instead of arbitrary triangulations. In Section 5, we show that any triangulated surface may be approximated by a bounded degree triangulation in a way that increases the number of triangles by at most a constant factor.

\end{enumerate}

These three sections contain the key steps of the proof of \cref{upper bound}. Before that, in Section 2, we introduce various definitions and prove several preliminary results. In Section 3, we prove results related to quasiconformal maps and the large genus geometry of Teichm\"{u}ller space.   

\subsection{Comments and references} Non-quantitative versions of Question 3 have been studied in number theory. Belyi's theorem states that Riemann surfaces defined over the algebraic numbers $\overline{\mathbb{Q}}$ are exactly the Riemann surfaces which admit a branched cover (Belyi map) to $\mathbb{P}^1$ branched only at $0$, $1$ and $\infty$. Belyi maps give triangulations on the Belyi surface, and conversely, triangulations give Belyi maps. Note that Riemann surfaces defined over $\overline{\mathbb{Q}}$ are dense in $\mathcal{M}_g$, which is a non-quantitative answer to Question 3. Given this context, \cref{lower bound} may be interpreted as a quantitative version of this statement. It describes how well one can approximate an arbitrary surface in $\mathcal{M}_g$ by a Belyi surface with respect to the Teichm\"{u}ller metric, in terms of the degree of the Belyi map. Another approach to approximating arbitrary Riemann surfaces by Belyi surfaces is described by Bishop in \cite{Bis14} and \cite[Section 15]{Bis15}. 

The study of random triangulations is a central topic of research in probability theory. In the large genus setting, progress has been made in the study of local limits of triangulations. In the range $T\sim 4g$ (which is the probabilistically expected range, as shown by Gamburd in \cite{Gam06}), local limits do not exist since the expected surface has very few vertices and very high degrees of vertices. In the range $T\sim \theta g$ where $\theta>4$, convergence does occur. Planar stochastic hyperbolic triangulations introduced by Curien in \cite{Cur16} are a family of random triangulations in the plane. These were conjectured by Benjamini and Curien to be local limits of high genus random triangulations. This was proved by Budzinski and Louf in \cite{BL19}. However, global questions, such as scaling limits, are difficult to understand in the large genus case. See \cite{LeG07}, \cite{Mar18} and \cite{Mie13} for some results on scaling limits of random maps in the planar setting.

In \cite{BS94}, Buser and Sarnak related the homological systole of a hyperbolic Riemann surface to the systole of its Jacobian. In \cite{BPS12}, Balacheff, Parlier and Sabourau gave a way to find a minimal homology basis on a hyperbolic Riemann surface, and used that to deduce more general results about the geometry of its Jacobian. The original method to prove \cref{upper bound} involved bounding the number of ways to express a hyperbolic Riemann surface as a triangulated surface by studying the geometry of its Jacobian, using \cite{BPS12}. Then, it turned out that these ideas were not necessary to prove \cref{upper bound}, so they do not appear in the proofs henceforth.

\subsection*{Acknowledgments} I thank my advisor Larry Guth for many inspiring discussions and enormous help with this paper. I thank Curt McMullen for many comments on the geometry of Teichm\"{u}ller space. I thank Chris Bishop for correspondence on quasiconformal maps. I have learned a lot from conversations with Morris Ang, Scott Sheffield and Yilin Wang on random triangulations in probability theory. I thank the referee for communicating to me the proof of \cref{qc to lipschitz} due to Maxime Fortier Bourque. I thank Robert Burklund, Yilin Wang and the referee for helping me improve the writing. This research was supported by the National Science Foundation Graduate Research Fellowship Program (under Grant No. 1745302) and the Simons Foundation (under Larry Guth's Simons Investigator award).

\section{Preliminaries}

\subsection{Triangulated surfaces} 

Let $S_g$ be a genus $g$ surface, and $S_{g,b}$ a genus $g$ surface with $b$ boundary components. Let $S$ be a metrized simplicial $2$-complex wherein each $2$-simplex is an oriented unit equilateral triangle and gluings of faces preserve orientations. We call $S$ a triangulated surface if it is homeomorphic to $S_g$ for some $g\geq 0$ and a triangulated surface with boundary if it is homeomorphic to $S_{g,b}$ for some $g,b\geq 0$. If $S$ is a triangulated surface with or without boundary, we may consider each equilateral triangle as embedded in $\mathbb{C}$ with vertices at $0$, $1$ and $\frac{1}{2}+\frac{\sqrt{3}}{2}i$. The complex structure on each equilateral triangle of $S$ is preserved when edges are glued. Extending the complex structure over the interior vertices of $S$, we obtain a canonical complex structure on $S$. In this way, we consider $S$ as a Riemann surface.

\begin{remark} The construction of triangulated surfaces (without boundary) in \cite{BM04} is done slightly differently. Instead of gluing together equilateral triangles, hyperbolic ideal triangles are used, and the resulting surface with cusps is compactified. One may check that these two constructions result in the same Riemann surface.
\end{remark}

Given a triangulated surface $S$ (with or without boundary), we denote by $V(S)$ the vertices of $S$, $E(S)$ the edges of $S$, and $F(S)$ the triangular faces of $S$. Given $v\in V(S)$ the degree of $v$ is the number of edges emanating from $v$. We also denote by $V_{> 6}(S)$, $V_{\neq 6}(S)$ and $V_{<6}(S)$ the set of vertices of $S$ of degree strictly greater than $6$, not equal to $6$ and strictly less than $6$, respectively.

\subsection{Space of triangulated surfaces} 

\begin{definition} $\Tri{g}{T}$ is the set of all triangulated surfaces of genus $g$ with $T$ triangles, up to simplicial isomorphism.
\end{definition} 

We have a map $$\Phi: \textstyle\Comb{g}{T}\to \mathcal{M}_g$$ which takes a triangulated surface to its underlying Riemann surface in $\mathcal{M}_g$. We also have canonical biholomorphisms $\Phi_S:S\to \Phi(S)$ for all $S\in \Comb{g}{T}$. 

Note that $T\geq 2$ for $\Tri{g}{T}$ to be nonempty; moreover, Euler characteristic conditions imply that $T\geq 4g-4$. Together these imply $g/T\leq 1/2$. In the future we will implicitly assume this condition. 

Finally, we denote by $\mathcal{M}_{\leq g}$ the union of $\mathcal{M}_{g'}$ over all $g'\leq g$. Similarly, we denote by $\Tri{\leq g}{\leq T}$ the union of $\Tri{g'}{T'}$ over all $g'\leq g$, $T'\leq T$. We will use this type of notation with respect to other spaces we will define later as well.

\subsection{Translation surfaces and combinatorial translation surfaces}

\begin{definition} A translation surface is a pair $(X,\omega)$ where $X$ is a Riemann surface and $\omega$ is a holomorphic $1$-form on $X$. 
\end{definition} 

The metric $|\omega|$ is a flat metric on $X$ with singularities at zeros of $\omega$. See \cite{Wri15} for an introduction to translation surfaces. The following result gives an alternative definition of translation surfaces. 

\begin{prop}[\cite{Wri15}, Proposition 1.6 and Proposition 1.8]\label{equivalent definition of translation surfaces} Any translation surface $(X,\omega)$ can be expressed in the following manner: $X$ is the union of a collection of polygons $P_1,...,P_n$ in $\mathbb{C}$ together with a choice of identification of parallel boundary edges of equal length on opposite sides, and $\omega$ is the $1$-form $dz$ on each polygon. Similarly, any collection of polygons $P_1,...,P_n\subset \mathbb{C}$ with edge identifications as above defines a translation surface.
\end{prop}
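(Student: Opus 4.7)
The plan is to prove both implications, since the second sentence of the proposition is essentially the converse of the first. The forward direction extracts a polygonal decomposition from the pair $(X,\omega)$, and the backward direction assembles a holomorphic $1$-form from the gluing data.

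For the forward direction, I would use $\omega$ to produce flat charts. Let $Z$ denote the zero set of $\omega$; on $X\setminus Z$ each local primitive $F(p)=\int^{p} \omega$ is a biholomorphism to an open set of $\mathbb{C}$, and two primitives on a connected domain differ by a constant, so the transition maps between these charts are translations $w\mapsto w+c$. In particular $|\omega|$ is a flat metric on $X\setminus Z$ with cone-type singularities at $Z$. Next, I would choose a geodesic triangulation of $X$ for the metric $|\omega|$ whose vertex set is exactly $Z$, for instance by extending a maximal collection of nonintersecting saddle connections to a triangulation of the resulting flat polygons. Each closed triangle $T$ is simply connected and meets $Z$ only at its vertices, so a primitive $F_T$ of $\omega$ exists on a neighborhood of $T$ and maps $T$ isometrically onto a Euclidean triangle $P_T\subset \mathbb{C}$. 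Two triangles sharing an edge produce two polygon edges which are images under primitives differing by a translation, hence parallel and of equal length, and the gluing is given by that translation. The collection $\{P_T\}$ with these edge identifications realizes $(X,\omega)$ as required, with $\omega = F_T^{\ast}(dz)$ on each $T$.

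For the reverse direction, start with polygons $P_1,\ldots,P_n\subset \mathbb{C}$ and the prescribed edge identifications, and let $X$ denote the topological quotient. On the complement $X^\circ$ of the finite set of vertex classes, the charts inherited from the ambient $\mathbb{C}$ of each polygon have translation transition maps, so $X^\circ$ carries a canonical complex structure, and the translation-invariant form $dz$ descends to a holomorphic $1$-form $\omega^\circ$ on $X^\circ$. To extend across a vertex $v$, note that the interior angles of the polygons meeting at $v$ sum to some $2\pi k$ with $k\in \mathbb{Z}_{\geq 1}$, because trivial rotational holonomy forces the total angle at $v$ to be an integer multiple of $2\pi$. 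The branched cover $w\mapsto w^k$ then provides a chart near $v$ turning it into a smooth manifold point, and in this chart $\omega^\circ = d(w^k) = k w^{k-1}\, dw$, which extends holomorphically with a zero of order $k-1$. Performing this at every vertex promotes $X^\circ$ to a compact Riemann surface $X$ and $\omega^\circ$ to a global holomorphic $1$-form $\omega$ on $X$.

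The main subtlety is the vertex extension in the backward direction: one must check that the cyclic arrangement of wedges at $v$ glues to give a well-defined $k$-fold branched covering $w\mapsto w^k$ whose charts are pulled back consistently, and that the resulting complex structure at $v$ is compatible with the existing structure on $X^\circ$. This follows because the developing map of the flat structure near $v$ is, by construction, a local branched cover of $\mathbb{C}$, together with Riemann's removable singularities theorem applied to the chart change across $v$. Compactness of $X$ is automatic from finiteness of the polygons and edge identifications, and the two constructions are then manifestly inverse to each other, yielding the stated equivalence.
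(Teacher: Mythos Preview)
The paper does not prove this proposition; it simply cites it from \cite{Wri15} (Propositions~1.6 and~1.8). Your argument is the standard one and is essentially correct.

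One small gap: in the forward direction you triangulate with vertex set exactly $Z$, the zero set of $\omega$. When $X$ has genus $1$ and $\omega$ is nowhere vanishing, $Z$ is empty and no such triangulation exists; you should allow yourself to adjoin an arbitrary marked point in that case. Otherwise the argument goes through as written.
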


We shall see that some triangulated surfaces are canonically translation surfaces as well. To this end, we define a combinatorial translation structure on a triangulated surface. Let $S$ be a triangulated surface. 

\begin{definition} \label{def of comb tran surface}  A combinatorial translation structure on $S$ is an assignment, to each edge $e\in E(S)$ and vertex $v\in V(S)$ such that $e$ emanates from $v$, a $6$th root of unity $\zeta(e,v)$ (called a directional weight). Directional weights must satisfy the following two properties:

\begin{enumerate}
\item if $e$ contains the vertices $v$ and $w$, then $\zeta(e,v)=-\zeta(e,w)$ and 

\item if $e_1$ and $e_2$ are two edges emanating from a vertex $v$ that lie on a triangle of $S$ such that $e_1$ lies counterclockwise from $e_2$ according to the orientation on $S$, then $\zeta(e_1,v)=e^{\pi i/3}\zeta(e_2,v)$.
\end{enumerate}
\end{definition}

Conditions 1 and 2 imply that for each triangle, there are only two possibilities for directional weights, which we label as Type A and Type B as seen in \cref{Type A} and \cref{Type B}, respectively.

\begin{figure}[ht] \begin{minipage}{.49\textwidth} \centering \hspace{.5in}
\begingroup%
  \makeatletter%
  \providecommand\color[2][]{%
    \errmessage{(Inkscape) Color is used for the text in Inkscape, but the package 'color.sty' is not loaded}%
    \renewcommand\color[2][]{}%
  }%
  \providecommand\transparent[1]{%
    \errmessage{(Inkscape) Transparency is used (non-zero) for the text in Inkscape, but the package 'transparent.sty' is not loaded}%
    \renewcommand\transparent[1]{}%
  }%
  \providecommand\rotatebox[2]{#2}%
  \newcommand*\fsize{\dimexpr\f@size pt\relax}%
  \newcommand*\lineheight[1]{\fontsize{\fsize}{#1\fsize}\selectfont}%
  \ifx\svgwidth\undefined%
    \setlength{\unitlength}{154.9383566bp}%
    \ifx\svgscale\undefined%
      \relax%
    \else%
      \setlength{\unitlength}{\unitlength * \real{\svgscale}}%
    \fi%
  \else%
    \setlength{\unitlength}{\svgwidth}%
  \fi%
  \global\let\svgwidth\undefined%
  \global\let\svgscale\undefined%
  \makeatother%
  \begin{picture}(1,0.61697289)%
    \lineheight{1}%
    \setlength\tabcolsep{0pt}%
    \put(0,0){\includegraphics[width=\unitlength,page=1]{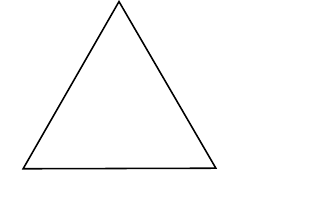}}%
    \put(0.18157797,0.00701422){\makebox(0,0)[lt]{\lineheight{1.25}\smash{\begin{tabular}[t]{l}$1$\end{tabular}}}}%
    \put(0.46994159,0.0139294){\makebox(0,0)[lt]{\lineheight{1.25}\smash{\begin{tabular}[t]{l}$e^{\pi i}$\end{tabular}}}}%
    \put(0.63313993,0.22484279){\makebox(0,0)[lt]{\lineheight{1.25}\smash{\begin{tabular}[t]{l}$e^{2\pi i/3}$\end{tabular}}}}%
    \put(0.48031436,0.4786303){\makebox(0,0)[lt]{\lineheight{1.25}\smash{\begin{tabular}[t]{l}$e^{5\pi i/3}$\end{tabular}}}}%
    \put(0.12591065,0.48208796){\makebox(0,0)[lt]{\lineheight{1.25}\smash{\begin{tabular}[t]{l}$e^{4\pi i/3}$\end{tabular}}}}%
    \put(-0.00340356,0.23175797){\makebox(0,0)[lt]{\lineheight{1.25}\smash{\begin{tabular}[t]{l}$e^{\pi i/3}$\end{tabular}}}}%
  \end{picture}%
\endgroup%
\caption{Type A triangle}\label{Type A} \end{minipage}  \begin{minipage}{0.49\textwidth} \centering \vspace{.05in}\hspace{.5in}
\begingroup%
  \makeatletter%
  \providecommand\color[2][]{%
    \errmessage{(Inkscape) Color is used for the text in Inkscape, but the package 'color.sty' is not loaded}%
    \renewcommand\color[2][]{}%
  }%
  \providecommand\transparent[1]{%
    \errmessage{(Inkscape) Transparency is used (non-zero) for the text in Inkscape, but the package 'transparent.sty' is not loaded}%
    \renewcommand\transparent[1]{}%
  }%
  \providecommand\rotatebox[2]{#2}%
  \newcommand*\fsize{\dimexpr\f@size pt\relax}%
  \newcommand*\lineheight[1]{\fontsize{\fsize}{#1\fsize}\selectfont}%
  \ifx\svgwidth\undefined%
    \setlength{\unitlength}{156.70621467bp}%
    \ifx\svgscale\undefined%
      \relax%
    \else%
      \setlength{\unitlength}{\unitlength * \real{\svgscale}}%
    \fi%
  \else%
    \setlength{\unitlength}{\svgwidth}%
  \fi%
  \global\let\svgwidth\undefined%
  \global\let\svgscale\undefined%
  \makeatother%
  \begin{picture}(1,0.61240557)%
    \lineheight{1}%
    \setlength\tabcolsep{0pt}%
    \put(0,0){\includegraphics[width=\unitlength,page=1]{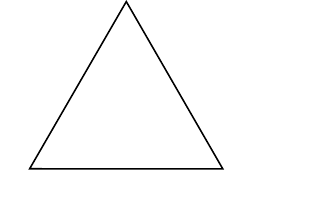}}%
    \put(0.52070479,0.01377226){\makebox(0,0)[lt]{\lineheight{1.25}\smash{\begin{tabular}[t]{l}$1$\end{tabular}}}}%
    \put(0.20756209,0.01377226){\makebox(0,0)[lt]{\lineheight{1.25}\smash{\begin{tabular}[t]{l}$e^{\pi i}$\end{tabular}}}}%
    \put(-0.00336517,0.22640839){\makebox(0,0)[lt]{\lineheight{1.25}\smash{\begin{tabular}[t]{l}$e^{4\pi i/3}$\end{tabular}}}}%
    \put(0.15730887,0.48519567){\makebox(0,0)[lt]{\lineheight{1.25}\smash{\begin{tabular}[t]{l}$e^{\pi i/3}$\end{tabular}}}}%
    \put(0.50087698,0.48348645){\makebox(0,0)[lt]{\lineheight{1.25}\smash{\begin{tabular}[t]{l}$e^{2\pi i/3}$\end{tabular}}}}%
    \put(0.63727861,0.22469917){\makebox(0,0)[lt]{\lineheight{1.25}\smash{\begin{tabular}[t]{l}$e^{5\pi i/3}$\end{tabular}}}}%
  \end{picture}%
\endgroup%
 \caption{Type B triangle}\label{Type B} \end{minipage}\end{figure}

Then, we have the following proposition. 

\begin{prop}\label{combinatorial translation structure gives translation surface} Each triangulated surface $S$ that admits a combinatorial translation structure is a translation surface wherein the associated flat metric agrees with the flat metric coming from the triangulation. Moreover, the associated holomorphic $1$-form is canonical in the sense that it only depends on $S$ and the combinatorial translation structure.
\end{prop}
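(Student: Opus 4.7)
The plan is to invoke \cref{equivalent definition of translation surfaces}. Given the combinatorial translation structure $\zeta$ on $S$, for each face $F\in F(S)$ I would embed $F$ as a unit equilateral triangle in $\mathbb{C}$, uniquely up to translation, such that for every vertex $v$ of $F$ and every edge $e$ of $F$ incident to $v$, the edge $e$ points away from $v$ in the direction $\zeta(e,v)$. Condition (2) guarantees that the three outgoing directions at each vertex of $F$ differ by rotations of $e^{i\pi/3}$, so such an embedding exists and is internally consistent. Pulling back $dz$ from $\mathbb{C}$ then equips each face with a holomorphic $1$-form $\omega_F$ whose associated flat metric is precisely the equilateral metric coming from the triangulation of $F$.

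Next I would verify that the $\omega_F$ glue coherently across interior edges. If an edge $e$ with endpoints $v$ and $w$ is shared by faces $F_1$ and $F_2$, then in either embedding $e$ is the unit segment starting at $v$ in direction $\zeta(e,v)$ and terminating at $w=v+\zeta(e,v)$. Condition (1), together with the fact that the boundary orientations of $e$ induced from $F_1$ and $F_2$ are opposite (so that the gluing respects the orientation of $S$), identifies the two copies of $e$ by a translation. Hence the collection of triangles with these edge identifications is a polygonal presentation of the form described in \cref{equivalent definition of translation surfaces}, and the $\omega_F$ assemble into a holomorphic $1$-form $\omega$ on the complement of $V(S)$.

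The remaining step is to extend $\omega$ across each vertex. The key observation is that a combinatorial translation structure forces the degree $k$ of every vertex $v$ to be a multiple of $6$: iterating Condition (2) around the edges $e_1,\ldots,e_k$ at $v$ in counterclockwise order gives $\zeta(e_1,v)=e^{ik\pi/3}\zeta(e_1,v)$, so $6\mid k$. Hence the total angle at $v$ equals $2\pi\cdot k/6$, and in a uniformizing coordinate $z$ centered at $v$ the form $\omega$ becomes a constant multiple of $d(z^{k/6})$, which extends holomorphically with a zero of order $k/6-1$. The associated flat metric $|\omega|$ equals the equilateral metric on each triangle and therefore coincides with the triangulation's flat metric, and canonicity is automatic since the only choices in the construction are the translations used to place each triangle in $\mathbb{C}$, which do not affect $dz$.

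The main technical subtlety I expect to confront is the vertex-extension step: verifying that the locally defined $1$-forms actually piece into a holomorphic form at the cone points requires the degree-divisibility observation above, and this is what gives the existence of a combinatorial translation structure a genuine global meaning on the triangulated surface $S$. The rest of the argument is essentially a diagram-chase through the definitions, reading off Conditions (1) and (2) as the translation-gluing compatibilities demanded by \cref{equivalent definition of translation surfaces}.
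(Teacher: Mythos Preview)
Your proposal is correct and follows essentially the same route as the paper: embed each triangle in $\mathbb{C}$ according to the directional weights, check that shared edges are identified by translations, and invoke \cref{equivalent definition of translation surfaces}. The paper compresses your first two steps by observing that Conditions~(1) and~(2) leave exactly two weight-patterns per face (Type~A and Type~B), placing these at fixed positions in $\mathbb{C}$, and noting that every gluing is Type~A to Type~B along parallel sides.

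One remark: your vertex-extension step, which you flag as the ``main technical subtlety,'' is unnecessary. Once the polygons-with-parallel-edge-identifications description is in hand, \cref{equivalent definition of translation surfaces} already delivers a translation surface together with its global holomorphic $1$-form, cone points included; the $6\mid\deg v$ computation is implicit in that proposition. The paper's proof is accordingly shorter, simply reading off $\phi=dz$ on each triangle after the citation.
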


\begin{proof} Rotating as necessary, we identify Type A triangles with the equilateral triangle in $\mathbb{C}$ having vertices at $0$, $1$ and $\frac{1}{2}+\frac{\sqrt{3}}{2}i$, and Type B triangles with the equilateral triangle in $\mathbb{C}$ having vertices at $0$, $1$ and $\frac{1}{2}-\frac{\sqrt{3}}{2}i$. Conditions 1 and 2 in the definition of combinatorial translation surface imply that all edge identifications must be of a Type A triangle with a Type B triangle on opposite sides along parallel edges. By \cref{equivalent definition of translation surfaces}, $S$ is a translation surface. Under the identification of Type A triangles and Type B triangles with triangles in $\mathbb{C}$ described above, the $1$-forms $dz$ on each triangle glue to give a $1$-form $\phi$ on $S$. The $1$-form $\phi$ only depends on $S$ and the combinatorial translation structure. Finally $|dz|$ is simply the Euclidean metric on each triangle.
\end{proof}

Note that given a combinatorial translation structure on $S$, then for $0\leq i\leq 6$ we have another combinatorial translation structure on $S$ obtained by multiplying each directional weight by $e^{\pi i/3}$. These six structures are the only valid combinatorial translation structures on $S$:  once we assign directional weights to one triangle on $S$, there is only one choice for all other directional weights.

\begin{definition} A combinatorial translation surface is a triangulated surface equipped with a combinatorial translation structure. 
\end{definition}

\begin{definition} $\Tran{g}{T}$ is the set of combinatorial translation surfaces of genus $g$ with $T$ triangles.
\end{definition}

By \cref{combinatorial translation structure gives translation surface}, any $S\in \Tran{g}{T}$ determines a canonical holomorphic $1$-form on $S$ that we denote by $\phi_S$. We call the flat metric on $S$ the $S$-metric. Its length element is $ds_{S}=|\phi_S|$, and area element is $|\phi_S|^2$. Distances in this metric shall be denoted by $d_{S}(\cdot,\cdot)$. As in the case of triangulated surfaces, we denote by $$\Phi:\textstyle\Tran{g}{T}\to \mathcal{T}_g$$ the map which sends a  combinatorial translation surface to the underlying Riemann surface.

\subsection{Extremal length on annuli} Let $A$ be a Riemann surface that is topologically an annulus. By the uniformization theorem $A$ is biholomorphic to $$A(r)=\{z\in \mathbb{C}|1<|z|<r\}$$ for some $r>1$. The modulus of $A$, denoted $\modulus(A)$, is the quantity $(1/2\pi)\log r$. 

Now, denote by $\gamma$ a generator of $H_1(A,\mathbb{Z})$. 

\begin{definition} Given a Riemannian metric $\rho$ on $A$, the quantity $$\textstyle\length_\rho(\gamma)$$ is defined to be the infimum of lengths in the $\rho$-metric over all curves representing $\gamma$. 
\end{definition}

\begin{definition} The extremal length of $\gamma$ on $A$ is $$\textstyle\Ext_A(\gamma)=\displaystyle\sup_\rho\frac{\length_\rho(\gamma)^2}{\Area_\rho(A)}$$ where the supremum is taken over all conformal metrics $\rho$ on $A$.
\end{definition} The following result is proved in \cite[Section 1.D]{Ahl66}.  

\begin{prop}\label{extremal length of annulus} We have, $\Ext_A(\gamma)=\modulus(A)^{-1}$.
\end{prop}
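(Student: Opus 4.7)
Since extremal length is by definition a supremum over all conformal metrics on $A$, it depends only on the conformal structure of $A$. By the uniformization statement recalled just before the proposition, we may therefore assume $A = A(r) = \{z \in \mathbb{C} : 1 < |z| < r\}$ with $r > 1$, and it suffices to show that $\Ext_{A(r)}(\gamma) = 2\pi/\log r$, which equals $\modulus(A)^{-1}$ by definition. The plan is to establish this equality by proving matching upper and lower bounds, i.e., by exhibiting an optimal metric and by bounding all other metrics against it via a length-area (Cauchy--Schwarz) argument.

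For the lower bound, I would take the flat cylinder metric $\rho_0 = |dz|/|z|$ on $A(r)$. Writing $z = e^{t+i\theta}$ with $0 < t < \log r$ and $\theta \in [0, 2\pi)$, this metric becomes $dt^2 + d\theta^2$. The circles $\{t = \text{const}\}$ represent $\gamma$ and all have $\rho_0$-length $2\pi$, so $\length_{\rho_0}(\gamma) \leq 2\pi$ (in fact an easy argument shows equality). The area is $\Area_{\rho_0}(A(r)) = 2\pi \log r$. Hence
\[
\frac{\length_{\rho_0}(\gamma)^2}{\Area_{\rho_0}(A(r))} = \frac{(2\pi)^2}{2\pi \log r} = \frac{2\pi}{\log r},
\]
which gives $\Ext_{A(r)}(\gamma) \geq 2\pi/\log r$.

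For the upper bound, let $\rho = \lambda(z)\,|dz|$ be any conformal metric on $A(r)$ with $\Area_\rho(A(r)) < \infty$ (metrics with infinite area contribute nothing to the supremum). For each $t \in (0, \log r)$, the circle $\gamma_t = \{|z| = e^t\}$ represents $\gamma$, so
\[
\length_\rho(\gamma) \;\leq\; \int_0^{2\pi} \lambda(e^{t+i\theta})\,e^{t}\,d\theta.
\]
Squaring and applying the Cauchy--Schwarz inequality on the $\theta$-integral gives
\[
\length_\rho(\gamma)^2 \;\leq\; 2\pi \int_0^{2\pi} \lambda(e^{t+i\theta})^2 e^{2t}\,d\theta.
\]
Integrating in $t$ from $0$ to $\log r$ and recognizing the right-hand side as $2\pi\,\Area_\rho(A(r))$, I obtain
\[
(\log r)\,\length_\rho(\gamma)^2 \;\leq\; 2\pi\,\Area_\rho(A(r)),
\]
so $\length_\rho(\gamma)^2 / \Area_\rho(A(r)) \leq 2\pi/\log r$. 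Taking the supremum over $\rho$ yields the matching upper bound and completes the proof.

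There is no serious obstacle; the only subtlety is to restrict attention to metrics of finite, nonzero area when forming the supremum (so that the Cauchy--Schwarz step is legitimate and the ratio is well-defined), but this is harmless since such metrics automatically achieve the supremum, as the explicit metric $\rho_0$ already shows.
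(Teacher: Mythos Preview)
Your proof is correct and is precisely the classical length--area argument. The paper does not actually supply its own proof of this proposition; it simply cites Ahlfors' \emph{Lectures on quasiconformal mappings} (Section 1.D), where essentially the same computation you wrote appears.
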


\subsection{Hyperbolic metric on a triangulated surface} In this section we prove two lemmas about the hyperbolic metric on a triangulated surface. The first lemma is about short geodesics.

\begin{lemma}\label{tri systole} Let $S$ be a $T$-triangle triangulated surface of genus $g$, and $X=\Phi(S)$. Denote by $\rho_X$ the hyperbolic metric on $X$. Then $$\sum_{\substack{\gamma \text{ \normalfont simple closed geodesic on } X\\\length_{\rho_X}(\gamma)<2\arcsinh(1)}}\textstyle\length_{\rho_X}(\gamma)^{-1}\leq CT$$ where $C$ is a universal constant. 
\end{lemma}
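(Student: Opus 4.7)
The plan is to combine the collar lemma with the conformal invariance of extremal length, using the flat metric $\mu$ on $S$ (coming from the unit equilateral triangulation) as a test metric on $X = \Phi(S)$. Note that $\mu$ lies in the same conformal class as $\rho_X$ and has total area $\Area_\mu(S) = T\sqrt{3}/4$.

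First, for each $\gamma_i$, write $\ell_i = \length_{\rho_X}(\gamma_i)$ and apply the collar lemma to obtain an embedded annular collar $C_i \subset X$ of hyperbolic width $w_i = \arcsinh(1/\sinh(\ell_i/2))$, with the $C_i$ pairwise disjoint. An explicit computation of the modulus of the hyperbolic cylinder $[-w_i, w_i] \times \mathbb{R}/\ell_i\mathbb{Z}$ yields $\modulus(C_i) \geq c_1/\ell_i$ for a universal constant $c_1 > 0$, uniformly in $\ell_i \in (0, 2\arcsinh(1)]$. Second, by \cref{extremal length of annulus} and conformal invariance,
$$\modulus(C_i)^{-1} = \Ext_{C_i}(\gamma_i) \geq \frac{L_i^2}{\Area_\mu(C_i)},$$
where $L_i$ denotes the infimum of $\mu$-lengths of closed curves in $C_i$ homotopic within $C_i$ to the core; equivalently, $\modulus(C_i) \leq \Area_\mu(C_i)/L_i^2$.

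Third, I need to bound $L_i$ below by a universal constant $c_0 > 0$. Any such curve is essential on $S$, so $L_i$ dominates the $\mu$-systole of $S$. A $\mu$-shortest essential closed curve is a broken geodesic whose segments are saddle connections between cone points of the triangulation, or (if it avoids cone points) it is the core of an embedded flat cylinder. In the first case the length is at least the minimum distance between two distinct cone points; in the second case one analyzes the combinatorics of the maximal flat cylinder. Both quantities are bounded below by a constant because the triangulation has unit edge lengths, the altitude of a unit equilateral triangle is $\sqrt{3}/2$, and two cone points lying in distinct triangles cannot be closer than this altitude. This gives $L_i \geq c_0$.

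Finally, combining the bounds $c_1/\ell_i \leq \modulus(C_i) \leq \Area_\mu(C_i)/c_0^2$ and summing over the pairwise disjoint collars,
$$\sum_{i=1}^n \frac{1}{\ell_i} \leq \frac{1}{c_0^2 c_1}\sum_{i=1}^n \Area_\mu(C_i) \leq \frac{\Area_\mu(S)}{c_0^2 c_1} = \frac{\sqrt{3}\,T}{4\,c_0^2 c_1} \leq CT,$$
as desired. The main obstacle is Step 3, the universal lower bound on the flat systole of an arbitrary triangulated surface. The case where the shortest $\mu$-geodesic passes through a cone point is straightforward from the vertex-distance observation above, but the case where it avoids all cone points requires a more delicate combinatorial argument about the embedded flat cylinder containing the geodesic on a unit equilateral tiling (in particular, one must rule out arbitrarily thin, long cylinders surviving on a bounded-complexity triangulation).
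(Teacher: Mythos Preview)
Your approach is exactly the paper's: collar lemma, modulus $\geq C/\ell_i$, extremal length on the annulus with the flat metric as test metric, a lower bound on the flat systole, and summing the flat areas of the disjoint collars against the total area $(\sqrt{3}/4)T$. The paper simply asserts $\sys_S(A_i)\geq 1$ for your Step~3; the clean argument you are missing is via the developing map. Every developed triangle has its vertices in the Eisenstein lattice $\mathbb{Z}+e^{\pi i/3}\mathbb{Z}$, so a closed flat geodesic avoiding all cone points has holonomy a pure translation by a nonzero lattice vector (the tangent direction returns to itself, forcing the rotational part to be trivial), hence length $\geq 1$; and any saddle connection develops to a straight segment between two distinct lattice points, hence also has length $\geq 1$. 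No separate combinatorial analysis of thin cylinders is needed, and your worry about a saddle connection joining a cone point to itself is handled by the same observation.
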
 

\begin{proof} Let $\gamma$ be a simple closed geodesic on $X$ with $\length_{\rho_X}(\gamma)<2\arcsinh(1)$. By the collar theorem \cite[Theorem 4.1.1]{Bus10}, $\gamma$ has an associated hyperbolic annular collar $A_\gamma$ of width $$\textstyle C\log (\length_{\rho_X}(\gamma)^{-1}).$$ Moreover, the collars $A_\gamma$ associated to the short geodesics are all mutually disjoint. The modulus of $A_\gamma$ is $$\textstyle C\length_{\rho_X}(\gamma)^{-1}.$$ By \cref{extremal length of annulus}, $$\frac{\sys_{S}(A_\gamma)^2}{\Area_{S}(A_\gamma)}\leq C\textstyle\length_{\rho_X}(\gamma).$$ Since $\sys_{S}(A_\gamma)\geq 1$, we have $$\textstyle\Area_{S}(A_\gamma)\geq C\textstyle\length_{\rho_X}(\gamma)^{-1}.$$ Since the $A_\gamma$ are all disjoint, 
\begin{align*}\sum_{\substack{\gamma \text{ \normalfont simple closed geodesic on } X\\\length_{\rho_X}(\gamma)<2\arcsinh(1)}}\textstyle\length_{\rho_X}(\gamma)^{-1} &\leq C\displaystyle\sum_{\substack{\gamma \text{ \normalfont simple closed geodesic on } X\\\length_{\rho_X}(\gamma)<2\arcsinh(1)}}\textstyle\Area_{S}(A_\gamma)\\&\leq CT 
\end{align*}
as desired.
\end{proof}

Next, we show that the hyperbolic metric on a triangulated surface admits a nicely behaved covering by hyperbolic balls.

\begin{lemma}\label{covering lemma} Let $X$ be a hyperbolic surface, with metric $\rho_X$. Suppose $$\sum_{\substack{\gamma \text{ \normalfont simple closed geodesic on } X\\\length_{\rho_X}(\gamma)<2\arcsinh(1)}}\textstyle\length_{\rho_X}(\gamma)^{-1}\leq R.$$ Then, there exist hyperbolic disks $U_1,...,U_N$, $V_1,...,V_N$ and $W_1,...,W_N$ on $X$ such that the following conditions are satisfied:

\begin{enumerate}

\item The $\{W_i\}$ together cover $X$.

\item $$\cent(U_i)=\cent(V_i)=\cent(W_i)$$ and $$\radius(U_i)=2\radius(V_i)=4\radius(W_i)\leq \arcsinh(1)/2.$$

\item If $U_i$ nontrivially intersects $U_j$, then $\radius(U_i)\leq C\radius(U_j)$.

\item Any point $x\in X$ is contained in at most $C$ of the $U_i$. 

\item Any $U_i$ nontrivially intersects at most $C$ of the $U_j$.

\item $N\leq C(R+g)$.

\end{enumerate}
Here, $C$ is a universal constant.

\end{lemma}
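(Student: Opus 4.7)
The plan is to construct the covering by a Vitali-type maximality argument with ball radii tied to the local injectivity radius, and to bound $N$ via a thick-thin decomposition of $X$. First, define a radius function
$$\rho(x)=\min\bigl(\injrad_X(x)/C_0,\ \arcsinh(1)/8\bigr)$$
for a large universal constant $C_0$. Since $\injrad_X$ is $1$-Lipschitz on a hyperbolic surface, $\rho$ is $(1/C_0)$-Lipschitz, so $\rho(y)$ lies within a factor of $2$ of $\rho(x)$ whenever $d(x,y)\leq 4\rho(x)$; in particular $B(x,4\rho(x))$ is an embedded hyperbolic disk. Then choose a maximal finite set $\{x_1,\dots,x_N\}\subset X$ with $d(x_i,x_j)\geq \min(\rho(x_i),\rho(x_j))/10$ for $i\neq j$, and set
$$V_i=B(x_i,\rho(x_i)),\quad W_i=B(x_i,2\rho(x_i)),\quad U_i=B(x_i,4\rho(x_i)).$$
Property (2) is then immediate, property (1) follows from maximality (any uncovered point could be adjoined), and properties (3)--(5) follow from the Lipschitz control on $\rho$ together with a packing argument using the disjointly embedded balls $B(x_i,\rho(x_i)/20)$ inside each $U_j$.

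To prove (6), decompose $X=X_{\geq}\sqcup X_{<}$ into the thick part $X_{\geq}=\{\injrad_X\geq \arcsinh(1)\}$ and its complement. By the collar theorem, $X_{<}$ is a disjoint union of standard collars about the short geodesics $\gamma_1,\dots,\gamma_n$. On $X_{\geq}$ we have $\rho\equiv \arcsinh(1)/8$, so each $V_i$ meeting $X_{\geq}$ has area bounded below by a universal constant; since $\Area(X)=2\pi(2g-2)$ by Gauss--Bonnet, at most $O(g)$ centers lie in $X_{\geq}$.

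For each collar, I work in Fermi coordinates $(s,t)\in (\mathbb{R}/\ell_j\mathbb{Z})\times [-w(\ell_j),w(\ell_j)]$ with metric $\cosh^2(t)\,ds^2+dt^2$. A direct hyperbolic computation (comparing the distance in $\mathbb{H}^2$ from a lift of $(s,t)$ to its translate under the deck transformation corresponding to $\gamma_j$) shows that $\injrad_X(s,t)$ is comparable to $\ell_j\cosh(t)$, ranging from $\sim \ell_j$ at the core to $\sim 1$ at the collar boundary. Stratify the collar dyadically by $k=0,1,\dots,O(\log(1/\ell_j))$ into strips $\cosh(t)\in [2^k,2^{k+1}]$: the $k$th strip has area $\sim \ell_j\cdot 2^k$, while on it $\rho\sim \ell_j\cdot 2^k$, so each $V_i$ centered in the strip has area $\sim \ell_j^2\cdot 4^k$. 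The $k$th strip therefore contains $O(2^{-k}/\ell_j)$ centers, and summing the geometric series gives $O(1/\ell_j)$ centers per collar, for a total contribution of $O\bigl(\sum_j \ell_j^{-1}\bigr)=O(R)$ from $X_{<}$. Combining yields $N\leq C(g+R)$. I expect the main obstacle to be the Fermi-coordinate estimate of $\injrad_X$ and of the strip areas, and ensuring that the dyadic sum remains correctly controlled down to the core of each collar, where $\rho\sim \ell_j$ becomes arbitrarily small and the ball count per strip is largest.
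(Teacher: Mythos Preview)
Your proposal is correct and follows essentially the same approach as the paper: both arguments choose centers with separation proportional to the local injectivity radius, set $V_i\subset W_i\subset U_i$ as concentric balls with those radii, and bound $N$ by a thick--thin decomposition, using Gauss--Bonnet on the thick part and a strip-by-strip count in collar (Fermi) coordinates on the thin part. The only cosmetic difference is that the paper builds the separated sets piecewise (a fixed-radius net on the thick part and a separate net on each integer slab $A_j^s=\{r\in[s-1,s+1]\}$ of each collar), whereas you define a single global radius function $\rho(x)\sim\injrad_X(x)$ and run one Vitali-type selection on all of $X$; your dyadic stratification in $\cosh t$ is equivalent to the paper's integer stratification in $r$ since $\cosh r\sim e^r$.
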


\begin{remark} Once the $U_i$ and $W_i$ are constructed, the existence of the $V_i$ is clear. \cref{covering lemma} will be used to prove \cref{lower bound} in Section 5, and in that proof the $V_i$ will become relevant. \cref{covering lemma} will also be used in \cref{8.5}.
\end{remark}

\begin{proof} We divide $X$ into the thick part and thin part. The thick part has injectivity radius at least $\arcsinh(1)$. The thin part is the union of disjoint annuli around each short geodesic.

The thick part has area at most $Cg$. On this part, we take a maximal $\arcsinh(1)/16$-separated set. This set has around $Cg$ points. Let the $W_i$ be radius $\arcsinh(1)/8$ disks around the points in the separated set, the $V_i$ radius $\arcsinh(1)/4$ disks around these points and the $U_i$ radius $\arcsinh(1)/2$ disks around these points.

Next, we construct the disks on the thin part. Let $\gamma$ be a geodesic on $X$ with length less than $2\arcsinh(1)$. Let $A_\gamma$ be the annulus associated to $\gamma$, which is a connected component of the thin part. By the collar theorem, each annulus $A_\gamma$ has width approximately $$C\textstyle\log(\length_{\rho_X}(\gamma)^{-1}).$$ We have coordinates $(r,\theta)$ on $A_j$ where $r=0$ on $\gamma$ and $r\in [w(A_\gamma)/2,w(A_\gamma)/2]$ such that the hyperbolic metric on $A_\gamma$ is $$d\rho_X^2=dr^2+\cosh^2 rd\theta^2.$$ The injectivity radius of $A_\gamma$ at any point $(r,\theta)$ (denoted $\injrad_{(r,\theta)}(A_\gamma)$) is around $$C\textstyle\length_{\rho_X}(\gamma)\cosh r.$$ 

Now, for $s\in \mathbb{N}\cap [-w(A_\gamma)/2,w(A_\gamma)/2]$, let $$A_\gamma^s=A_\gamma\cap \{r\in [s-1,s+1]\}.$$ Denote by $\injrad(A_\gamma^s)$ the injectivity radius of $A_\gamma^s$. Choose a $\injrad(A_\gamma^s)/16$-separated set on $A_\gamma^s$ to be the centers of $W_i$, so that the $W_i$ are radius $\injrad(A_\gamma^s)/8$ disks around these centers, the $V_i$ are radius $\injrad(A_\gamma^s)/4$ disks, and the $U_i$ are radius $\injrad(A_\gamma^s)/2$ disks. In each $A_\gamma^s$ there are at most $$(C \textstyle\length_{\rho_X}(\gamma)\cosh s)^{-1}$$ centers. In total, there are at most 
\begin{align*}C\sum_{s=1}^{\lceil w(A_\gamma)\rceil} (\textstyle\length_{\rho_X}(\gamma)\cosh s)^{-1}&\leq C\textstyle\length_{\rho_X}(\gamma)^{-1}\displaystyle\int_{s=0}^\infty (\cosh s)^{-1}\\&\leq C\textstyle\length_{\rho_X}(\gamma)^{-1}
\end{align*} centers in $A_\gamma$.

We simply take all the $U_i$, $V_i$ and $W_i$ constructed on the thick part and thin part. By construction, conditions 1, 2, 3, 4 and 5 are satisfied. The total number of $U_i$ (or $V_i$ or $W_i$) is at most $$C\left(g+\sum_{\substack{\gamma \text{ \normalfont simple closed geodesic on } X\\\length_{\rho_X}(\gamma)<2\arcsinh(1)}}\textstyle\length_{\rho_X}(\gamma)^{-1}\right)\leq C(R+g)$$ which shows condition 6.
\end{proof}

\subsection{Conformal doubles and triangulated surfaces} Denote by $S_{g,b}$ a surface of genus $g$ with $b$ boundary components. Let $X$ be a Riemann surface of genus $g$ with $b$ boundary components, meaning that $X$ is homeomorphic to $S_{g,b}$. 

\begin{definition}\label{conformal mirror} The conformal mirror $X^{-1}$ of $X$ is the Riemann surface whose local coordinates are obtained by composing each local coordinate $z$ on $X$ with the anti-holomorphic map $z\to \overline{z}$. 
\end{definition} 

We may construct another Riemann surface, $X^d$, by gluing $X$ and $X^{-1}$ along their boundaries $\partial X$ and $\partial X^{-1}$ which are canonically identified. The complex structure on $X^d$ in a neighborhood $U$ of a point $p\in \partial X$ is given as follows. If $z$ is a local coordinate on $X$ under which $U\cap X$ is conformally a half-disk around $p$, then the local coordinate on $U$ which is $z$ on $U\cap X$ and $\overline{z}$ on $U\cap X^{-1}$ identifies $U$ with a disk. This gives a complex structure on $U$ which can be checked to be independent of the choice of local coordinate $z$. In this way, we obtain a complex structure on $X^d$. 

\begin{definition}\label{conformal double}
The conformal double of $X$ is the surface $X^d$ constructed above.
\end{definition} 

Then $X^d$ is a compact genus $2g+b-1$ surface which has an anti-holomorphic involution fixing $\partial X=\partial X^{-1}\subset X^d$. We have the following statement about conformal doubles of triangulated surfaces.

\begin{prop}\label{double of triangulated surface with boundary} Suppose $S$ is a triangulated surface with boundary. Then $S^d$ is canonically a triangulated surface without boundary.
\end{prop}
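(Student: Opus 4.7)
The plan is to build an explicit triangulation of $S^d$ from the triangulations of $S$ and $S^{-1}$, verify that it satisfies the axioms of a triangulated surface, and confirm that the resulting complex structure agrees with the one coming from the doubling construction.

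First I would define the triangulation. Take the $2$-simplices of $S$, together with a corresponding mirror $2$-simplex for each face of $S$ sitting in $S^{-1}$. Reflecting an oriented unit equilateral triangle across any line yields another oriented unit equilateral triangle, so each mirror face is again a standard $2$-simplex. The interior face gluings of $S$ are kept as is; the interior gluings of $S^{-1}$ are the mirror images of those of $S$; and each boundary edge $e\subset \partial S$ is glued to its image $\bar e \subset \partial S^{-1}$ under the canonical identification $\partial S=\partial S^{-1}$. Because $S$ and $S^{-1}$ carry opposite orientations, these seam identifications are orientation-preserving in $S^d$. So every $2$-simplex is an oriented unit equilateral triangle, every gluing is orientation-preserving, and the underlying topological space is the closed surface $S_{2g+b-1}$; this is the definition of a triangulated surface without boundary.

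Next I would check that the complex structure coming from this triangulation agrees with the one from the conformal double construction. The two structures agree tautologically on the open subsets $\interior(S)$ and $\interior(S^{-1})$. For an interior point $p$ of a boundary edge $e$, pick the local coordinate $z$ obtained by embedding the $S$-triangle adjacent to $e$ as an equilateral triangle in the closed upper half-plane with $e$ lying on the real axis. The coordinate on the $S^{-1}$-side prescribed by the conformal mirror is then $\bar z$; but this is precisely the coordinate obtained by embedding the mirrored triangle in the closed lower half-plane and matching along the real axis. Together they give the standard coordinate on a full disk neighborhood of $p$ in $\mathbb{C}$, showing that the two complex structures agree near $p$. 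At a boundary vertex $v$, both complex structures are the unique extension across a removable singularity of their common restriction to a punctured neighborhood of $v$ (this is the same extension procedure that was used in the paper to define the complex structure at interior vertices of a triangulated surface), so they agree at $v$ as well.

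The only point that requires any genuine care, and so the main obstacle, is the verification at the seam: one must check that the $z$/$\bar z$ description of the conformal double coincides with the coordinate obtained by unfolding the two adjacent equilateral triangles into $\mathbb{C}$. This reduces to the elementary observation that reflecting an equilateral triangle across one of its sides and then embedding the result by its tautological coordinate is the same operation as complex-conjugating the original embedding; once this is noted the entire compatibility follows, and the canonicity of the triangulated structure on $S^d$ (independence of choices such as the particular embedding of each triangle) is automatic from the fact that the whole construction is dictated by the data of $S$ alone.
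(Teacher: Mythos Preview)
Your proposal is correct and follows essentially the same approach as the paper: construct $S^{-1}$ by reversing orientation on each triangle, glue $S$ and $S^{-1}$ along their identified boundaries to obtain a triangulated surface, and verify this agrees with the conformal double. The paper's proof is a terse two sentences that simply asserts the result ``can be checked,'' whereas you have spelled out the verification in detail (particularly the seam compatibility via the $z/\bar z$ coordinate and the removable-singularity argument at boundary vertices); your version is a faithful expansion of what the paper leaves implicit.
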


\begin{proof} Denote by $S^{-1}$ the triangulated surface wherein each triangle of $S$ is equipped with the opposite orientation. Since $\partial S$ and $\partial S^{-1}$ are naturally identified, gluing $S$ and $S^{-1}$ under this identification produces a triangulated surface $S^d$ which can be checked to be the conformal double of $S$.
\end{proof}

\subsection{Triangulated surfaces with boundedness properties} 

In this section we define several subsets of $\Tri{g}{T}$ that shall be useful in the proof of \cref{upper bound}.

Let $E$ be an equilateral triangle in $\mathbb{C}$ of side length $\ell$. 

\begin{definition} A $k$-subdivision of $E$ is the unique triangulation by $k^2$ equilateral triangles of side length $\ell/k$.
\end{definition}

Let $T$ be a triangulation of a surface (possibly with boundary).

\begin{definition} The $k$-subdivision of $T$ is the new triangulation constructed by replacing each triangle in $T$ with its $k$-subdivision.
\end{definition}

\begin{figure}[ht]

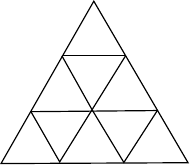

\caption{A 3-subdivision of an equilateral triangle.}

\end{figure}

\begin{definition}\label{lb tri surface} A locally bounded triangulated surface is a triangulated surface $S$ which satisfies the following two properties. 

\begin{enumerate}

\item The maximum degree of any vertex of $S$ is $7$ and

\item There exists a triangulation $S_{\lb}$ of the surface $S$ into equilateral triangles of side length $5$ such that the triangulation $S$ is a $5$-subdivision $S_{\lb}$.

\end{enumerate}

The set of locally bounded triangulated surfaces with $T$ triangles and genus $g$ (resp. $\leq T$ triangles and genus $\leq g$) is denoted $\Tric{g}{T}$ (resp. $\Tric{\leq g}{\leq T}$).

\end{definition}

Similarly, we also define locally bounded combinatorial translation surfaces. First, we have a preliminary lemma that motivates the definition. 

\begin{lemma}\label{periods of tri tran surface} Let $S$ be a combinatorial translation surface. Let $V_{>6}(S)$ be the set of vertices of $S$ with degree greater than $6$. If $\gamma$ is an arc representing an element of $H_1(S, V_{>6}(S), \mathbb{Z})$, then $$\int_\gamma\phi_S\in \mathbb{Z}+e^{\pi i/3}\mathbb{Z}.$$
\end{lemma}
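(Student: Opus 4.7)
My plan is to reduce to integrating $\phi_S$ along an edge-path of the triangulation, where the statement becomes immediate, and to use Stokes' theorem to compare with an arbitrary representative. The two facts I will rely on are that $\phi_S$ is closed (being holomorphic) and that on each triangle $\phi_S = dz$ under the identification with a standard equilateral triangle in $\mathbb{C}$ given in the proof of \cref{combinatorial translation structure gives translation surface}.

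First I would show that any arc $\gamma$ with endpoints in $V_{>6}(S)$ is homologous, in $H_1(S, V_{>6}(S), \mathbb{Z})$, to an edge-path $\gamma'$ in the $1$-skeleton of $S$ with the same endpoints. Concretely, perturb $\gamma$ so that it is transverse to the $1$-skeleton; this divides $\gamma$ into segments each of which lies in a single closed triangle with its endpoints on the triangle's boundary. Within each such triangle, which is a topological disk, the segment can be homotoped rel endpoints to a path along the boundary edges, and the individual homotopies glue to a $2$-chain $\sigma$ with $\partial \sigma = \gamma - \gamma'$. Since $d\phi_S = 0$, Stokes' theorem gives $\int_\gamma \phi_S = \int_{\gamma'} \phi_S$.

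Second, I would compute the contribution of each edge. By the identification in the proof of \cref{combinatorial translation structure gives translation surface}, each triangle of $S$ sits as a standard equilateral triangle in $\mathbb{C}$ with vertices among $\{0, 1, \tfrac{1}{2} \pm \tfrac{\sqrt{3}}{2}i\}$, and $\phi_S$ restricts to $dz$ there. Integrating $dz$ along an oriented edge thus yields a sixth root of unity. A direct inspection shows that all six roots of unity lie in the lattice $\mathbb{Z} + e^{\pi i/3}\mathbb{Z}$: the non-obvious cases are $e^{2\pi i/3} = -1 + e^{\pi i/3}$ and $-e^{2\pi i/3} = 1 - e^{\pi i/3}$. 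Writing $\gamma' = \sum_i \epsilon_i e_i$ with $\epsilon_i \in \{\pm 1\}$, we then obtain $\int_\gamma \phi_S = \sum_i \epsilon_i \int_{e_i}\phi_S \in \mathbb{Z} + e^{\pi i/3}\mathbb{Z}$, as required. The only step needing care is the homological reduction to an edge-path; this is a routine consequence of the CW structure on $S$, and is the place where the role of $V_{>6}(S)$ enters, since it pins down the endpoints of the arc.
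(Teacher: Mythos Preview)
Your proposal is correct and follows essentially the same approach as the paper: reduce to an edge-path in the $1$-skeleton (the paper phrases this as a homotopy rel endpoints, you as a homology and Stokes argument, which amounts to the same thing since $\phi_S$ is closed), then observe that each edge contributes a sixth root of unity, all of which lie in $\mathbb{Z}+e^{\pi i/3}\mathbb{Z}$. Your version is slightly more explicit in verifying the lattice membership of all six roots and in spelling out the triangle-by-triangle homotopy, but there is no substantive difference.
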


\begin{proof} Note that any arc $\gamma$ on $S$ with $\partial \gamma\subset V_{>6}(S)$ is homotopic to a piecewise smooth arc wherein each piece is an edge of $S$. Since $dz$ integrates to an element of $\mathbb{Z}+e^{\pi i/3}\mathbb{Z}$ along sides of the equilateral triangle with vertices $0,1,\frac{1}{2}+\frac{\sqrt{3}}{2}i$ in $\mathbb{C}$, $$\int_\gamma\phi_S\in \mathbb{Z}+e^{\pi i/3}\mathbb{Z},$$ as desired.
\end{proof}

\begin{definition} \label{lb comb tran surface} A locally bounded combinatorial translation surface is a combinatorial translation surface $S$ that satisfies the following two properties.

\begin{enumerate} 

\item The maximum degree of any vertex of $S$ is $42$ and

\item Given any two vertices $x,y\in V_{> 6}(S)$ (not necessarily distinct) and $\gamma$ any arc from $x$ to $y$, $$\int_\gamma \phi_S\in 5\mathbb{Z}+5e^{\pi i/3}\mathbb{Z}.$$

\end{enumerate}

The set of locally bounded combinatorial translation surfaces with $T$ triangles and genus $g$ (resp. $\leq T$ triangles and genus $\leq g$) is denoted $\Tranc{g}{T}$ (resp. $\Tranc{\leq g}{\leq T}$). 

\end{definition}

\begin{remark} Given \cref{periods of tri tran surface}, one would expect condition 2 in \cref{lb comb tran surface} to be the translation surface version of condition 2 in \cref{lb tri surface}. More rigorously, we shall see in \cref{8.2} that triangulated surfaces have a canonical branched $6$-cover which is a combinatorial translation surface. Locally bounded combinatorial translation surfaces are defined so that the canonical branched $6$-covers of locally bounded triangulated surfaces are locally bounded combinatorial translation surfaces. See \cref{covers}.
\end{remark}

It is also useful to consider triangulated surfaces for which the number of vertices of degree other than $6$ is bounded. (Note that combinatorial translation surfaces of genus $g$ automatically satisfy the property that the number of vertices of degree other than $6$ is bounded by $Cg$.)

\begin{definition} The set $\Tri{\leq g}{\leq T,\leq m}$ consists of $S\in \Tri{\leq g}{\leq T}$ which satisfy the property that $|V_{\neq 6}(S)|\leq m$. 
\end{definition} 

\begin{definition} The set $\Tric{\leq g}{\leq T,\leq m}$ consists of $S\in \Tric{\leq g}{\leq T}$ which satisfy the property that $|V_{\neq 6}(S)|\leq m$.
\end{definition} 

Euler characteristic conditions imply that $|V(S)|\leq T/2$ when $g\geq 1$, so in the future, we will implicitly assume the condition $m/T\leq 1/2$ when $g\geq 1$. 

\subsection{Counting functions for the image of $\Phi$ and roadmap to prove \cref{upper bound}}

\begin{definition}\label{counting functions} Let $$\Ntri(T,g,r)=\sup \{\#(\{S\in \textstyle\Tri{\leq g}{\leq T}\displaystyle|\Phi(S)\in B_{d_T}(X,r)\})|X\in \mathcal{M}_{\leq g}\},$$ $$\Ntric(T,g,r)= \sup \{\#(\{S\in \textstyle\Tric{\leq g}{\leq T}\displaystyle|\Phi(S)\in B_{d_T}(X,r)\})|X\in \mathcal{M}_{\leq g}\},$$ $$ \Ntri(T,g,m,r)= \sup \{\#(\{S\in \textstyle\Tri{\leq g}{\leq T,\leq m}\displaystyle|\Phi(S)\in B_{d_T}(X,r)\})|X\in \mathcal{M}_{\leq g}\}  ,$$ $$ \Ntric(T,g,m,r)= \sup \{\#(\{S\in \textstyle\Tric{\leq g}{\leq T,\leq m}\displaystyle|\Phi(S)\in B_{d_T}(X,r)\})|X\in \mathcal{M}_{\leq g}\}$$ and $$\Ntranc(T,g,r)=\sup \{\#(\{S\in \textstyle\Tranc{\leq g}{\leq T}\displaystyle|\Phi(S)\in B_{d_T}(X,r)\})|X\in \mathcal{M}_{\leq g}\}.$$
\end{definition}

\begin{remark}\label{counting function notation} Here, $d_T$ denotes the Teichm\"{u}ller metric on moduli space $\mathcal{M}_g$. The Teichm\"{u}ller metric is only defined for $g\geq 2$. For the purpose of notation, in our counting functions when we count genus $g$ triangulated surfaces for $g=0,1$ we simply count all the surfaces and omit the radius variable.
\end{remark}

To show \cref{upper bound}, it is necessary to find an upper bound for $\Ncomb(T,g,r)$. We do by proving several bounds related to the other counting functions introduced in \cref{counting functions}. In Section 5, we bound $\Ntri$ in terms of $\Ntric$. In Section 6, we bound $\Ntranc$ in terms of $\Ntri$. In Section 7, we bound $\Ntric$ in terms of $\Ntranc$. Meanwhile, in Section 3 we prove bounds about the geometry of Teichm\"{u}ller balls in $\mathcal{M}_g$ which shall be useful for the bounds in Section 6 and Section 7 as well. Finally, in Section 7, we combine all these bounds and use a recursive argument to prove \cref{upper bound}.

\section{Geometry of Teichm\"{u}ller space}

In this section we give a brief review of definitions in Teichm\"{u}ller theory and describe the large genus geometry of Teichm\"{u}ller space. Detailed exposition can be found in \cite{Ahl66}, \cite{Gar87} and \cite{Hub06}. We also prove some results about quasiconformal maps.

\subsection{Quasiconformal maps} Let $U$ and $V$ be Riemann surfaces and $K\geq 1$. 

\begin{definition} An orientation preserving diffeomorphism $f:U\to V$ is $K$-quasiconformal if it satisfies $$\left|\frac{\partial f}{\partial \overline{z}}\right|\leq k\displaystyle\left|\frac{\partial f}{\partial z}\right|$$ where $k=(K-1)/(K+1)$ and $z$ is a holomorphic coordinate on $U$.
\end{definition}

Given conformal metrics on $U$ and $V$, this means locally there exist oriented orthonormal bases on $U$ and $V$ with respect to which $df$ has singular values $\lambda_1$ and $\lambda_2$ satisfying $K^{-1}\leq \lambda_1/\lambda_2\leq K$. The smallest such quantity $K$ is called the quasiconformal dilatation. 

\begin{remark} Quasiconformal maps are generally required to be homeomorphisms only. See \cite[Sections 4.1 and 4.5]{Hub06} for definitions in this setting. In future sections, we explicitly use only the definition we have given above, but we implicitly use the definition in the homeomorphism setting in \cref{extending quasiconformal maps} and \cref{bi-Lipschitz metric}.
\end{remark}

\subsection{Hodge norm on $H^1(X,\mathbb{C})$} Let $X$ be a compact Riemann surface of genus $g\geq 2$ and $\rho_X$ the hyperbolic metric on $X$. In this section we record certain constructions from Hodge theory on Riemann surfaces. See \cite{Dem09} for a detailed exposition. Let $\mathcal{E}^1(X,\mathbb{C})$ denote the space of complex valued differential $1$-forms on $X$; also denote closed and exact $1$-forms by $Z^1(X,\mathbb{C})$ and $B^1(X,\mathbb{C})$, respectively. The Hodge star $$*:\mathcal{E}^1(X,\mathbb{C})\to \mathcal{E}^1(X,\mathbb{C})$$ is an $\mathbb{C}$-linear map which fixes the space of harmonic $1$-forms $\mathcal{H}^1(X,\mathbb{C})\subset \mathcal{E}^1(X,\mathbb{C})$. 

\begin{definition} The Hodge inner product (a Hermitian inner product) on $\mathcal{E}^1(X,\mathbb{C})$ is defined as $$\langle \omega_1,\omega_2\rangle_X =\int_X \omega_1\wedge *\overline{\omega_2}=\int_X \langle \omega_1,\omega_2\rangle_{\rho_X}d\rho_X^2.$$ The Hodge norm on $\mathcal{E}^1(X,\mathbb{C})$ is defined as $$\|\omega\|^2_X=\int_X \omega \wedge *\overline{\omega}$$ for $\omega, \omega_1,\omega_2\in \mathcal{E}^1(X,\mathbb{C})$.
\end{definition} 

By the Hodge decomposition theorem, the space of closed complex valued differential $1$-forms on $X$ splits as $$\mathcal{Z}^1(X,\mathbb{C})=\mathcal{H}^1(X,\mathbb{C})\oplus \mathcal{B}^1(X,\mathbb{C}),$$ and the splitting is orthogonal with respect to the Hodge inner product. 

\begin{definition}\label{Hodge norm} The Hodge norm of a cohomology class $$u\in \mathcal{Z}^1(X,\mathbb{C})/\mathcal{B}^1(X,\mathbb{C})\simeq H^1(X,\mathbb{C}),$$ is $$\|u\|_X=\inf_{[\omega]= u}\|\omega\|_X.$$
\end{definition}

The infimum in \cref{Hodge norm} is attained by the unique harmonic representative of $u$.

\subsection{Quasiconformal maps and Hodge norm} In this section we study how the Hodge norm behaves under a quasiconformal map. All quasiconformal maps in this section are assumed to be diffeomorphisms. Let $X$ and $Y$ be compact Riemann surfaces of genus $g\geq 2$, with hyperbolic metrics $\rho_X$ and $\rho_Y$, respectively.  

\begin{lemma}\label{qc map Hodge norm diff form} If $f:X\to Y$ is a $K$-quasiconformal map and $\omega$ is a complex valued differential $1$-form on $Y$, then $$ (1/K)^{1/2}\|\omega\|_Y\leq \|f^*\omega\|_X\leq K^{1/2}\|\omega\|_Y.$$
\end{lemma}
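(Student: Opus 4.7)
The plan is to prove the inequality pointwise in adapted orthonormal frames given by the singular value decomposition of $df$, then integrate. Since $\|\omega\|_Y^2 = \int_Y |\omega|_{\rho_Y}^2\, d\rho_Y^2$ and similarly $\|f^*\omega\|_X^2 = \int_X |f^*\omega|_{\rho_X}^2\, d\rho_X^2$, the whole inequality reduces to comparing these two Lebesgue integrals after changing variables under $f$.

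First, I would fix a point $p \in X$ and choose $\rho_X$-orthonormal vectors $e_1, e_2 \in T_p X$ and $\rho_Y$-orthonormal vectors $E_1, E_2 \in T_{f(p)} Y$ such that $df_p(e_i) = \lambda_i E_i$, with $\lambda_1, \lambda_2 > 0$ the singular values of $df_p$. The $K$-quasiconformality assumption then gives $K^{-1} \le \lambda_1/\lambda_2 \le K$, and the Jacobian identity $f^* d\rho_Y^2 = \lambda_1 \lambda_2\, d\rho_X^2$ follows immediately. Writing $\omega_{f(p)} = a E_1^* + b E_2^*$ in the dual basis, the identity $(f^*\omega)_p(e_i) = \lambda_i \omega(E_i)$ gives
\[
|f^*\omega|_{\rho_X}^2 = \lambda_1^2 |a|^2 + \lambda_2^2 |b|^2, \qquad |\omega|_{\rho_Y}^2 = |a|^2 + |b|^2.
\]

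The key pointwise inequality then follows from the bounds $K^{-1} \lambda_1 \lambda_2 \le \lambda_i^2 \le K \lambda_1 \lambda_2$ for $i = 1, 2$, which together yield
\[
K^{-1} \lambda_1 \lambda_2 (|a|^2 + |b|^2) \le \lambda_1^2 |a|^2 + \lambda_2^2 |b|^2 \le K \lambda_1 \lambda_2 (|a|^2 + |b|^2).
\]
That is, $K^{-1} |\omega|_{\rho_Y}^2 \circ f \cdot \lambda_1 \lambda_2 \le |f^*\omega|_{\rho_X}^2 \le K |\omega|_{\rho_Y}^2 \circ f \cdot \lambda_1 \lambda_2$ pointwise on $X$.

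Integrating against $d\rho_X^2$ and using the change-of-variables formula $\int_X (|\omega|_{\rho_Y}^2 \circ f) \lambda_1 \lambda_2\, d\rho_X^2 = \int_X (|\omega|_{\rho_Y}^2 \circ f)\, f^* d\rho_Y^2 = \int_Y |\omega|_{\rho_Y}^2\, d\rho_Y^2 = \|\omega\|_Y^2$, I obtain $K^{-1} \|\omega\|_Y^2 \le \|f^*\omega\|_X^2 \le K \|\omega\|_Y^2$, and taking square roots gives the claim. There is no real obstacle here; the only point that requires mild care is ensuring $f$ is a diffeomorphism (explicitly assumed in this subsection) so that the pointwise SVD and the smooth change of variables are justified, together with the observation that $\omega$ need not be closed or harmonic for this argument, which only uses the definition of the Hodge norm as an $L^2$ norm.
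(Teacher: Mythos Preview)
Your proof is correct and follows essentially the same approach as the paper: compare the pointwise norms via the singular values of $df$, use that $K^{-1}\le \lambda_1/\lambda_2\le K$ to bound $\lambda_i^2$ against $\lambda_1\lambda_2$, and then integrate using change of variables. The only cosmetic difference is that you extract both inequalities from a single two-sided pointwise bound, whereas the paper proves the upper bound and then obtains the lower bound by applying the same argument to $f^{-1}$.
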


\begin{proof} We have, \begin{align*}
\|f^*\omega\|^2_X&=\int_X\langle f^*\omega,f^*\omega\rangle_{\rho_X}\rho^2_X\\&\leq \int_X \|df\|^2f^*(\langle \omega,\omega\rangle_{\rho_Y})|\det(df)|^{-1}f^*\rho^2_Y
\\&\leq K\int_Y \langle \omega,\omega\rangle_{\rho_Y}\rho^2_Y\\&=K\|\omega\|^2_Y.
\end{align*}

Applying an analogous argument to $f^{-1}$, which is also $K$-quasiconformal, we obtain $\|\omega\|_Y\leq K^{1/2}\|f^*\omega\|_X$.
\end{proof}

As a corollary, we have:

\begin{cor}\label{qc map Hodge norm cohomology} Let $f:X\to Y$ be a $K$-quasiconformal map. Let $u\in H^1(Y,\mathbb{C})$. Then $$(1/K)^{1/2}\|u\|_Y\leq \|f^*u\|_X\leq K^{1/2}\|u\|_Y.$$
\end{cor}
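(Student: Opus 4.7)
The plan is to reduce directly to \cref{qc map Hodge norm diff form} by choosing harmonic representatives. The key observation is that the Hodge norm on cohomology is defined as an infimum of Hodge norms of closed $1$-form representatives, and that pullback under the (diffeomorphism) $f$ sends closed forms to closed forms and exact forms to exact forms, hence descends to a well defined map $f^* \colon H^1(Y,\mathbb{C}) \to H^1(X,\mathbb{C})$ that agrees with pullback of any representative.

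For the upper bound, I would take $\omega \in u$ to be the unique harmonic representative on $Y$, so that $\|u\|_Y = \|\omega\|_Y$. Then $f^*\omega$ is a closed representative of $f^*u$ on $X$, and applying \cref{qc map Hodge norm diff form} gives
\[
\|f^*u\|_X \;\leq\; \|f^*\omega\|_X \;\leq\; K^{1/2}\|\omega\|_Y \;=\; K^{1/2}\|u\|_Y.
\]

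For the lower bound I would feed the same argument through $f^{-1}$, which is also $K$-quasiconformal. Applying the upper bound to the class $f^*u \in H^1(X,\mathbb{C})$ and the map $f^{-1} \colon Y \to X$ yields
\[
\|(f^{-1})^*(f^*u)\|_Y \;\leq\; K^{1/2}\|f^*u\|_X,
\]
and since $(f^{-1})^* \circ f^* = \id$ on cohomology, the left-hand side equals $\|u\|_Y$, giving $(1/K)^{1/2}\|u\|_Y \leq \|f^*u\|_X$. There is no real obstacle here; the only point to check carefully is the naturality of pullback on cohomology classes under a quasiconformal diffeomorphism, which is immediate once $f$ is smooth as in the standing convention of this section.
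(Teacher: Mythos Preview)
Your proposal is correct and follows essentially the same approach as the paper: take the harmonic representative $\omega$ of $u$ on $Y$, apply \cref{qc map Hodge norm diff form} to get the upper bound via $\|f^*u\|_X \leq \|f^*\omega\|_X$, and then run the same argument through $f^{-1}$ for the lower bound. The paper's proof is slightly terser but identical in substance.
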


\begin{proof} Let $\omega$ be the harmonic form on $Y$ representing $u$. Then by \cref{qc map Hodge norm diff form}, 
\begin{align*}\|f^*u\|_X &\leq \|f^*\omega\|_X\\&\leq K^{1/2}\|\omega\|_Y\\&= K^{1/2}\|u\|_Y.
\end{align*} 
The analogous argument applied to $f^{-1}$, gives $\|u\|_Y\leq K^{1/2}\|f^*u\|_X$. 
\end{proof}

Finally, we show that the pullback of a harmonic form under a quasiconformal map is close to its harmonic representative. 

\begin{lemma}\label{pullback of harmonic form close to harmonic representative} Let $f:X\to Y$ be a $K$-quasiconformal map. Let $\omega$ be a harmonic $1$-form on $Y$. Denote by $(f^*\omega)^h$ the unique harmonic $1$-form on $X$ cohomologous to $f^*\omega$. Then $$\|f^*\omega-(f^*\omega)^h\|_X\leq ((K^2-1)/K)^{1/2}\|\omega\|_Y.$$
\end{lemma}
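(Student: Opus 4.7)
The plan is to exploit the orthogonality in the Hodge decomposition together with the two previously established bounds (\cref{qc map Hodge norm diff form} and \cref{qc map Hodge norm cohomology}). Write
\[
f^*\omega = (f^*\omega)^h + \eta,
\]
where $\eta = f^*\omega - (f^*\omega)^h \in \mathcal{B}^1(X,\mathbb{C})$ is exact. Since the Hodge decomposition $\mathcal{Z}^1(X,\mathbb{C}) = \mathcal{H}^1(X,\mathbb{C}) \oplus \mathcal{B}^1(X,\mathbb{C})$ is orthogonal with respect to the Hodge inner product, the Pythagorean identity gives
\[
\|f^*\omega\|_X^2 = \|(f^*\omega)^h\|_X^2 + \|\eta\|_X^2,
\]
so bounding $\|\eta\|_X$ reduces to bounding $\|f^*\omega\|_X^2$ from above and $\|(f^*\omega)^h\|_X^2$ from below.

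The upper bound comes directly from \cref{qc map Hodge norm diff form}: $\|f^*\omega\|_X^2 \leq K\|\omega\|_Y^2$. For the lower bound, note that $(f^*\omega)^h$ is by definition the harmonic representative of the cohomology class $f^*[\omega] \in H^1(X,\mathbb{C})$, so $\|(f^*\omega)^h\|_X = \|f^*[\omega]\|_X$. Since $\omega$ is itself harmonic, $\|[\omega]\|_Y = \|\omega\|_Y$, and \cref{qc map Hodge norm cohomology} applied to $u = [\omega]$ yields
\[
\|(f^*\omega)^h\|_X^2 = \|f^*[\omega]\|_X^2 \geq K^{-1}\|\omega\|_Y^2.
\]

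Combining these two estimates gives
\[
\|\eta\|_X^2 = \|f^*\omega\|_X^2 - \|(f^*\omega)^h\|_X^2 \leq K\|\omega\|_Y^2 - K^{-1}\|\omega\|_Y^2 = \frac{K^2-1}{K}\|\omega\|_Y^2,
\]
which yields the desired inequality after taking square roots. No step in this argument poses a real obstacle; the only subtle point is recognizing that one must use the harmonic representative bound in both directions (upper bound on $\|f^*\omega\|_X$ treating it as a form, lower bound on $\|(f^*\omega)^h\|_X$ treating it as a cohomology class), which together with orthogonality yields the gap $K - K^{-1}$.
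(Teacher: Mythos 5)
Your proof is correct and is essentially the paper's argument: both hinge on the orthogonality of the exact form $f^*\omega - (f^*\omega)^h$ against the harmonic form $(f^*\omega)^h$, reducing to $\|f^*\omega - (f^*\omega)^h\|_X^2 = \|f^*\omega\|_X^2 - \|(f^*\omega)^h\|_X^2$, and then apply \cref{qc map Hodge norm diff form} for the upper bound and \cref{qc map Hodge norm cohomology} for the lower bound. The only cosmetic difference is that you invoke the Pythagorean identity directly, whereas the paper expands the inner product and then observes that the cross term vanishes.
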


\begin{proof} We have,
\begin{align*} \|f^*\omega-(f^*\omega)^h\|^2_X&=\langle f^*\omega-(f^*\omega)^h, f^*\omega-(f^*\omega)^h \rangle_X\\&=\langle f^*\omega,f^*\omega\rangle_X-\langle (f^*\omega)^h, (f^*\omega)^h\rangle_X-2\langle f^*\omega-(f^*\omega)^h,(f^*\omega)^h\rangle_X.
\end{align*}
By \cref{qc map Hodge norm diff form} and \cref{qc map Hodge norm cohomology}, $$\langle f^*\omega,f^*\omega\rangle_X-\langle (f^*\omega)^h, (f^*\omega)^h\rangle_X\leq ((K^2-1)/K)\|\omega\|^2_Y.$$ Exact forms are orthogonal to harmonic forms, so since $f^*\omega-(f^*\omega)^h$ is exact and $(f^*\omega)^h$ is harmonic, $$2\langle f^*\omega-(f^*\omega)^h,(f^*\omega)^h\rangle_X=0.$$ The lemma follows.
\end{proof}

\subsection{Quasisymmetric maps and quasicircles}\label{extending quasiconformal maps} In this section, we introduce some definitions related to quasiconformal maps.
 
Let $(U,d_U)$ and $(V,d_V)$ be metric spaces bi-Lipschitz to domains in $\mathbb{C}$. 

\begin{definition} An embedding $f:U\to V$ is called $K$-weakly-quasisymmetric if for all $x,y,z\in U$, $$d_V(f(x), f(y))\leq K d_V(f(x),f(z))$$ if $$d_U(x,y)\leq d_U(x,z).$$\end{definition} 

When $U$ and $V$ are oriented topological manifolds, we will assume $f$ is orientation preserving. When $U$ and $V$ are simply domains in $\mathbb{C}$, the metrics on $U$ and $V$ are taken to be the restrictions of the Euclidean metric to $U$ and $V$, respectively. Then weakly-quasisymmetric maps of the unit circle may be extended to quasiconformal maps of the unit disk:

\begin{prop}[\cite{Ahl66}, Theorem IV.B.2] \label{extending bi-Lipschitz map 1} Suppose $f':S^1\to S^1$ is $K$-weakly-quasisymmetric. Then $f'$ extends to a $C(K)$-quasiconformal map $f:\overline{\mathbb{D}}\to \overline{\mathbb{D}}$ where $C(K)$ only depends on $K$.\end{prop}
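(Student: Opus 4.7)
The plan is to reduce to the classical Beurling--Ahlfors extension on the upper half plane.

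First I would use a Cayley transform $\phi : \overline{\mathbb{D}} \to \overline{\mathbb{H}}$ sending a chosen point of $S^1$ to $\infty$, which converts $f' : S^1 \to S^1$ into an orientation preserving homeomorphism $g = \phi \circ f' \circ \phi^{-1} : \mathbb{R} \to \mathbb{R}$. Since Möbius maps preserve cross-ratios, and the weak quasisymmetry condition involves only relative comparisons of distances, the condition transfers with a controlled loss: there exists $K'$ depending only on $K$ such that $g$ is $K'$-weakly-quasisymmetric on $\mathbb{R}$ with the Euclidean metric.

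Next I would upgrade weak quasisymmetry to (strong) quasisymmetry, finding a homeomorphism $\eta : [0, \infty) \to [0, \infty)$ depending only on $K$ with
$$\frac{|g(x) - g(y)|}{|g(x) - g(z)|} \leq \eta\!\left( \frac{|x - y|}{|x - z|} \right)$$
for every triple $x, y, z \in \mathbb{R}$ with $x \neq z$. The case $|x - y| \leq |x - z|$ gives $\eta(1) \leq K'$ immediately from the definition. The case $|x - y| > |x - z|$ follows by inserting a dyadic chain of intermediate points between $z$ and $y$ and iterating the weak quasisymmetry inequality along it; this produces an explicit $\eta$ depending only on $K'$.

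I would then apply the Beurling--Ahlfors extension
$$G(x + iy) = \frac{1}{2y}\int_0^y \bigl(g(x+t) + g(x-t)\bigr)\, dt + \frac{i}{2y}\int_0^y \bigl(g(x+t) - g(x-t)\bigr)\, dt$$
for $x \in \mathbb{R}$ and $y > 0$. Differentiation under the integral sign shows $G$ is $C^1$ on $\mathbb{H}$, and boundary continuity $G|_\mathbb{R} = g$ follows by a Lebesgue differentiation style argument. The main technical step, and the expected principal obstacle, is to bound the quasiconformal dilatation: one computes the four partial derivatives and singular values of $dG$ at an arbitrary point $x + iy$, and uses the quasisymmetry estimate of the previous paragraph to control the ratios $(g(x + t) - g(x)) / (g(x) - g(x - t))$ for $t \in (0, y]$, yielding a pointwise bound on the dilatation of $G$ by a constant depending only on $K$. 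Finally, pulling back by $\phi$ produces the desired map $f = \phi^{-1} \circ G \circ \phi : \overline{\mathbb{D}} \to \overline{\mathbb{D}}$, which is $C(K)$-quasiconformal with $f|_{S^1} = f'$ because $\phi$ is conformal on the interior.
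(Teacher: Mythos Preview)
The paper does not supply its own proof of this proposition; it is stated as a citation to Ahlfors' book (Theorem IV.B.2 of \cite{Ahl66}) and used as a black box. Your sketch is precisely the classical Beurling--Ahlfors argument that the cited reference contains, so there is nothing to compare against and your approach is the expected one.

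One small caution: your transfer step via the Cayley transform is slightly glib. Weak quasisymmetry is phrased in terms of Euclidean chordal distances on $S^1$, and a M\"obius map does not preserve Euclidean distances, so ``M\"obius maps preserve cross-ratios'' does not immediately give you weak quasisymmetry of $g$ on $\mathbb{R}$. What one actually checks is that the conjugated map $g$ satisfies the Beurling--Ahlfors $M$-condition $M^{-1}\le (g(x+t)-g(x))/(g(x)-g(x-t))\le M$ with $M$ depending only on $K$; this is the special case $|x-y|=|x-z|$ of weak quasisymmetry and does transfer cleanly because equal chordal distances on $S^1$ correspond under a M\"obius map to points in a bounded cross-ratio configuration on $\mathbb{R}$. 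The $M$-condition is all that the Beurling--Ahlfors integral formula requires, so your second paragraph (upgrading to full $\eta$-quasisymmetry) is not strictly needed.
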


\begin{definition} A simple closed curve (resp. simple arc) in $\mathbb{C}$ is a $K$-quasicircle (resp. $K$-quasiarc) is the image of the unit circle (resp. unit line segment) under a $K$-quasiconformal homeomorphism of the plane. \end{definition}

Let $(U,d_U)$ be a metric space bi-Lipschitz and homeomorphic to a domain in $\mathbb{C}$. When $U$ is simply a domain in $\mathbb{C}$, $d_U$ is taken to be the restriction of the Euclidean metric to $U$. 

\begin{definition} A simple arc $\gamma$ in $U$ is a $K$-bounded-turning curve if it satisfies the following condition: for all $x,y,z\in \gamma$ lying in order, $$d_U(x,y)+d_U(y,z)\leq K d_U(x,z).$$ A simple closed curve $\gamma$ in $U$ is a $K$-bounded-turning curve if every point of $\gamma$ lies in the interior of a subarc of $\gamma$ that is a $K$-bounded-turning curve.\end{definition} 

In the rest of this section, we note some results about quasicircles, quasiarcs, bounded-turning curves and quasisymmetric maps that shall be useful in later sections.

The following statement relates quasicircles and quasiarcs to the bounded-turning property. A proof can be found in \cite[Sections 2.8.7, 2.8.8 and 2.8.9]{LV73}.

\begin{prop}\label{bounded turning curve is quasicircle}Suppose a simple closed curve (resp. simple arc) $\gamma$ in $\mathbb{C}$ is a $K$-bounded-turning curve. Then $\gamma$ is a $C(K)$-quasicircle (resp. $C(K)$-quasiarc) where $C(K)$ is a constant only depending on $K$. Similarly, suppose a simple closed curve (resp. simple arc) $\gamma$ is a $K$-quasicircle (resp. $K$-quasiarc). Then it is a $C(K)$-bounded-turning curve.\end{prop}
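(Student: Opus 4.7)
The plan is to prove both implications through the framework of $\eta$-quasisymmetry. A standard fact I would use is that a $K$-quasiconformal self-homeomorphism of $\mathbb{C}$ is $\eta_K$-quasisymmetric, where $\eta_K:[0,\infty)\to[0,\infty)$ is an increasing homeomorphism depending only on $K$.

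For the direction that a $K$-quasicircle (resp.~quasiarc) $\gamma$ is $C(K)$-bounded-turning, write $\gamma=f(S^1)$ (resp.~$\gamma=f([0,1])$) for a $K$-quasiconformal self-homeomorphism $f$ of $\mathbb{C}$. Given $x,y,z\in\gamma$ with $y$ strictly between $x$ and $z$ on the shorter subarc (or on the segment), let $a,b,c$ be their $f$-preimages. On the model curve one checks directly that $|a-b|\leq|a-c|$ and $|b-c|\leq|a-c|$. Applying $\eta_K$-quasisymmetry yields $|x-y|,|y-z|\leq\eta_K(1)|x-z|$, so $|x-y|+|y-z|\leq 2\eta_K(1)|x-z|$. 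For a Jordan curve this verifies the local condition required by the definition by taking, for each point of $\gamma$, a sufficiently small subarc around it.

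For the converse, I would construct a quasiconformal extension via conformal welding. Let $\gamma$ be a $K$-bounded-turning Jordan curve in $\mathbb{C}$ (the arc case is analogous, replacing the disk components by upper and lower half-planes). Let $\psi_+:\mathbb{D}\to\Omega_+$ and $\psi_-:\mathbb{D}^*\to\Omega_-$ be Riemann maps onto the two components of $\mathbb{C}\setminus\gamma$; both extend continuously to the boundaries by Carath\'{e}odory's theorem, and the welding map $h=\psi_+^{-1}\circ\psi_-\vert_{S^1}:S^1\to S^1$ is a homeomorphism. Using the bounded-turning condition together with standard modulus estimates for annuli separating pairs of arcs on $\gamma$, one shows $h$ is $C(K)$-weakly-quasisymmetric. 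Then \cref{extending bi-Lipschitz map 1} produces a $C(K)$-quasiconformal extension $H:\overline{\mathbb{D}}\to\overline{\mathbb{D}}$. Define $F:\mathbb{C}\to\mathbb{C}$ by $F=\psi_-$ on $\overline{\mathbb{D}^*}$ and $F=\psi_+\circ H$ on $\overline{\mathbb{D}}$; the two definitions agree on $S^1$, so $F$ is a homeomorphism of $\mathbb{C}$ carrying $S^1$ to $\gamma$ that is $C(K)$-quasiconformal on each complementary component of $S^1$.

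The main obstacle comes in two pieces. The first is showing that the bounded-turning geometry of $\gamma$ transfers to quasisymmetry of the welding map $h$; this is the geometric heart of Ahlfors' three-point characterization and requires controlling the conformal moduli of annuli in $\Omega_\pm$ that separate short subarcs of $\gamma$ from long ones, using bounded turning to bound those moduli from above and below. The second is verifying that $F$, which is only known to be quasiconformal on each side of $S^1$, is globally quasiconformal; here the Beltrami coefficient $\mu_F$ is essentially bounded by $(C(K)-1)/(C(K)+1)$ off the measure-zero set $S^1$, so it extends to an $L^\infty(\mathbb{C})$ Beltrami coefficient with the same bound, and the measurable Riemann mapping theorem identifies $F$ (up to a M\"{o}bius postcomposition) with the global quasiconformal solution, giving dilatation $C(K)$.
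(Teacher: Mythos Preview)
The paper does not give its own proof of this proposition; it simply cites \cite[Sections 2.8.7--2.8.9]{LV73}. So there is no in-paper argument to compare against, and your proposal should be judged on its own.

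Your first direction (quasicircle/quasiarc $\Rightarrow$ bounded-turning) is correct. A $K$-quasiconformal self-homeomorphism of $\mathbb{C}$ is $\eta_K$-quasisymmetric (this is exactly the content of \cite[Theorem 18.1]{Vai71}, already cited in the paper), and your chord comparison on $S^1$ or $[0,1]$ goes through as written.

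Your second direction has the right architecture---conformal welding is indeed one classical route to Ahlfors' three-point characterization---but the step you flag as ``the main obstacle'' is essentially the entire content of the theorem, and you have not actually argued it. Saying that bounded turning lets you ``bound those moduli from above and below'' is a restatement of what needs to be proved, not a proof: one must show, quantitatively in $K$ alone, that the conformal maps $\psi_\pm$ distort cross-ratios on $\partial\Omega_\pm$ in a controlled way, and this requires genuine work (in Lehto--Virtanen it occupies several pages of modulus estimates). Your removability step for $F$ across $S^1$ is fine.

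Finally, your treatment of the arc case is too quick. A simple arc in $\mathbb{C}$ does not separate the plane into two Jordan domains, so ``replacing the disk components by upper and lower half-planes'' does not make sense as stated. The standard reduction is to close up a bounded-turning arc to a bounded-turning Jordan curve (for instance by adjoining a large circular arc through the endpoints) and then invoke the closed-curve case; alternatively one can appeal directly to the arc version in \cite[Section 2.8.9]{LV73}.
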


We have the following extension result about quasisymmetric maps.

\begin{lemma}\label{extending bi-Lipschitz map 2} Let $U_1\subset \mathbb{C}$ and $U_2\subset \mathbb{C}$ be simply connected open domains such that $\partial U_1$ is a $K_1$-quasicircle and $\partial U_2$ is a $K_2$-quasicircle. Suppose $$f':\partial U_1\to \partial U_2$$ is a $K_3$-weakly-quasisymmetric homeomorphism. Then $f'$ extends to a map $$f:\overline{U_1}\to \overline{U_2}$$ that is $C(K_1,K_2,K_3)$-quasiconformal. Here, $C(K_1,K_2,K_3)$ is a constant that depends only on $K_1$, $K_2$ and $K_3$.\end{lemma}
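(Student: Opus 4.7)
\medskip

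\textbf{Proof proposal.} The plan is to reduce the problem to an extension problem on the unit disk, apply \cref{extending bi-Lipschitz map 1}, and then push the resulting map forward to $U_1$ and $U_2$. Since $\partial U_1$ is a $K_1$-quasicircle and $\partial U_2$ is a $K_2$-quasicircle, there exist orientation-preserving $K_1$- and $K_2$-quasiconformal homeomorphisms $g_1,g_2:\mathbb{C}\to\mathbb{C}$ with $g_1(S^1)=\partial U_1$ and $g_2(S^1)=\partial U_2$. By post-composing with the reflection $z\mapsto 1/\overline{z}$ if necessary (which is an isometric inversion preserving $S^1$ and preserving quasiconformal dilatation), we may arrange $g_1(\mathbb{D})=U_1$ and $g_2(\mathbb{D})=U_2$.

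The main step is to check that the composition
\[
h := g_2^{-1}\circ f'\circ g_1\big|_{S^1}:S^1\longrightarrow S^1
\]
is $C(K_1,K_2,K_3)$-weakly-quasisymmetric with respect to the Euclidean metric on $S^1$. Here I would use the classical fact that a $K$-quasiconformal self-homeomorphism of $\mathbb{C}$ (normalized so that it fixes, say, two points at bounded Euclidean distance, which we can arrange by an auxiliary Möbius normalization) is quasisymmetric on any compact set, with quasisymmetry constant depending only on $K$ and that set; applied to $g_1$ and $g_2$ this gives that $g_1|_{S^1}$ and $g_2^{-1}|_{\partial U_2}$ are $C(K_1)$- and $C(K_2)$-quasisymmetric between the Euclidean metric on $S^1$ and the Euclidean metric restricted to the quasicircle. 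Composing three weakly-quasisymmetric maps yields that $h$ is $C(K_1,K_2,K_3)$-weakly-quasisymmetric. (The fact that weakly-quasisymmetric is preserved under composition on the circle, a doubling space, is standard; on general metric spaces one passes through the stronger $\eta$-quasisymmetric notion, but on $S^1$ the two coincide with controlled constants.) Now apply \cref{extending bi-Lipschitz map 1} to $h$ to obtain a $C(K_1,K_2,K_3)$-quasiconformal extension $H:\overline{\mathbb{D}}\to\overline{\mathbb{D}}$.

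Finally, define $f:=g_2\circ H\circ g_1^{-1}:\overline{U_1}\to \overline{U_2}$. On $\partial U_1$ this restricts by construction to $g_2\circ h\circ g_1^{-1}=f'$, and on $U_1$ it is a composition of a $K_1$-quasiconformal map, a $C(K_1,K_2,K_3)$-quasiconformal map, and a $K_2$-quasiconformal map, hence $C(K_1,K_2,K_3)$-quasiconformal. The anticipated obstacle is the quasisymmetry claim for the restrictions $g_i|_{S^1}$: one must carry out the normalization carefully so that the quasisymmetry constants depend only on $K_i$ and not on the size or location of the quasicircles. For this I would normalize by a preliminary affine map so that $\partial U_1$ and $\partial U_2$ have diameter $1$ and contain $0$, use scale invariance of all the hypotheses (quasicircle constant, weak-quasisymmetry constant, quasiconformal dilatation), and then invoke the standard estimate that a $K$-quasiconformal self-map of $\mathbb{C}$ fixing two points at definite Euclidean distance is quasisymmetric on any fixed compact set with constants depending only on $K$.
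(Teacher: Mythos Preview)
Your proposal is correct and follows essentially the same route as the paper's proof: conjugate by the global quasiconformal maps realizing the quasicircles, show the resulting self-map of $S^1$ is weakly-quasisymmetric, extend over $\overline{\mathbb{D}}$ via \cref{extending bi-Lipschitz map 1}, and conjugate back. The paper uses maps $f_i:\mathbb{C}\to\mathbb{C}$ sending $\overline{U_i}$ to $\overline{\mathbb{D}}$ (the inverses of your $g_i$), but this is only notational.

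The one place where the paper is cleaner is your ``anticipated obstacle'': you worry about normalizing so that the quasisymmetry constants of $g_i|_{S^1}$ depend only on $K_i$. This concern is unnecessary. Because $g_1,g_2$ are \emph{global} $K_i$-quasiconformal self-homeomorphisms of $\mathbb{C}$, they are automatically $\eta$-quasisymmetric on all of $\mathbb{C}$ with $\eta$ depending only on $K_i$; no normalization or compactness argument is needed. The paper simply cites V\"ais\"al\"a \cite{Vai71}, Theorem~18.1 for this, and then invokes Heinonen \cite{Hei01}, Proposition~10.6 and Corollary~10.22 to handle compositions and inverses of (weakly-)quasisymmetric maps. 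Your affine-normalization detour would work, but it is not required.
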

\begin{proof} Let $$f_1:\mathbb{C}\to \mathbb{C}$$ be a $K_1$-quasiconformal mapping taking $\overline{U_1}$ to $\overline{\mathbb{D}}$. Then $f_1$ is $C(K_1)$-weakly-quasisymmetric for some $C(K_1)$ only depending on $K_1$ (see \cite[Theorem 18.1]{Vai71}). Similarly, let $$f_2:\mathbb{C}\to \mathbb{C}$$ be a $K_2$-quasiconformal mapping taking $\overline{U_2}$ to $\overline{\mathbb{D}}$. Then $f_2$ is $C(K_2)$-weakly-quasisymmetric. Then identifying $\partial \mathbb{D}\simeq S^1$, the map $$f_2\circ f' \circ f_1^{-1}:S^1\to S^1$$ is $C(K_1,K_2,K_3)$-weakly-quasisymmetric (note that compositions and inverses of weakly-quasisymmetric maps of the plane are weakly-quasisymmetric by Theorem 10.6 and Theorem 10.19 in \cite{Hei01}). By \cref{extending bi-Lipschitz map 1}, $$f_2\circ f' \circ f_1^{-1}$$ extends to a $C(K_1,K_2,K_3)$-quasiconformal map $$f'':\overline{\mathbb{D}}\to\overline{\mathbb{D}}.$$ Taking $$f=f_2^{-1}\circ f''\circ f_1$$ gives a $C(K_1,K_2,K_3)$-quasiconformal map from $\overline{U_1}$ to $\overline{U_2}$ that is an extension of $f'$.\end{proof}

\subsection{Teichm\"{u}ller space and Teichm\"{u}ller metric}

Let $g\geq 2$ and $S_g$ a smooth oriented genus $g$ surface. A marked Riemann surface is a Riemann surface $X$ along with a diffeomorphism $S_g\to X$.

Let $X$ and $Y$ be marked Riemann surfaces of genus $g$, with markings $$f_X:S_g\to X$$ and $$f_Y:S_g\to Y,$$ respectively. The marked surfaces $X$ and $Y$ are considered equivalent if there is a biholomorphism  $$f:X\to Y$$ satisfying the property that $f_Y^{-1}\circ f\circ f_X$ is isotopic to the identity.

\begin{definition}\label{Teichmuller space} Teichm\"{u}ller space $\mathcal{T}_g$ is the set of equivalence classes of marked Riemann surfaces. The Teichm\"{u}ller distance, denoted $d_T$, is given by $$d_{T}(X,Y)=\inf\left\lbrace\frac{1}{2}\log K|f:X\to Y \text{ is } K \text{-quasiconformal} \right\rbrace$$ where $f$ satisfies the property that $f_Y^{-1}\circ f\circ f_X$ is isotopic to the identity. 
\end{definition}

Teichm\"{u}ller's theorem asserts that the infimum in \cref{Teichmuller space} is attained by a homeomorphism and is unique. The unique map that attains the infimum is called a Teichm\"{u}ller map.

The space $\mathcal{T}_g$ admits a natural complex structure under which it is a complex manifold of dimension $3g-3$. The cotangent space at a point $X$ may be naturally identified with $Q(X)$, the space of holomorphic quadratic differentials on $X$. Under this identification, the Teichm\"{u}ller metric is the $L^1$-norm on $Q(X)$, given by $$\|\phi\|=\int_{X}|\phi|$$ for a holomorphic quadratic differential $\phi$ on $X$.  

Moduli space $\mathcal{M}_g$ can be obtained from $\mathcal{T}_g$ by quotienting by the action of the mapping class group $\Mod_g$. The Teichm\"{u}ller metric also descends to $\mathcal{M}_g$.

\subsection{Extremal length and Teichm\"{u}ller metric} The Teichm\"{u}ller metric has a description in terms of extremal length, as we shall explain now. Let $X$ be a Riemann surface of genus $g$. Let $\gamma$ a free homotopy class of a simple closed curve on $X$. 

\begin{definition} Given a Riemannian metric $\rho$ on $X$, the quantity $$\textstyle\length_\rho(\gamma)$$ is defined to be the infimum of lengths in the $\rho$-metric over all curves representing $\gamma$.
\end{definition}

\begin{definition}\label{extremal length} The extremal length of $\gamma$ on $X$ is defined to be $$\textstyle\Ext_X(\gamma)=\displaystyle\sup_\rho\frac{\length_\rho(\gamma)^2}{\Area_\rho(X)}$$ where the supremum is taken over all conformal metrics $\rho$ on $X$. 
\end{definition}

The next theorem describes when the supremum in \cref{extremal length} is achieved. 

\begin{theorem} [Jenkins \cite{Jen57}, Strebel \cite{Str66}, see also Theorem 3.1 in \cite{Ker80}] \label{Jenkins-Strebel} The supremum of $$\sup_\rho\frac{\length_\rho(\gamma)^2}{\Area_\rho(X)}$$ is achieved when $\rho$ is the flat metric $|\phi|$ associated to a holomorphic quadratic differential $$\phi\in Q(X).$$
\end{theorem}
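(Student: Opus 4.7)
The plan is to split the proof into the genuinely hard existence statement (the classical Jenkins--Strebel theorem, which I would cite as a black box) and a short extremality argument using only the natural coordinates of the differential and Cauchy--Schwarz.

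First, I would invoke Jenkins--Strebel existence: for the free homotopy class $\gamma$ on $X$, there is a holomorphic quadratic differential $\phi \in Q(X)$ whose horizontal foliation is periodic, with every closed horizontal trajectory freely homotopic to $\gamma$ and sweeping out an embedded annulus $A_\phi \subset X$ whose complement in $X$ is a measure-zero union of critical trajectories. In natural coordinates $z = x+iy$ on $A_\phi$, the annulus is identified with $(\mathbb{R}/\ell\mathbb{Z}) \times [0,h]$, one has $|\phi| = |dz|^2$, the horizontal trajectories are the circles $\gamma_y = \{y = \mathrm{const}\}$, and the modulus of $A_\phi$ equals $m = h/\ell$. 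Producing such $\phi$ is the substantive content of Jenkins's and Strebel's work: one minimizes an appropriate functional on the finite-dimensional space $Q(X)$ and analyzes the minimizer to show its horizontal foliation is periodic and fills the prescribed homotopy class. I would quote this from \cite{Jen57} and \cite{Str66}.

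Given $\phi$, extremality is a direct Cauchy--Schwarz calculation. Let $\rho = \lambda(z)\,|dz|$ be an arbitrary conformal metric on $X$ of finite area, read off in the natural coordinates on $A_\phi$. Each horizontal trajectory $\gamma_y$ is a representative of the class $\gamma$, so
\[
\length_\rho(\gamma) \;\leq\; \int_0^\ell \lambda(x,y)\,dx \;\leq\; \sqrt{\ell}\,\left(\int_0^\ell \lambda(x,y)^2\,dx\right)^{1/2}.
\]
Squaring and integrating over $y \in [0,h]$ yields
\[
h \cdot \length_\rho(\gamma)^2 \;\leq\; \ell \cdot \Area_\rho(A_\phi) \;\leq\; \ell \cdot \Area_\rho(X),
\]
hence $\length_\rho(\gamma)^2/\Area_\rho(X) \leq \ell/h = 1/m$. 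For the specific choice $\rho = |\phi|$ one has $\lambda \equiv 1$ on $A_\phi$, every $\gamma_y$ has $|\phi|$-length exactly $\ell$ (so $\length_{|\phi|}(\gamma) = \ell$, the infimum being realized on the core circles of the flat cylinder), and $\Area_{|\phi|}(X) = h\ell$. Each inequality above becomes an equality, so $|\phi|$ attains the supremum.

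The main obstacle is the existence step: producing a Jenkins--Strebel differential with prescribed cylinder class is not elementary, as it requires compactness together with a delicate analysis of horizontal trajectories to rule out non-periodic foliations and to ensure that the closed trajectories fill a single annulus in the homotopy class of $\gamma$, rather than in some other class. The extremality half, by contrast, is essentially a one-line application of Cauchy--Schwarz once the cylinder structure is in hand. I would therefore not reprove existence and would simply cite \cite{Jen57} and \cite{Str66}.
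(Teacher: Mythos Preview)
Your proposal is correct, and in fact more detailed than what the paper does: the paper simply cites this theorem from Jenkins \cite{Jen57}, Strebel \cite{Str66}, and Kerckhoff \cite{Ker80} without any proof, treating it entirely as a black box. Your decision to quote the existence of the Jenkins--Strebel differential and then supply the standard Cauchy--Schwarz extremality argument is the right way to flesh this out, and the calculation you give is the classical one.

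One minor remark: the paper's follow-up remark notes that for its purposes only the fact that the supremum is achieved by some $|\phi|$ with $\phi \in Q(X)$ is needed, not the periodic-foliation structure. Your argument, however, genuinely uses the cylinder decomposition to run the Cauchy--Schwarz step, so you are proving slightly more than the paper requires---which is fine, since that is precisely the content of the Jenkins--Strebel theorem as stated.
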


The Teichm\"{u}ller distance has the following description due to Kerckhoff \cite[Theorem 4]{Ker80}. 

\begin{theorem}\label{Teichmuller metric via extremal length} For $X,Y\in \mathcal{T}_g$, $$d_T(X,Y)=\frac{1}{2}\log \sup_{\gamma}\frac{\Ext_Y(\gamma)}{\Ext_X(\gamma)},$$ where the supremum is taken over all free homotopy classes of a simple closed curve on $X$. 
\end{theorem}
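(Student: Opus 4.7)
The plan is to prove both inequalities separately. The direction $d_T(X,Y)\geq \frac{1}{2}\log\sup_\gamma \Ext_Y(\gamma)/\Ext_X(\gamma)$ will follow from quasi-invariance of extremal length under quasiconformal maps; the reverse inequality is the deep part and will require Teichm\"{u}ller's theorem together with approximation by Jenkins-Strebel differentials provided by \cref{Jenkins-Strebel}.

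For the easy direction, I would take a $K$-quasiconformal homeomorphism $f:X\to Y$ of marked surfaces and any simple closed curve $\gamma$ on $X$. Given a conformal metric $\sigma$ on $Y$, define a conformal metric $\rho$ on $X$ by $\rho=(\sigma\circ f)\cdot\|df\|$, where $\|df\|$ is the pointwise operator norm of $df$. Then $\length_\rho(\gamma')\geq \length_\sigma(f(\gamma'))$ for every representative $\gamma'$ of the free homotopy class of $\gamma$, while the pointwise bound $\|df\|^2\leq K\cdot |\det df|$ coming from $K$-quasiconformality together with the change of variables formula yields $\Area_\rho(X)\leq K\cdot \Area_\sigma(Y)$. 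Combining,
$$\frac{\length_\sigma(f(\gamma))^2}{\Area_\sigma(Y)}\;\leq\; K\cdot\frac{\length_\rho(\gamma)^2}{\Area_\rho(X)}\;\leq\; K\cdot\Ext_X(\gamma).$$
Taking the supremum over $\sigma$ gives $\Ext_Y(f(\gamma))\leq K\cdot\Ext_X(\gamma)$; infimizing over $K$ using Teichm\"{u}ller's theorem proves this inequality.

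For the hard direction, set $K=e^{2d_T(X,Y)}$ and let $f:X\to Y$ be the Teichm\"{u}ller map, with initial holomorphic quadratic differential $\phi$ on $X$. The key fact is that $f$ acts as an affine stretch by $K^{1/2}$ in the horizontal direction and $K^{-1/2}$ in the vertical direction of the flat structure $|\phi|$, so that the extremal length of the vertical measured foliation of $\phi$ is multiplied by exactly $K$ when transported to $Y$, while its extremal length on $X$ is already realized by $|\phi|$. I would approximate $\phi$ in $L^1$-norm by Jenkins-Strebel differentials $\phi_n$ (a classical density result of Strebel and Douady-Hubbard). For each $\phi_n$, \cref{Jenkins-Strebel} gives that $|\phi_n|$ realizes the extremal length problem on $X$ for a weighted multicurve built from the core curves $\gamma_{n,1},\ldots,\gamma_{n,k_n}$ of its horizontal cylinders, and the affine stretch associated to $\phi_n$ (which is close to $f$ because $\phi_n\to\phi$) multiplies this extremal length by essentially $K$ on $Y$. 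A weighted multicurve always admits a single component $\gamma_{n,i_n}$ for which a comparable ratio holds, producing simple closed curves with $\Ext_Y(\gamma_{n,i_n})/\Ext_X(\gamma_{n,i_n})\to K$ and forcing the reverse inequality.

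The main obstacle will be the approximation step: one must both establish (or cite) the $L^1$-density of Jenkins-Strebel differentials in $Q(X)$ and then show that the extremal length ratio is sufficiently continuous under the approximation $\phi_n\to\phi$ to transfer the exact identity $\Ext_Y(\cdot)/\Ext_X(\cdot)=K$ from the affine stretch of $\phi_n$ to the true Teichm\"{u}ller map of $\phi$ while losing only a multiplicative factor tending to $1$. The cleanest route is to work throughout with measured foliations — on which extremal length extends as a continuous functional — and to extract a single simple closed curve only at the very last step using the homogeneity and subadditivity of $\sqrt{\Ext}$ on weighted multicurves.
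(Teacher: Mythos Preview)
The paper does not give its own proof of this theorem; it is stated with attribution to Kerckhoff (\cite{Ker80}, Theorem 4) and used as a black box. So there is nothing to compare your argument against in the paper itself.

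Your outline is essentially Kerckhoff's original proof. The easy direction via the pointwise bound $\|df\|^2\leq K|\det df|$ is exactly standard. For the hard direction, the route through the Teichm\"{u}ller map, density of Jenkins--Strebel differentials in $Q(X)$, and continuity of extremal length on measured foliations is the classical argument. One point to be careful about: your statement that ``a weighted multicurve always admits a single component $\gamma_{n,i_n}$ for which a comparable ratio holds'' is the place where the argument is most delicate, and as written is not quite right --- the ratio $\Ext_Y/\Ext_X$ for a weighted multicurve need not be dominated by the ratio for any single component in general. The correct move (and what Kerckhoff does) is to pass to the extension of extremal length to measured foliations, prove the identity $\Ext_Y(\mathcal{F})/\Ext_X(\mathcal{F})=K$ for the vertical foliation $\mathcal{F}$ of the Teichm\"{u}ller differential, and then use density of weighted simple closed curves in the space of measured foliations together with continuity of extremal length there to produce simple closed curves $\gamma_n$ with ratio tending to $K$. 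You essentially say this in your final paragraph, so the plan is sound, but the single-component extraction step as phrased in the middle should be replaced by the measured-foliation argument you defer to at the end.
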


\begin{remark} Since $X$ and $Y$ are marked surfaces, $\gamma$ (which was initially defined as a free homotopy class on $X$) is also automatically a free homotopy class on $Y$.
\end{remark}

\subsection{Bi-Lipschitz metric}\label{bi-Lipschitz metric} Let $g\geq 2$. 
\begin{definition} The bi-Lipschitz metric $d_L$ on $\mathcal{M}_g$ is $$d_L(X,Y)=\inf\left\{\log K|f:X\to Y \text{ is a K-bi-Lipschitz diffeomorphism}\right\}.$$ Here, the bi-Lipschitz constant is measured with respect to the unique hyperbolic metrics on $X$ and $Y$.
\end{definition} 

The following result establishes a comparison between $d_T$ and $d_L$.

\begin{prop} \label{qc to lipschitz} There exists a universal constant $C$ such that for all $g$, $$d_T\leq d_L\leq Cd_T$$ on $\mathcal{M}_g$. 
\end{prop}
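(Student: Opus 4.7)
The plan is to prove the two inequalities separately.

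The inequality $d_T \leq d_L$ is elementary. If $f : X \to Y$ is a $K$-bi-Lipschitz diffeomorphism of marked surfaces with respect to the hyperbolic metrics, then at every point the singular values of $df$ lie in $[1/K, K]$, so their ratio is at most $K^2$ and $f$ is $K^2$-quasiconformal. Hence $d_T(X,Y) \leq \tfrac{1}{2}\log K^2 = \log K$, and taking the infimum over all such $f$ gives $d_T \leq d_L$.

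For the harder direction $d_L \leq C d_T$, I would use a lifting argument combined with a visual extension. Writing $X = \mathbb{H}^2 / \Gamma_X$ and $Y = \mathbb{H}^2/\Gamma_Y$, any $K$-quasiconformal diffeomorphism of marked surfaces $f : X \to Y$ lifts to a $K$-quasiconformal homeomorphism $\tilde f : \mathbb{H}^2 \to \mathbb{H}^2$ which is equivariant with respect to $\Gamma_X$ and $\Gamma_Y$. This $\tilde f$ extends continuously to a boundary homeomorphism $\phi : \partial \mathbb{H}^2 \to \partial \mathbb{H}^2$ that is quasisymmetric with constant depending only on $K$. Applying the visual extension (the conformally natural construction used by McMullen) to $\phi$ produces a new map $g : \mathbb{H}^2 \to \mathbb{H}^2$ with the same boundary values $\phi$. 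Conformal naturality forces $g$ to be equivariant with respect to $\Gamma_X$ and $\Gamma_Y$, so it descends to a diffeomorphism $\bar g : X \to Y$ with the same boundary values as $\tilde f$, and therefore in the same homotopy class as $f$; in particular $\bar g$ is a map of marked surfaces.

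The key analytic input is that the visual extension of a quasisymmetric circle homeomorphism with dilatation depending only on $K$ is hyperbolically $L$-bi-Lipschitz with $\log L \leq c \log K$ for a universal constant $c$. This estimate is genus-independent because it takes place entirely in $\mathbb{H}^2$ and makes no reference to the Fuchsian groups. Granting this, $\bar g$ is $L$-bi-Lipschitz on $X \to Y$, so $d_L(X,Y) \leq \log L \leq c \log K$; taking the infimum over $K$ yields $d_L(X,Y) \leq 2c \cdot d_T(X,Y)$ as desired. The main obstacle is establishing this polynomial-in-$K$ bi-Lipschitz estimate for the visual extension uniformly in the boundary data; this is precisely the content of McMullen's analysis, and its application here was the simplification pointed out in the acknowledgments.
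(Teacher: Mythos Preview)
Your easy direction is correct and matches the paper. For the hard direction, both you and the paper invoke McMullen's visual extension, but at different levels, and the difference matters.

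The paper works \emph{infinitesimally}. A tangent vector to $\mathcal{T}_g$ at $X$ is represented by a quasiconformal vector field $v$ on $X$; lifting to $\mathbb{D}$, restricting to $S^1$, and applying McMullen's visual-average operator $\av : V(S^1) \to V(\mathbb{D})$ produces another equivariant vector field. McMullen's appendices show this new field is area-preserving and has strain $\|S\,\av(v')\|_\infty \leq C\|Sv\|_\infty$; area-preserving plus bounded strain yields a bounded infinitesimal bi-Lipschitz constant. Thus the two Finsler norms on $T\mathcal{T}_g$ satisfy $\|\cdot\|_L \leq C\|\cdot\|_T$ pointwise, and integrating along a Teichm\"uller geodesic gives $d_L \leq C d_T$.

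Your proposal instead applies the visual extension once, to the boundary \emph{homeomorphism} $\phi$ of a single $K$-quasiconformal map, and asserts the resulting map is $L$-bi-Lipschitz with $\log L \leq c\log K$. That map-level polynomial bound is not what McMullen's appendices establish: those results concern the visual extension of \emph{vector fields}, not homeomorphisms. The Douady--Earle/visual extension of a homeomorphism is not obtained by integrating the vector-field operator along a path, and the standard statements give only that the extension of a $K$-quasisymmetric boundary map is $K^*$-quasiconformal for some $K^*$ depending on $K$, without the polynomial control $K^* \leq K^c$ (let alone the bi-Lipschitz version) that you need. To justify your ``key analytic input'' you would have to move along the Teichm\"uller geodesic, control the infinitesimal generators via McMullen, and integrate --- which is exactly the paper's argument. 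So the gap is genuine: the result you cite from McMullen is the infinitesimal one, and the step from it to the global map-level estimate you invoke is precisely the work the paper does.
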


\begin{proof} The inequality $d_T\leq d_L$ follows from the fact that any $K$-bi-Lipschitz map is automatically $K^2$-quasiconformal. 

In the other direction, we must show that $d_L\leq Cd_T$. The following proof is due to Maxime Fortier Bourque. Fix $K_0>0$. Let $X,Y\in \mathcal{M}_g$, such that $$d_T(X,Y)=\log K.$$ Cut the Teichm\"{u}ller geodesic from $X$ to $Y$ into around $$n=\left\lceil \frac{\log K}{\log K_0}\right\rceil$$ segments of length around $$\frac{\log K}{n}.$$ Let $Z_0=X,Z_1,...,Z_n=Y$ be the endpoints of these segments. Since $$d_T(Z_i,Z_{i+1})\leq \log K_0,$$ there is a quasiconformal map $$f_i:Z_i\to Z_{i+1}$$ with dilatation at most $K_0^2$. Using the Douady-Earle extension \cite[Theorem 5.2]{DE86}, $f_i$ may be replaced with a map $g_i$ that is $C(K_0^2)$-bi-Lipschitz between the hyperbolic metrics on $Z_i$ and $Z_{i+1}$. Take $$g=g_{n-1}\circ...\circ g_0:X\to Y.$$ Then $g$ is $C(K_0^2)^n$-bi-Lipschitz between the hyperbolic metrics on $X$ and $Y$. Therefore, 
\begin{align*}d_L(X,Y)&\leq C\frac{\log C(K_0^2)}{\log K_0}\log K\\&\leq C\log K
\end{align*} for a universal constant $C$.
\end{proof}

\subsection{Kobayashi metric} Let $M$ be a complex manifold of arbitrary dimension. Roughly speaking, the Kobayashi pseudometric is the largest pseudometric on $M$ such that all holomorphic maps into $M$ are distance decreasing. See \cite[Section 4.1]{Kob05} for a rigorous construction. In general, the Kobayashi pseudometric may not be a metric (i.e. may not separate points). However, we shall see that in all cases of $M$ relevant to us, the Kobayashi metric exists. In particular, the Kobayashi metric exists when $M$ is a bounded domain in $\mathbb{C}^n$ \cite[Corollary 4.4.6]{Kob05}. The Kobayashi metric satisfies the following important property.

\begin{prop}[\cite{Kob05}, Proposition 4.1.1] \label{Kobayashi comparison} Let $M$ and $N$ be complex manifolds. Suppose they admit Kobayashi metrics $d_M$ and $d_N$, respectively. Let $f:M\to N$ be a holomorphic map. Then for all $x,y\in M$, $$d_M(x,y)\geq d_N(f(x),f(y)).$$ If $f$ is a biholomorphism, then equality holds. 
\end{prop}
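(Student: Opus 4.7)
The plan is to prove this directly from the construction of the Kobayashi pseudometric as an infimum over chains of holomorphic disks. Recall that $d_M(x,y)$ is defined as the infimum, over all finite chains of holomorphic maps $\phi_1, \ldots, \phi_k : \mathbb{D} \to M$ together with points $a_i, b_i \in \mathbb{D}$ satisfying $\phi_1(a_1) = x$, $\phi_k(b_k) = y$, and $\phi_i(b_i) = \phi_{i+1}(a_{i+1})$, of the sum $\sum_{i=1}^k \rho_{\mathbb{D}}(a_i, b_i)$, where $\rho_{\mathbb{D}}$ is the Poincar\'e metric on the unit disk. (This is the content of Kobayashi's construction; it is the starting point of the existence assumption in the statement.)

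To prove the inequality $d_M(x,y) \geq d_N(f(x), f(y))$, I would fix $\epsilon > 0$ and choose a chain $(\phi_i, a_i, b_i)_{i=1}^k$ in $M$ realizing $\sum_i \rho_{\mathbb{D}}(a_i, b_i) \leq d_M(x,y) + \epsilon$. Post-composing, the maps $f \circ \phi_i : \mathbb{D} \to N$ are again holomorphic, with $(f \circ \phi_1)(a_1) = f(x)$, $(f \circ \phi_k)(b_k) = f(y)$, and the chain condition $(f \circ \phi_i)(b_i) = (f \circ \phi_{i+1})(a_{i+1})$ is automatic. Hence $(f \circ \phi_i, a_i, b_i)_{i=1}^k$ is an admissible chain in $N$ from $f(x)$ to $f(y)$, so by definition
\[
d_N(f(x), f(y)) \leq \sum_{i=1}^k \rho_{\mathbb{D}}(a_i, b_i) \leq d_M(x,y) + \epsilon.
\]
Letting $\epsilon \to 0$ gives the desired inequality.

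For the biholomorphism case, apply the inequality just proved to $f$ and then to $f^{-1}$ (also holomorphic) to get the two inequalities $d_N(f(x), f(y)) \leq d_M(x,y)$ and $d_M(x,y) = d_M(f^{-1}(f(x)), f^{-1}(f(y))) \leq d_N(f(x), f(y))$, yielding equality.

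There is no real obstacle: the only subtlety is that the argument proves the statement for the Kobayashi \emph{pseudo}metric in full generality, and the separation hypothesis in the proposition is used only to ensure that both sides of the inequality are genuine distances rather than pseudodistances; but the inequality itself is insensitive to whether the pseudometrics separate points.
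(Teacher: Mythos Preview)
Your argument is correct and is the standard proof of the distance-decreasing property of the Kobayashi pseudometric. Note, however, that the paper does not give its own proof of this proposition: it is stated with a citation to \cite{Kob05}, Proposition 4.1.1, and used as a black box. Your write-up is essentially the argument one finds in Kobayashi's text, so there is nothing to compare against in the paper itself.
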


We also have the following theorem due to Royden \cite[Theorem 3]{Roy71}.

\begin{theorem} \label{Royden's theorem} On $\mathcal{T}_g$, the Kobayashi metric exists and is the Teichm\"{u}ller metric $d_T$.
\end{theorem}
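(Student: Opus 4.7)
The plan is to prove both assertions simultaneously by establishing that the infinitesimal Finsler norms agree, $F_K(X,v)=F_T(X,v)$ at every $X\in \mathcal{T}_g$ and $v\in T_X\mathcal{T}_g$; integrating along smooth paths then yields $d_K=d_T$ as pseudometrics, and since $d_T$ is a genuine metric, so is $d_K$. (Alternatively, existence of the Kobayashi metric can be quoted from the Bers embedding of $\mathcal{T}_g$ as a bounded domain in $\mathbb{C}^{3g-3}$ together with the paper's remark on bounded domains.) We use the identification of $T_X\mathcal{T}_g$ with $L^\infty$ Beltrami differentials modulo infinitesimally trivial ones, for which the quotient norm is $F_T(X,v)=\inf\{\|\mu\|_\infty : \mu \text{ represents } v\}$. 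Dually, by Hahn--Banach,
\[
F_T(X,v)=\sup\left\{\left|\int_X \mu_v\,\phi\right| : \phi\in Q(X),\ \|\phi\|_{L^1}\leq 1\right\},
\]
which matches the $L^1$ description of the Teichm\"uller norm on $Q(X)$ given earlier in the paper.

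For $F_K\leq F_T$: given $v$ with $F_T(X,v)=1$, pick a Beltrami representative $\mu$ with $\|\mu\|_\infty=1$. By the measurable Riemann mapping theorem, the assignment $\Phi(t)=[X,t\mu]$ defines a holomorphic map $\Phi:\mathbb{D}\to \mathcal{T}_g$ with $\Phi(0)=X$ and $\Phi'(0)=v$. Applying \cref{Kobayashi comparison} to $\Phi$ gives $F_K(X,v)\leq 1$. This is the easy Teichm\"uller-disk direction and is essentially formal.

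For the reverse inequality $F_T\leq F_K$: suppose $f:\mathbb{D}\to \mathcal{T}_g$ is holomorphic with $f(0)=X$ and $f'(0)=rv$ for some $r>0$; we want $F_T(X,v)\leq 1/r$. Lift $f$ to a holomorphic family of Beltrami differentials $\{\mu_t\}_{t\in \mathbb{D}}$ on $X$ with $\mu_0=0$, $[X,\mu_t]=f(t)$, and $\|\mu_t\|_\infty<1$, via the Ahlfors--Bers / Bers section theory. Then for any $\phi\in Q(X)$ with $\|\phi\|_{L^1}\leq 1$, the function $h(t)=\int_X \mu_t\,\phi$ is holomorphic in $t$, vanishes at $0$, and satisfies
\[
|h(t)|\leq \|\mu_t\|_\infty\,\|\phi\|_{L^1}\leq 1.
\]
The Schwarz lemma yields $|h'(0)|\leq 1$, i.e.\ $|\int_X \dot\mu_0\,\phi|\leq 1$. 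Since $\dot\mu_0$ is a Beltrami representative of $f'(0)=rv$, taking the supremum over $\phi$ gives $r F_T(X,v)\leq 1$, as required.

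The main obstacle is the lifting step: given a holomorphic map $f:\mathbb{D}\to \mathcal{T}_g$, produce a holomorphic family of Beltrami coefficients $\{\mu_t\}$ realizing $f$ such that $t\mapsto \int_X\mu_t\,\phi$ is honestly holomorphic and $\|\mu_t\|_\infty<1$. This is the technical heart of Royden's theorem and relies on holomorphic local sections of the Bers projection (for instance the Ahlfors--Weill section near the origin, or the Douady--Earle extension globally). Once the lift is in hand, the Schwarz-lemma calculation and Hahn--Banach pairing are immediate. The only remaining care is to verify that the holomorphic dependence of $\mu_t$ on $t$ in $L^\infty$ implies holomorphic dependence of the pairing $\int_X\mu_t\,\phi$, which follows from dominated convergence applied to difference quotients.
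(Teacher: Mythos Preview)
The paper does not prove this theorem; it is stated with a citation to Royden's original paper (\cite{Roy71}, Theorem~3) and used as a black box throughout Section~4. Your sketch is essentially the classical argument: the easy inequality $F_K\le F_T$ via Teichm\"uller disks, and the hard inequality $F_T\le F_K$ by lifting an arbitrary holomorphic disk $f:\mathbb{D}\to\mathcal{T}_g$ to a holomorphic family of Beltrami differentials and applying the Schwarz lemma to the pairing with unit-norm quadratic differentials, combined with the $L^1$--$L^\infty$ duality between $Q(X)$ and the tangent space. You have correctly isolated the only genuinely nontrivial step, namely the existence of a holomorphic section of the Bers projection over the disk, and you name the standard tools (Ahlfors--Weill, Douady--Earle) that supply it. As written this is a sound outline rather than a complete proof: the lifting is asserted, not carried out, and the passage from the infinitesimal equality $F_K=F_T$ to the integrated equality $d_K=d_T$ requires knowing that the Kobayashi pseudodistance coincides with the integral of its infinitesimal form (which holds here since $\mathcal{T}_g$ is taut, e.g.\ by the Bers embedding into a bounded domain). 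Since the paper only intends to quote the result, your level of detail already exceeds what the paper provides.
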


\subsection{Bers embedding of $\mathcal{T}_g$}

Let $g\geq 2$ and fix $X\in \mathcal{T}_g$. Recall from \cref{conformal mirror} that $X^{-1}$ denotes the conformal mirror of $X$. Let $Q^{\infty}(X^{-1})$ denote the space of quadratic differentials on $X^{-1}$, equipped with the norm $$\|\phi\|_\infty=\sup_{x\in X^{-1}}\frac{|\phi(x)|}{|\rho_{X^{-1}}|^2}.$$ (Here, $\rho_{X^{-1}}$ is the hyperbolic metric on $X^{-1}$.) The Bers embedding \cite[Section 5.4]{Gar87} is a holomorphic embedding of $\mathcal{T}_g$ into $Q^{\infty}(X^{-1})$ sending $X\in \mathcal{T}_g$ to the origin in $Q^{\infty}(X^{-1})$. 

\begin{theorem}[\cite{Gar87}, Theorem 5.4.1] \label{Bers embedding} The Bers embedding $$\beta_X:\mathcal{T}_g\to Q^{\infty}(X^{-1})$$ satisfies $$B_{\infty}(0,1/2)\subset \beta_X(\mathcal{T}_g)\subset B_{\infty}(0,3/2)$$ where $B_{\infty}(0,r)$ denotes the norm ball of radius $r$ in the space $Q^{\infty}(X^{-1})$.
\end{theorem}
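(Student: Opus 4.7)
The plan is to lift the problem to the universal cover and apply the classical Nehari and Ahlfors--Weill theorems. Identify $\widetilde{X^{-1}}$ with the unit disk $\mathbb{D}$ via a uniformization, and let $\Gamma \subset \mathrm{Aut}(\mathbb{D})$ be the Fuchsian group with $\mathbb{D}/\Gamma \simeq X^{-1}$. The hyperbolic metric $\rho_{X^{-1}}$ lifts to $2/(1-|z|^2)$, and the space $Q^\infty(X^{-1})$ is identified with $\Gamma$-equivariant holomorphic quadratic differentials $\tilde\phi$ on $\mathbb{D}$ with finite norm $\sup_{z\in\mathbb{D}}|\tilde\phi(z)|(1-|z|^2)^2/4$. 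Recall that the Bers embedding is defined as follows: a point $Y \in \mathcal{T}_g$ is represented by a Beltrami differential $\mu$ on $X$; extend $\mu$ by zero on $X^{-1}$ to get a Beltrami differential on the sphere obtained by doubling, lift to $\widehat{\mathbb{C}}$ via the Kleinian uniformization, and solve the Beltrami equation to get a quasiconformal homeomorphism $w^\mu : \widehat{\mathbb{C}} \to \widehat{\mathbb{C}}$ that is conformal on $\mathbb{D}$; then $\beta_X(Y)$ is the image of the Schwarzian derivative $S(w^\mu|_{\mathbb{D}})$ under the descent to $X^{-1}$.

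For the upper inclusion $\beta_X(\mathcal{T}_g) \subset B_\infty(0,3/2)$, observe that $w^\mu|_{\mathbb{D}}$ is a univalent holomorphic map from $\mathbb{D}$ into $\widehat{\mathbb{C}}$. Apply the Kraus--Nehari theorem: any univalent holomorphic $g: \mathbb{D} \to \widehat{\mathbb{C}}$ satisfies
\[
|S(g)(z)|(1-|z|^2)^2 \leq 6
\]
for all $z \in \mathbb{D}$. Dividing by $4$ (the normalization of the hyperbolic metric squared) gives $\|\beta_X(Y)\|_\infty \leq 3/2$. This inclusion works for any $Y$, so $\beta_X(\mathcal{T}_g)$ lies entirely in $B_\infty(0, 3/2)$.

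For the lower inclusion $B_\infty(0,1/2) \subset \beta_X(\mathcal{T}_g)$, use the Ahlfors--Weill section. Given $\phi \in Q^\infty(X^{-1})$ with $\|\phi\|_\infty \leq 1/2$, let $\tilde\phi$ be its lift to $\mathbb{D}$, so $|\tilde\phi(z)|(1-|z|^2)^2 \leq 2$. Define a $\Gamma$-equivariant Beltrami differential on $\mathbb{C}\setminus\overline{\mathbb{D}}$ by the Ahlfors--Weill formula
\[
\mu(z) = -\tfrac{1}{2}(1-|z^*|^2)^2\,\overline{\tilde\phi(z^*)}\,\overline{(z^*)'^{\,2}}, \qquad z^* = 1/\bar z,
\]
extended by zero on $\mathbb{D}$; the hypothesis $\|\phi\|_\infty \leq 1/2$ ensures $\|\mu\|_\infty \leq 1$, and the equivariance under the action of $\Gamma$ (extended to the exterior disk via reflection) follows from the transformation law for quadratic differentials. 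Solving the Beltrami equation yields a quasiconformal map $w^\mu$ on $\widehat{\mathbb{C}}$ that is conformal on $\mathbb{D}$, and the defining identity of the Ahlfors--Weill extension gives $S(w^\mu|_\mathbb{D}) = \tilde\phi$. The conjugated group $w^\mu \Gamma (w^\mu)^{-1}$ is a quasi-Fuchsian group whose quotient represents a point $Y \in \mathcal{T}_g$ with $\beta_X(Y) = \phi$.

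The main obstacle is verifying that the Ahlfors--Weill construction genuinely produces a point in the image of the Bers embedding, i.e., that the resulting quasiconformal map is automorphic with respect to the correct group and that the induced complex structure gives a marked surface equivalent to a point of $\mathcal{T}_g$. This requires checking the $\Gamma$-equivariance of $\mu$, the $\Gamma$-invariance of $w^\mu$ up to Möbius transformations on the $\mathbb{D}$ side, and the functoriality of the Schwarzian recovery. Both the Nehari bound and the Ahlfors--Weill section are classical facts (see, e.g., \cite{Gar87}, Chapter 5), and the harder direction is really the lower inclusion, where the explicit construction and the delicate sharp constant $1/2$ must be handled carefully.
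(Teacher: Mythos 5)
The paper does not prove this statement; it cites Gardiner's Theorem 5.4.1 and, in the remark that follows, points to McMullen's Theorem 2.2 for the normalization under which the constants are $1/2$ and $3/2$. Your argument is the standard classical proof underlying those references, and it is correct in substance: the Kraus--Nehari bound $|S(g)(z)|(1-|z|^2)^2\leq 6$ gives the outer inclusion, the Ahlfors--Weill section gives the inner one, and with the curvature $-1$ normalization $\rho_{\mathbb{D}}=2/(1-|z|^2)$ (the one the paper and McMullen use) both classical constants get divided by $\rho_{\mathbb{D}}^2(1-|z|^2)^2=4$, giving $3/2$ and $1/2$ exactly as you compute. This is also why the paper's remark flags the normalization issue: Gardiner works with a differently scaled metric and so reports different numbers.

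Two small slips, neither affecting the substance. In your Ahlfors--Weill formula, the complex conjugate on $\tilde\phi$ is spurious and the reflection Jacobian appears to be double-counted (the weight $(1-|z^*|^2)^2$ already carries the $|z|^{-4}$ factor relative to $(|z|^2-1)^2$): the disk-model formula for $|z|>1$ is $\mu(z)=-\tfrac{1}{2}\,\tilde\phi(1/\bar z)\,(|z|^2-1)^2/\bar z^4$, or in the half-plane model $\mu(\bar z)=-2y^2\tilde\phi(z)$, with no conjugation of $\tilde\phi$. Since you explicitly defer to the references for this construction, this is a notational slip rather than a gap; the $\|\mu\|_\infty\leq 1$ estimate depends only on moduli, and the $\Gamma$-equivariance follows from the transformation law for quadratic differentials together with the fact that $z\mapsto 1/\bar z$ commutes with $\Gamma$. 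Also, ``the sphere obtained by doubling'' should read ``the Riemann sphere $\widehat{\mathbb{C}}$'': the Fuchsian group $\Gamma$ acts on $\widehat{\mathbb{C}}$ with ordinary set $\mathbb{D}\cup\mathbb{D}^*$ and quotient $X^{-1}\sqcup X$, and the lifted Beltrami differential lives there; the double $X^d$ itself is a genus $2g$ surface, not a sphere. With these corrections your sketch is the proof the paper is invoking.
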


\begin{remark} The constants in \cite[Theorem 4.5.1]{Gar87} are different because the normalization of the Teichm\"{u}ller metric is different. See \cite[Theorem 2.2]{McM00} for normalization and constants that agree with ours.
\end{remark}

\subsection{Asymptotic geometry of Teichm\"{u}ller space} 

In this section, we use the Bers embedding to obtain bounds on the geometry of $d_T$ on $\mathcal{T}_g$. This technique has been used to prove \cite[Theorem 8.2]{McM00} and \cite[Theorem 1.5]{FKM13}. Our bounds are variations of the bounds in the latter.

Fix $X\in \mathcal{T}_g$. Denote by $Q=B_{\infty}(0,1)$ the open unit norm ball in $Q^{\infty}(X^{-1})$. Since $Q^{\infty}(X^{-1})$ is $3g-3$-dimensional, $Q$ is an open $3g-3$ complex manifold. Let $d_Q$ denote the Kobayashi metric on $Q$. Putting together \cref{Kobayashi comparison}, \cref{Royden's theorem} and \cref{Bers embedding}, we have the following two metric comparison results. 

\begin{lemma}\label{3/2} Let $Y,Z\in \mathcal{T}_g$. Then $$\|\beta_X(Y)\|_\infty\leq 3/2$$ and $$\|\beta_X(Z)\|_\infty\leq 3/2.$$ Moreover, $$d_Q((2/3)\beta_X(Y),(2/3)\beta_X(Z))\leq d_T(Y,X).$$
\end{lemma}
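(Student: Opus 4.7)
The proof plan is to bootstrap directly off of the three ingredients assembled just before the lemma: the Bers embedding bound (Theorem \ref{Bers embedding}), Royden's identification of the Teichm\"uller metric with the Kobayashi metric (Theorem \ref{Royden's theorem}), and the distance-decreasing property of holomorphic maps for Kobayashi metrics (Proposition \ref{Kobayashi comparison}). The first assertion $\|\beta_X(Y)\|_\infty, \|\beta_X(Z)\|_\infty \le 3/2$ is immediate from Theorem \ref{Bers embedding}, since $\beta_X(\mathcal{T}_g)\subset B_\infty(0,3/2)$.

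For the second assertion, the plan is to build a holomorphic map from $\mathcal{T}_g$ to the unit ball $Q=B_\infty(0,1)\subset Q^\infty(X^{-1})$ and then push distances forward. Define
\[
f:\mathcal{T}_g\to Q,\qquad f(W)=\tfrac{2}{3}\beta_X(W).
\]
This map is well-defined with image in $Q$: by the first assertion, $\|\beta_X(W)\|_\infty\le 3/2$, so $\|f(W)\|_\infty\le 1$ (in fact the open Bers embedding lands strictly inside, so the image lies in the \emph{open} unit ball, which is what we need for $d_Q$ to be defined on it). The map $f$ is holomorphic as the composition of the holomorphic Bers embedding with the $\mathbb{C}$-linear scaling by $2/3$.

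Now $Q$ is a bounded domain in $\mathbb{C}^{3g-3}$, so by the remark preceding Proposition \ref{Kobayashi comparison} (quoting \cite[Corollary 4.4.6]{Kob05}) its Kobayashi pseudometric $d_Q$ is an honest metric. By Theorem \ref{Royden's theorem}, the Kobayashi metric on $\mathcal{T}_g$ coincides with $d_T$. Applying Proposition \ref{Kobayashi comparison} to the holomorphic map $f$ at the points $Y,Z\in\mathcal{T}_g$ yields
\[
d_Q\bigl(\tfrac{2}{3}\beta_X(Y),\tfrac{2}{3}\beta_X(Z)\bigr)=d_Q(f(Y),f(Z))\le d_T(Y,Z),
\]
which is the inequality claimed (reading the right-hand side as $d_T(Y,Z)$, since $X$ appears only as the basepoint of the Bers embedding).

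There is no genuine obstacle here; the entire argument is a two-line application of the distance-decreasing property once the scaling factor of $2/3$ has been introduced to compensate for the radius $3/2$ in the Bers embedding theorem. The only mild subtlety is ensuring that the image of $f$ actually lies in an open set on which $d_Q$ is defined and finite, which is handled by the strict inclusion $\beta_X(\mathcal{T}_g)\subset B_\infty(0,3/2)$ implicit in Theorem \ref{Bers embedding}.
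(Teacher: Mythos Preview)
Your proof is correct and follows exactly the approach the paper intends: the paper does not spell out a proof but simply says the lemma follows by ``putting together'' Proposition~\ref{Kobayashi comparison}, Theorem~\ref{Royden's theorem}, and Theorem~\ref{Bers embedding}, which is precisely what you do by composing the Bers embedding with the scaling $W\mapsto (2/3)\beta_X(W)$ and applying the distance-decreasing property. You also correctly identify that the right-hand side should read $d_T(Y,Z)$ rather than $d_T(Y,X)$; this is confirmed by how the lemma is invoked in the proof of Lemma~\ref{Teichmuller bound 1}.
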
 

\begin{lemma}\label{1/2} There exists a holomorphic map $$\beta_X^{-1}:B_{\infty}(0,1/2)\to \mathcal{T}_g$$ such that $$\beta_X\circ \beta_X^{-1}=\id$$ on $B_{\infty}(0,1/2)$. Moreover, for $\phi_1,\phi_2\in B_{\infty}(0,1/2)$, $$d_T(\beta_X^{-1}(\phi_1),\beta_X^{-1}(\phi_2))\leq d_Q(2\phi_1,2\phi_2).$$
\end{lemma}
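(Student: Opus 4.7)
The plan is to mirror the structure of \cref{3/2} but in reverse: here we start from the Bers ball and produce a holomorphic section to Teichm\"{u}ller space, then apply the Kobayashi distance-decreasing principle in that direction. First, existence of $\beta_X^{-1}$ is immediate from \cref{Bers embedding}: the inclusion $B_\infty(0,1/2) \subset \beta_X(\mathcal{T}_g)$ combined with the fact that the Bers embedding $\beta_X$ is a holomorphic embedding of $\mathcal{T}_g$ into $Q^\infty(X^{-1})$ means we may define $\beta_X^{-1}$ on $B_\infty(0,1/2)$ as the (holomorphic) inverse of $\beta_X$ restricted to the preimage of this ball. By construction $\beta_X \circ \beta_X^{-1} = \id$ on $B_\infty(0,1/2)$.

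For the metric inequality, the key idea is to rescale so as to land in the unit ball $Q = B_\infty(0,1)$, where the Kobayashi metric $d_Q$ is available. Define a map
\[
f : Q \to \mathcal{T}_g, \qquad f(\phi) = \beta_X^{-1}(\phi/2).
\]
This is well defined because $\phi \in Q$ implies $\phi/2 \in B_\infty(0,1/2)$, and it is holomorphic as a composition of the holomorphic map $\phi \mapsto \phi/2$ with the holomorphic map $\beta_X^{-1}$. Now apply \cref{Kobayashi comparison} to $f$: since by \cref{Royden's theorem} the Kobayashi metric on $\mathcal{T}_g$ is $d_T$, and since $d_Q$ is by definition the Kobayashi metric on $Q$, we obtain
\[
d_T\bigl(f(\psi_1), f(\psi_2)\bigr) \leq d_Q(\psi_1, \psi_2)
\]
for every $\psi_1, \psi_2 \in Q$.

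Finally, given $\phi_1, \phi_2 \in B_\infty(0,1/2)$, the points $\psi_i = 2\phi_i$ lie in $Q$ and satisfy $f(\psi_i) = \beta_X^{-1}(\phi_i)$. Substituting yields the desired inequality
\[
d_T\bigl(\beta_X^{-1}(\phi_1), \beta_X^{-1}(\phi_2)\bigr) \leq d_Q(2\phi_1, 2\phi_2).
\]
There is no real obstacle here; the only point requiring care is verifying that $Q$ genuinely carries a Kobayashi metric (not merely a pseudometric), which is immediate since $Q$ is a bounded domain in the finite-dimensional complex vector space $Q^\infty(X^{-1})$, so \cite[Corollary 4.4.6]{Kob05} applies just as in the paragraph preceding \cref{Kobayashi comparison}.
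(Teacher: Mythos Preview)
Your proof is correct and follows exactly the approach the paper indicates: the paper states that \cref{1/2} comes from ``putting together \cref{Kobayashi comparison}, \cref{Royden's theorem} and \cref{Bers embedding},'' and your argument spells this out precisely by constructing the holomorphic map $f:Q\to\mathcal{T}_g$, $\phi\mapsto\beta_X^{-1}(\phi/2)$, and applying the Kobayashi distance-decreasing property.
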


The following lemma gives us an approximation for $d_Q$.

\begin{lemma}\label{Kobayashi metric on norm ball} For $\phi_1,\phi_2\in B_{\infty}(0,1/8)$, $$C_1\|\phi_1-\phi_2\|_\infty \leq d_Q(\phi_1,\phi_2)\leq C_2\|\phi_1-\phi_2\|_\infty$$ for universal constants $C_1$ and $C_2$. 
\end{lemma}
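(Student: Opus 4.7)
The plan is to prove the two inequalities separately, using in each direction the standard construction of the Kobayashi metric via holomorphic disks into or out of $Q$. Throughout, let $V = Q^{\infty}(X^{-1})$ so that $Q = B_\infty(0,1) \subset V$ is a bounded convex open subset of a finite-dimensional complex Banach space.

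For the upper bound, I would construct an explicit holomorphic map $f \colon \mathbb{D} \to Q$ passing through $\phi_1$ and $\phi_2$. Given $\phi_1, \phi_2 \in B_\infty(0,1/8)$ with $\phi_1 \neq \phi_2$, set
\[
v = \frac{7}{8} \cdot \frac{\phi_2 - \phi_1}{\|\phi_2 - \phi_1\|_\infty}, \qquad s = \tfrac{8}{7}\|\phi_2 - \phi_1\|_\infty,
\]
and define $f(z) = \phi_1 + z v$. Since $\|\phi_1\|_\infty \le 1/8$ and $\|v\|_\infty = 7/8$, for $|z| < 1$ we have $\|f(z)\|_\infty < 1$, so $f \colon \mathbb{D} \to Q$ is holomorphic, with $f(0) = \phi_1$ and $f(s) = \phi_2$. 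Since $\|\phi_1 - \phi_2\|_\infty \le 1/4$ we have $s \le 2/7$, and the distance-decreasing property of the Kobayashi metric (\cref{Kobayashi comparison}) combined with the fact that the Kobayashi metric on $\mathbb{D}$ is the Poincar\'e metric gives
\[
d_Q(\phi_1,\phi_2) \le d_{\mathbb{D}}(0,s) = \tfrac{1}{2} \log \tfrac{1+s}{1-s} \le C_2\, s \le C_2\,\|\phi_1 - \phi_2\|_\infty.
\]

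For the lower bound, I would use the Hahn-Banach theorem to produce a continuous $\mathbb{C}$-linear functional $L \colon V \to \mathbb{C}$ with operator norm $\|L\|_{\mathrm{op}} = 1$ such that $L(\phi_2 - \phi_1) = \|\phi_2 - \phi_1\|_\infty$. Because $\|L\|_{\mathrm{op}} = 1$, the restriction $L|_Q$ is a holomorphic map $Q \to \mathbb{D}$, and the distance-decreasing property gives
\[
d_{\mathbb{D}}(L(\phi_1), L(\phi_2)) \le d_Q(\phi_1,\phi_2).
\]
Since $|L(\phi_i)| \le \|\phi_i\|_\infty \le 1/8$, both $L(\phi_1)$ and $L(\phi_2)$ lie in the Euclidean disk of radius $1/8$ about the origin in $\mathbb{D}$, where the Poincar\'e metric density $\tfrac{2}{1-|z|^2}$ is bounded between two universal positive constants. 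Hence
\[
d_{\mathbb{D}}(L(\phi_1), L(\phi_2)) \ge C_1\,|L(\phi_1) - L(\phi_2)| = C_1\,\|\phi_1 - \phi_2\|_\infty.
\]

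Neither direction is particularly delicate; the main care is only in choosing the scaling constants in the upper bound so that the affine disk $f(\mathbb{D})$ fits inside $Q$ while still hitting $\phi_2$ at a parameter comparable to $\|\phi_1 - \phi_2\|_\infty$, and in invoking Hahn-Banach with the correct normalization in the lower bound. The hypothesis that $\phi_1,\phi_2$ lie in the smaller ball $B_\infty(0,1/8)$ rather than all of $Q$ is used precisely to keep the relevant Poincar\'e-Euclidean comparisons bounded on both sides.
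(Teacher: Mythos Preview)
Your proposal is correct and follows essentially the same approach as the paper: an affine holomorphic disk $\mathbb{D}\to Q$ for the upper bound, and a norm-one linear functional $Q\to\mathbb{D}$ for the lower bound, both combined with the distance-decreasing property of the Kobayashi metric. The only cosmetic difference is that the paper obtains its linear functionals concretely as evaluation maps $\phi\mapsto \phi(x)/\rho_{X^{-1}}^2(x)$ and takes the supremum over $x$, whereas you invoke Hahn--Banach; since the $\|\cdot\|_\infty$ norm is a supremum of such evaluations, these are the same argument.
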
  

\begin{proof} First, we show that $$d_Q(\phi_1,\phi_2)\geq C_1\|\phi_1-\phi_2\|_\infty.$$ To see this, let $x\in X^{-1}$ (the conformal mirror of $X$), and let $v$ be a unit tangent vector at $x$ with respect to the hyperbolic metric $\rho_{X^{-1}}$ on $X^{-1}$. We define a map $$I_x: B_{\infty}(0,1)\to \mathbb{D}$$ that sends $\phi$ to the evaluation of $\phi$ at $v\otimes v$. Note that for all $\phi\in B_{\infty}(0,1)$, 
\begin{align*}
|I_x(\phi)|&= \displaystyle\frac{|\phi|}{|\rho_{X^{-1}}^2|}(x)\\&\leq 1.
\end{align*} Also, for all $\phi\in B_{\infty}(0,1/2)$, $$|I_x(\phi)|\leq 1/2.$$ Furthermore, $I_x$ is affine, thus holomorphic. Therefore, by \cref{Kobayashi comparison}, $I_x$ is distance decreasing with respect to $d_Q$ on $B_{\infty}(0,1)$ and the Kobayashi metric on $\mathbb{D}$ which is the Poincare metric. So for $\phi_1,\phi_2\in B_{\infty}(0,1/2)$,  
\begin{align*}d_Q(\phi_1,\phi_2)&\geq \rho_{\mathbb{D}}(I_x(\phi_1),I_x(\phi_2))\\&\geq C_1|I_x(\phi_1)-I_x(\phi_2)|\\&= C_1\frac{|\phi_1-\phi_2|}{|\rho_X|^2}(x).
\end{align*} Since this is true for all $x\in X^{-1}$, $$d_Q(\phi_1,\phi_2)\geq C_1\|\phi-\phi_2\|_\infty$$ as desired. 

Next, we show that for $\phi_1,\phi_2\in B_{\infty}(0,1/8)$, $$d_Q(\phi_1,\phi_2)\leq C_2\|\phi_1-\phi_2\|_\infty.$$ To do this, consider the map $I:\mathbb{D}\to B_{\infty}(0,1)$ sending $\zeta\in \mathbb{D}$ to $$\phi_1+\zeta\frac{\phi_2-\phi_1}{2\|\phi_2-\phi_1\|_\infty}.$$ The image of $I$ is contained in $B_{\infty}(0,1)$ because 
\begin{align*}\left\|\phi_1+\zeta\frac{\phi_2-\phi_1}{2\|\phi_2-\phi_1\|_\infty}\right\|_\infty &\leq \|\phi_1\|_\infty+\left|\frac{\zeta}{2}\right|\\&\leq 1.
\end{align*} Note that $I$ is affine and therefore holomorphic. Moreover, $I(0)=\phi_1$ and $$I(2\|\phi_1-\phi_2\|_\infty)=\phi_2.$$ By \cref{Kobayashi comparison}, $I$ must be distance decreasing with respect to the Poincare metric on $\mathbb{D}$. Since $\phi_1,\phi_2\in B_{\infty}(0,1/8)$, $$2\|\phi_1-\phi_2\|_\infty\leq 1/2.$$ Therefore, 
\begin{align*}d_Q(\phi_1,\phi_2)&\leq d_{\rho_{\mathbb{D}}}(0,2|\phi_1-\phi_2\|_\infty)\\&\leq C_2\|\phi_1-\phi_2\|_\infty,
\end{align*} as desired.
\end{proof}

\cref{3/2}, \cref{1/2} and \cref{Kobayashi metric on norm ball} together imply:

\begin{lemma} \label{Teichmuller bound 1} Let $$\phi_1,\phi_2\in B_{\infty}(0,1/16).$$ Then $$C_3\|\phi_1-\phi_2\|_\infty \leq d_T(\beta_X^{-1}(\phi_1),\beta_X^{-1}(\phi_2))\leq C_4\|\phi_1-\phi_2\|_\infty.$$
\end{lemma}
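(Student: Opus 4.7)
The plan is to derive the bound by directly chaining together the three preceding metric comparison lemmas, namely \cref{3/2}, \cref{1/2}, and \cref{Kobayashi metric on norm ball}. The choice of radius $1/16$ in the hypothesis is made precisely so that after the scaling factors of $2$ and $2/3$ introduced by \cref{1/2} and \cref{3/2} respectively, the resulting points lie in $B_\infty(0,1/8)$, the ball on which \cref{Kobayashi metric on norm ball} applies.

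For the upper bound, I would start with $\phi_1,\phi_2\in B_\infty(0,1/16)$, observe that $2\phi_1,2\phi_2\in B_\infty(0,1/8)$, and apply \cref{1/2} to get
\[
d_T(\beta_X^{-1}(\phi_1),\beta_X^{-1}(\phi_2))\le d_Q(2\phi_1,2\phi_2).
\]
Then the upper bound of \cref{Kobayashi metric on norm ball} yields $d_Q(2\phi_1,2\phi_2)\le C_2\|2\phi_1-2\phi_2\|_\infty=2C_2\|\phi_1-\phi_2\|_\infty$, so taking $C_4=2C_2$ finishes this direction.

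For the lower bound, set $Y=\beta_X^{-1}(\phi_1)$ and $Z=\beta_X^{-1}(\phi_2)$ so that $\beta_X(Y)=\phi_1$ and $\beta_X(Z)=\phi_2$. Since $\|\phi_i\|_\infty\le 1/16<3/2$, \cref{3/2} applies and gives
\[
d_Q((2/3)\phi_1,(2/3)\phi_2)\le d_T(Y,Z).
\]
The points $(2/3)\phi_1,(2/3)\phi_2$ lie in $B_\infty(0,1/24)\subset B_\infty(0,1/8)$, so the lower bound half of \cref{Kobayashi metric on norm ball} produces $d_Q((2/3)\phi_1,(2/3)\phi_2)\ge C_1\|(2/3)\phi_1-(2/3)\phi_2\|_\infty=(2C_1/3)\|\phi_1-\phi_2\|_\infty$. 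Setting $C_3=2C_1/3$ completes the proof.

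There is no real obstacle here; the lemma is essentially a bookkeeping exercise in tracking the scaling constants between the three preceding comparison results, and the mild shrinkage from $1/8$ to $1/16$ in the hypothesis exists solely to absorb the factor of $2$ coming from \cref{1/2}.
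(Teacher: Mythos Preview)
Your proposal is correct and follows essentially the same argument as the paper: both proofs chain \cref{1/2} with the upper half of \cref{Kobayashi metric on norm ball} for the upper bound, and \cref{3/2} with the lower half of \cref{Kobayashi metric on norm ball} for the lower bound, using the radius $1/16$ precisely so that the scaled points land in $B_\infty(0,1/8)$. The only cosmetic difference is that you name the resulting constants explicitly as $C_4=2C_2$ and $C_3=2C_1/3$, whereas the paper simply absorbs the scaling factors into generic constants.
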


\begin{proof} By \cref{1/2}, $$d_T(\beta_X^{-1}(\phi_1),\beta_X^{-1}(\phi_2))\leq d_Q(2\phi_1,2\phi_2).$$ Since $$2\phi_1,2\phi_2\in B_{\infty}(0,1/8),$$ by \cref{Kobayashi metric on norm ball}, $$d_Q(2\phi_1,2\phi_2)\leq C_4\|2\phi_1-2\phi_2\|_\infty\leq C_4\|\phi_1-\phi_2\|_\infty.$$ This gives one direction of the comparison in the lemma statement. To obtain the other direction, by \cref{3/2}, $$d_T(\beta_X^{-1}(\phi_1),\beta_X^{-1}(\phi_2))\geq d_Q((2/3)\phi_1,(2/3)\phi_2).$$ Since $$(2/3)\phi_1,(2/3)\phi_2\in B_{\infty}(0,1/8),$$ \cref{Kobayashi metric on norm ball} implies $$d_Q((2/3)\phi_1,(2/3)\phi_2)\geq C_3\|(2/3)\phi_1-(2/3)\phi_2\|_\infty\geq  C_3\|\phi_1-\phi_2\|_\infty,$$ as desired.
\end{proof}

Finally, we have: 

\begin{lemma} \label{Teichmuller bound 2} There exist universal constants $C_3$ and $C_4$ such that for $$Y,Z\in B_{d_T}(X,C_3/20),$$
$$C_3\|\beta_X(Y)-\beta_X(Z)\|_\infty \leq d_T(Y,Z)\leq C_4\|\beta_X(Y)-\beta_X(Z)\|_\infty.$$
\end{lemma}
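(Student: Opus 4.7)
The plan is to deduce Lemma \ref{Teichmuller bound 2} directly from Lemma \ref{Teichmuller bound 1} by applying the latter with $\phi_1=\beta_X(Y)$ and $\phi_2=\beta_X(Z)$. The only real work is to verify the hypothesis of Lemma \ref{Teichmuller bound 1}, namely that $\beta_X(Y),\beta_X(Z)\in B_\infty(0,1/16)$; once this is established, Lemma \ref{1/2} gives $Y=\beta_X^{-1}(\beta_X(Y))$ and $Z=\beta_X^{-1}(\beta_X(Z))$ (because $B_\infty(0,1/16)\subset B_\infty(0,1/2)$), so the two-sided comparison in Lemma \ref{Teichmuller bound 1} yields exactly the stated inequalities with the same constants $C_3$ and $C_4$.

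To obtain the a priori bound $\|\beta_X(Y)\|_\infty\leq 1/16$, I would argue by continuity. Since $d_T$ is the length metric of a Finsler norm on holomorphic quadratic differentials, for any $\epsilon>0$ there exists a continuous path $\gamma:[0,1]\to\mathcal{T}_g$ from $X$ to $Y$ whose $d_T$-length is at most $d_T(X,Y)+\epsilon\leq C_3/20+\epsilon$. The composition $\beta_X\circ\gamma$ is a continuous curve in $Q^{\infty}(X^{-1})$ starting at $0$, contained in $B_\infty(0,3/2)$ by Lemma \ref{3/2}. If $\|\beta_X(Y)\|_\infty>1/16$, then by the intermediate value theorem applied to $t\mapsto\|\beta_X(\gamma(t))\|_\infty$ there is a first time $t_0\in(0,1]$ at which $\|\beta_X(\gamma(t_0))\|_\infty=1/16$. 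On $[0,t_0]$ the curve $\beta_X\circ\gamma$ stays inside $B_\infty(0,1/16)$, so Lemma \ref{Teichmuller bound 1} (with $\phi_1=0$ and $\phi_2=\beta_X(\gamma(t_0))$) yields
$$d_T(X,\gamma(t_0))\geq C_3\|\beta_X(\gamma(t_0))\|_\infty=C_3/16.$$
However, $d_T(X,\gamma(t_0))$ is bounded above by the length of $\gamma|_{[0,t_0]}$, hence by $C_3/20+\epsilon$; letting $\epsilon\to 0$ gives $C_3/16\leq C_3/20$, a contradiction. The same argument applied to $Z$ yields $\|\beta_X(Z)\|_\infty\leq 1/16$.

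With both $\beta_X(Y)$ and $\beta_X(Z)$ in $B_\infty(0,1/16)$, Lemma \ref{Teichmuller bound 1} now gives
$$C_3\|\beta_X(Y)-\beta_X(Z)\|_\infty\leq d_T\bigl(\beta_X^{-1}(\beta_X(Y)),\,\beta_X^{-1}(\beta_X(Z))\bigr)\leq C_4\|\beta_X(Y)-\beta_X(Z)\|_\infty,$$
and by Lemma \ref{1/2} the middle term is exactly $d_T(Y,Z)$, completing the proof.

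The main obstacle is the a priori bound on $\|\beta_X(Y)\|_\infty$: the estimate we need in order to invoke Lemma \ref{Teichmuller bound 1} is the same kind of estimate we are trying to prove, so some non-circular input is required. The continuity argument above supplies this by exploiting the fact that $d_T$ is a length metric, so that the bound of Lemma \ref{Teichmuller bound 1} can be propagated outward from $X$ along paths. An alternative that avoids path-lifting would be to combine Lemma \ref{3/2} with the evaluation maps $I_{x,v}:Q\to\mathbb{D}$ from the proof of Lemma \ref{Kobayashi metric on norm ball}: these are holomorphic and therefore distance-decreasing, so $d_Q((2/3)\beta_X(Y),0)\geq d_\mathbb{D}(0,(2/3)\|\beta_X(Y)\|_\infty)$, and since the Poincar\'e metric on $\mathbb{D}$ is a proper metric the bound $d_T(X,Y)\leq C_3/20$ forces $\|\beta_X(Y)\|_\infty$ into $B_\infty(0,1/16)$ directly, after choosing $C_3$ suitably.
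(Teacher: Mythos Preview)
Your proposal is correct and follows essentially the same approach as the paper: both reduce to Lemma~\ref{Teichmuller bound 1} after establishing the a priori containment $\beta_X(Y),\beta_X(Z)\in B_\infty(0,1/16)$, and both obtain that containment from the lower bound in Lemma~\ref{Teichmuller bound 1} with $\phi_1=0$. The only difference is in how the containment is argued: you use a path/first-exit-time argument invoking the length-metric property of $d_T$, whereas the paper uses a purely topological connectedness argument---the image $\beta_X^{-1}(B_\infty(0,1/20))$ is open, contains $X$, and its boundary lies at $d_T$-distance at least $C_3/20$ from $X$, so $B_{d_T}(X,C_3/20)$ is trapped inside. The paper's version is slightly cleaner (no paths, no length-metric input, and it sidesteps the open-versus-closed-ball technicality at your first-exit time), but the arguments are otherwise the same.
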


\begin{proof} Consider the map $$\beta_X^{-1}:B_{\infty}(0,1/20)\to \mathcal{T}_g.$$ This is a biholomorphism onto its image. Note that $$C_3\|\phi_1-\phi_2\|_\infty \leq d_T(\beta_X^{-1}(\phi_1),\beta_X^{-1}(\phi_2))$$ for $$\phi_1,\phi_2\in B_{\infty}(0,1/16)$$ by \cref{Teichmuller bound 1}. In particular, $$C_3\|\phi \|_\infty \leq d_T(X,\beta_X^{-1}(\phi))$$ for $$\phi\in B_{\infty}(0,1/16).$$ Thus $\beta_X^{-1}(B_{\infty}(0,1/20))$ contains the point $X\in \mathcal{T}_g$, but $\partial \beta_X^{-1}(B_{\infty}(0,1/20))$ does not intersect $B_{d_T}(X,C_3/20)$. This means $\beta_X^{-1}(B_{\infty}(0,1/20))$ contains $B_{d_T}(X,C_3/20)$. Now \cref{Teichmuller bound 1} gives the desired inequalities. 
\end{proof}

\begin{lemma}\label{Teichmuller bound 2.5} Let $C_3$ be as in \cref{Teichmuller bound 2} and suppose $r_1,r_2\in \mathbb{R}_+$ such that $r_1\leq C_3/20$ and $r_1\leq r_2$. Let $X_1,...,X_N\in B_{d_T}(X,r_2)\subset \mathcal{T}_g$ such that $$d_T(X_i,X_j)\geq r_1$$ for all $i,j\in \{1,...,N\}$ distinct. If $r_2\leq C_3/20$, then  $$N\leq (C/r_1)^{6g-6}.$$ For all $r_2\geq C_3/20$, $$N\leq (C/r_1)^{Cr_2(6g-6)}.$$ Here, $C$ is a universal constant.
\end{lemma}

\begin{proof} First we treat the case where $r_2\leq C_3/20$. For all $i,j\in \{1,...,N\}$ distinct, $$r_1/C_4\leq \|\beta_X(X_i)-\beta_X(X_j)\|_\infty$$ by \cref{Teichmuller bound 2}. Therefore norm balls of radius $r_1/(2C_4)$ around the $\beta_X(X_i)$ are disjoint. Again by \cref{Teichmuller bound 2}, $$\|\beta_X(X_i)\|_\infty\leq r_2/C_3$$ for all $i\in \{1,...,N\}$. Now, $Q^{\infty}(X^{-1})\simeq \mathbb{C}^{3g-3}$ as a vector space. Hence it admits a Euclidean volume. Denote by $V(r)$ the Euclidean volume of a norm ball of radius $r$ in $Q^{\infty}(X^{-1})$. Then $$V(r_1/(2C_4))\geq ((4C_4r_2)/(C_3r_1))^{-(6g-6)}V(2r_2/C_3)$$ since a ball of radius $2r_2/C_3$ is simply a scaled copy of a ball of radius $r_1/(2C_4)$. Note that $$B_{\infty}(\beta_X(X_i),r_1/(2C_4))\subset B_{\infty}(0,2r_2/C_3)$$ because $\beta_X(X_i)\in \overline{B_{\infty}(0,r_2/C_3)}$ for all $i\in \{1,...,N\}$ and $r_1\leq r_2$, $C_3\leq C_4$. The former balls are also mutually disjoint, so $$V(2r_2/C_3)\geq N\cdot V(r_1/(2C_4)).$$ Therefore $$N\leq (C/r_1)^{6g-6}.$$ This proves the first part of the lemma. 

Now, to prove the second part, let us assume that $r_2\geq C_3/20$. We have that at most $(C/r_1)^{6g-6}$ of the $X_i$ are contained in $B_{d_T}(X,C_3/20)$. Also, at most $(C/r_1)^{6g-6}$ of the $X_j$ are contained in $B_{d_T}(X_i,C_3/20)$ for all $X_i$. By induction we obtain the desired result.
\end{proof}

As a corollary, we have:

\begin{cor}\label{Teichmuller bound 3} Let $C_3$ be as in \cref{Teichmuller bound 2} and suppose $r_1,r_2\in \mathbb{R}_+$ such that $r_1\leq C_3/20$ and $r_1\leq r_2$. If $$r_2\leq C_3/20,$$ then any $r_2$-radius ball in $\mathcal{M}_g$ in the Teichm\"{u}ller metric can be covered by $$(C/r_1)^{6g-6}$$ number of $r_1$-radius balls. If $$r_2\geq C_3/20,$$ then any $r_2$-radius ball in $\mathcal{M}_g$ can be covered by $$(C/r_1)^{Cr_2(6g-6)}$$ number of $r_1$-radius balls. Here, $C$ is a universal constant.
\end{cor}

\begin{proof} By \cref{Teichmuller bound 2.5}, these statements are true on $\mathcal{T}_g$. Thus they are true on $\mathcal{M}_g$ also.
\end{proof}

\section{Lower bounds}

In this section, we prove \cref{lower bound}. 

Let $X\in \mathcal{M}_g$. Let $\rho_X$ be the conformal hyperbolic metric on $X$. Assume $$\sum_{\substack{\gamma \text{ \normalfont simple closed geodesic on } X\\\length_{\rho_X}(\gamma)<2\arcsinh(1)}}\textstyle\length_{\rho_X}(\gamma)^{-1}\leq R.$$
 
Choose $U_1,...,U_N$, $V_1,...,V_N$ and $W_1,...,W_N$ as in \cref{covering lemma}. Here, $N\leq C(R+g)$. Recall that the $\{W_i\}$ each cover $X$ by condition 1 of \cref{covering lemma}. The circles $\partial W_i$ divide $X$ into $M$ connected components that we label $P_1,...,P_M$. By construction, each $P_j$ is contained in some $W_i$. Moreover, each $\partial P_j$ is a union of hyperbolic circular arcs. There are at most $C$ such arcs in $\partial P_j$: $P_j$ is contained in some $W_i$, which in turn is contained in $U_i$, which intersects at most $C$ of the other $U_1,...,U_N$ (by condition 5 of \cref{covering lemma}). For the same reason, each $W_i$ is a union of at most $C$ of the $P_1,...,P_M$. 

Let $$\mathbb{D}_{\tanh(\radius(W_i)/2)}=\{z\in \mathbb{D}||z|<{\tanh(\radius(W_i)/2)}\}.$$ Let $$f_i:W_i\to \mathbb{D}_{\tanh(\radius(W_i)/2)}$$ be a conformal map (unique up to rotation) that sends $\cent(W_i)$ to $0$, that is $C$-bi-Lipschitz between $W_i$ with $\rho_X$ and $\mathbb{D}_{\tanh(\radius(W_i)/2)}$ with the Euclidean metric.

Now, we construct triangulations $T_r$ of $X$ by triangulating each $P_j$ as follows. Fix $r>0$ sufficiently small.

\begin{enumerate}

\item For each $P_j$, choose an arbitrary $W_{s(j)}$ such that $P_j\subset W_{s(j)}$. 

\item Divide $f_{s(j)}(P_j)$ into $$P^B_j=B_{\Euc}(\partial (f_{s(j)}(P_j)),r\cdot \radius(W_{s(j)}))$$ and $$P^I_j=f_{s(j)}(P_j)\setminus P^B_j.$$ We triangulate each $P_j$ separately.

\item Triangulate a subset of $f_{s(j)}(P_j)$ by the standard hexagonal triangulation with side length $$r\cdot \radius(W_{s(j)})$$ such that $P^I_j$ is covered by the triangulation.

\item Pullback by $f_{s(j)}$ to obtain a partial triangulation of a subset $X_{E}\subset X$. 

\begin{figure}[ht]

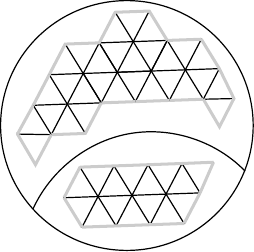

\caption{Two of the $P_j$ contained in the same $W_i$, with the partial triangulation constructed in Step 3. The gray edges are replaced in Step 5.}

\end{figure}

\item In the triangulation of $X_E$, replace all border edges between two vertices by the shortest hyperbolic geodesic connecting these two vertices.

\item Choose a set of vertices on $X\setminus X_E$ that is a $C_1r\cdot\radius(W_i)$-separated set and a $C_2r\cdot\radius(W_i)$-net on each $W_i$. Using these chosen vertices, complete the triangulation of $X\setminus X_E$ by hyperbolic triangles such that each triangle is contained in some $V_i$, and within $V_i$ is contained in a $C_3r\cdot \radius(W_i)$-radius hyperbolic ball.

\end{enumerate} 

We have the following lemma regarding Step 5 of the triangulation process described.

\begin{lemma}\label{se tri angle} There exists a universal constant $r_0$ such that for all $r<r_0$, the following statements hold. Let $x,y\in W_i$ such that $f_i^*d_{\Euc}(x,y)\leq r\cdot \radius(W_i)$. Let $\gamma_{\Euc}$ be the geodesic in $W_i$ connecting $x$ and $y$ in the metric $f_i^*d_{\Euc}$, and $\gamma_X$ be the geodesic in $W_i$ connecting $x$ and $y$ in the metric $\rho_X$. The angles between $f_i(\gamma_X)$ and $f_i(\gamma_{\Euc})$ are at most $\pi/7$. Moreover, the curvatures of $f_i(\gamma_X)$ and $f_i(\gamma_{\Euc})$ are bounded above by a universal constant.
\end{lemma}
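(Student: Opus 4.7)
The approach exploits that $f_i$ is effectively a hyperbolic isometry between $(V_i,\rho_X)$ and $\mathbb{D}_{\tanh(\radius(V_i)/2)}$ equipped with the Poincar\'e metric inherited from $\mathbb{D}$, together with the fact that on a compact subdisk of $\mathbb{D}$ bounded away from $\partial \mathbb{D}$, hyperbolic and Euclidean geometry are quantitatively comparable. First I would observe that both $(V_i,\rho_X)$ and $\mathbb{D}_{\tanh(\radius(V_i)/2)}$ with its Poincar\'e metric are hyperbolic disks of the same hyperbolic radius $\radius(V_i)$, and that any conformal map between two such disks sending center to center is a hyperbolic isometry, unique up to rotation about the center. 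Consequently $f_i$ sends hyperbolic geodesics in $(V_i,\rho_X)$ to arcs of hyperbolic geodesics of $\mathbb{D}$ that lie inside $\mathbb{D}_{\tanh(\radius(V_i)/2)}$; in Euclidean coordinates these are arcs of Euclidean circles perpendicular to $\partial \mathbb{D}$ (or diameters of $\mathbb{D}$), while $f_i(\gamma_{\Euc})$ is by definition a Euclidean line segment.

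Next I would handle the curvature assertion. The segment $f_i(\gamma_{\Euc})$ has Euclidean curvature zero. For $f_i(\gamma_X)$, a direct computation in the Poincar\'e model (using that a geodesic circle in $\mathbb{D}$ with Euclidean center $c$ and radius $R$ satisfies $|c|^2=R^2+1$, and that passing through $z\in\mathbb{D}$ forces $2\re(\bar c z)=1+|z|^2$, hence $|c|\geq (1+|z|^2)/(2|z|)$) shows that such a geodesic is an arc of a Euclidean circle of radius at least $\tfrac{1-|z|^2}{2|z|}$, so of Euclidean curvature at most $\tfrac{2|z|}{1-|z|^2}$ at any point $z$ along the arc. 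Since $\radius(V_i)\leq \arcsinh(1)/8$, the arc $f_i(\gamma_X)$ is contained in $\mathbb{D}_{\rho_0}$ with $\rho_0=\tanh(\arcsinh(1)/16)<1$, so its Euclidean curvature is bounded by the universal constant $K_0=\tfrac{2\rho_0}{1-\rho_0^2}$. This proves the second assertion.

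For the angle bound, observe that $f_i(\gamma_X)$ is a circular arc whose chord has Euclidean length $|f_i(x)-f_i(y)|\leq r\radius(V_i)\leq r\arcsinh(1)/8$. For a circular arc of Euclidean curvature $\kappa\leq K_0$ and chord length $\ell$, the identity $\ell=\tfrac{2}{\kappa}\sin(\alpha/2)$ (where $\alpha$ is the central angle) gives $\alpha\leq 2\arcsin(K_0\ell/2)$, and by elementary Euclidean geometry the angle at each endpoint between the arc and its chord equals $\alpha/2$. Hence the angles between $f_i(\gamma_X)$ and $f_i(\gamma_{\Euc})$ at the two endpoints $f_i(x),f_i(y)$ are each at most $\arcsin(K_0 r\arcsinh(1)/16)$. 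Since $K_0$ and $\arcsinh(1)$ are universal constants, one may choose a universal $r_0>0$ such that this quantity is at most $\pi/7$ whenever $r<r_0$, which completes the argument.

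The only modestly technical ingredient is the Euclidean curvature estimate for hyperbolic geodesics in $\mathbb{D}$ that appears in the second paragraph, which is a short calculation using the perpendicularity to $\partial\mathbb{D}$; the remainder is routine bookkeeping for a short circular arc. In essence the lemma is the quantitative statement that on a hyperbolic disk of fixed bounded radius, once identified conformally with a Euclidean disk, a short hyperbolic geodesic is Euclideanly very close to its Euclidean chord, both in position and in tangent direction.
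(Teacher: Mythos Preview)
Your proposal is correct and follows essentially the same approach as the paper: both arguments recognize that $f_i$ carries $\rho_X$-geodesics to Poincar\'e geodesics (circular arcs orthogonal to $\partial\mathbb{D}$) and $f_i^*d_{\Euc}$-geodesics to Euclidean segments, then use the uniform bound $\radius(V_i)\leq\arcsinh(1)/8$ to keep the image disk compactly inside $\mathbb{D}$, yielding a universal curvature bound and hence a small chord--arc angle for short chords. Your version is more explicit (computing the curvature bound $2|z|/(1-|z|^2)$ and the chord--arc angle $\arcsin(K_0\ell/2)$), whereas the paper states these steps qualitatively.
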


\begin{proof} The map $f_i$ maps $W_i$ conformally to the disk $\mathbb{D}_{\tanh(\radius(W_i)/2)}$ (sending $\cent(W_i)$ to $0$) such that the restriction of the Poincare metric $\rho_{\mathbb{D}}$ on $\mathbb{D}$ is $\rho_X$ on $W_i$. Geodesics with respect to $d_{\Euc}$ correspond to straight lines in $\mathbb{D}_{\tanh(\radius(W_i)/2)}$, whereas geodesics with respect to $\rho_{\mathbb{D}}$ are circular arcs which meet $\partial \mathbb{D}$ at a right angle. Since $\radius(W_i)\leq \arcsinh(1)/8$ (see condition 2 of \cref{covering lemma}), $\tanh(\radius(W_i)/2)$ is bounded away from $1$. So a geodesic of $\rho_{\mathbb{D}}$ passing through $\mathbb{D}_{\tanh(\radius(W_i)/2)}$ must have curvature bounded above by a universal constant $C$. If two points in $\mathbb{D}$ are sufficiently close, then the line between them and constant curvature arc (of curvature at most $C$) between them meet at an angle of at most $\pi/7$.  The lemma follows. 
\end{proof}

Thus we obtain, for each $0<r<r_0$, a triangulation of $X$ that we label $T_r$. There are two types of edges in $T_r$: those constructed in Step 3 and left unchanged (which we call flat edges), and those constructed in Step 5 or 6 (which we call hyperbolic edges). There are three types of triangles: those constructed in Step 3 and left unchanged (which we call equilateral triangles), those constructed in Step 5 (which we call semi-equilateral triangles), and those constructed in Step 6 (which we call hyperbolic triangles). We denote by $T_E$ the union of the equilateral triangles of $T_r$, $T_{SE}$ the union of the semi-equilateral triangles of $T_r$, and $T_H$ the union of the hyperbolic triangles of $T_r$. 

Denote by $T_r^1$ the $1$-skeleton of the triangulation $T_r$. We define a metric $d_{T_{r}^1}$ on $T_r^1$ as follows. On a hyperbolic edge, we define $d_{T_{r}^1}$ to the restriction of $d_{\rho_X}$ (the hyperbolic distance on $X$) to the hyperbolic edge. On a flat edge that was constructed by triangulating $f_{s(j)}(P_j)$ in Step 3, we define $d_{T_{r}^1}$ to be the restriction of the pullback under $f_{s(j)}$ of the Euclidean distance $d_{\Euc}$ on $f_{s(j)}(P_j)\subset \mathbb{D}$ to the flat edge. 

We let $S_r$ be the unique triangulated surface with the same underlying triangulation as $T_r$. Note that $T_r$ has at most $Cr^{-2}(R+g)$ triangles, since the number of equilateral and semi-equilateral triangles is bounded by $CNr^{-2}$ and the number of hyperbolic triangles is bounded by $$Cr^{-1}\sum_{i}\textstyle{\length_{\rho_X}(L_i)\radius(W_i)^{-1}}\leq Nr^{-1}.$$ Hence $$S_r\in \textstyle\Tri{g}{\leq Cr^{-2}(R+g)}.$$ Let $$X_r=\Phi(S_r)\in \mathcal{M}_g.$$ 

We will now show: 

\begin{theorem}\label{approximation theorem} For $0<r<r_0$, $d_T(X,X_r)<Cr$. Here, $C$ is a universal constant.
\end{theorem}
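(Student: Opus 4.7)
My plan is to invoke Kerckhoff's formula (\cref{Teichmuller metric via extremal length}), which reduces $d_T(X,X_r)\le Cr$ to showing, for every free homotopy class $\gamma$ of simple closed curves on the common underlying surface, the two-sided bound
$$e^{-Cr}\le \Ext_{X_r}(\gamma)/\Ext_X(\gamma)\le e^{Cr}.$$
Fix such a $\gamma$ and let $\phi$ be a Jenkins--Strebel differential on $X$ associated to $\gamma$ (\cref{Jenkins-Strebel}); then $\gamma$ lies as the core of a flat cylinder $C_\gamma\subset X$ of modulus $1/\Ext_X(\gamma)$. The strategy is to transport $C_\gamma$ to an annulus in $X_r$ homotopic to $\gamma$ whose modulus differs from that of $C_\gamma$ by a factor of $1+Cr$.

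I implement this transport via the natural triangulation-matching map $\psi_r\colon X\to X_r$ that sends each face of $T_r$ to the corresponding unit equilateral triangle of $S_r$. On an equilateral face in $T_E$, $\psi_r$ is the conformal rescaling through the chart $f_{s(j)}$, so the dilatation is $1$. On a semi-equilateral face in $T_{SE}$, the vertices still form an equilateral triangle in the chart, but the edges are hyperbolic geodesics deviating from the Euclidean straight edges by $O(r)$ on an $O(r)$-sized triangle (by \cref{se tri angle}), so a barycentric straightening produces a vertex-preserving map of dilatation $1+Cr$. On a hyperbolic face in $T_H$, the analogous construction applies provided Step 6 is arranged to produce faces close to Euclidean-equilateral in the chart, which is plausible given that its vertex set is both $C_1 r\radius(V_i)$-separated and a $C_2 r\radius(V_i)$-net and must match the Step 3 hexagonal grid along $\partial P_j^I$. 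With arc-length parametrizations along shared edges, the local pieces glue to a global $(1+Cr)$-quasiconformal homeomorphism, and Gr\"otzsch's inequality applied to $\psi_r(C_\gamma)$ gives $\modulus(\psi_r(C_\gamma))\ge (1+Cr)^{-1}\modulus(C_\gamma)$, i.e. $\Ext_{X_r}(\gamma)\le(1+Cr)\Ext_X(\gamma)$. The symmetric direction follows by applying the same argument to $\psi_r^{-1}$ together with a Jenkins--Strebel differential on $X_r$.

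The main obstacle is the dilatation estimate on the hyperbolic faces $T_H$: as stated, Step 6 only constrains the size and separation of its faces, not their near-equilaterality, so a vertex-preserving map from such a face to a unit equilateral triangle may have dilatation bounded below away from $1$. I expect the cleanest resolution to be a modulus-comparison argument bypassing the pointwise estimate on $T_H$: the bad strip $T_{SE}\cup T_H$ occupies Euclidean width $O(r\radius(V_i))$ in each chart, while the ambient equilateral region has Euclidean diameter $\Theta(\radius(V_i))$, so any curve in $C_\gamma$ spends a fraction $O(r)$ of its $|\phi|$-length inside this strip, and a standard ``remove a thin subannulus'' estimate shows that this changes the modulus by a factor $1+O(r)$. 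Verifying that this extremal-length bookkeeping combines coherently with the conformal map on the equilateral portion is the principal technical step.
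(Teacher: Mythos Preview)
Your overall framework—Kerckhoff's formula together with the Jenkins--Strebel extremal metric $|\phi|^{1/2}$—matches the paper's. But the paper never attempts a globally $(1+Cr)$-quasiconformal map: its triangle-by-triangle comparison map $F_r$ is conformal on $T_E$, only $C$-quasiconformal on $T_{SE}$, and has dilatation of order $Cr^2/\Area_{\rho_X}(Q)$ (i.e.\ a fixed constant, not $1+O(r)$) on a hyperbolic face $Q\in T_H$. Your first strategy is therefore more ambitious than what is needed, and as you correctly diagnose, it breaks on $T_H$.

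The genuine gap is in your fallback. The step ``the bad strip has Euclidean width $O(r\,\radius(V_i))$ in each chart, hence any curve in $C_\gamma$ spends a fraction $O(r)$ of its $|\phi|$-length there'' is unjustified: $|\phi|$ need not be comparable across a chart, so Euclidean width gives no control on the $|\phi|$-length of individual trajectories through the strip, and the bad set is not a subannulus of $C_\gamma$, so a thin-subannulus modulus estimate does not apply. The missing ingredient is the mean value inequality for the holomorphic quadratic differential: setting $m_i=\sup_{W_i}|\phi|/\rho_X^2$, one has $\int_{U_i}|\phi|\ge C\,\radius(U_i)^2\,m_i$, and since the bad region inside each $W_i$ has hyperbolic area $\le Cr\,\radius(U_i)^2$, its $|\phi|$-area is $\le Cr\,\radius(U_i)^2\,m_i$. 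Summing over the bounded-overlap cover yields $\Area_{\phi}(T_{SE}\cup T_H)\le Cr\,\Area_{\phi}(X)$—an \emph{area} statement, not a statement about single curves.

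The paper then avoids any annulus surgery and works directly from the definition of extremal length: let $\rho_r$ be the smallest conformal metric on $X_r$ dominating $(F_r)_*|\phi|^{1/2}$. Lengths can only increase, so $\length_{\rho_r}(\gamma)\ge\length_{\phi}(\gamma)$ for every curve automatically; for the area, $F_r$ is conformal on $T_E$ so that portion contributes at most $\Area_{\phi}(X)$, while the contribution from $T_{SE}\cup T_H$ is bounded, via the dilatation estimates combined with the mean-value inequality, by $Cr\,\Area_{\phi}(X)$. This yields $\Ext_{X_r}(\gamma)\ge(1-Cr)\,\Ext_X(\gamma)$, and by symmetry of $d_T$ this one-sided inequality already gives $d_T(X,X_r)\le Cr$.
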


This directly implies \cref{lower bound}. First, we have a preliminary lemma. 

\begin{lemma}\label{hyp tri vs tri} Let $Q$ be a hyperbolic triangle contained in a hyperbolic ball of radius $r<1/4$. Then there exists a $Cr^2\Area(Q)^{-1}$-quasiconformal map from $Q$ (equipped with the hyperbolic metric) to the unit equilateral triangle $T$ (equipped with the Euclidean metric) which sends vertices of $Q$ to vertices of $T$ and is a scaling map on each of the three boundary edges.
\end{lemma}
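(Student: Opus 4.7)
The plan is to factor the desired map $f\colon Q \to T$ as a composition $Q \xrightarrow{g} Q_E \xrightarrow{h} T$, where $Q_E \subset \mathbb{R}^2$ is an auxiliary Euclidean triangle and $h$ is affine. The affine piece supplies the $r^2/\Area(Q)$ factor, while $g$ is essentially a near-isometry whose only role is to pass from hyperbolic to Euclidean geometry.

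First I would work in hyperbolic exponential normal coordinates at a point $c$ in the interior of $Q$, for instance the centroid. Since $Q$ lies in a hyperbolic ball of radius $r < 1/4$ and the sectional curvature is $-1$, standard Jacobi field comparison shows that the pullback of the hyperbolic metric via $\exp_c$ is $(1 + O(r^2))$-bi-Lipschitz equivalent to the flat metric on the corresponding domain $\tilde Q \subset \mathbb{R}^2$. Let $Q_E \subset \mathbb{R}^2$ denote the Euclidean triangle whose vertices are $\exp_c^{-1}(v_1), \exp_c^{-1}(v_2), \exp_c^{-1}(v_3)$. The sides of $\tilde Q$, being $\exp_c$-preimages of hyperbolic geodesics, are $O(r^3)$-close in $C^1$ to the corresponding straight-line sides of $Q_E$. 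I then construct a $(1 + O(r^2))$-bi-Lipschitz homeomorphism $\Phi\colon \tilde Q \to Q_E$ which fixes the three vertices and, on each boundary edge, is the arc-length scaling from the nearly straight boundary arc of $\tilde Q$ onto the corresponding straight edge of $Q_E$; a natural realization is to prescribe $\Phi$ on the boundary by arc-length parameterization and extend to the interior by barycentric interpolation from the centroid. The composition $g := \Phi \circ \exp_c^{-1}\colon Q \to Q_E$ is then a $(1 + O(r^2))$-quasiconformal map sending vertices to vertices, scaling each boundary edge, and satisfying $\Area(Q_E) = (1 + O(r^2))\,\Area(Q)$.

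Finally, let $h\colon Q_E \to T$ be the unique affine map sending the vertices of $Q_E$ to the vertices of the unit equilateral triangle $T$. Because $h$ is affine between triangles, it restricts to a linear scaling on each boundary edge, and a direct computation of the singular values of its linear part (using $|\det h| = \Area(T)/\Area(Q_E)$ together with $\ell_{\max}(Q_E) \leq 2r + O(r^3)$) yields
$$K(h) \;\leq\; C\,\frac{\ell_{\max}(Q_E)^2}{\Area(Q_E)} \;\leq\; C\,\frac{r^2}{\Area(Q_E)}.$$
Composing, $f := h \circ g$ has quasiconformal dilatation at most $(1 + O(r^2)) \cdot C r^2 / \Area(Q_E) \leq C r^2 / \Area(Q)$, sends vertices of $Q$ to vertices of $T$, and restricts to a scaling on each boundary edge (the two scalings compose to a single scaling sending the $i$th edge of $Q$ isometrically onto the $i$th edge of $T$ up to one uniform factor).

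The main obstacle I anticipate is verifying that the straightening map $\Phi$ is genuinely $(1 + O(r^2))$-bi-Lipschitz. This requires carefully controlling the $C^1$-closeness of the sides of $\tilde Q$ and $Q_E$ and checking that the combined boundary reparameterization plus barycentric interior extension has Jacobian within $(1 + O(r^2))$ of the identity throughout $\tilde Q$. These are routine but technical estimates and involve no essentially new ideas beyond the Jacobi comparison already invoked.
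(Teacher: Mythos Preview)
Your overall strategy (pass to an auxiliary Euclidean triangle, then take an affine map to $T$) is the same as the paper's, but the straightening step hides a real gap. The claim that $\Phi:\tilde Q\to Q_E$ is $(1+O(r^2))$-bi-Lipschitz does not survive when $Q$ is highly degenerate. In normal coordinates at $c$ the sides of $\tilde Q$ deviate from the straight sides of $Q_E$ by $O(r^3)$ in the normal direction; if the height of $Q$ is $h\ll r^3$ then this deviation dominates $h$, so $\Area(Q_E)$ and $\Area(\tilde Q)\asymp\Area(Q)$ can differ by an arbitrarily large factor, and $Q_E$ can even collapse to a segment. In that regime no $(1+O(r^2))$-bi-Lipschitz---indeed no uniformly $C$-bi-Lipschitz---homeomorphism $\tilde Q\to Q_E$ exists, so the barycentric extension cannot produce one, and the affine map $h$ may not even be defined. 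The lemma is invoked later for hyperbolic triangles with no lower bound on area, so this case cannot be excluded; the difficulty is not ``routine but technical.''

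The paper avoids this by choosing a chart in which hyperbolic geodesics are already straight. It places $Q$ in the Poincar\'e disk with one vertex at the origin, then applies inverse stereographic projection to the unit hemisphere followed by orthographic projection back to the disk; this composite is exactly the Poincar\'e-to-Klein change of model, so the image $Q_1$ is a genuine straight-sided Euclidean triangle, and the map is $C$-bi-Lipschitz on the relevant compact region \emph{uniformly in the shape of $Q$}. Hence $\Area(Q_1)\asymp\Area(Q)$ with a universal constant and no straightening is needed. The paper then arranges the edge-wise scaling on the boundary in a separate final step, post-composing with a $C$-quasiconformal self-map of $T$ obtained from its quasisymmetric extension lemma, rather than building this into the first map as you do. Swapping your exponential chart for the Klein-model chart would repair your argument with essentially no other changes.
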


\begin{proof} The hyperbolic triangle $Q$ admits a $C$-quasiconformal map to some flat triangle $Q_1$, as follows. Consider $Q$ as contained in $\mathbb{D}$ (equipped with the Poincare metric $\rho_{\mathbb{D}}$) with one of the vertices the origin. 

Let $$H=\{x,y,z\in \mathbb{R}|x^2+y^2+(z-1/2)^2=1/4, z\leq 1/2\}$$ be a hemisphere in $\mathbb{R}^3$. View $\mathbb{D}$ as a unit disk in the $x,y$-plane in $\mathbb{R}^3$, centered at the origin. Taking the inverse stereographic projection of $Q$ to $H$, then the orthographic projection back to the $x,y$-plane gives a map from $Q$ to a Euclidean triangle $Q_1$.

Away from $\partial \mathbb{D}$ and $\partial H$, the stereographic projection is $C$-bi-Lipschitz between the spherical metric on $H$ and the hyperbolic metric on $\mathbb{D}$. The orthographic projection is $C$-bi-Lipschitz between the spherical metric on $H$ and the Euclidean metric on its image. Since $Q$ is contained in a hyperbolic ball of radius $r<1/4$, we have a $C$-bi-Lipschitz map $F_1$ from $Q$ (with the hyperbolic metric) to $Q_1$ (with the Euclidean metric). Let $Q_2$ be $Q_1$ scaled by $r^{-2}$; then we have a map $F_2:Q_1\to Q_2$ that is conformal. The map $F_2$ satisfies the property that $F_2$ is $r^{-2}$-Lipschitz and $F_2^{-1}$ is $r^2$-Lipschitz. Finally, note that $Q_2$ is contained in a $C$-radius ball in the Euclidean metric. There exists an $\Area(Q_2)^{-1}$-quasiconformal affine map $F_3$ from $Q_2$ to $T$, which sends vertices of $Q_2$ to vertices of vertices of $T$ and is scaling on each boundary edge of $\partial Q_2$. Now, since $\Area(Q_2)$ is around $Cr^{-2}\Area(Q)$, $$F_3\circ F_2\circ F_1:Q\to T$$ is $Cr^2\Area(Q)^{-1}$-quasiconformal. Let $$\sca:\partial Q\to \partial T$$ be a map that sends vertices to vertices and is scaling on each boundary edge, with respect to the hyperbolic metric on $\partial Q$ and the Euclidean metric on $\partial T$. Then by construction, $$\sca \circ (F_3\circ F_2\circ F_1)^{-1}:\partial T\to \partial T$$ is $C$-bi-Lipschitz with respect to the Euclidean metric, therefore $C$-weakly-quasisymmetric. By \cref{extending bi-Lipschitz map 2}, it extends to a $C$-quasiconformal map from $T$ to $T$. Precomposing this map with $F_3\circ F_2\circ F_1$ gives the map desired in the lemma statement.
\end{proof}

\begin{proof}[Proof of \cref{approximation theorem}] We construct a map $F_r:X\to X_r$ triangle-by-triangle as follows. On vertices, $F_r$ is naturally defined. On edges, we define $F_r$ to be scaling (with respect to the metrics $d_{T_{r}^1}$ on $T_r^1$ and $d_{S_r}$, the canonical flat metric given by $S_r$, on $X_r$). Then $F_r$ conformally extends to all equilateral triangles (triangles in $T_E$) of $T_r$. To define $F_r$ on $T_{SE}$, note that by construction each semi-equilateral triangle $Q$ is contained in $V_i$ for some $i$. By \cref{se tri angle}, Step 6 of the triangulation construction, and conformal property of $f_i$, the triangular region $f_{i}(Q)$ has angles bounded below by $\pi/21$. By \cref{se tri angle}, Step 6 of the triangulation construction and the $C$-bi-Lipschitz property of $f_{i}$, the edges of the triangular region $f_i(Q)$ have lengths bounded above and below by $Cr\radius(W_i)$ and curvature bounded above by $C$. Therefore $f_{i}(\partial Q)$ is a $C$-quasicircle. Let $$\sca:\partial Q\to \partial T$$ denote a scaling map, with respect to the metric $d_{T_{r}^1}$ on $\partial Q$ and Euclidean metric on $\partial T$, that sends vertices of $\partial Q$ to vertices of $\partial T$. Since $$\sca\circ f^{-1}_{i}:f_{i}(\partial Q)\to \partial T$$ is $Cr^{-1}\radius(W_i)^{-1}$-Lipschitz with a $Cr\radius(W_i)$-Lipschitz inverse (with respect to the Euclidean metrics), it is $C$-weakly-quasisymmetric. Hence by \cref{extending bi-Lipschitz map 2}, $\sca\circ f^{-1}_{i}$ extends to a $C$-quasiconformal map from $f_{s(i)}(Q)$ to $T$ which means $F_r$ extends to a $C$-quasiconformal map on $Q$. Finally, by \cref{hyp tri vs tri}, $F_r$ extends to a $Cr^2\Area(Q)^{-1}$-quasiconformal map on each hyperbolic triangle $Q$ in $T_H$. In this way, we construct a map $F_r:X\to X_r$.
 
We use the characterization of the Teichm\"{u}ller metric in terms of extremal length from \cref{Teichmuller metric via extremal length}. We have on $\mathcal{M}_g$, $$d_T(X,X_r)=\frac{1}{2}\log \inf_f\sup_\gamma\frac{\Ext_{X}(\gamma)}{\Ext_{X_r}(f(\gamma))}$$ where the infimum runs through all quasiconformal maps $f:X\to X_r$ and the supremum runs through all free homotopy classes of simple closed curves $\gamma$ on $X$. To show this quantity is bounded by $Cr$, it suffices to show that for all free homotopy classes $\gamma$ of a simple closed curve on $X$, $$\textstyle\Ext_{X_r}(F_{r}(\gamma))\geq (1-Cr)\Ext_X(\gamma).$$ Recall that $$\textstyle\Ext_{X_r}(F_{r}(\gamma))=\displaystyle\sup_{\rho_{r}}\frac{\length_{\rho_r}(F_{r}(\gamma))^2}{\Area_{\rho_{r}}(X_r)}$$ where the supremum ranges over all conformal metrics $\rho_r$ on $X_r$. Similarly, $$\textstyle\Ext_X(\gamma)=\displaystyle\sup_\rho\frac{\length_\rho(\gamma)^2}{\Area_\rho(X)}$$ where the supremum ranges over all conformal metrics $\rho$ on $X$. By \cref{Jenkins-Strebel}, the $\rho$ which achieves this supremum is given by $|\phi|^{1/2}$ for a holomorphic quadratic differential $\phi$ on $X$. To show $$\textstyle\Ext_{X_r}(F_r(\gamma))\geq (1-Cr)\Ext_X(\gamma),$$ it suffices to exhibit a conformal metric $\rho_r$ on $X_r$ such that $$\frac{\length_{\rho_r}(F_r(\gamma))^2}{\Area_{\rho_r}(X_r)}\geq (1-Cr)\displaystyle\frac{\length_{\phi}(\gamma)^2}{\Area_{\phi}(X)}.$$ To do this, we consider the metric $(F_r)_*|\phi|^{1/2}$ (which is not a conformal metric on $X_r$), and let $\rho_r$ be the smallest conformal metric on $X_r$ such that $\rho_r\geq (F_{r})_*|\phi|^{1/2}$. Then $$\textstyle\length_{\rho_r}(F_r(\gamma))^2\geq \textstyle\length_{\phi}(\gamma)^2$$ by construction. It remains to see that $$\textstyle\Area_{\rho_r}(X_r)\leq (1+Cr)\textstyle\Area_{\phi}(X).$$ To see this, for each $1\leq i\leq N$ let $$m_i =\sup_{x\in V_i}\frac{|\phi|}{\rho_X^2}(x).$$ By the mean value property, $$\int_{U_i}|\phi|\geq C\radius(U_i)^2 m_i.$$ Therefore, by condition 4 of \cref{covering lemma}, 
\begin{equation}\label{eq51}\textstyle\Area_{\phi}(X)\geq \displaystyle C\sum_{i=1}^N \radius(U_i)^2 m_i.
\end{equation}

Now, for any equilateral triangle $Q$ that is a face of $T_r$, $\Area_{\rho_r}(F_r(Q))=\Area_\phi(Q)$ because $F_r$ is conformal on $Q$. Therefore, 
\begin{equation}\label{eq52}\textstyle\Area_{\rho_r}(F_r(T_E))\leq \Area_{\phi}(X)
\end{equation}

Any triangle $Q$ in $T_{SE}$ is contained in $W_i$ for some $1\leq i\leq N$. By $C$-quasiconformality of $F_r$ on $Q$,
\begin{equation}\label{eq53}
\begin{aligned}\textstyle\Area_{\rho_r}(F_r(Q))&\leq C\textstyle\Area_{\phi}(Q)\\&\leq Cm_i\textstyle\Area_{\rho_X}(Q).
\end{aligned}
\end{equation} Now, by Step 5 in the triangulation construction semi-equilateral triangles contained in $V_i$ are actually contained in a $Cr\radius(U_i)$ hyperbolic neighborhood of $\left(\cup_{i=1}^N L_i\right) \cap V_i$, and the latter has length at most $C\radius(U_i)$ in the hyperbolic metric by condition 3 and condition 5 of \cref{covering lemma}. Summing \cref{eq53} over all semi-equilateral triangles $Q$ we obtain 
\begin{equation}\label{eq54}
\begin{aligned}\textstyle\Area_{\rho_r}(F_r(T_{SE}))&\leq Cr\sum_{i=1}^N \radius(U_i)^2 m_i\\&\leq Cr \textstyle\Area_{\phi}(X)
\end{aligned}
\end{equation}
by \cref{eq51}.

Finally, any triangle $Q$ in $T_H$ is contained in $V_i$ for some $1\leq i\leq N$. By $Cr^2\Area_{\rho_X}(Q)^{-1}$-quasiconformality of $F_r$ on $Q$, we have 
\begin{equation}\label{eq55}
\begin{aligned}\textstyle\Area_{\rho_r}(F_r(Q))&\leq Cr^2\textstyle\Area_{\rho_X}(Q)^{-1}\textstyle\Area_{\phi}(Q)\\&\leq Cr^2 m_i.
\end{aligned}
\end{equation} Since by Step 6 of the triangulation construction at most $C\radius(U_i)^2r^{-1}$ hyperbolic triangles $Q$ are contained in any $V_i$, summing \cref{eq55} over all hyperbolic triangles $Q$ we obtain 
\begin{equation}\label{eq56}
\begin{aligned}\textstyle\Area_{\rho_r}(F_r(T_{H}))&\leq Cr\sum_{i=1}^N \radius(U_i)^2 m_i\\&\leq Cr \textstyle\Area_{\phi}(X)
\end{aligned}
\end{equation}
by \cref{eq51}.

\cref{eq52}, \cref{eq54} and \cref{eq56} together give
\begin{align*}\textstyle\Area_{\rho_r}(X_r)&=\textstyle\Area_{\rho_r}(F_r(T_E))+\Area_{\rho_r}(F_r(T_{SE}))+\Area_{\rho_r}(F_r(T_H))\\&\leq \textstyle(1+Cr)\Area_{\phi}(X)
\end{align*} which completes the proof.
\end{proof}

\section{Approximating triangulated surfaces with bounded degree triangulations}

\subsection{Approximation theorem} The goal of this section is to show the following theorem:

\begin{theorem}\label{bounded degree surface} There exists a map $B:\Tri{g}{T}\to \Tri{g}{\leq \sigma T}$, and universal constants $C, \sigma, \mu>0$, such that: 

\begin{enumerate}

\item There is a $C$-quasiconformal map $f_S:S\to B(S)$,

\item The maximum degree of any vertex of $B(S)$ is $7$,

\item Given any two vertices $x,y\in V_{\neq 6}(B(S))$ (not necessarily distinct), and $\gamma$ any arc from $x$ to $y$ that is homotopically nontrivial in $B(S)\setminus \{x,y\}$, we have $$\int_\gamma |\psi_{B(S)}|^{1/6}\geq 3,$$

\item $$|V_{\neq 6}(B(S))|\leq \mu(|V_{\neq 6}(S)|+g),$$ and

\item The fiber of $B$ over $B(S)$ has cardinality at most $C^{|V_{\neq 6}(B(S))|}$.
\end{enumerate}
Here, $C$, $\sigma$ and $\mu$ are universal constants.
\end{theorem}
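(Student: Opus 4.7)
I would build $B(S)$ in two stages: a conformally trivial refinement, followed by a local surgery at each bad vertex of $S$. Fix a universal constant $k$ (say $k = 10$) and let $S^{(k)}$ be the $k$-subdivision of $S$. Since $k$-subdivision preserves the underlying Riemann surface and only introduces interior vertices of degree $6$, one has $V_{\neq 6}(S^{(k)}) = V_{\neq 6}(S)$ with unchanged degrees, and any two bad vertices of $S^{(k)}$ are at combinatorial distance at least $k$. For each $v \in V_{\neq 6}(S^{(k)})$ of degree $d_v$, let $N_v$ be the combinatorial ball of radius $\lfloor k/3 \rfloor$ around $v$ in $S^{(k)}$; these are pairwise disjoint triangulated disks, each containing exactly one cone point.

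\textbf{Local replacement.} I would replace each $N_v$ with a ``standard model'' $M_{d_v}$, a triangulated disk of equilateral unit triangles with maximum interior vertex degree $7$, whose boundary polygon is isometric to $\partial N_v$, and whose interior contains exactly $|d_v - 6|$ cone vertices of degree $7$ (if $d_v > 6$) or $5$ (if $d_v < 6$), placed pairwise and from the boundary at flat distance at least $3$. The existence of $M_{d_v}$ is a purely local combinatorial construction: one distributes the cone excess $(d_v - 6)\pi/3$ over $|d_v - 6|$ prescribed interior points in a disk of area comparable to $|N_v|$, then fills the remainder with a bounded-degree triangulation that matches the prescribed boundary. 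Gluing each $M_{d_v}$ into $S^{(k)} \setminus \bigcup_v N_v$ along boundaries produces $B(S) \in \Tri{g}{\leq \sigma T}$, where $\sigma$ absorbs the factor $k^2$ and the universal ratio $|M_{d_v}|/|N_v|$.

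\textbf{Quasiconformal map, the main obstacle.} The map $f_S$ is defined to be the identity on $S^{(k)} \setminus \bigcup_v N_v$, and on each $N_v$ it is a bi-Lipschitz identification with $M_{d_v}$ in the flat metric. Producing this identification with universal quasiconformal dilatation is the main difficulty: while both disks have the same boundary length and the same total cone angle $d_v \pi/3$, the cone angle is concentrated at $v$ in $N_v$ but spread out among many smaller cone points in $M_{d_v}$, so their conformal classes are a priori very different. I would first fix an isometric identification of the boundary polygons $\partial N_v$ and $\partial M_{d_v}$ (both are sequences of $O(d_v)$ unit edges with prescribed interior angles), observe that both are bounded-turning curves in their respective disks, and then invoke the extension lemmas of Section 3 (in particular \cref{extending bi-Lipschitz map 2} and \cref{extending quasisymmetric map 2}) to extend the boundary identification quasiconformally. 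Universality of the dilatation follows because, for each fixed $d_v$, $M_{d_v}$ is a standard template chosen in advance, so the relative geometry of $N_v$ and $M_{d_v}$ depends only on $d_v$ and the fixed radius $\lfloor k/3 \rfloor$, both bounded in a uniform way.

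\textbf{Counting and fiber.} For condition (4), the Euler identity $\sum_v (d_v - 6) = 12g - 12$ together with the bound $6 - d_v \leq 5$ for $d_v < 6$ gives $\sum_v |d_v - 6| \leq 10 |V_{\neq 6}(S)| + 12g$, exactly the number of bad vertices created in $B(S)$ by the surgery, so condition (4) holds with $\mu = 12$. For condition (5), the map $B$ is canonical once $S^{(k)}$ and the templates $M_d$ are fixed; the only freedom in recovering $S$ from $B(S)$ is the combinatorial identification of each $\partial N_v$ with $\partial M_{d_v}$, which has at most $C$ choices (given by the cyclic symmetries of the boundary polygon). Since each surgery region contributes at least one element to $V_{\neq 6}(B(S))$, the fiber bound $C^{|V_{\neq 6}(B(S))|}$ follows.
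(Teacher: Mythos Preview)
Your approach has the right overall shape, but the central step — showing the dilatation of $f_S$ is universal — has a genuine gap. You write that the constant is universal ``because the relative geometry of $N_v$ and $M_{d_v}$ depends only on $d_v$ and the fixed radius $\lfloor k/3\rfloor$, both bounded in a uniform way.'' But $d_v$ is \emph{not} bounded; vertices of $S$ can have arbitrarily large degree, and that is precisely what the theorem is meant to fix. Your appeal to \cref{extending bi-Lipschitz map 2} and \cref{extending quasisymmetric map 2} yields a constant $C(K_1,K_2,K_3)$ where the $K_i$ encode the bi-Lipschitz/quasicircle data of the domains. For $N_v$, a flat cone of total angle $d_v\pi/3$, there is no bi-Lipschitz homeomorphism to the unit disk with constant independent of $d_v$ (the obvious conformal map $z\mapsto z^{6/d_v}$ is not even Lipschitz at the apex), so those lemmas only give you $C(d_v)$.

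The paper handles exactly this point by constructing the replacement disk $TH_d$ explicitly as a stack of roughly $\log_2 d$ combinatorial annuli $TA_1,\dots,TA_k$, each of which (\cref{annulus vs tri annulus}) admits a $C$-quasiconformal map to a round annulus with $C$ independent of $d$. These maps are then \emph{glued side by side}, not composed, so the resulting map $TH_d\to\overline{\mathbb{D}}$ is piecewise $C$-quasiconformal and hence globally $C$-quasiconformal with a universal $C$. Together with the conformal map $TD_d\to\overline{\mathbb{D}}$ of \cref{tri disk vs disk} (which is exactly conformal by rotational symmetry), this gives the uniform bound in \cref{triangulated disk vs triangulated hyp disk}. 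This explicit layered construction is also what makes the fiber count work: the paper recovers $S$ from $B(S)$ by locating the centers of the $TH_d$ (vertices of degree $>6$) and using the rigidity of successive closed stars (\cref{two choices for d}) to pin down $d$. Your fiber argument assumes the surgery regions can be read off from $B(S)$, but without an explicit $M_{d_v}$ you have not shown how to distinguish which degree-$7$ vertices belong to which region, nor how to recover $d_v$ from a region.
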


To show \cref{bounded degree surface}, we first construct a local replacement for high degree vertices as follows.

For $d\in \mathbb{N}$, define the triangulated disk $TD_d$ to be the following triangulation of a topological disk by unit equilateral triangles: $d$ unit equilateral triangles are glued together to form a topological disk with one interior vertex of degree $d$. The boundary of $TD_d$, denoted $\partial TD_d$, consists of $d$ edges. We thus obtain a triangulated surface with boundary that we also call $TD_d$, which comes with a flat metric that may have a singularity at the interior vertex.

\begin{lemma}\label{tri disk vs disk} There is a conformal map $f:TD_d\to \overline{\mathbb{D}}$ such that $f:\partial TD_d\to S^1$ is a scaling map with respect to the restriction of the flat metric on $TD_d$ to $\partial TD_d$, and the Euclidean metric on $S^1$.
\end{lemma}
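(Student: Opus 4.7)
I would prove this lemma by combining conformal uniformization with the dihedral symmetry of $TD_d$. First, note that the cone singularity at the central vertex (of angle $d\pi/3$) is removable for the underlying Riemann surface structure, so $TD_d$ is a compact Riemann surface with boundary, homeomorphic to a closed disk. By the Riemann mapping theorem combined with Carath\'eodory's extension theorem, there exists a conformal biholomorphism $f : TD_d \to \overline{\mathbb{D}}$ extending to a homeomorphism on the boundaries. Normalize $f$ so that the central vertex is sent to $0$; this determines $f$ uniquely up to post-composition with a rotation of $\overline{\mathbb{D}}$.

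To pin down the boundary correspondence, exploit the dihedral symmetry group $G$ of $TD_d$ of order $2d$, generated by rotation by $2\pi/d$ about the central vertex and a reflection through one of the $d$ axes of symmetry. Conjugation by $f$ produces a faithful action of $G$ on $\overline{\mathbb{D}}$ by conformal and anti-conformal automorphisms fixing $0$, which must coincide with the standard dihedral action. Comparing linear Taylor coefficients of $f$ at the central vertex identifies the rotation by $2\pi/d$ on $TD_d$ with the rotation by $2\pi/d$ on $\overline{\mathbb{D}}$, rather than some other primitive $d$-th root of unity. Absorbing the residual rotation into the choice of $f$, the $d$ boundary vertices of $\partial TD_d$ are sent to the $d$-th roots of unity, and consequently each boundary edge is mapped onto one of the $d$ arcs of $S^1$ between consecutive roots of unity, all of equal length $2\pi/d$; since each boundary edge of $\partial TD_d$ has flat length $1$, the corresponding lengths scale by the constant factor $2\pi/d$.

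The main obstacle, and the final step, is to upgrade this edgewise length matching to genuine arclength scaling on each individual edge. Using the reflection symmetry across the perpendicular bisector of an edge $e$, $f$ sends the midpoint of $e$ to the midpoint of its image arc and is symmetric across these midpoints. To promote this midpoint symmetry to a full affine scaling in arclength, I would work with the explicit Schwarz-Christoffel-type formula for the inverse map $f^{-1}: \overline{\mathbb{D}} \to TD_d$, which takes the form $(f^{-1})'(z) = C \cdot z^{d/6 - 1} (z^d - 1)^{-1/3}$, combining a central cone factor $z^{d/6 - 1}$ (from the cone angle $d\pi/3$ at the origin) with boundary vertex factors $(z - e^{2\pi i k/d})^{-1/3}$ (from the interior angle $2\pi/3$ at each boundary vertex, where two triangles meet). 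Verifying that the boundary arclength speed $|(f^{-1})'|$ is constant along each open arc between consecutive $d$-th roots of unity is the heart of the argument; this is what pins down the scaling on each edge and closes out the proof.
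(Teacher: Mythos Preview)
The key claim in your final step is false: the boundary speed $|(f^{-1})'(e^{i\theta})|$ is \emph{not} constant along the arcs between consecutive $d$-th roots of unity. In your own Schwarz--Christoffel expression $(f^{-1})'(z) = C\,z^{d/6-1}(z^d-1)^{-1/3}$, the factor $z^{d/6-1}$ has unit modulus on $|z|=1$, but $|z^d-1|$ ranges from $0$ at each vertex to $2$ at each arc midpoint, so $|(f^{-1})'|$ blows up at every boundary vertex --- as it must, since $\partial TD_d$ has genuine corners of interior angle $2\pi/3$ there. Already the case $d=6$ is decisive: $TD_6$ is a flat regular hexagon with no cone point, and the Riemann map from the disk to a regular hexagon is the classical Schwarz--Christoffel map, whose boundary correspondence is well known not to be arclength-linear. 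Your symmetry argument correctly pins the vertices to the $d$-th roots of unity and the edge midpoints to arc midpoints, but the correspondence in between is genuinely nonlinear and cannot be repaired by a further M\"obius postcomposition, since M\"obius maps are smooth on $S^1$ while the defect has power-law singularities at the vertices.

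In fact the word ``conformal'' in the lemma statement appears to be a slip: no biholomorphism $TD_d\to\overline{\mathbb{D}}$ is boundary-scaling once $\partial TD_d$ has corners. The paper only ever uses the lemma in its $C$-quasiconformal form (see the opening line of the proof of \cref{triangulated disk vs triangulated hyp disk}), and the paper's own argument actually builds a $C$-quasiconformal map, not a conformal one. That construction is: use \cref{extending bi-Lipschitz map 2} to produce one fixed $C$-quasiconformal map from the unit equilateral triangle $T$ to the circular sector $W_{\pi/3}$ that is scaling on each of the three boundary edges; glue $d$ rotated copies to obtain a $C$-quasiconformal map from $TD_d$ to the round unit cone of total angle $d\pi/3$ that is scaling on its (now circular) boundary; then compose with the conformal map $w\mapsto w^{6/d}$ from that cone to $\overline{\mathbb{D}}$, which \emph{is} an honest boundary scaling because the cone boundary is already round. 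The corners of $\partial TD_d$ are absorbed by quasiconformal distortion inside each triangle --- precisely the freedom your purely conformal approach lacks.
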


\begin{remark} Scaling map here means all distances get scaled by a constant factor which is $$\displaystyle\frac{\length(\partial TD_d)}{\length (S^1)}.$$
\end{remark}

\begin{proof} Denote by $W_{\pi/3}$ the closed sector of the unit circle with angle $\pi/3$, and denote by $T$ the unit equilateral triangle. By \cref{extending bi-Lipschitz map 2}, there exists a $C$-quasiconformal map $W_{\pi/3}\to T$ that sends boundary vertices of $W_{\pi/3}$ to boundary vertices of $T$ and is scaling on each boundary edge. Gluing, we obtain a conformal map from $TD_d$ to the unit cone of angle $d\pi/3$, which is scaling on the boundary. The unit cone of angle $d\pi/3$ admits a conformal map to $\overline{\mathbb{D}}$ which is scaling on the boundary. Composing these two maps, we obtain the statement in the lemma.
\end{proof}

For $d\geq 8$, we now construct the triangulated hyperbolic disk $TH_d$ to be another topological disk formed by gluing unit equilateral triangles satisfying $\partial TH_d\simeq \partial TD_d$ (that is, $TH_d$ also has $d$ boundary edges). However, $TH_d$ will have more interior points, and therefore more triangles. The number of triangles in $TH_d$ will be bounded above by $Cd$. We construct $TH_d$ inductively by constructing annular layers starting from its boundary.

Label the boundary points of $\partial TH_d\simeq \partial TD_d$ by $x_{0,0},...,x_{0,d-1}$. If $d\geq 8$, construct $TA_1$, a triangulated annulus with outer and inner boundaries $\partial TA_1^+$ and $\partial TA_1^-$, respectively, such that 

\begin{enumerate} 

\item $\partial TA_1^+=\partial TH_d$, 

\item $\deg_{TA_1}(x_{0,j})=3$ if $j$ is even and $4$ if $j$ is odd,

\item $TA_1$ has no interior vertices.

\end{enumerate}

(Recall that the degree of a vertex of a triangulated surface with boundary is the number of edges emanating from the vertex.) These conditions determine $TA_1$ uniquely. That is, $\partial TA_1^-$ consists of $d_1$ edges and vertices where $d_1=\lfloor d/2\rfloor$. The vertices of $\partial TA_1^-$ may be labelled $x_{1,0},...,x_{1,d_1-1}$ (in cyclic order), such that the following holds. 

When $d$ is even, in $TA_1$, $x_{0,j}$ is connected by an edge to $x_{1,j/2}$ if $j$ is even, and $x_{0,j}$ is connected by an edge to $x_{1,(j-1)/2}$ and $x_{1,(j+1)/2}$ if $j<d-1$ is odd. Finally, $x_{0,d-1}$ is connected by an edge to $x_{1,(d-2)/2}$ and $x_{1,0}$. 

When $d$ is odd, $x_{0,j}$ is connected by an edge to $x_{1,j/2}$ if $j<d-1$ is even, and $x_{0,j}$ is connected by an edge to $x_{1,(j-1)/2}$ and $x_{1,(j+1)/2}$ if $j<d-2$ is odd. Moreover, $x_{0,d-1}$ is connected by an edge to $x_{1,0}$, while $x_{0,d-2}$ is connected by an edge to $x_{1,(d-3)/2}$ and $x_{1,0}$.

\begin{figure}[ht]

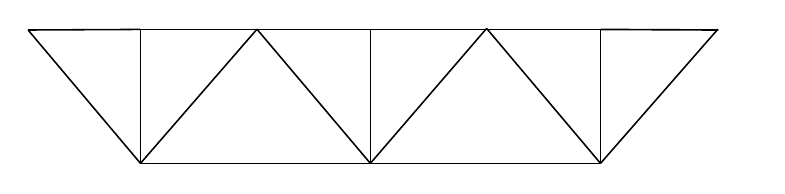

\caption{A piece of $TA_1$, with the triangles not to scale.}

\end{figure}

We now describe the second inductive step, constructing another annulus $TA_2$ with outer and inner boundaries $\partial TA_2^+$ and $\partial TA_2^-$, respectively, such that

\begin{enumerate}

\item $\partial TA_2^+=\partial TA_1^-$,

\item $\deg_{TA_2}(x_{1,j})=3$ if $j$ is even and $4$ if $j$ is odd,

\item $TA_2$ has no interior vertices.

\end{enumerate}

We continue this inductive process. At the $i$th inductive step, wherein the vertices of $TA_{i-1}^-$ are denoted $x_{i-1,0},...,x_{i-1,d_{i-1}-1}$  in cyclic order (where $d_{i-1}\sim d/2^{i-1}$), we construct annulus $TA_i$ with outer and inner boundaries $\partial TA_i^+$ and $\partial TA_i^-$ such that 

\begin{enumerate}

\item $\partial TA_i^+=\partial TA_{i-1}^-$,

\item $\deg_{TA_i}(x_{i,j})=3$ if $j$ is even and $4$ if $j$ is odd,

\item $A_i$ has no interior vertices.

\end{enumerate}

Here, $\partial TA_i^-$ consists of $d_i$ edges and vertices where $d_i=\lfloor d_{i-1}/2\rfloor$. The vertices of $\partial TA_i^-$ may by labelled $x_{i,0},...,x_{i,d_i-1}$ (in cyclic order) such that the following holds. 

When $d_{i-1}$ is even, in $TA_i$, $x_{i-1,j}$ is connected by an edge to $x_{i,j/2}$ if $j$ is even, and $x_{i-1,j}$ is connected by an edge to $x_{i,(j-1)/2}$ and $x_{i,(j+1)/2}$ if $j<d_{i-1}-1$ is odd. Finally, $x_{i-1,d_{i-1}-1}$ is connected by an edge to $x_{i,(d_{i-1}-2)/2}$ and $x_{i,0}$. 

When $d_{i-1}$ is odd, $x_{i-1,j}$ is connected by an edge to $x_{i,j/2}$ if $j<d_{i-1}-1$ is even, and $x_{i-1,j}$ is connected by an edge to $x_{i,(j-1)/2}$ and $x_{i,(j+1)/2}$ if $j<d_{i-1}-2$ is odd. Moreover, $x_{i,d_{i-1}-1}$ is connected by an edge to $x_{i,0}$, while $x_{i-1,d_{i-1}-2}$ is connected by an edge to $x_{i,(d_{i-1}-3)/2}$ and $x_{i,0}$.

We continue until we reach the $k$th step wherein $d_{k-1}\leq 7$. We let $TA_k=TD_{d_{k-1}}$ and glue $TA_k$ to $TA_{k-1}$ along the boundary $T\partial A_{k-1}^-$. Our construction of the $TA_1,...,TA_k$ naturally identifies $\partial TA_{i-1}^-$ with $\partial TA_i^+$. We define $TH_d$ to be the union of the $TA_1,...,TA_k$. We call the unique interior vertex of $TA_k$ the center of $TH_d$. From the construction described above, the following lemma is evident. 

\begin{lemma}\label{successive closed stars} Let $v$ be the center of $TH_d$. For $2\leq i\leq k$, the closed star of the union of $TA_{i},...,TA_k$ is the union of $TA_{i-1},...,TA_{k}$, and $TA_k$ is the closed star of $v$. In particular, the $k$th successive closed star of $v$ is $TH_d$.
\end{lemma}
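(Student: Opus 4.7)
The plan is to prove the two assertions by downward induction on $i$, using only the explicit combinatorial construction of the annuli $TA_1,\ldots,TA_k$ together with the two bookkeeping facts that (i) none of $TA_1,\ldots,TA_{k-1}$ has an interior vertex, and (ii) the vertex set of $\partial TA_j^{-}$ is shared only with $TA_{j+1}$ (which then has it as $\partial TA_{j+1}^{+}$), and lies in no other annulus.

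For the base case, observe that $TA_k = TD_{d_{k-1}}$, and by its definition $TD_{d_{k-1}}$ is the cone on its boundary with apex $v$: every one of its $d_{k-1}$ triangles has $v$ as a vertex. Hence $\mathrm{star}(v)$, which is the union of all closed simplices containing $v$, equals all of $TA_k$. This gives the second claim of the lemma and starts the induction.

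For the inductive step, suppose we have verified that the $(k-i+1)$-fold closed star of $v$ is $TA_i\cup\cdots\cup TA_k$, and let $A = TA_i\cup\cdots\cup TA_k$. I would compute $\mathrm{star}(A)$ by describing the vertex set $V(A)$ explicitly. Since none of $TA_i,\ldots,TA_{k-1}$ contributes interior vertices and only $TA_k$ contributes the single interior vertex $v$, one has
\[
V(A) \;=\; \{v\}\,\cup\,\partial TA_{k-1}^{-}\cup\partial TA_{k-2}^{-}\cup\cdots\cup\partial TA_{i-1}^{-}.
\]
For each $j\geq i$, the vertices on $\partial TA_j^{-}$ lie in exactly two annuli, namely $TA_j$ and $TA_{j+1}$, so their closed stars already lie inside $A$. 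The only vertices whose closed stars escape $A$ are those on $\partial TA_{i-1}^{-} = \partial TA_i^{+}$, and for these the escape is precisely into $TA_{i-1}$. This proves the inclusion $\mathrm{star}(A)\subseteq TA_{i-1}\cup\cdots\cup TA_k$.

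For the reverse inclusion I must check that every triangle of $TA_{i-1}$ is adjacent to some vertex of $\partial TA_{i-1}^{-}$. Since $TA_{i-1}$ has no interior vertex, each of its triangles has all three vertices on $\partial TA_{i-1}^{+}\cup\partial TA_{i-1}^{-}$; consulting the explicit local picture of the annulus (each $x_{i-2,j}$ on the outer boundary is joined to one or two $x_{i-1,\cdot}$ on the inner boundary, and every triangle involves both layers), one sees directly that no triangle of $TA_{i-1}$ has all three vertices on $\partial TA_{i-1}^{+}$. Hence every triangle of $TA_{i-1}$ hits $\partial TA_{i-1}^{-}\subset V(A)$, and therefore lies in $\mathrm{star}(A)$. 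This completes the induction and, at step $i=2$ (together with $TA_1$ being absorbed at the final step by the same argument, which forces the $k$-th closed star to equal $TH_d$), yields the ``in particular'' conclusion.

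The only step requiring real care, rather than bookkeeping, is the last one: verifying from the explicit description of the gluing pattern for $TA_j$ that no triangle of $TA_j$ is supported entirely on its outer boundary $\partial TA_j^{+}$. This is a finite case check based on the parity rules defining which $x_{j-1,\cdot}$ are joined to which $x_{j,\cdot}$, and it is where one actually uses the detailed structure of the inductive construction rather than just the three numbered properties of each annulus.
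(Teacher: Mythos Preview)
Your proof is correct and is essentially the only approach there is: unwind the explicit layered construction of $TH_d$ and check that every triangle of $TA_{i-1}$ meets the inner boundary $\partial TA_{i-1}^{-}$. The paper itself gives no proof beyond asserting that the lemma is ``evident'' from the construction; you have simply written out the details the paper omits.
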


We also have the following lemma. 

\begin{lemma}\label{degrees of TH_d vertices} There are at most $Cd$ vertices of $TH_d$. All interior vertices of $TH_d$ have degree at most $7$ and all boundary vertices have degree at most $4$. Moreover, each $TA_{i}$ for $2\leq i\leq k$ contains at least one outer boundary vertex that has degree $7$ in $TH_d$.
\end{lemma}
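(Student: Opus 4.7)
The plan is to unpack the recursive construction layer by layer.

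For the vertex count, each annulus $TA_i$ with $1 \leq i \leq k-1$ introduces its inner boundary $\partial TA_i^-$ (exactly $d_i \leq d/2^i$ vertices) and no interior vertices, while $TA_k = TD_{d_{k-1}}$ contributes only the center. Summing the geometric series yields
\[
|V(TH_d)| \leq d + \sum_{i \geq 1} d/2^i + 1 \leq Cd,
\]
with the depth $k = O(\log d)$ contributing nothing significant.

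For the degree bounds I would case-split on the location of the vertex. A boundary vertex $x_{0,j} \in \partial TH_d$ lies only in $TA_1$, so $\deg_{TH_d}(x_{0,j}) = \deg_{TA_1}(x_{0,j}) \in \{3,4\}$ directly from the construction. For an interior vertex $x_{i,j}$ with $1 \leq i \leq k-1$ lying on the shared circle $\partial TA_i^- = \partial TA_{i+1}^+$, I decompose its edges as (i) two along the shared circle to $x_{i, j\pm 1}$; (ii) three outer edges going into $TA_i$ to the triple $x_{i-1, 2j-1}, x_{i-1, 2j}, x_{i-1, 2j+1}$, which can be read directly off the adjacency rule of $TA_i$; and (iii) $\deg_{TA_{i+1}}(x_{i,j}) - 2$ inner edges going into $TA_{i+1}$, where the $-2$ removes the two boundary edges already counted. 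The inner contribution is $1$ or $2$ according to the parity of $j$ when $TA_{i+1}$ is a standard annulus, giving totals of $6$ or $7$; when $i+1 = k$ the contribution is $1$ regardless of parity, since each outer boundary vertex of $TD_{d_{k-1}}$ attaches to the center by a single edge. The center of $TH_d$, being the unique interior vertex of $TA_k$, has degree $d_{k-1} \leq 7$ by the termination criterion.

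For the moreover clause, fix $2 \leq i \leq k$ and consider the outer boundary of $TA_i$, namely $x_{i-1, 0}, \ldots, x_{i-1, d_{i-1}-1}$. Because the halving process has not terminated before step $i$, one has $d_{i-2} \geq 8$ and thus $d_{i-1} \geq 4$, so an odd-indexed vertex exists; for $2 \leq i \leq k-1$ the layer-$(i-1)$ case analysis above yields $\deg_{TH_d}(x_{i-1, j}) = 7$ for $j$ odd directly. The terminal case $i = k$ requires a brief separate check, since the inner contribution from $TA_k = TD_{d_{k-1}}$ is always $1$; here one appeals to the center, whose degree $d_{k-1}$ attains $7$ in the extremal termination case. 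The only real obstacle throughout is careful accounting of shared boundary edges between consecutive annuli so as not to double-count; once that bookkeeping is done, all three assertions reduce to reading off adjacency data from the inductive definition of the $TA_i$.
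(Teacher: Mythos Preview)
Your vertex count and degree-bound arguments follow the same layer-by-layer bookkeeping as the paper and are essentially correct. One small slip: claim (ii) of exactly three outward cross-edges fails for $x_{i,0}$ when $d_{i-1}$ is odd, since the wraparound produces a fourth neighbour $x_{i-1,d_{i-1}-2}$; but as $j=0$ is even the inward contribution (iii) is then $1$, so the total is still $2+4+1=7$ and the bound survives. The paper records precisely this exception when it says ``inner boundary vertices of $TA_i$ have degree at most $5$, except for vertex $x_{i,0}$ which has degree $6$.''

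The real gap is your treatment of $i=k$ in the moreover clause. The center is \emph{not} an outer boundary vertex of $TA_k$, so it cannot serve as the required witness regardless of its degree, and $d_{k-1}$ need not equal $7$ anyway. Your odd-index witness $x_{i-1,1}$ works cleanly for $2\le i\le k-1$ (and is in fact a more reliable choice than the paper's $x_{i-1,0}$, which has degree $7$ only when $d_{i-2}$ is odd), but it genuinely fails at $i=k$: every outer boundary vertex of $TA_k=TD_{d_{k-1}}$ has a single inward edge to the center, giving total degree $2+3+1=6$. For $d=8$, for instance, all four vertices $x_{1,j}$ have degree $6$ and the center has degree $4$, so no outer boundary vertex of $TA_k$ has degree $7$ at all. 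The clause at $i=k$ is therefore not salvageable by either your argument or the paper's; the downstream use of the lemma (in bounding the fibers of $B$) only ever invokes it at indices strictly below $k$, where your odd-index argument is valid.
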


\begin{proof} The total number of vertices of $TA_i$ is at most $Cd/2^i$, so the number of vertices of $TH_d$ is at most $Cd$. By construction, outer boundary vertices of $TA_i$ have degree at most $4$ (in particular, $x_{i-1,0}$ has degree $3$). Inner boundary vertices of $TA_i$ have degree at most $5$, except for vertex $x_{i,0}$ which has degree $6$. From this we obtain the second claim in the lemma. Finally, for $2\leq i\leq k$, $x_{i-1,0}$ has degree $7$ in $TH_d$.
\end{proof}

Our next goal is to show the following lemma. 

\begin{lemma}\label{triangulated disk vs triangulated hyp disk} There exists a $C$-quasiconformal map $$f:TH_d\to TD_d$$ which on the boundaries agrees with an identification $\partial TH_d\simeq \partial TD_d$. 
\end{lemma}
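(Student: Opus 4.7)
My plan is to reduce the problem to constructing a quasiconformal map $g: TH_d \to \overline{\mathbb{D}}$ with bounded dilatation whose restriction to $\partial TH_d$ scales each unit edge of $\partial TH_d$ onto an arc of length $2\pi/d$ on $\partial \mathbb{D}$ (so vertex images are equally spaced). Given such $g$, composing with the inverse of the conformal map $TD_d \to \overline{\mathbb{D}}$ from \cref{tri disk vs disk}---which is itself a boundary scaling---yields a $C$-quasiconformal map $TH_d \to TD_d$ whose boundary restriction is the canonical edge-to-edge identification $\partial TH_d \simeq \partial TD_d$.

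To build $g$, I use the layered decomposition $TH_d = TA_1 \cup TA_2 \cup \cdots \cup TA_k$ from the construction of $TH_d$. Choose nested radii $1 = r_0 > r_1 > \cdots > r_{k-1} > 0$ so that for $\ell < k$ the round annulus $A'_\ell := \{r_\ell \leq |z| \leq r_{\ell-1}\}$ has conformal modulus equal to that of $TA_\ell$, and set $A'_k := \{|z| \leq r_{k-1}\}$. A direct extremal length estimate using the flat metric gives $\modulus(TA_\ell) \sim 1/d_{\ell-1}$, so the total modulus $\sum_\ell \modulus(TA_\ell)$ is $O(1)$ and $r_{k-1}$ is bounded below by a universal constant.

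For $\ell < k$, partition $TA_\ell$ into $\lfloor d_{\ell-1}/2 \rfloor$ pairwise congruent sectors $S^{(\ell)}_{j'}$, each consisting of two upright triangles and one inverted triangle from the construction (plus at most one exceptional sector if $d_{\ell-1}$ is odd). Each $S^{(\ell)}_{j'}$ is isometric to a fixed isosceles trapezoid with parallel sides of lengths $1$ and $2$ and slant sides of length $1$, with the midpoint of the outer side marked as the image of $x_{\ell-1, 2j'+1}$. Correspondingly partition $A'_\ell$ into matching round-annular sectors $\Sigma^{(\ell)}_{j'}$ of angular width $4\pi/d_{\ell-1}$, each with the midpoint of its outer arc marked. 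Both $S^{(\ell)}_{j'}$ and $\Sigma^{(\ell)}_{j'}$ are simply connected planar regions with uniformly bounded-turning boundary; moreover, after rescaling $\Sigma^{(\ell)}_{j'}$ by $d_{\ell-1}/(2\pi)$ it becomes uniformly bi-Lipschitz close to a fixed model rectangle (because $\modulus(A'_\ell) \sim 1/d_{\ell-1}$ forces the angular width and radial depth of $\Sigma^{(\ell)}_{j'}$ to be comparable up to universal constants). The prescribed boundary identification, being linear scaling on each of five boundary pieces (two outer, one inner, two radial), is uniformly weakly quasisymmetric, so by \cref{extending bi-Lipschitz map 2} it extends to a $C$-quasiconformal map $S^{(\ell)}_{j'} \to \Sigma^{(\ell)}_{j'}$ with universal $C$. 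These sector maps glue along shared radial sides (where they agree by construction, being linear scalings of a unit edge onto the same radial segment) to produce $g_\ell: TA_\ell \to A'_\ell$. For the innermost piece $\ell = k$, $TA_k = TD_{d_{k-1}}$ has $d_{k-1} \leq 7$, hence bounded complexity, and \cref{tri disk vs disk} applied to $TA_k$ yields a conformal map to a round disk that we scale and rotate to identify with $A'_k$ so that inner boundary vertex positions match.

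The glued map $g: TH_d \to \overline{\mathbb{D}}$ is well-defined because $g_\ell$ and $g_{\ell+1}$ agree on $\partial TA_\ell^- = \partial TA_{\ell+1}^+$ (both send the $d_\ell$ shared vertices to the same equally spaced points on $|z| = r_\ell$ and are linear scaling on each shared unit edge), and it has universally bounded dilatation. The main technical obstacle is proving uniformity of the sector-map dilatation in $\ell$ and $d$; this reduces to the compactness claim above, namely that after rescaling the family $\{S^{(\ell)}_{j'}, \Sigma^{(\ell)}_{j'}\}$ forms a compact collection of planar domains with uniformly bounded-turning boundaries, so a single universal dilatation constant from \cref{extending bi-Lipschitz map 2} suffices across all layers and all values of $d$.
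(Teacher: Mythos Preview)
Your proposal is correct and follows essentially the same strategy as the paper: reduce to building a $C$-quasiconformal map $TH_d\to\overline{\mathbb{D}}$ that scales on the boundary, then handle each layer $TA_\ell$ separately by mapping it to a round annulus via a piece-by-piece extension argument using \cref{extending bi-Lipschitz map 2}, and finally glue. The paper packages the per-layer step as a separate lemma (\cref{annulus vs tri annulus}), decomposing each $TA_\ell$ into its individual triangles and mapping them to explicit triangular regions in an annulus $A(d_{\ell-1}-1,d_{\ell-1})$; you instead group three triangles into trapezoidal sectors, pick the radii by matching moduli, and argue uniformity via a rescaling/compactness observation. These are implementation variants of the same idea. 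One small point: your modulus-matching forces you to prove the two-sided estimate $\modulus(TA_\ell)\sim 1/d_{\ell-1}$ and the bound $r_{k-1}\geq c$, whereas the paper's explicit choice of radii sidesteps this; and your treatment of the exceptional sector when $d_{\ell-1}$ is odd and of the gluing of inner and outer vertex positions across consecutive layers should be spelled out, but these are routine.
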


To do this, we have the following preliminary lemma. 

\begin{lemma}\label{annulus vs tri annulus} Each triangulated annulus $TA_i$ admits a $C$-quasiconformal map to a closed annulus $$A(r,1)=\{x\in \mathbb{C}|r\leq |x|\leq 1\}$$ whose restriction $\partial TA_i\to \partial A(r,1)$ is a scaling map. Here, $C$ is a universal constant independent of $TA_i$.
\end{lemma}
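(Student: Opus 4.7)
The plan is to construct $f$ as a composition of two maps: a piecewise-affine homeomorphism $F : TA_i \to C$ where $C$ is an appropriate flat cylinder matched to the combinatorics of $TA_i$, followed by the standard conformal uniformization $\Psi : C \to A(r,1)$. The key point is that, after choosing the cylinder to have circumference $d_{i-1}$ and height $1$, the combinatorial structure of $TA_i$ can be realized by bounded-shape Euclidean triangles in $C$, so mapping equilateral triangles to them affinely produces a map of bounded dilatation.

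Concretely, take $C = (\mathbb{R}/d_{i-1}\mathbb{Z}) \times [0,1]$ and place $d_{i-1}$ equally spaced outer vertices $(j, 0)$ and $d_i$ equally spaced inner vertices $(j d_{i-1}/d_i, 1)$ on its boundary, connecting them by edges that reproduce exactly the incidence pattern of $TA_i$ described in Section 6. Since $d_i = \lfloor d_{i-1}/2\rfloor \geq 4$, the ratio $d_{i-1}/d_i$ lies in $[2, 9/4]$, so every triangle produced in $C$ has all sides of length $O(1)$ and all angles bounded away from $0$ and $\pi$ by universal constants (a direct inspection shows the relevant shapes are right triangles with legs of length $1$ and isoceles triangles with legs of length $\sqrt{1+(d_{i-1}/d_i)^2}$ and base $d_{i-1}/d_i$). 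Define $F : TA_i \to C$ by sending each unit equilateral triangle of $TA_i$ to its corresponding triangle in $C$ via the affine map matching vertex-to-vertex. Since the affine maps agree on shared edges, $F$ is a homeomorphism, and since each affine map between an equilateral triangle and a triangle of bounded shape has bounded quasiconformal dilatation, $F$ is $C$-quasiconformal.

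On the outer boundary of $TA_i$, each unit edge maps isometrically to a unit segment of $\partial C^+$, so $F|_{\partial TA_i^+}$ is scaling with factor $1$. On the inner boundary of $TA_i$, each unit edge maps to a segment of length $d_{i-1}/d_i$, so $F|_{\partial TA_i^-}$ is scaling with factor $d_{i-1}/d_i$. Now set $r = \exp(-2\pi/d_{i-1})$ and let $\Psi(y,x) = \exp\!\bigl(-2\pi(y + ix)/d_{i-1}\bigr)$, which is a conformal biholomorphism $C \to A(r,1)$ sending $y=0$ to the outer circle $|z|=1$ and $y=1$ to the inner circle $|z|=r$, and which is scaling on each boundary component (with factors $2\pi/d_{i-1}$ and $2\pi r/d_{i-1}$ respectively). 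Then $f = \Psi \circ F$ is $C$-quasiconformal and scaling on each boundary component of $TA_i$, as required.

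The only genuinely fiddly point is the combinatorial seam where indices wrap around $\mathbb{Z}/d_{i-1}\mathbb{Z}$, and (for odd $d_{i-1}$) the slight defect described in Section 6 involving $x_{i-1,d_{i-1}-1}$ and $x_{i-1,d_{i-1}-2}$. However, the seam affects only a universally bounded number of triangles, and the vertex coordinates I prescribed on $C$ still force these defect triangles to have bounded aspect ratio; hence the $C$-quasiconformality bound is unaffected. No part of the argument depends on $d$ quantitatively beyond the uniform lower bound $d_i \geq 4$, so all constants are universal.
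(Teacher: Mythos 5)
Your proof is correct but takes a genuinely different route from the paper. The paper also places $d_{i-1}$ equally spaced vertices on the outer boundary and $d_i$ on the inner, but works directly inside a round annulus $A(\alpha,\beta)$: the resulting regions are ``curved triangles'' with two straight sides and one circular arc, each shown to be a uniform quasidisk, and the vertex-matching boundary scaling is then extended region-by-region via the quasisymmetric extension lemma (\cref{extending bi-Lipschitz map 2}); this forces a case split on the size of $d_{i-1}$ because the arc curvature behaves differently in the large-$d_{i-1}$ and small-$d_{i-1}$ regimes. You factor instead through a flat cylinder $C$ of circumference $d_{i-1}$ and height $1$: the triangulation you prescribe in $C$ consists of genuine Euclidean triangles of uniformly bounded shape (because $d_{i-1}/d_i\in[2,9/4]$ and $d_{i-1}\geq 8$), so the piecewise-affine map $F$ has uniformly bounded dilatation by the standard fact that a piecewise-affine homeomorphism with bounded-dilatation pieces is quasiconformal, and the exponential $\Psi:C\to A(r,1)$ is conformal and scaling on each boundary circle. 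This sidesteps both the quasisymmetric-extension machinery and the case split, which is a real simplification. Two small slips worth flagging: your leg-length formula $\sqrt{1+(d_{i-1}/d_i)^2}$ for the ``isoceles'' triangles is incorrect --- with outer vertices at $(j,0)$ and inner vertices at $(jd_{i-1}/d_i,1)$, the triangle with vertices $(2k+1,0)$, $(2k,1)$, $(2k+2,1)$ in the even case has legs $\sqrt{2}$, not $\sqrt{5}$ --- and your coordinate conventions for $C$ and for $\Psi$ are inconsistent (in $\Psi(y,x)=\exp(-2\pi(y+ix)/d_{i-1})$ the height coordinate must be $y$, yet the vertex coordinates $(j,0)$ put the periodic coordinate first). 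Neither slip affects the argument: what one actually needs is only that every side length lies in a fixed interval such as $[1,\sqrt{5}]$ and every triangle area is at least $1/2$, both of which follow from your vertex placement since any two distinct vertices are at distance $\geq 1$ and the horizontal offset across any edge is at most $2$.
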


\begin{proof} We will construct a quasiconformal map to $$A(\alpha,\beta)=\{x\in \mathbb{C}|\alpha\leq |x|\leq \beta\}.$$ Then, composing with a scaling map gives the lemma statement. We will choose $\alpha$ and $\beta$ later. We first partition $A(\alpha,\beta)$ into triangular regions. Each triangular region consists of three boundary components, two of which are straight lines and one which is a circular arc. Let $(r,\theta)$ be polar coordinates on $\mathbb{C}$ centered at $0$. Denote by $$C_-=\{(r,\theta)|r=\alpha\}$$ and $$C_+=\{(r,\theta)|r=\beta\}$$ the two boundary circles of $A(\alpha,\beta)$ of radius $\alpha$ and $\beta$ respectively. Recall that the vertices of $\partial TA_{i}^+$ are $x_{i-1,0},...,x_{i,d_{i-1}-1}$ and the vertices of  $\partial TA_i^-$ are $x_{i,0},...,x_{i,d_{i}-1}$, where $d_{i}=\lfloor d_{i-1}/2\rfloor$. Now, let $y_{i-1,j}$ denote the vertex $$(r,\theta)=(\beta,2\pi j/d_{i-1})$$ (for $0\leq j\leq d_{i-1}-1$). Let $y_{i,j}$ denote the vertex $$(r,\theta)=(\alpha, 2\pi j/d_i)$$ (for $0\leq j\leq d_i-1$). We construct a triangulation of $A(\alpha,\beta)$ as follows. We let the vertices of the triangulation be the $y_{i-1,j}$ and $y_{i,j}$. Two vertices $y_{i-1,j_1}$ and $y_{i,j_2}$ are connected by an edge if $x_{i-1,j_1}$ and $x_{i,j_2}$ in $TA_i$ are connected by an edge. In this case, the edge between $y_{i-1,j_1}$ and $y_{i,j_2}$ is a straight line. 

\begin{figure}[ht]

\hspace{.4in}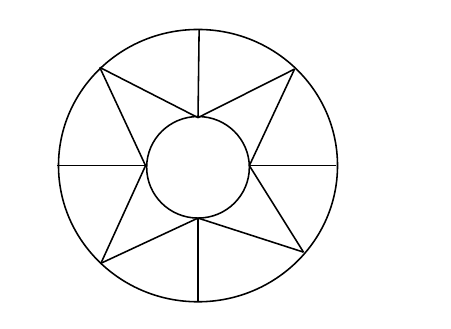

\caption{Triangulation of $A(\alpha,\beta)$ when $d_{i-1}=8$, which is combinatorially equivalent to $TA_i$ when $d_{i-1}=8$.}

\end{figure}

For $d_i$ sufficiently large, we take $\alpha=d_{i-1}-1$ and $\beta=d_{i-1}$, and this decomposition of $A(\alpha,\beta)$ into vertices and edges gives a triangulation of $A(\alpha,\beta)$. This is true since by the construction of $TA_i$, two vertices $y_{i-1,j_1}$ and $y_{i,j_2}$ are connected by an edge only if the distance between them is at most $C$. For small $d_i$ but still greater than $7$, we take $\alpha=1/10$ and $\beta=1$ to obtain a triangulation where the angles of each triangular region are nonzero. We claim that when $d_i$ is large, the angles of each triangular region are bounded below by a universal constant. We also claim that when $d_i$ is large, the lengths of the triangular sides are bounded below and above by universal constants. To see this, note that each triangular region $Q$ consists of two straight lines along with a circular arc. Associated to $Q$ is a genuine flat triangle $Q'$ which may be obtained by replacing the circular arc with a straight line, which we call the base of $Q'$. By construction, the circular arc has length bounded below and above by universal constants, and since $d_i$ is assumed to be sufficiently large, the straight line approximates the circular arc. So the length of the base of the triangle is bounded below and above by universal constants. Since the two circles $C_\alpha$ and $C_\beta$ are distance $1$ apart, the height of each triangle is also bounded below and above by universal constants. Hence, $\partial Q$ has three sides, two of which are straight lines and the third a circular arc, each with lengths bounded below and above by universal constants. Moreover the angles of $\partial Q$ are bounded below by a universal constant because the angles of $\partial Q'$ are bounded below by a universal constant. Therefore for all $d_i$, each triangular region $Q$ is a $C$-quasicircle, where $C$ is independent of $d_i$. The map from $\partial Q$ to $\partial T$ (the boundary of an equilateral triangle) which sends vertices of to vertices and is a scaling map on each of the three sides is $C$-bi-Lipschitz with respect to the Euclidean metric, therefore also $C$-weakly-quasisymmetric. Using \cref{extending bi-Lipschitz map 2} on each triangular region, we obtain a map to the unit equilateral triangle that is scaling on each boundary component. Gluing the inverses of these maps together, we obtain a map from $TA_i$ to $A(\alpha,\beta)$ which is a scaling map on the boundary.
\end{proof}

\begin{proof}[Proof of \cref{triangulated disk vs triangulated hyp disk}] We already have shown that there exists a $C$-quasiconformal map $$TD_d\to \overline{\mathbb{D}}$$ which is a scaling map on the boundary. To show \cref{triangulated disk vs triangulated hyp disk}, it suffices to construct a $C$-quasiconformal map $$TH_d\to \overline{\mathbb{D}}$$ that is a scaling map on the boundary. We do this inductively as follows. First, by \cref{annulus vs tri annulus}, $TA_1$ admits a $C$-quasiconformal map to the annulus $A(r_1,1)$ which is a scaling map on the boundary. Then $TA_2$ admits a $C$-quasiconformal map to the annulus $A(r_1r_2, r_1 )$ which is a scaling map on the boundary. In general, for $i\in \{1,...,k-1\}$,  $TA_i$ admits a $C$-quasiconformal map to the annulus $A(r_1...r_i,r_1...r_{i-1})$ which is scaling on the boundary. Finally, by \cref{tri disk vs disk} $TA_k$ (which is a triangulated disk) admits a $C$-quasiconformal map to the Euclidean disk of radius $r_1...r_{k-1}$ which is a scaling map on the boundary. These maps (after possible rotations) glue together to give the desired map $TH_d\to \overline{\mathbb{D}}$. 
\end{proof}

\begin{proof}[Proof of \cref{bounded degree surface}]
First, we replace $S$ with a surface $S_1$ in $\Tri{g}{16T}$ which is the $4$-subdivision of $S$ rescaled. Then we may replace the closed star around every vertex in $S_1$ of degree $d$ greater than $7$ (which is a copy of $TD_d$) by $TH_d$ to obtain a triangulated surface $S_2$. (These closed stars are all disjoint.) Then, take the rescaled $5$-subdivision of $S_2$ to obtain $B(S)$, a genus $g$ triangulated surface with at most $\sigma T$ triangles, for a constant $\sigma$ independent of $T$ and $g$. By construction, conditions 2 and 3 in the statement of \cref{bounded degree surface} are satisfied. Applying \cref{triangulated disk vs triangulated hyp disk} for each triangulated disk replacement and gluing, we have a $C$-quasiconformal map $f:S\to S_2$, and $S_2$ is naturally conformally equivalent to $B(S)$. This shows condition 1 in the statement of \cref{bounded degree surface}.

Now, $$|V_{\neq 6}(S)|=|V_{\neq 6}(S_1)|$$ and $$|V_{\neq 6}(B(S))|=|V_{\neq 6}(S_2)|.$$ Since $S_2$ is formed by replacing disjoint copies of $TD_d$ in $S_1$ with $TH_d$, by \cref{degrees of TH_d vertices}, we have $$|V_{\neq 6}(S_2)|\leq |V_{<6}(S_1)|+C\sum_{x\in V_{>6}(S_1)}\deg x.$$ By Euler characteristic considerations, $$\sum_{x\in V_{>6}(S_1)}\deg x\leq C|V_{<6}(S_1)|+Cg.$$ So
\begin{align*}|V_{\neq 6}(S_2)|&\leq C|V_{<6}(S_1)|+Cg\\&\leq C|V_{\neq 6}(S_1)|+Cg.
\end{align*} Hence $$|V_{\neq 6}(B(S))|\leq C(|V_{\neq 6}(S)|+g),$$ showing condition 4 in the statement of \cref{bounded degree surface}.

It remains to bound the cardinality of the fibers of $B$. Given $B(S)$, we may recover $S_2$ in the following way. We start by choosing a vertex of $B(S)$ with degree strictly greater than $6$ (which must exist by Euler characteristic considerations). This vertex must also be a vertex of $S_2$, and from this vertex we may inductively reconstruct $S_2$. Recall that $S_1$ may be constructed from $S_2$ by replacing certain (disjoint) copies of $TH_d$ by copies of $TD_d$. Each copy $TD_d$ contains its center, which is a vertex of degree $d>6$, and these centers must be distinct. There are therefore at most $C^{|V_{\neq 6}(B(S))|}$ total choices for the set of centers, and the set of centers has cardinality at most $|V_{\neq 6}(B(S))|$.

\begin{lemma}\label{two choices for d} Suppose $v\in V(S_2)$ is the center of a copy of $TH_d$. Given $S_2$ and $v$, there are at most two possible choices for $d$.
\end{lemma}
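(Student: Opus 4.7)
The plan is to read off the quantities $d_{k-1}, d_{k-2}, \dots, d_1$ from the combinatorics of the successive closed stars of $v$ in $S_2$, and then invoke the defining relation $d_1 = \lfloor d/2 \rfloor$ to conclude that $d \in \{2d_1, 2d_1+1\}$, which gives at most two candidate values.

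Concretely, write $N_j$ for the $j$-th successive closed star of $v$ in $S_2$ and set $n_j = |\partial N_j|$. Since $v$ is the center of the given copy of $TH_d$, \cref{successive closed stars} applied inside $TH_d$ shows that $N_j = TA_{k-j+1} \cup \dots \cup TA_k$ for every $j \leq k$, and hence $n_j = d_{k-j}$ for each such $j$. Thus the degree of $v$ in $S_2$ already yields $n_1 = d_{k-1}$, and each subsequent count $n_2, n_3, \dots, n_{k-1}$ reveals the next term of the sequence $(d_i)$. Moreover, for $j \leq k-1$ the cycle $\partial N_j$ is interior to $TH_d$, so by \cref{degrees of TH_d vertices} every vertex of $\partial N_j$ has degree at most $7$ in $S_2$.

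The main obstacle is to argue that the index $k-1$ itself can be identified from $S_2$ and $v$, so that we really read off $d_1$ and not some $d_i$ with $i \geq 2$. Here I plan to combine the rigid annular recipe used to build each $TA_i$ (which forces a precise interlocking of degree-$3$ and degree-$4$ outer-boundary vertices, and, for $i \leq k-1$, no interior vertices at all) with the bounded-degree structure of $S_2$ outside the replaced copies of $TD_d$: the $4$-subdivision $S_1$ has maximum vertex degree at most $7$ away from the centers that were replaced, so once $N_j$ starts to include triangles from $S_2 \setminus TH_d$ the newly added annular layer cannot simultaneously match the prescribed $TA_i$ connectivity on both of its boundary circles. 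This will rule out spurious continuations of the doubling relation $n_{j+1} \in \{2n_j, 2n_j+1\}$ beyond $j = k-1$ and pin $d_1 = n_{k-1}$ down uniquely.

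Once $d_1$ has been isolated in this way, the relation $d_1 = \lfloor d/2 \rfloor$ immediately forces $d \in \{2d_1, 2d_1+1\}$, and the residual two-fold ambiguity is precisely the parity of $d$. That parity controls only the single ``wraparound'' edge identification near $x_{0,d-1}$ and $x_{1,0}$ in the construction of $TA_1$, and is genuinely invisible from the interior layers $TA_2,\dots,TA_k$, so both candidate values of $d$ remain consistent with the portion of $S_2$ that we can actually probe from $v$.
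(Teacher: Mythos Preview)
Your constructive approach---reading off $n_j = |\partial N_j|$ from successive closed stars of $v$ and then trying to locate the outermost layer of $TH_d$---is a genuine alternative to the paper's contradiction argument, but the crucial stopping step is not established, and the reason you offer does not work. You invoke that ``the $4$-subdivision $S_1$ has maximum vertex degree at most $7$ away from the centers that were replaced,'' but this cannot distinguish the inside of $TH_d$ from the outside: by \cref{degrees of TH_d vertices} every interior vertex of $TH_d$ already has degree at most $7$, and in fact every vertex of $S_2$ does. Worse, the first annulus immediately outside $TH_d$ in $S_2$ (the layer between the distance-$1$ and distance-$2$ circles around the original high-degree vertex in $S_1$) is combinatorially indistinguishable from a legitimate $TA_1$: it has $2d$ outer vertices of alternating annulus-degrees $3,4,3,4,\dots$ and $d$ inner vertices each of annulus-degree $5$, with exactly the connectivity prescribed for $TA_1$ inside $TH_{2d}$. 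In particular $n_{k+1}=2d=2n_k$, so the doubling relation you propose does \emph{not} fail at $j=k-1$; your criterion overshoots.

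The paper sidesteps all of this by arguing by contradiction, using a sharper dichotomy you do not isolate: the last clause of \cref{degrees of TH_d vertices} says every inner annular boundary inside any $TH_{d''}$ carries a vertex of degree exactly $7$, whereas the boundary of the closed star of the replaced $TH_d$ in $S_2$ consists entirely of degree-$6$ vertices (these sit at graph distance $2$ from the center in the $4$-subdivision $S_1$ and are untouched subdivision vertices). If three values $d<d'<d''$ all worked, the closed star of $TH_d$ would lie strictly inside $TH_{d''}$, forcing a degree-$7$ vertex on a circle that must be entirely degree $6$. Your route can be repaired with this same $6$-versus-$7$ test, but then the detour through $d_1=\lfloor d/2\rfloor$ becomes superfluous: once the correct layer index is pinned down you read off $d=n_k$ directly, and the ``residual two-fold ambiguity'' you describe does not in fact arise from your argument.
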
 

\begin{proof} Suppose that $TH_{d}$, $TH_{d'}$ and $TH_{d''}$, with $d<d'<d''$ are all contained in $S_2$ and centered at $v$. By construction of $S_2$, all the boundary vertices of the closed star of $TH_d$ have degree $6$ in $S_2$. (This is true because $S_1$ is the rescaled $4$-subdivision of $S$, and to construct $S_2$ from $S_1$ we only replace stars of vertices of degree $d\geq 8$ by copies of $TH_d$.) Since $TH_{d'}$ and $TH_{d''}$ are also subsets of $S_2$ centered at $v$ and $d<d'<d''$, by \cref{successive closed stars} we must have that the closed star of $TH_d$ in $S_2$ is contained in the interior of $TH_{d''}$. In other words, boundary vertices of the closed start of $TH_d$ are interior vertices of $TH_{d''}$. This contradicts \cref{degrees of TH_d vertices}.
\end{proof}

We continue with the proof of \cref{bounded degree surface}. Given a set of centers, by \cref{two choices for d} there are at most $C^{|V_{\neq 6}(B(S))|}$ possibilities for the choice of triangulated hyperbolic disks $TH_d$ centered at these centers. Once these disks are chosen, there is a unique choice of replacement ($TD_d$) for each triangulated hyperbolic disk, hence $S_1$ may be reconstructed. Finally, given $S_1$, $S$ can be recovered by choosing a vertex of $S_1$ with degree strictly greater than $6$, which must also be a vertex of $S$, and inductively reconstructing $S$ starting from this vertex. Thus, given $B(S)$, there are at most $C^{|V_{\neq 6}(B(S))|}$ possibilities for $S$. This shows condition 5 in the statement of \cref{bounded degree surface}.
\end{proof}

\subsection{Upper bounds for triangulated surfaces in terms of locally bounded surfaces}

We have the following corollary of \cref{bounded degree surface}.

\begin{cor}\label{trim vs tricm} There exists a universal constant $C$ such that $$\Ntri(T,g,m,r)\leq C^{m+g} \Ntric(\sigma T,g,\mu (m+g),r+C)$$ for $g\geq 2$. Here, $\sigma$ and $\mu$ are the constants defined in the statement of \cref{bounded degree surface}.
\end{cor}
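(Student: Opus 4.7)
The plan is to apply the bounded-degree approximation $B:\Tri{g}{T}\to \Tri{g}{\leq \sigma T}$ from \cref{bounded degree surface} to every surface being counted and then use the finiteness of its fibers to trade one counting function for the other.

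More precisely, fix $g'\leq g$ and $X\in \mathcal{T}_{g'}$ achieving (up to a small slack) the supremum defining $\Ntri(T,g,m,r)$, and let $\mathcal{S}\subset \Tri{\leq g}{\leq T,\leq m}$ be the set of marked triangulated surfaces with $\Phi(S)\in B_{d_T}(X,r)$. First I would verify that $B$ sends $\mathcal{S}$ into $\Tric{\leq g}{\leq \sigma T,\leq \mu(m+g)}$ and keeps the image inside a slightly larger Teichm\"uller ball. Condition (2) of \cref{bounded degree surface} gives the degree-$\leq 7$ property, while condition (3) is designed to provide precisely the $3$-subdivision requirement in the definition of $\Tric{}{}$ (since having all distance-$\neq 6$ vertices separated by distance at least $3$ in the flat metric is exactly what lets one pass to the coarser side-length-$3$ triangulation). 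Condition (4) gives $|V_{\neq 6}(B(S))|\leq \mu(|V_{\neq 6}(S)|+g)\leq \mu(m+g)$, so $B(S)\in \Tric{\leq g}{\leq \sigma T,\leq \mu(m+g)}$. Condition (1) gives a $C$-quasiconformal map $S\to B(S)$, hence $d_T(\Phi(S),\Phi(B(S)))\leq \tfrac{1}{2}\log C$, so $\Phi(B(S))\in B_{d_T}(X,r+C')$ for the universal constant $C'=\tfrac{1}{2}\log C$.

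Next I would use condition (5) to bound the multiplicity of $B$. Since each fiber $B^{-1}(B(S))\cap \mathcal{S}$ has cardinality at most $C^{|V_{\neq 6}(B(S))|}\leq C^{\mu(m+g)}$, the pigeonhole principle gives
\[
|\mathcal{S}|\;\leq\; C^{\mu(m+g)}\,\bigl|\,B(\mathcal{S})\,\bigr|.
\]
By the previous paragraph, $B(\mathcal{S})$ is a subset of those elements of $\Tric{\leq g}{\leq \sigma T,\leq \mu(m+g)}$ whose image under $\Phi$ lies in $B_{d_T}(X,r+C')$, so
\[
\bigl|\,B(\mathcal{S})\,\bigr|\;\leq\;\Ntric(\sigma T,g,\mu(m+g),r+C').
\]
Combining the two displays (and absorbing $C^{\mu}$ into a new universal constant still called $C$) yields the claimed bound $\Ntri(T,g,m,r)\leq C^{m+g}\,\Ntric(\sigma T,g,\mu(m+g),r+C)$ after taking the supremum over $X$.

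There is essentially no obstacle beyond bookkeeping: all of the substantive work is already packaged into \cref{bounded degree surface}. The only two points that require care are (i) confirming that conditions (2) and (3) of that theorem, taken together, really do place $B(S)$ in $\Tric{}{}$ (in particular, that the separation condition (3) implies the existence of the coarse triangulation $S_{\lb}$ by equilateral triangles of side length $3$), and (ii) correctly tracking the hypothesis $g\geq 2$ so that the Teichm\"uller metric and the inequality $d_T(\Phi(S),\Phi(B(S)))\leq \tfrac12\log C$ from quasiconformality are legitimate for every genus appearing in $\Tri{\leq g}{}$ — for genus $0$ and $1$ terms, \cref{counting function notation} tells us radii are ignored, so these contribute only a bounded multiplicative factor that can be absorbed into $C^{m+g}$.
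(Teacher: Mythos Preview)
Your approach is essentially identical to the paper's: apply $B$, use condition (1) for the Teichm\"uller distance shift, condition (4) for the $m$-parameter, and condition (5) for the fiber bound. One clarification on your flagged point (i): condition (3) of \cref{bounded degree surface} (separation of degree-$\neq 6$ vertices by distance $\geq 3$) does \emph{not} by itself imply the $3$-subdivision property required for membership in $\Tric{}{}$; that implication simply fails in general. The reason $B(S)\in\Tric{}{}$ is rather that in the \emph{construction} of $B$ (in the proof of \cref{bounded degree surface}) the last step is literally ``take the rescaled $3$-subdivision of $S_2$ to obtain $B(S)$,'' so the coarse triangulation $S_{\lb}$ exists by fiat. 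The paper's own proof of the corollary just asserts $B(S)\in\Tric{}{}$ without comment, relying on this.
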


\begin{proof} Let $X\in \mathcal{T}_g$. Suppose $S\in \Tri{g}{\leq T,\leq m}$ such that $\Phi(S)\in B_{d_T}(X,r)$. By \cref{bounded degree surface}, there exists a triangulated surface $B(S)$ in $\Tric{g}{\leq \sigma T,\leq \mu (m+g)}$ such that $$d_T(\Phi(B(S)),\Phi(S))\leq C.$$ Here, $B$ is the map defined in statement of \cref{bounded degree surface}. Since 
\begin{align*}|V_{\neq 6}(B(S))|&\leq \mu (|V_{\neq 6}(S)|+g)\\&\leq \mu (m+g),
\end{align*} the fibers of $B$ have cardinality at most $C^{m+g}$. Summing over all $g'\leq g$, we have $$\Ntri(T,g,m,r)\leq C^{m+g} \Ntric(\sigma T,g,\mu (m+g), r+C),$$ as desired.
\end{proof}

\section{Upper bounds for combinatorial translation surfaces via triangulated surfaces}\label{bound tran by tri}

In this section, we shall prove the following result.

\begin{lemma}\label{tranc vs tri} There exists a universal constant $C$ such that $$\Ntranc(T,g,r)\leq (T/g)^{C(1+r)g}\sum_{ \substack{n\leq (1/100)g\\T_1+...+T_n\leq 2T\\ g_1+...+g_n\leq \mu^{-1}(1/100)g\\g_1,...,g_n\geq 2\\ m_1+...+m_n\leq \mu^{-1}(1/100)g}} \prod_{i=1}^n \Ntri(T_i,g_i,m_i, r+C)$$ for $g\geq 2$. Here, $\mu$ is the constant defined in the statement of \cref{bounded degree surface}.
\end{lemma}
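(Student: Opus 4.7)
The plan is to enumerate $S\in \Tranc{\leq g}{\leq T}$ with $\Phi(S)\in B_{d_T}(X,r)$ by first enumerating the cohomology class of $\phi_S$ via a net argument in the Hodge norm on a fixed reference surface, and then counting, for each approximate cohomology class, the number of compatible combinatorial translation structures by reducing to triangulated surface counts on small-genus subsurfaces. Fix $X\in \mathcal{T}_{\leq g}$. For each such $S$ the Teichm\"{u}ller map $f_S\colon X\to \Phi(S)$ is $e^{2r}$-quasiconformal, so $f_S^*[\phi_S]\in H^1(X,\mathbb{C})$ has Hodge norm at most $e^r\|\phi_S\|_{\Phi(S)}$ by \cref{qc map Hodge norm cohomology}. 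Since the $|\phi_S|^2$-area of $\Phi(S)$ equals the combinatorial flat area, $\|\phi_S\|_{\Phi(S)}\leq CT^{1/2}$. The unique harmonic representative $\omega_S$ of $f_S^*[\phi_S]$ on $X$ then satisfies $\|f_S^*\phi_S-\omega_S\|_X\leq C(e^{2r}-1)^{1/2}T^{1/2}$ by \cref{pullback of harmonic form close to harmonic representative}.

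The next step is to discretize. The Hodge-norm ball of radius $Ce^r T^{1/2}$ in the $4g$-real-dimensional space $H^1(X,\mathbb{C})$ admits a net at scale comparable to $g^{1/2}$ of cardinality at most $(CT^{1/2}e^r/g^{1/2})^{4g}\leq (T/g)^{C(1+r)g}$, using that $T/g$ is bounded away from $1$ by Euler characteristic. I would round $f_S^*[\phi_S]$ to the nearest net point $\alpha$. The class $\alpha$, together with the integrality of $\phi_S$-periods between high-degree vertices (condition 2 in the definition of locally bounded combinatorial translation surface) and the bi-Lipschitz comparison \cref{qc to lipschitz}, is intended to rigidify the $\phi_S$-period data on a large subset of $X$.

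To make the rigidity precise, lift $\omega_S$ to the universal cover $\widetilde{X}$ and integrate to produce a developing map $\widetilde{X}\to \mathbb{C}$. The preimages of the hexagonal lattice $3\mathbb{Z}+3e^{\pi i/3}\mathbb{Z}$ provide candidate positions for the images of the high-degree vertices of $S$ under the period map $z\mapsto \int \phi_S$. Applying Chebyshev-type bounds to the $L^2$ error $\|f_S^*\phi_S-\omega_S\|_X$ along short arcs, I would show that away from a subset $U\subset X$ whose $\phi_S$-area is at most $Cg$, every vertex of $f_S^{-1}(V(S))$ snaps to a unique lattice point and every face of $S$ is combinatorially forced. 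The bounded-degree hypothesis $\deg\leq 42$ in the definition of $\Tranc{}{}$ then uniquely determines the triangulation of $S$ restricted to $X\setminus U$.

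Finally, the complement $X\setminus U$, pulled back to $S$, is a triangulated surface with boundary; let its connected components have genus $g_i$, $T_i$ triangles, and $m_i$ vertices of degree not equal to $6$. The area bound on $U$ forces $\sum T_i\leq T$, while genus and singular-vertex bookkeeping (bounded by a small fraction of $g$ since $U$ has $\phi_S$-area at most $Cg$ and locally bounded combinatorial translation surfaces satisfy $|V_{\neq 6}|\leq Cg$) yields $\sum g_i\leq \mu^{-1}(1/100)g$, $\sum m_i\leq \mu^{-1}(1/100)g$, and $n\leq (1/100)g$. Doubling each component via \cref{double of triangulated surface with boundary} gives a closed triangulated surface of genus $g_i$ with at most $2T_i$ triangles, whose Teichm\"{u}ller class lies within $r+C$ of a reference point in $\mathcal{T}_{g_i}$. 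Enumerating each component by $\Ntri(2T_i,g_i,m_i,r+C)$, summing over admissible decompositions, and multiplying by the net cardinality $(T/g)^{C(1+r)g}$ (which also absorbs the combinatorics of how the pieces attach to the rigid region) yields the lemma. The chief obstacle is the rigidity step: converting the global $L^2$ smallness of $f_S^*\phi_S-\omega_S$ into pointwise combinatorial rigidity on most of $X$, where the integrality-of-periods and degree bounds in the definition of $\Tranc{}{}$ are both essential.
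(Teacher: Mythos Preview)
Your high-level plan matches the paper's: discretize in $H^1(X,\mathbb{C})$ with the Hodge norm, use \cref{pullback of harmonic form close to harmonic representative} to pass from cohomology to the actual $1$-forms, then rigidify the triangulation on most of the surface and count the leftover low-genus pieces by $\Ntri$. But there is a genuine gap in the very first step that makes the rest of the argument unworkable as written.

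The bound you quote, $\|f_S^*\phi_S-\omega_S\|_X\leq C(e^{2r}-1)^{1/2}T^{1/2}$, is of order $T^{1/2}$ for any fixed $r>0$. Chebyshev then says the set where the pointwise error exceeds a constant has $|\phi_S|$-area of order $T$, i.e.\ the whole surface, so no rigidity follows. The paper fixes this by \emph{first} covering $B_{d_T}(X,r)$ by balls of radius $(g/T)^{\kappa_2}$ using \cref{Teichmuller bound 3}; this is where the factor $(T/g)^{C(1+r)g}$ actually comes from. Inside such a tiny ball the quasiconformal constant is $1+O((g/T)^{\kappa_2})$, so \cref{pullback of harmonic form close to harmonic representative} gives $\|f^*\phi_{S_Y}-(f^*\phi_{S_Y})^h\|_X\leq C(g/T)^{\kappa_2/2}T^{1/2}$, which is now small enough (after a further net at scale $(g/T)^{\kappa_3}g^{1/2}$) to run the rigidity argument. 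Your net at scale $g^{1/2}$ and your direct use of radius $r$ both miss this.

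There is a second structural issue. You compare $f_S^*\phi_S$ to its harmonic representative $\omega_S$ on $X$, but $\omega_S$ carries no lattice structure: its developing map has no reason to send high-degree vertices near $3\mathbb{Z}+3e^{\pi i/3}\mathbb{Z}$. The paper instead fixes one reference $S_X$ with $\Phi(S_X)=X$ and compares $\phi_{S_X}$ to $f^*\phi_{S_Y}$ directly (\cref{quantitative form determines triangulation}). Both forms come from combinatorial translation structures, so the integrality of periods and the bounded vertex degrees can be played off against one another in \cref{6.1}--\cref{6.3}. Finally, two smaller points: you have the roles of $U$ and $X\setminus U$ reversed (it is the \emph{bad} region that gets doubled and counted by $\Ntri$), and the bound $\sum g_i\leq \mu^{-1}(1/100)g$ does not follow from an area bound on $U$ alone; the paper obtains it via the parallelogram decomposition of \cref{6.4} and an Euler-characteristic computation (\cref{J genus}, \cref{K genus}).
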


\subsection{Hodge norms and roadmap to prove \cref{tranc vs tri}}\label{6.0} Let $X\in \mathcal{M}_g$. We will first compute $$|\{S_Y\in \textstyle\Tranc{g}{T}|\Phi(S_Y)\in B_{d_T}(X,r)\}|.$$ To do this, we will first compute $$|\{S_Y\in \textstyle\Tranc{g}{T}|\Phi(S_Y)\in B_{d_T}(X,(g/T)^{\kappa_0})\}|$$ for an appropriate integer $\kappa_0\in \mathbb{N}$ to be chosen later. Then we will use \cref{Teichmuller bound 3} to compute $$|\{S_Y\in \textstyle\Tranc{g}{T}|\Phi(S_Y)\in B_{d_T}(X,r)\}|.$$ 

For any $$Y\in B_{d_T}(X,(g/T)^{\kappa_0}),$$ there exists a diffeomorphism surfaces $$f:X\to Y$$ such that $f$ is $1+8(g/T)^{\kappa_0}$-quasiconformal. 

We define a map $$H_X: \{S_Y\in \textstyle\Tranc{g}{T}|\Phi(S_Y)\in B_{d_T}(X,(g/T)^{\kappa_0})\}\to H^1(X,\mathbb{C})$$ that sends $S_Y$ to the cohomology class represented by $f^*\phi_{S_Y}$. We count the quantity $$|\{S_Y\in \textstyle\Tranc{g}{T}|\Phi(S_Y)\in B_{d_T}(X,(g/T)^{\kappa_0})\}|$$ in two steps. First, we compute the number of $S_Y\in \Tranc{g}{T}$ such that $$\Phi(S_Y)\in B_{d_T}(X,(g/T)^{\kappa_0})$$ and $H_X(S_Y)$ is close in the Hodge metric to a fixed cohomology class in $H^1(X,\mathbb{C})$. Then we bound the number of cohomology classes, quantitatively. 

\begin{lemma}\label{quantitative form determines triangulation} Let $X\in \mathcal{M}_g$ and suppose $S_X\in \Tranc{g}{T}$ such that $\Phi(S_X)=X$. Then there are at most $$(T/g)^{Cg}\sum_{\substack{n\leq (1/100)g\\T_1+...+T_n\leq T\\ g_1+...+g_n\leq \mu^{-1}(1/100)g\\g_1,...,g_n\geq 2\\ m_1+...+m_n\leq \mu^{-1}(1/100)g}} \prod_{i=1}^n \Ntri(2T_i,g_i,m_i, r+C)$$ number of $S_Y\in \Tranc{g}{T}$ that satisfy the following properties:
\begin{enumerate}

\item $$Y=\Phi(S_Y)\in B_{d_T}(X,(g/T)^{\kappa_0})$$ and

\item $$\| \phi_{S_X}-f^* \phi_{S_Y}\|_X\leq \alpha_1(g/T)^{\kappa_1}g^{1/2}.$$

\end{enumerate} Here, $\alpha_1$ is a sufficiently small universal constant and $\kappa_1$ is a sufficiently large universal constant. We choose these constants in \cref{Constants}. 
\end{lemma}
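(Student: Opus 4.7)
The plan is to count $S_Y$ in two stages: first bound the number of ways to match the zero set $V_{>6}(S_Y)$ against $V_{>6}(S_X)$ via $f^{-1}$, and then bound the combinatorial data on the subregions where the triangulations of $S_X$ and $S_Y$ genuinely disagree. Throughout, $S_X$ is fixed and we exploit that both $S_X$ and $S_Y$ lie in $\Tranc{g}{T}$, so all vertex degrees are bounded by $42$ and, by \cref{periods of tri tran surface}, periods between vertices in $V_{>6}$ lie in the lattice $3\mathbb{Z}+3e^{\pi i/3}\mathbb{Z}$.

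First, I would use \cref{pullback of harmonic form close to harmonic representative}, together with the hypothesis $\|\phi_{S_X}-f^*\phi_{S_Y}\|_X\leq \alpha_1(g/T)^{\kappa_1}g^{1/2}$, to conclude that the cohomology class $[\phi_{S_X}]-[f^*\phi_{S_Y}]\in H^1(X,\mathbb{C})$ has Hodge norm at most $C\alpha_1(g/T)^{\kappa_1}g^{1/2}$. Since $\phi_{S_X}$ is holomorphic and hence harmonic, and the Hodge norm controls $L^2$ mass, I would then deduce by a mean-value estimate on small balls in the $|\phi_{S_X}|$-flat metric (using that locally bounded combinatorial translation surfaces have uniform control on local geometry away from zeros) that for ``most'' vertices $v\in V_{>6}(S_X)$ there is a unique \emph{good} matching vertex $w\in V_{>6}(S_Y)$ with $f^{-1}(w)$ close to $v$ in the flat metric.

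Second, for each good vertex $v$, the lattice constraint on periods forces the position of $f^{-1}(w)$ in $\phi_{S_X}$-flat coordinates near $v$ to lie in a discrete set, and the pointwise closeness from Step 1 localizes this choice to at most $C\cdot T/g$ possibilities. Since $|V_{>6}(S_X)|\leq 2g-2$, the total number of matchings is at most $(T/g)^{Cg}$. Having fixed the matching, I would decompose $X$ into the \emph{good} region, where $f$ can be modified to send equilateral triangles of $S_X$ to equilateral triangles of $S_Y$, and its complement, the \emph{bad} region. Each connected component of the bad region is bounded by a closed loop of edges of $S_X$ and carries, via $f$, a subregion of $S_Y$ that is a triangulated surface with boundary; by \cref{double of triangulated surface with boundary} its conformal double is a closed triangulated surface of genus $g_i$ with at most $2T_i$ triangles and $m_i$ high-degree vertices, contributing at most $\Ntri(2T_i,g_i,m_i,r+C)$ possibilities. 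The constraints $n\leq g/100$, $\sum g_i\leq \mu^{-1}g/100$, $\sum m_i\leq \mu^{-1}g/100$, and $\sum T_i\leq T$ follow from a topological count: the total topology that can be concentrated in bad components is controlled by the number of unmatched vertices from Step 1, itself bounded by $|V_{>6}(S_X)|\leq 2g-2$ and the Hodge norm hypothesis.

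The main obstacle is Step 1, the passage from the $L^2$ Hodge norm bound to pointwise information on vertex positions, since an $L^2$ bound does not a priori control pointwise values. The remedy is to use holomorphicity of $\phi_{S_X}$ together with the local boundedness hypothesis, which yields a uniform lower bound on the flat injectivity radius away from zeros, allowing mean-value and Cauchy--Schwarz estimates along short paths at the loss of only a polynomial factor in $g/T$. Choosing $\kappa_0$ sufficiently larger than $\kappa_1$ and $\alpha_1$ sufficiently small (as allowed by the statement) absorbs these losses. A secondary subtlety is verifying that the bad region decomposes into at most $n\leq g/100$ components rather than a larger number; this requires an Euler-characteristic/combinatorial argument using that a component with positive genus must absorb at least one vertex of $V_{>6}(S_X)\cup V_{>6}(S_Y)$, of which there are at most $O(g)$ to go around.
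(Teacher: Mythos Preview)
Your proposal has the right overall shape---match zeros, identify a ``good'' region carrying a simplicial isomorphism, and count the doubles of the ``bad'' components via $\Ntri$---but it is missing the central organizing device of the paper's argument and contains a counting step that does not work as written.

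The key missing ingredient is the \emph{parallelogram decomposition} of $S_X$ (Section~7.5 in the paper). The paper does not try to propagate a simplicial isomorphism directly from matched zeros; instead it cuts $S_X$ along horizontal and diagonal edge-trajectories emanating from $V_{>6}(S_X)$ into at most $12(g-1)$ flat parallelograms $R_1,\dots,R_N$, each of which has a corner in $V_{>6}(S_X)$. One then throws away parallelograms that (i) have no corner in $V_X$, (ii) are too long or wide, or (iii) carry too much $L^2$ mass of $|f^*\phi_{S_Y}-\phi_{S_X}|^2$; Lemmas~7.12, 7.13, 7.16 show that each of these steps discards only $O(\alpha g)$ parallelograms with $\alpha$ as small as desired. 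On the surviving parallelograms one \emph{iterates} the edge lemma (Lemma~7.8) along edge paths of length at most $\delta T/g$ to build the simplicial map $\mathfrak f$. Your sentence ``decompose $X$ into the good region, where $f$ can be modified to send equilateral triangles of $S_X$ to equilateral triangles of $S_Y$'' hides exactly this work: without the parallelogram structure you have no a priori bound on the path length needed to reach an arbitrary triangle from a matched zero, and the iterated error in Lemma~7.8 blows up.

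Your Step~2 counting is not valid. You write that ``the lattice constraint on periods forces the position of $f^{-1}(w)$ in $\phi_{S_X}$-flat coordinates near $v$ to lie in a discrete set.'' But $f$ is merely $(1+\alpha_0(g/T)^{\kappa_0})$-quasiconformal, not simplicial, so $f^{-1}(w)$ has no reason to sit at a lattice point of the $S_X$-chart; the lattice constraint lives on $S_Y$, not on $X$. In the paper the $(T/g)^{Cg}$ factor does \emph{not} come from counting zero positions; it comes from (a) the $2^{|F(B_X)|}\le C^g$ choices of the good set $E_2\subset F(B_X)$, (b) the $(T/g)^{Cg}$ count of genus $\le 1$ components of $K_Y$ (Lemma~7.22), and (c) the $(T/g)^{Cg}$ count of boundary gluings of $J_Y$ to $G'_Y$. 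Finally, your appeal to \cref{pullback of harmonic form close to harmonic representative} is inverted: that lemma is used to pass from closeness of cohomology classes to closeness of forms (this is how Corollary~7.5 is deduced \emph{from} the present lemma), whereas here the hypothesis already hands you the $L^2$ bound on the forms, and the work is the co-area/Cauchy--Schwarz extraction of pointwise information in Lemmas~7.7 and~7.8, not a mean-value inequality.
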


As a corollary:

\begin{cor}\label{count cohomology classes} We have,
\begin{multline*}|\{S_Y\in \textstyle\Tranc{g}{T}|\Phi(S_Y)\in B_{d_T}(X,(g/T)^{\kappa_2}))\}|  \\ \leq  (T/g)^{Cg}\sum_{\substack{n\leq (1/100)g\\ T_1+...+T_n\leq T\\ g_1+...+g_n\leq \mu^{-1}(1/100)g\\g_1,...,g_n\geq 2\\ m_1+...+m_n\leq \mu^{-1}(1/100)g}} \prod_{i=1}^n \Ntri(2T_i,g_i,m_i, r+C)
\end{multline*}
where $\kappa_2$ is a sufficiently large universal constant chosen in \cref{Constants}, satisfying $\kappa_2\geq \kappa_0$, $(\kappa_2-1)/2\geq \kappa_1$ and $$100\alpha_1^{-1}(1/2)^{(\kappa_2-1)/2-\kappa_1}\leq 1.$$
\end{cor}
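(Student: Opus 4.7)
The plan is to reduce \cref{count cohomology classes} to \cref{quantitative form determines triangulation} by covering the image of $H_X$ in $H^1(X,\mathbb{C})$ by Hodge-norm balls of a carefully chosen small radius, and applying the Lemma to each. If the set to be bounded is empty there is nothing to prove, so we may pick an element $S_{X_0}$ in it, set $X_0 = \Phi(S_{X_0}) \in B_{d_T}(X,(g/T)^{\kappa_2})$, and, by the triangle inequality, instead bound the number of $S_Y$ with $\Phi(S_Y) \in B_{d_T}(X_0, 2(g/T)^{\kappa_2})$. Choosing $\kappa_2$ large enough that $2(g/T)^{\kappa_2} \le (g/T)^{\kappa_0}$, we have a legitimate base point $X_0 = \Phi(S_{X_0})$ for the Lemma.

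For any such $S_Y$, let $f:X_0\to Y$ be a $1 + O((g/T)^{\kappa_2})$-quasiconformal marking map. Since $S_Y$ is assembled from $T$ unit equilateral triangles, $\|\phi_{S_Y}\|_Y^2 = \Area_{|\phi_{S_Y}|}(Y) \le CT$, and then \cref{qc map Hodge norm cohomology} gives $\|H_{X_0}(S_Y)\|_{X_0} \le CT^{1/2}$. So every cohomology class $H_{X_0}(S_Y)$ lies in a single Hodge-norm ball of radius $CT^{1/2}$ in the $4g$-real-dimensional vector space $H^1(X_0,\mathbb{C})$. Cover this ball by Hodge-norm balls of radius $\rho := \alpha_1(g/T)^{\kappa_1}g^{1/2}/C'$, where $C'$ is a constant to be chosen. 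A standard volume-packing estimate bounds the number of small balls by
\[
\bigl(C T^{1/2}/\rho\bigr)^{4g} \;=\; \bigl(C(T/g)^{\kappa_1+1/2}\bigr)^{4g} \;\le\; (T/g)^{Cg}.
\]

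For each small ball of this cover that contains at least one $H_{X_0}(S_Y)$, I would pick a reference $S_{X_1}$ whose class lies in that ball and set $X_1 = \Phi(S_{X_1})$. Any other $S_Y$ with $H_{X_0}(S_Y)$ in the same ball satisfies $\|H_{X_0}(S_Y) - H_{X_0}(S_{X_1})\|_{X_0} \le 2\rho$, and pulling back by the near-conformal map $X_0\to X_1$ and reusing \cref{qc map Hodge norm cohomology} gives $\|H_{X_1}(S_Y) - [\phi_{S_{X_1}}]\|_{X_1} \le 4\rho$. The hard part is upgrading this cohomological bound to the form-level bound $\|\phi_{S_{X_1}} - f^*\phi_{S_Y}\|_{X_1} \le \alpha_1(g/T)^{\kappa_1}g^{1/2}$ demanded by \cref{quantitative form determines triangulation}. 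To do so: $\phi_{S_{X_1}}$ is holomorphic, hence its own harmonic representative, whereas \cref{pullback of harmonic form close to harmonic representative} bounds the difference between $f^*\phi_{S_Y}$ and its harmonic representative by $((K^2-1)/K)^{1/2}\|\phi_{S_Y}\|_Y = O((g/T)^{\kappa_2/2}T^{1/2}) = O\bigl((g/T)^{\kappa_2/2 - 1/2}g^{1/2}\bigr)$, with $K = 1 + O((g/T)^{\kappa_2})$. Provided $\kappa_2$ is chosen so that $\kappa_2/2 - 1/2 > \kappa_1$, and $C'$ is chosen large enough, combining the harmonic-approximation bound with the cohomology-class bound yields the required form-level bound.

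Applying \cref{quantitative form determines triangulation} with base surface $S_{X_1}$ then gives at most $(T/g)^{Cg} \sum \prod \Ntri(2T_i,g_i,m_i,r+C)$ surfaces $S_Y$ per small ball (with the sum taken over the indices stated in the Lemma), and multiplying by the $(T/g)^{Cg}$ bound on the number of small balls absorbs into a single factor $(T/g)^{Cg}$ and produces the claimed inequality. The main obstacle, as indicated above, is the form-to-cohomology upgrade; it rests on both the near-conformality of the change-of-base map $X_0 \to X_1$ and the near-harmonicity of the pullback of a holomorphic form under a nearly conformal map, which is why the Lemma's hypothesis $\kappa_0 > \kappa_1$ (and our additional requirement $\kappa_2 > 2\kappa_1 + 1$) are needed.
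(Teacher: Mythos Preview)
Your proposal is correct and follows essentially the same approach as the paper: bound the image of $H_X$ in a Hodge ball of radius $CT^{1/2}$, cover it by small Hodge balls, and within each small ball use \cref{pullback of harmonic form close to harmonic representative} to upgrade the cohomology-class bound to the form-level bound required by \cref{quantitative form determines triangulation}. The only cosmetic difference is that the paper works with $H_X$ on the original $X$ throughout and, for each small ball, picks a reference $S_Y$ and compares any other $S_Z$ to it via the composite $X\to Y\to Z$ before pulling back to $Y$, whereas you first recenter at a translation-surface base point $X_0$ and then at $X_1$, exploiting directly that $\phi_{S_{X_1}}$ is harmonic on $X_1$; both routes impose the same constraint $(\kappa_2-1)/2>\kappa_1$ (up to constants) and yield the same conclusion.
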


First, a preliminary lemma.

\begin{lemma}\label{Hodge norm of tran surface} The Hodge norm squared of $\phi_S$ is $$\int_X \phi_S\wedge * \overline{\phi_S}=(\sqrt{3}/{4})T.$$ 
\end{lemma}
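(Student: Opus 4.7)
The plan is to compute $\int_X \phi_S \wedge *\overline{\phi_S}$ triangle by triangle, using the completely explicit local form of $\phi_S$ supplied by the combinatorial translation structure.

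By \cref{combinatorial translation structure gives translation surface}, for each triangle $\tau \in F(S)$ the combinatorial translation structure provides a canonical isometric identification of $\tau$ with the standard unit equilateral Type A or Type B triangle in $\mathbb{C}$, and in that chart $\phi_S|_\tau = dz$. Since $X$ is a surface, the Hodge star $*$ on $1$-forms depends only on the conformal structure, so the pointwise value of the integrand $\phi_S \wedge *\,\overline{\phi_S}$ on $\tau$ may be computed using the Euclidean Hodge star in that chart. A direct calculation with $*dx = dy$ and $*dy = -dx$ gives $*\,d\bar z = i\,d\bar z$, hence $dz \wedge *\,d\bar z$ is a positive constant multiple of the Euclidean area form $dx \wedge dy$. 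Integrating over the unit equilateral triangle, whose Euclidean area is $\sqrt{3}/4$, yields the contribution of a single face, which is the same for Type A and Type B by symmetry.

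Summing the identical contributions over all $T$ faces of $S$ then gives $\|\phi_S\|^2_X = (\sqrt{3}/4)\,T$, once the constant appearing in the paper's Hermitian-inner-product convention $\langle \omega_1, \omega_2\rangle_X = \int_X \omega_1 \wedge *\overline{\omega_2}$ is tracked so that a single unit equilateral triangle contributes exactly its Euclidean area. There is essentially no obstacle here beyond this routine bookkeeping: the argument is a one-chart calculation combined with summation over triangles, and uses no further information about the global geometry of $X$.
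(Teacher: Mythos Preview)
Your proposal is correct and follows essentially the same approach as the paper's proof: compute the Hodge norm of $dz$ on a single unit equilateral triangle (obtaining $\sqrt{3}/4$) and then sum over the $T$ triangles of $S$. The paper's proof is in fact more terse than yours, simply asserting the per-triangle contribution and summing.
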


\begin{proof} On the equilateral triangle with vertices $0,1,\frac{1}{2}+\frac{\sqrt{3}}{2}i$ in $\mathbb{C}$, the Hodge norm squared of $dz$ is $\frac{\sqrt{3}}{4}$. Since there are $T$ triangles in $S$, the lemma follows.
\end{proof}

\begin{proof}[Proof of \cref{count cohomology classes}] The key idea is to reduce \cref{count cohomology classes} to \cref{quantitative form determines triangulation} by using \cref{pullback of harmonic form close to harmonic representative}. \cref{pullback of harmonic form close to harmonic representative} allows us to deduce, from a condition about cohomology classes being close (condition 2 of \cref{quantitative form determines triangulation}), a much stronger condition about the individual forms being close averaged over the surface.

Let $S_Y\in \Tranc{g}{T}$ satisfying $$\Phi(S_Y)\in B_{d_T}(X,(g/T)^{\kappa_2}).$$ By \cref{Hodge norm of tran surface} and \cref{qc map Hodge norm cohomology}, $$\|H_X(S_Y)\|\leq C T^{1/2}.$$ Now, cover the $CT^{1/2}$ radius Hodge norm ball in $H^1(X,\mathbb{C})$ centered at $0$ by $(T/g)^{Cg}$ number of $(g/T)^{\kappa_3}g^{1/2}$ radius balls $B_1,...,B_{(T/g)^{Cg}}$. Here, $\kappa_3$ is a constant to be chosen, and we assume that $\kappa_3\geq (\kappa_2-1)/2$. Given $$S_Y,S_Z\in \textstyle\Tranc{g}{T}$$ satisfying $$\Phi(S_Y),\Phi(S_Z)\in B_{d_T}(X,(g/T)^{\kappa_2})$$ and $$H_X(S_Y), H_X(S_Z)\in B_k,$$ let $f:X\to Y$ and $g:Y\to Z$ be $1+8(g/T)^{\kappa_2}$-quasiconformal maps. Let $$(f^*\phi_{S_Y})^h$$ be the harmonic form on $X$ representing $H_X(S_Y)$, the cohomology class of $f^*\phi_{S_Y}$. Let $$((g\circ f)^*\phi_{S_Z})^h$$ be the harmonic form on $X$ representing $H_X(S_Z)$, the cohomology class of $(g\circ f)^*\phi_{S_Z}$.

By \cref{pullback of harmonic form close to harmonic representative} and \cref{Hodge norm of tran surface}, $$\|f^*\phi_{S_Y}-(f^*\phi_{S_Y})^h\|_X\leq 4(\sqrt{3}/4)^{1/2}(g/T)^{\kappa_2/2}T^{1/2}$$ and 
$$\|(g\circ f)^*\phi_{S_Z}-((g\circ f)^*\phi_{S_Z})^h\|_X\leq 8(\sqrt{3}/4)^{1/2}(g/T)^{\kappa_2/2}T^{1/2}.$$ Since $H_X(S_Y),H_X(S_Z)\in B_k$, $$\|(f^*\phi_{S_Y})^h-(f^*\phi_{S_Z})^h\|\leq 2(g/T)^{\kappa_3}g^{1/2}.$$ Summing, we obtain 
\begin{align*}\|f^*\phi_{S_Y}-(g\circ f)^*\phi_{S_Z}\|&\leq (20(g/T)^{(\kappa_2-1)/2}+2(g/T)^{\kappa_3})g^{1/2}\\&\leq 40(g/T)^{(\kappa_2-1)/2}g^{1/2}
\end{align*} assuming $\kappa_3\geq (\kappa_2-1)/2$. Pulling back to $Y$ under $f^{-1}$, by \cref{qc map Hodge norm diff form} we obtain $$\|\phi_{S_Y}-g^*\phi_{S_Z}\|\leq 100(g/T)^{(\kappa_2-1)/2}g^{1/2}.$$ Since $g/T\leq 1/2,$ the assumptions $\kappa_1\leq (\kappa_2-1)/2$ and $$100\alpha_1^{-1}(1/2)^{(\kappa_2-1)/2-\kappa_1}\leq 1$$ imply condition 2 of \cref{quantitative form determines triangulation} is satisfied. Assuming $\kappa_2\geq \kappa_0$, condition 1 of \cref{quantitative form determines triangulation} is satisfied. Applying \cref{quantitative form determines triangulation}, we obtain $$|(H_X)^{-1}(B_k)|\leq (T/g)^{Cg}\sum_{\substack{n\leq (1/100)g\\ T_1+...+T_n\leq T\\ g_1+...+g_n\leq \mu^{-1}(1/100)g\\g_1,...,g_n\geq 2\\ m_1+...+m_n\leq \mu^{-1}(1/100)g}} \prod_{i=1}^n \Ntri(2T_i,g_i,m_i, r+C)$$ for all $k\in \{1,...,(T/g)^{Cg}\}$. The lemma statement follows. 
\end{proof} 

\cref{count cohomology classes} now implies \cref{tranc vs tri}: 

\begin{proof}[Proof of \cref{tranc vs tri}] By \cref{Teichmuller bound 3}, we may cover $B_{d_T}(X,r)$ with $(T/g)^{C\kappa_2(1+r)g}$ many $B_{d_T}(\cdot,(g/T)^{\kappa_2})$ balls. Applying \cref{count cohomology classes} to each ball, then summing over all $T'\leq T$ and $g'\leq g$ gives the desired bound on $\Ntran(T,g,r)$. 
\end{proof}

We now turn to the proof of \cref{quantitative form determines triangulation}, which will take the rest of \cref{bound tran by tri}. Condition 2 in the statement of \cref{quantitative form determines triangulation} can be written as 
\begin{equation}\label{eq60.75}\int_X |f^*\phi_{S_Y}-\phi_{S_X}|^2_{S_X}|\phi_{S_X}|^2\leq \alpha_2(g/T)^{\kappa_4}g.
\end{equation} Here, $\alpha_2=\alpha_1^2$ and $\kappa_4=2\kappa_1$. Also, $|\cdot|_{S_X}$ here denotes the operator norm of a $1$-form at a particular point of $X$ with respect to the $S_X$-metric. 

\begin{remark} Roughly speaking, \cref{eq60.75} gives a measure of how Lipschitz the map $f$ is with respect to the metrics $d_{S_X}$ and $d_{S_Y}$, averaged over the entire surface $X$. However, \cref{eq60.75} also implies that on most of $X$, the integral of $\phi_X$ on a curve is close to the integral of the $1$-form $\phi_Y$ on the image of the curve.
\end{remark}

The first step to prove \cref{quantitative form determines triangulation} is to show that \cref{eq60.75} implies most vertices of $S_X$ and $S_Y$ of degree strictly greater than $6$ must be close to each other under $f$. Then we show that \cref{eq60.75} implies many edges of $S_X$ and $S_Y$ must be close to each other under $f$. Finally we show that many faces of $S_X$ and $S_Y$ must be close to each other under $f$. We do this precisely in \cref{6.1}, \cref{6.2} and \cref{6.3} below. 

The second step to prove \cref{quantitative form determines triangulation} is to show that the geometric conditions about many vertices, edges and faces being close together under $f$ imply that $f$ is close to a simplicial isomorphism on all of $S_X$ except for a part that has much lower genus. To do this, we decompose $S_X$ into around $g$ parallelograms of length and width at most around $T/g$. We use the geometric conditions to say that $f$ is close to a simplicial isomorphism on most of the parallelograms. The remaining parallelograms form a surface of much smaller genus. This allows us to reduce the problem of counting combinatorial translation surfaces in moduli space to the problem of counting triangulated surfaces in a lower dimensional moduli space. We do this in \cref{6.4}, \cref{6.5} and \cref{6.6}. Finally in \cref{6.7} we prove \cref{quantitative form determines triangulation}. 

\subsection{Vertices}\label{6.1}

In this section, we show that most vertices of $S_X$ of degree strictly greater than $6$ are close under $f$ to vertices of $S_Y$ of degree strictly greater than $6$.

Let $0<\epsilon_0<1/2$, to be chosen in \cref{Constants}. Assume that $$8\cdot (1/2)^{\kappa_0}\leq (1/10)^{10}.$$ Let $V_X$ be the set of vertices $x\in V(S_X)$ such that there exists $y\in V(S_Y)$ satisfying $d_{S_Y}(y, f(x))<\epsilon_0\cdot (g/T)$. Note that there is at most one choice of such $y$, since vertices of $S_Y$ are at least distance $1$ apart in the ${S_Y}$-metric.  

\begin{lemma}\label{vertices-x} We have, $|V_{>6}(S_X)\setminus V_X|\leq \alpha_3 g$ where $\alpha_3=10^{10}\alpha_2(g/T)^{\kappa_4-10}\epsilon_0^{-10}$. 
\end{lemma}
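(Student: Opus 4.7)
The plan is to exhibit a lower bound of at least $10^{-10}\epsilon_0^{10}$ for the contribution of each bad cone point to the integral in \cref{eq60.75}, and to sum these contributions over disjoint neighborhoods of the bad points.

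Fix $x \in V_{>6}(S_X) \setminus V_X$ of degree $d\in\{12,18,\ldots,42\}$, and set $k := d/6 - 1 \in \{1,\ldots,6\}$. Choose a conformal coordinate $w$ on $X$ centered at $x$ so that $\phi_{S_X} = w^k\,dw$. Since $d_{S_Y}(f(x),V(S_Y))\ge \epsilon_0$, the ball $B_{S_Y}(f(x),\epsilon_0)$ lies in the interior of a single triangle of $S_Y$ and carries a flat coordinate $z$ with $z(f(x))=0$ and $\phi_{S_Y}=dz$. Let $F(w) := z\circ f(w)$ and $G(w) := w^{k+1}/(k+1)$. A direct computation in the $w$-coordinate gives
\[ |f^*\phi_{S_Y}-\phi_{S_X}|_{S_X}^2\,dA_{S_X} = 2\bigl(|F_w-w^k|^2+|F_{\bar w}|^2\bigr)\,dA_w = 2\bigl(|H_w|^2+|H_{\bar w}|^2\bigr)\,dA_w, \]
where $H := F-G$, so the $L^2$ norm of $dH$ over $D_R\subset\mathbb{C}$ in the Euclidean $w$-measure is exactly half the desired contribution.

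The key topological input is a winding mismatch: as $w$ traces $\partial D_R$, the polynomial $G$ winds $k+1\ge 2$ times around $0$, while $F|_{\partial D_R}$, being the boundary of the homeomorphic image $f(D_R)$ inside the $z$-disk, winds exactly once around $F(0)=0$. I would choose $R$ in the window where simultaneously $F(D_R)\subset\{|z|\le\epsilon_0/2\}$ (so that $F$ is single-valued and $|F|\le\epsilon_0/2$ on $\partial D_R$) and $|G|=R^{k+1}/(k+1)\ge\epsilon_0$ on $\partial D_R$. The existence of such an $R\asymp\epsilon_0^\beta$ for some $\beta\in(0,1]$ follows from the standard quasisymmetry estimates for $K$-quasiconformal maps with $K$ close to $1$, applied inside the conformal $w$-chart (invoking \cref{extend qc map to plane} to transplant $f$ to the plane if needed). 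On this choice of $R$, the Rouché inequality $|H|\ge|G|-|F|\ge\epsilon_0/2$ holds pointwise on $\partial D_R$.

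Converting this boundary lower bound into a Dirichlet-energy lower bound, Cauchy--Schwarz along radial segments $\{te^{i\theta}:t\in[R/2,R]\}$ gives the trace-type inequality
\[ \int_0^{2\pi}\bigl|H(Re^{i\theta})-H((R/2)e^{i\theta})\bigr|^2\,d\theta \le C\int_{D_R}\bigl(|H_w|^2+|H_{\bar w}|^2\bigr)\,dA_w, \]
and iterating on a dyadic sequence of concentric annuli (at each scale at which the winding obstruction continues to bite) shows that $H$ cannot simultaneously have $|H|\ge\epsilon_0/2$ on $\partial D_R$ and Dirichlet energy smaller than $c\epsilon_0^{10}$ for some universal constant $c$. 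Pulling back through the formula above and taking $r_x := R^{k+1}/(k+1)<1/2$ yields
\[ \int_{B_{S_X}(x,r_x)}|f^*\phi_{S_Y}-\phi_{S_X}|_{S_X}^2\,dA_{S_X} \ge 10^{-10}\epsilon_0^{10}. \]
Since vertices of $S_X$ are at pairwise $S_X$-distance $\ge 1$ and $r_x<1/2$, the balls $B_{S_X}(x,r_x)$ for distinct bad cone points are pairwise disjoint; summing and invoking \cref{eq60.75} gives
\[ |V_{>6}(S_X)\setminus V_X|\cdot 10^{-10}\epsilon_0^{10} \le \alpha_2(g/T)^{\kappa_4} g, \]
which rearranges to the claimed inequality.

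The main obstacle is making the window for $R$ in the second step sufficiently explicit: the only control on $|F|$ comes from the QC modulus of continuity, since the integral Hodge bound in \cref{quantitative form determines triangulation} offers no pointwise handle. The exponent $10$ in $\epsilon_0^{-10}$ should emerge from optimising the tension between the winding constraint $R^{k+1}\gtrsim\epsilon_0$ and the image containment $|F(w)|\lesssim R^{1/K}$ across $k\in\{1,\ldots,6\}$, compounded by the losses from the dyadic Cauchy--Schwarz iteration; threading these constants through uniformly in the conformal coordinate is the genuinely technical part of the argument.
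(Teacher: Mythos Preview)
Your approach has a genuine gap at the step you yourself flag as ``the main obstacle'': the existence of the window $R$ with $|F|\le\epsilon_0/2$ and $|G|\ge\epsilon_0$ on $\partial D_R$. Quasiconformality of $f$ gives no control whatsoever on the scale of $F$: a $K$-quasiconformal map of planar domains can dilate by any factor, and the only normalization you have is the single point condition $F(0)=0$, which has zero capacity. Quasisymmetry estimates like Mori's theorem require normalization at two scales (e.g., a self-map of $\mathbb{D}$ fixing $0$), and invoking \cref{extend qc map to plane} does not supply that. Concretely, nothing rules out $|F(w)|\gg\epsilon_0$ already on circles $|w|=R$ with $R^{k+1}/(k+1)\ll\epsilon_0$; the two inequalities you want can fail to overlap. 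Even granting the window, your Dirichlet-energy conversion is also problematic: the radial Cauchy--Schwarz from $0$ to $R$ involves $\int_0^R r^{-1}\,dr$, which diverges, reflecting again that $H(0)=0$ is a zero-capacity constraint that cannot by itself force $\int|\nabla H|^2\gtrsim\epsilon_0^2$.

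The paper's argument (via \cref{one vertex}) avoids pointwise control of $F$ entirely. It fixes a radius $u\asymp\eta$ in the $S_X$-metric, uses the local $L^2$ bound on $f^*\phi_{S_Y}-\phi_{S_X}$ over $C_u$ (found by pigeonholing over $r\in[\eta/200,\eta/100]$) to show that $f(C_u)$ lies in a thin neighborhood of a single Euclidean circle of radius $u$ in the $S_Y$-coordinate; hence the embedded disk $f(B_{S_X}(x,u))$ has $S_Y$-area at most $\pi(u+\text{err})^2$. On the other hand the $S_X$-area of $B_{S_X}(x,u)$ is $a\pi u^2$ with $a=\deg(x)/6\ge 2$, and the near-$1$ quasiconformality of $f$ (area distortion $\le K$) together with the same $L^2$ bound forces $\Area_{S_Y}(f(B_{S_X}(x,u)))\ge(1-\text{err})a\pi u^2$. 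Comparing yields a contradiction, so the local integral must exceed $\beta=(\epsilon_0/10)^{10}$. The disjointness of the half-balls then comes from the locally-bounded condition on $S_X\in\Tranc{g}{T}$, which puts vertices of $V_{>6}(S_X)$ at $S_X$-distance $\ge 3$ apart; summing against \cref{eq60.75} gives the claim. The moral difference is that the paper extracts shape information about $f(C_u)$ \emph{from the $L^2$ bound itself}, whereas your route tries to get it from quasiconformality, which is too weak.

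A minor point: your assertion that $B_{S_Y}(f(x),\epsilon_0)$ ``lies in the interior of a single triangle'' is false (edges may be crossed), though this alone is not fatal since the flat coordinate $z$ with $\phi_{S_Y}=dz$ exists on any simply connected region avoiding $V_{>6}(S_Y)$.
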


Note that $\alpha_3$ is not a universal constant, since it depends on $T/g$. To show \cref{vertices-x}, we first show the following lemma.

\begin{lemma}\label{one vertex} Suppose $x\in V_{>6}(S_X)$ and $$\int_{B_{S_X}(x,1/2)}|f^*\phi_{S_Y}-\phi_{S_X}|^2_{S_X}|\phi_{S_X}|^2\leq \beta.$$ Then there exists $y\in V_{>6}(S_Y)$ satisfying $$d_{S_Y}(y,f(x))<\eta(\beta).$$ Here, $$\eta(\beta)=10\beta^{1/10}<1/2.$$
\end{lemma}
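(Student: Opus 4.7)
The plan is to argue by contradiction. Suppose no $y\in V_{>6}(S_Y)$ satisfies $d_{S_Y}(y,f(x))<\eta$. Since vertices of $V_{>6}(S_Y)$ are precisely the zeros of $\phi_{S_Y}$ (cone angle $\deg(y)\pi/3>2\pi$), the ball $B_{S_Y}(f(x),\eta)$ is then flat and isometric via the developing map $\Psi_Y(w)=\int_{f(x)}^w\phi_{S_Y}$ to the Euclidean disk $D(0,\eta)\subset\mathbb{C}$. Since $x\in V_{>6}(S_X)$ and $S_X\in\Tranc{g}{T}$ is locally bounded with $\deg\leq 42$, the zero order $k:=\deg(x)/6-1$ is a bounded positive integer in $\{1,\dots,6\}$. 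In a local holomorphic coordinate $w$ centered at $x$ we have $\phi_{S_X}=cw^k\,dw$ and $\Psi_X(w)=cw^{k+1}/(k+1)$, which identifies $B_{S_X}(x,r_0)$ with $\{|w|\leq\rho_0\}$ via $r_0=|c|\rho_0^{k+1}/(k+1)$.

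First I would fix a small universal constant $r_0$ and use a conformal modulus argument: the modulus of $B_{S_X}(x,1/2)\setminus B_{S_X}(x,r_0)$ is preserved up to the factor $K=1+\alpha_0(g/T)^{\kappa_0}$ close to $1$ by the $K$-quasiconformal map $f$, and this modulus constraint, combined with the contradiction hypothesis that $B_{S_Y}(f(x),\eta)$ is a flat disk, forces $f(B_{S_X}(x,r_0))\subset B_{S_Y}(f(x),\eta)$. With this inclusion, the composition $G:=\Psi_Y\circ f:B_{S_X}(x,r_0)\to D(0,\eta)$ is a well-defined single-valued $K$-quasiconformal map with $G(x)=0$, $|G|\leq\eta$ and $dG=f^*\phi_{S_Y}$. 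In the coordinate $w$, the hypothesis rewrites as
$$\int_{|w|\leq\rho_0}\!\left(|G_w-cw^k|^2+|G_{\bar w}|^2\right)\,dA_E\;\lesssim\;\beta,$$
because the weights $|c|^2|w|^{2k}$ from the flat area form $|\phi_{S_X}|^2$ and $(|c|^2|w|^{2k})^{-1}$ from converting pointwise $1$-form norms between the flat and Euclidean metrics cancel exactly.

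The contradiction comes from a winding-number argument. Since $|G_{\bar w}|\leq k_0|G_w|$ with $k_0=(K-1)/(K+1)$ very small, $G$ is essentially holomorphic, so a mean-value inequality applied to the $L^2$ bound above yields the pointwise estimate $|G_w(w)-cw^k|\lesssim\sqrt\beta/\rho_0$ on $|w|\leq\rho_0/2$, and integrating from $0$ to $w$ gives $|G(w)-\Psi_X(w)|\lesssim|w|\sqrt\beta/\rho_0$. On the comparison circle $|w|=\rho_0/4$ this becomes $|G-\Psi_X|\lesssim\sqrt\beta$, while $|\Psi_X|=|c|(\rho_0/4)^{k+1}/(k+1)=r_0/4^{k+1}$ is a positive universal constant (using $k\leq 6$). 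For $\beta$ small enough, which is guaranteed by the assumption $\eta=10\beta^{1/10}<1/2$, this forces $|G-\Psi_X|<|\Psi_X|$ on the circle, and the topological Rouch\'e principle (valid for continuous maps between oriented planes) requires $G$ and $\Psi_X$ to have the same winding number about $0$ on this circle. But $\Psi_X$ has winding $k+1\geq 2$, whereas $G$, being the composition of the orientation-preserving homeomorphism $f$ with the biholomorphism $\Psi_Y$ and vanishing only at $w=0$, has winding $1$. This contradiction forces the existence of $y\in V_{>6}(S_Y)$ with $d_{S_Y}(y,f(x))<\eta$.

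The main obstacle is the execution of the conformal-modulus step ensuring $f(B_{S_X}(x,r_0))\subset B_{S_Y}(f(x),\eta)$, where one must simultaneously invoke the contradiction hypothesis (no nearby zero of $\phi_{S_Y}$, so the flat disk at $f(x)$ is large) and the near-conformality of $f$ (modulus nearly preserved), and then calibrate all compounding loss constants to produce the prescribed exponent $10\beta^{1/10}$. The critical structural input making the argument uniform is the bound $k\leq 6$ from the local-boundedness condition in the definition of $\Tranc{g}{T}$; without it the Rouch\'e margin would degrade exponentially in $k$ and no universal exponent like $1/10$ would be available.
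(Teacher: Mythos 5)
Your approach---compare winding numbers of the developed maps $G=\Psi_Y\circ f$ and $\Psi_X$ on a circle and invoke a topological Rouch\'e principle---is a genuinely different route from the paper's, which is an area comparison: the paper picks a circle $C_u$ with $u\sim\eta$ via pigeonhole, shows $f(C_u)$ is trapped near a round circle of radius $u$, so $f(B_{S_X}(x,u))$ has $S_Y$-area near $\pi u^2$, contradicting its $S_X$-area $a\pi u^2$ where $a=\deg x/6\geq 2$. Both mechanisms exploit the same cone-angle discrepancy, but there are two substantive gaps in your execution.

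First, $r_0$ cannot be a universal constant. The $S_X$-area of $B_{S_X}(x,r_0)$ is $a\pi r_0^2$, and the hypothesis forces the $S_Y$-area of $f(B_{S_X}(x,r_0))$ to agree with it up to $O(\beta^{1/2})$; since $B_{S_Y}(f(x),\eta)$ has area at most $\pi\eta^2$ with $\eta=10\beta^{1/10}$ possibly arbitrarily small, the containment $f(B_{S_X}(x,r_0))\subset B_{S_Y}(f(x),\eta)$ needed to define $G$ is simply false unless $r_0\lesssim\eta$. Moreover, modulus preservation by itself does not give the containment even at the right scale: the modulus of a ring domain controls neither the size nor the location of the bounded complementary component without further normalization, which is why the paper derives the containment of $f(C_u)$ in $B_{S_Y}(f(x),\eta)$ directly from the $L^2$ bound along a good circle. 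Second, the ``mean-value inequality'' step yielding $|G_w-cw^k|\lesssim\sqrt{\beta}/\rho_0$ pointwise is unjustified. The function $G_w-cw^k$ is not holomorphic; we only control $\bar\partial(G-\Psi_X)=G_{\bar w}$ in $L^2$, which in two real dimensions gives $\mathrm{BMO}$ rather than $L^\infty$ control, and the pointwise Beltrami inequality $|G_{\bar w}|\leq k_0|G_w|$ says nothing about $\bar\partial G_w$. The remedy, as in the paper, is a pigeonhole on radii: choose $u$ in a dyadic range so that $\int_{C_u}(|G_w-cw^k|+|G_{\bar w}|)\,d\theta$ is small, which bounds the angular \emph{oscillation} of $G-\Psi_X$ on $C_u$ by $O(\sqrt\beta)$. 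The unknown additive constant (the value of $G-\Psi_X$ at one base point of $C_u$) is then harmless, since a Jordan curve has winding number $0$ or $1$ about every point, still strictly less than the winding $k+1\geq 2$ of $\Psi_X$ about $0$. With these two repairs the Rouch\'e route should close, though the numerology would need recomputation to recover a definite exponent in $\eta(\beta)$.
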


\begin{proof} In the following argument we write $\eta=\eta(\beta)$. Suppose the contrary, that $f(x)$ is not within a $\eta$-neighborhood of any vertex in $V_{>6}(S_Y)$. Let $a=\deg x/6$. Since $S_X$ is a locally bounded combinatorial translation surface, $a\leq 7$. Let $(r_X,\theta_X)$ (where $0\leq \theta_X<2\pi a$) be polar coordinates on $B_{S_X}(x,\eta)$ around $x$ such that $$\phi_{S_X}=e^{i\theta_X}dr_X+i r_Xe^{i\theta_X}d\theta_X.$$ Let $(r_Y,\theta_Y)$ (where $0\leq \theta<2\pi$) be polar coordinates on $B_{S_Y}(f(x),\eta)$ around $y$ such that $$\phi_{S_Y}=e^{i\theta_Y}dr_Y+i r_Ye^{i\theta_Y}d\theta_Y.$$ Let $$C_r=\{(r_X,\theta_X)|r_X=r\}.$$ By assumption, $$\int_{r=\eta/200}^{\eta/100}\int_{C_r}|f^*\phi_{S_Y}-\phi_{S_X}|^2_{S_X}r_Xdr_Xd\theta_X\leq  \beta.$$ Therefore for some $u\in [\eta/200,\eta/100]$, we have $$\int_{C_u}|f^*\phi_{S_Y}-\phi_{S_X}|^2_{S_X}d\theta_X\leq 40000\beta \eta^{-2}.$$ By Cauchy-Schwarz, 
\begin{equation}\label{eq611}
\begin{aligned}\int_{C_u}|f^*\phi_{S_Y}-\phi_{S_X}|_{S_X}d\theta_X&\leq \left(\int_{C_u} |f^*\phi_{S_Y}-\phi_{S_X}|^2_{S_X}d\theta_X\right)^{1/2} (2\pi a \eta)^{1/2}\\&\leq 400\eta^{-1/2}(\pi a\beta)^{1/2}.
\end{aligned}
\end{equation}
Therefore, 
\begin{multline*} \left|\int_{C_u}\left|f^*\phi_{S_Y}\left(\frac{1}{r_X}\frac{\partial}{\partial \theta_X}\right)\right|d\theta_X -\int_{C_u}\left|\phi_{S_X}\left(\frac{1}{r_X}\frac{\partial}{\partial \theta_X}\right)\right| d\theta_X\right|\\
\begin{aligned} &\leq \int_{C_u}\left|f^*\phi_{S_Y}\left(\frac{1}{r_X}\frac{\partial}{\partial \theta_X}\right)-\phi_{S_X}\left(\frac{1}{r_X}\frac{\partial}{\partial \theta_X}\right)\right| d\theta_X\\&\leq \int_{C_u}|f^*\phi_{S_Y}-\phi_{S_X}|_{S_X}d\theta_X\\&\leq 400\eta^{-1/2}(\pi a\beta)^{1/2}.
\end{aligned}
\end{multline*}
Now, $$\textstyle\length_{S_X}(C_u)=\displaystyle u\int_{C_u}\left|\phi_{S_X}\left(\frac{1}{r_X}\frac{\partial}{\partial \theta_X}\right)\right|d\theta_X$$ and $$\textstyle\length_{S_Y}(f(C_u))=\displaystyle u\int_{C_u}\left|f^*\phi_{S_Y}\left(\frac{1}{r_X}\frac{\partial}{\partial \theta_X}\right)\right|d\theta_X.$$ Thus $$2\pi a u - 400\eta^{-1/2}(\pi a\beta)^{1/2}\leq \textstyle\length_{S_Y}(f(C_u))\leq 2\pi a u + 400\eta^{-1/2}(\pi a\beta)^{1/2}.$$ Since $u\in [\eta/200,\eta/100]$, $a\leq 7$ and $\beta\leq \eta^{10}/10^{10}$, $f(C_u)$ is contained in $B_{S_Y}(f(x),\eta)$.

For $v\in [0,2a\pi)$, let $x_v$ be a point on $C_u$ wherein $r_X=u$ and $\theta_X=v$. Letting $C_u(x_0,x_v)$ be an arc of $C_u$ from $x_0$ to $x_v$, we have
\begin{multline*} \left|\left(ue^{iv}-u\right)-\int_{C_u(x_0,x_v)}f^*\phi_{S_Y}\left(\frac{1}{r_X}\frac{\partial}{\partial \theta_X}\right)\right|\\ \begin{aligned} &= \left|\int_{C_u(x_0,x_v)}f^*\phi_{S_Y}\left(\frac{1}{r_X}\frac{\partial}{\partial \theta_X}\right)-\int_{C_u(x_0,x_v)}\phi_{S_X}\left(\frac{1}{r_X}\frac{\partial}{\partial \theta_X}\right)\right|\\&\leq \int_{C_u(x_0,x_v)}\left|f^*\phi_{S_Y}\left(\frac{1}{r_X}\frac{\partial}{\partial \theta_X}\right)-\phi_{S_X}\left(\frac{1}{r_X}\frac{\partial}{\partial \theta_X}\right)\right|\\&\leq \int_{C_u}|f^*\phi_{S_Y}-\phi_{S_X}|_{S_X}d\theta_X\\&\leq 400\eta^{-1/2}(\pi a\beta)^{1/2}.
\end{aligned}
\end{multline*} by \cref{eq611}. Hence, $$|f(x_v)-f(x_0)-(ue^{iv}-u)|_{S_Y}\leq 400\eta^{-1/2}(\pi a\beta)^{1/2}.$$ This means $f(C_u)$ lies in a $400\eta^{-1/2}(\pi a\beta)^{1/2}$-neighborhood of a radius $u$-ball passing through $f(x_0)$ (in the $S_Y$-metric). Therefore, \begin{equation}\label{eq612}\textstyle\Area_{S_Y}(f(B_{S_X}(x,u)))\leq \pi(u+400\eta^{-1/2}(\pi a\beta)^{1/2})^2.
\end{equation}

Finally, by $1+8(g/T)^{\kappa_0}$-quasiconformality of $f$,
\begin{multline*}\label{eq613}\textstyle\Area_{S_X}(B_{S_X}(x,u))^{1/2}-(1+8(g/T)^{\kappa_0})^{1/2}\Area_{S_Y}(f(B_{S_X}(x,u)))^{1/2}\\
 \begin{aligned} &\leq \left(\int_{B_{S_X}(x,u)}|\phi_{S_X}|^2\right)^{1/2}-\left(\int_{B_{S_X}(x,u)}|f^*\phi_{S_Y}|^2_{S_X}|\phi_{S_X}|^2\right)^{1/2} \\ &\leq \left(\int_{B_{S_X}(x,u)}|f^*\phi_{S_Y}-\phi_{S_X}|^2_{S_X}|\phi_{S_X}|^2\right)^{1/2}\\&\leq \beta^{1/2}.
\end{aligned}
\end{multline*} 
Therefore $$\textstyle\Area_{S_Y}(f(B_{S_X}(x,u)))^{1/2}\geq (1-8(g/T)^{\kappa_0})^{1/2}((a \pi)^{1/2} u-\beta^{1/2}).$$ Since $a\geq 2$, $g/T\leq 1/2$, $\beta\leq\eta^{10}/10^{10}$ and $u\in [\eta/200,\eta/100]$, assuming $8\cdot (1/2)^{\kappa_0}\leq (1/10)^{10}$ we have a contradiction with \cref{eq612}. 
\end{proof}

\begin{proof}[Proof of \cref{vertices-x}] Combining \cref{eq60.75} and  \cref{one vertex}, we obtain the desired result.
\end{proof}

\subsection{Edges}\label{6.2}

In this section, we show that many edges of $S_X$ and $S_Y$ are close to each other under $f$. 

\begin{lemma} \label{edges} Let $0<\epsilon\leq 1/1000$. Suppose $x_0\in V(S_X)$ and $f(x_0)\in B_{S_Y}(y_0,\epsilon)$ for a vertex $y_0\in V(S_Y)$. Suppose $e(x_0,x_1)$ is an edge in $S_X$ from $x_0$ to $x_1$. Suppose further that 
\begin{equation}\label{eq621}\int_{B_{S_X}(e(x_0,x_1),1/2)}|f^*\phi_{S_Y}-\phi_{S_X}|^2_{S_X}|\phi_{S_X}|^2\leq \beta,
\end{equation} where $\beta$ satisfies $$\tau(\epsilon,\beta)=100\beta^{1/4}\epsilon^{-1}+\epsilon\leq 1/1000.$$ Then $$f(x_1)\in B_{S_Y}(y_1,\tau(\epsilon,\beta))$$ for some vertex $y_1$ in $S_Y$. Also, $y_1$ is connected to $y_0$ by an edge $e(y_0,y_1)$ such that $$f(e(x_0,x_1))\subset B_{S_Y}(e(y_0,y_1),10\tau(\epsilon,\beta)).$$
\end{lemma}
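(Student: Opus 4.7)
The plan is to adapt the polar-coordinate argument of \cref{one vertex} and combine it with a longitudinal Cauchy--Schwarz bound along the edge. The key identity $f(\gamma(1)) - f(\gamma(0)) = \int_\gamma f^*\phi_{S_Y}$ (valid in any simply connected flat chart on $S_Y$), combined with the fact that $\int_\gamma \phi_{S_X}$ is the displacement of $\gamma$ in a flat chart of $S_X$, converts $L^1$ control of $f^*\phi_{S_Y} - \phi_{S_X}$ along a path into control of how far $f$ deviates from an isometric translation on the endpoints of that path.

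First, I will set up local flat coordinates $(s,t)$ on a neighborhood of $e(x_0,x_1)$ with $\phi_{S_X}=e^{i\theta_0}(ds+i\,dt)$, $x_0=(0,0)$, $x_1=(1,0)$, and define parallel slices $\gamma_t(s)=(s,t)$ and radial circles $C_r$ around $x_0$. The hypothesis \eqref{eq621} bounds $\int|f^*\phi_{S_Y}-\phi_{S_X}|^2$ by $\beta$ on both the rectangle around the edge and the ball around $x_0$. A simultaneous Chebyshev argument selects a common parameter $t_*$ at scale $\beta^{1/4}$ for which the slice $\gamma_{t_*}$ and the circle $C_{t_*}$ both carry controlled $L^2$ integrands; Cauchy--Schwarz then yields $\int_{\gamma_{t_*}}|f^*\phi_{S_Y}-\phi_{S_X}|\leq C\beta^{3/8}$ and $\int_{C_{t_*}}|f^*\phi_{S_Y}-\phi_{S_X}|\leq C\beta^{1/4}$. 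The longitudinal bound gives $|f(\gamma_{t_*}(1))-f(\gamma_{t_*}(0))-e^{i\theta_0}|_{S_Y}\leq C\beta^{3/8}$. The circle bound allows a \emph{positive} reuse of the argument in \cref{one vertex} (rather than a contradiction): using $|f(x_0)-y_0|_{S_Y}\leq\epsilon$ and integrating $f^*\phi_{S_Y}-\phi_{S_X}$ along a short radial segment from $x_0$ to a point of $C_{t_*}$, I obtain $|f(\gamma_{t_*}(0))-(y_0+it_*e^{i\theta_0})|_{S_Y}\leq \epsilon+C\beta^{1/4}$ in the flat chart at $y_0$.

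Since $y_0$ is a vertex of the combinatorial translation surface $S_Y$, its cone angle is a multiple of $2\pi$, so the direction $e^{i\theta_0}$ is the direction of an edge at $y_0$ terminating in a vertex $y_1:=y_0+e^{i\theta_0}$ (in the chart) joined to $y_0$ by an edge $e(y_0,y_1)$. Chaining the two bounds gives $|f(\gamma_{t_*}(1))-y_1-it_*e^{i\theta_0}|_{S_Y}\leq \epsilon+C\beta^{1/4}$, and running the circle argument a second time around $x_1$ (now anchored by proximity to $y_1$) yields $|f(x_1)-y_1|_{S_Y}\leq \epsilon+C\beta^{1/4}\leq \tau(\epsilon,\beta)$, using $\beta^{1/4}\leq \beta^{1/4}\epsilon^{-1}$. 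The uniqueness of $y_1$ follows because vertices of $S_Y$ are at $d_{S_Y}$-distance at least $1$ apart and $\tau<1/2$. For $f(e(x_0,x_1))\subset B_{S_Y}(e(y_0,y_1),10\tau)$, I will apply the same longitudinal argument to each $\gamma_t$ with $|t|\leq t_*$, showing each image $f(\gamma_t)$ lies within $3\tau$ of $e(y_0,y_1)+ite^{i\theta_0}$, and then use a transverse Cauchy--Schwarz bound on perpendicular slices at fixed $s\in[0,1]$ (in particular through $x_0$ and $x_1$) to extend the control to $f(\gamma_0)$ itself.

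The main obstacle is the positive reuse of the circle argument: in \cref{one vertex} a contradiction is drawn from the assumption that $f(x_0)$ is not near a vertex, whereas here we need to extract \emph{precise} location of $f$ on $C_{t_*}$ relative to the single \emph{a priori} anchor $f(x_0)\approx y_0$. This requires both a short radial integration from $x_0$ to the circle and careful tracking of angles and sheet identification when $\deg(x_0)>6$, and it is the step that forces the scale $t_*\sim \beta^{1/4}$ and determines the shape of $\tau(\epsilon,\beta)$. A secondary technical point is that the analysis on $S_Y$ must be carried out in a large-enough flat-coordinate neighborhood of $e(y_0,y_1)$; since $S_Y$ is locally bounded (so cone points of $\phi_{S_Y}$ are mutually at $S_Y$-distance at least $3$ by condition 2 of the definition), overlapping flat charts at $y_0$ and $y_1$ provide a neighborhood ample for all the estimates above.
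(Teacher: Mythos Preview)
Your overall strategy of selecting a good parallel slice and a good circle by Chebyshev, then chaining an endpoint estimate with a longitudinal displacement estimate, is close in spirit to the paper's argument. However, there is a genuine gap in how you identify $y_1$.

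You write $y_1:=y_0+e^{i\theta_0}$ ``in the chart'' and justify this by saying the cone angle at $y_0$ is a multiple of $2\pi$. This presupposes a flat $\phi_{S_Y}$-coordinate in which $y_0$ is a regular point so that vector addition makes sense. But if $\deg y_0>6$ then $y_0$ is a zero of $\phi_{S_Y}$ and the flat chart has a cone singularity there; in particular there are \emph{several} edges emanating from $y_0$ with directional weight $e^{i\theta_0}$, one per sheet, and the expression $y_0+e^{i\theta_0}$ is ambiguous. Your sheet-tracking remark concerns $\deg x_0>6$, not $\deg y_0>6$, and your anchor $f(x_0)\in B_{S_Y}(y_0,\epsilon)$ does not by itself single out a sheet for where $f(\gamma_{t_*}(1))$ lands. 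The paper avoids this entirely: it never tries to add a vector to $y_0$. Instead it first bounds $d_{S_Y}(y_0,f(x_1))\le 1+\text{small}$, takes $y_1$ to be the vertex of $S_Y$ nearest $f(x_1)$ (so $d_{S_Y}(y_1,f(x_1))\le 1/\sqrt{3}$), observes that $d_{S_Y}(y_0,y_1)<\sqrt{3}$ forces $d_{S_Y}(y_0,y_1)\in\{0,1\}$, and then invokes the period integrality $\int_t\phi_{S_Y}\in\mathbb{Z}+e^{\pi i/3}\mathbb{Z}$ along a path $t$ from $y_0$ to $y_1$ homotopic to the concatenation through $f$. Since that integral is within $\text{small}+1/\sqrt{3}$ of $\zeta$, integrality pins it down to $\zeta$ exactly, which simultaneously rules out $y_0=y_1$, forces the sharper bound on $d_{S_Y}(y_1,f(x_1))$, and identifies $t$ as an edge. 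This integrality step is the missing ingredient in your proposal.

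A secondary issue: your ``short radial integration from $x_0$ to a point of $C_{t_*}$'' is along a \emph{fixed} direction (perpendicular to the edge), and the hypothesis \eqref{eq621} gives no $L^1$ control along a single prescribed segment without a further selection. The paper instead uses an enclosure argument: for a good radius $w\in[\upsilon,2\upsilon]$ the length of $f(C_{0,w})$ is $\lesssim\upsilon$, hence $f(C_{0,\upsilon})\subset B_{S_Y}(f(x_0),20\upsilon)$, so the endpoint of the longitudinal path (which lies on $C_{0,\upsilon}$) is automatically close to $f(x_0)$. You should replace the radial integration by this kind of enclosure, or explicitly add the radial segment to your simultaneous Chebyshev selection. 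Finally, note that the paper's longitudinal path is a pair of rays $L_u\cup R_u$ meeting at the edge midpoint, with the angle $u$ (not a transverse offset $t$) as the selection parameter, and the parameter $\upsilon=\beta^{1/4}$ is the radial cutoff; the resulting Cauchy--Schwarz carries a harmless $\log(1/\upsilon)$ factor which is absorbed by the definition of $\tau(\epsilon,\beta)$.
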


\begin{proof} Let $T$ be a triangle in $S_X$ containing edge $e(x_0,x_1)$, and let $T'$ be the other triangle containing edge $e(x_0,x_1)$. As shown in \cref{triangle figure}, identify $T$ with the triangle in $\mathbb{C}$ with vertices at $0$, $1$ and $\displaystyle\frac{1}{2}+\displaystyle\frac{\sqrt{3}}{2}i$, with $x_0$ and $x_1$ identified with $0$ and $1$. Then $T'$ is naturally identified with the triangle in $\mathbb{C}$ with vertices at $0$, $1$ and $\displaystyle\frac{1}{2}-\displaystyle\frac{\sqrt{3}}{2}i$. Let $(r_0,\theta_0)$ be polar coordinates on $T\cup T'$ centered at $0$, and $(r_1,\theta_1)$ be polar coordinates on $T\cup T'$ centered at $1$. Note that the form $\phi_{S_X}$ may be written as $$\zeta e^{i\theta_0}d{r_0}+i\zeta r_0e^{i\theta_0}d\theta_0$$ in the $(r_0,\theta_0)$ coordinates and $$-\zeta e^{-i\theta_0}d{r_0}+i\zeta r_0e^{-i\theta_0}d\theta_0$$ in the $(r_1,\theta_1)$-coordinates, for some $6$th root of unity $\zeta$. Let $\upsilon<1/4$ be sufficiently small, to be chosen later in this proof. For $\theta\in [-\pi/3,\pi/3]$, define $$L_{\theta}=\{(r_0,\theta_0)\in T|\theta_0=\theta, \upsilon \leq r_0\leq (1/2)(\cos \theta)^{-1}\},$$ and $$R_\theta=\{(r_1,\theta_1)\in T|\theta_1=\pi-\theta, \upsilon \leq r_1\leq (1/2)(\cos \theta)^{-1}\}.$$ We assume that $L_\theta$ (resp. $R_\theta$) is oriented so that $r_0$ (resp. $r_1$) is increasing. 

\begin{figure}[ht]
\hspace{.35in}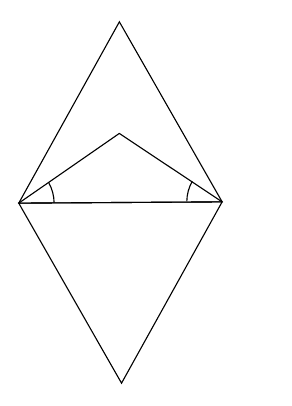
\caption{A diagram of $T$ and $T'$ with $L_\theta$ and $R_\theta$ in bold.}
\label{triangle figure}
\end{figure} 

By elementary trigonometry, we see that for $\theta\in [-\epsilon,\epsilon]$, $L_\theta\cup R_\theta\subset B_{S_X}(e(x_0,x_1),\epsilon)$. By the co-area formula along with \cref{eq621}, $$\int_{\theta=0}^{\epsilon}\int_{L_\theta}r_0|f^*\phi_{S_Y}-\phi_{S_X}|^2_{S_X}dr_0d\theta_0+\int_{\theta=0}^{\epsilon}\int_{R_\theta}r_1|f^*\phi_{S_Y}-\phi_{S_X}|^2_{S_X}dr_1d\theta_1\leq \beta.$$ This means for some $u\in [0,\epsilon]$, $$\int_{L_u}r_0|f^*\phi_{S_Y}-\phi_{S_X}|^2_{S_X}dr_0+\int_{R_u}r_1|f^*\phi_{S_Y}-\phi_{S_X}|^2_{S_X}dr_1\leq \beta\epsilon^{-1}.$$ By Cauchy-Schwarz,
\begin{multline}\label{eq622}
\int_{L_u}|f^*\phi_{S_Y}-\phi_{S_X}|_{S_X}dr_0+\int_{R_u}|f^*\phi_{S_Y}-\phi_{S_X}|_{S_X}dr_1\\  \leq \left(\int_{L_u}r_0|f^*\phi_{S_Y}-\phi_{S_X}|^2_{S_X}dr_0\right)^{1/2}\left(\int_{r_0=\upsilon}^{(1/2)(\cos \theta)^{-1}}r_0^{-1}\right)^{1/2}\\
+\left(\int_{R_u}r_1|f^*\phi_{S_Y}-\phi_{S_X}|^2_{S_X}dr_1\right)^{1/2}\left(\int_{r_1=\upsilon}^{(1/2)(\cos \theta)^{-1}}r_1^{-1}\right)^{1/2}
\end{multline}
thus 
\begin{equation}\label{eq622.5}\int_{L_u}|f^*\phi_{S_Y}-\phi_{S_X}|_{S_X}dr_0+\int_{R_u}|f^*\phi_{S_Y}-\phi_{S_X}|_{S_X}dr_1\leq 4\beta^{1/2}\epsilon^{-1/2}(\log(1/\upsilon))^{1/2}.
\end{equation}
Since $$\left|f_*\phi_{S_Y}\left(\frac{\partial}{\partial r_0}\right)-\phi_{S_X}\left(\frac{\partial}{\partial r_0}\right)\right|\leq |f^*\phi_{S_Y}-\phi_{S_X}|_{S_X}$$ on $L_u$ and $$\left|f_*\phi_{S_Y}\left(\frac{\partial}{\partial r_1}\right)-\phi_{S_X}\left(\frac{\partial}{\partial r_1}\right)\right|\leq |f^*\phi_{S_Y}-\phi_{S_X}|_{S_X}$$ on $R_u$, combining with \cref{eq622.5} we obtain 
\begin{multline}\label{eq623}\int_{L_u}\left|f_*\phi_{S_Y}\left(\frac{\partial}{\partial r_0}\right)-\phi_{S_X}\left(\frac{\partial}{\partial r_0}\right)\right|dr_0+\int_{R_u}\left|f_*\phi_{S_Y}\left(\frac{\partial}{\partial r_1}\right)-\phi_{S_X}\left(\frac{\partial}{\partial r_1}\right)\right|dr_1\\ \leq 4\beta^{1/2}\epsilon^{-1/2}(\log(1/\upsilon))^{1/2}.
\end{multline} For an appropriate $6$th root of unity $\zeta$, $$\phi_{S_Y}\left(\frac{\partial}{\partial r_0}\right)=e^{iu}\zeta$$ on $L_u$ and $$\phi_{S_Y}\left(\frac{\partial}{\partial r_1}\right)=-e^{-iu}\zeta$$ on $R_u$. Plugging into \cref{eq623} we have $$\int_{L_u}\left|f^*\phi_{S_Y}\left(\frac{\partial}{\partial r_0}\right)-e^{iu}\zeta\right|dr_0+\int_{R_u}\left|f^*\phi_{S_Y}\left(\frac{\partial}{\partial r_1}\right)+e^{-iu}\zeta\right|dr_1\leq 4\beta^{1/2}\epsilon^{-1/2}(\log(1/\upsilon))^{1/2}.$$ This means 
\begin{equation}\label{eq624}\left|\int_{L_u\cup R_u}f^*\phi_{S_Y}-\int_{L_u}e^{iu}\zeta dr_0-\int_{R_u}e^{-iu}\zeta dr_1\right|\leq 4\beta^{1/2}\epsilon^{-1/2}(\log(1/\upsilon))^{1/2}.
\end{equation} Note that when integrating over $L_u\cup R_u$, the orientation of $R_u$ switches, hence there is a sign change. By construction of $L_u$ and $R_u$, $$\int_{L_u}e^{iu}\zeta dr_0=((1/2)(\cos u)^{-1}-\upsilon)\zeta e^{iu}$$ and $$\int_{R_u}e^{-iu}\zeta dr_1= ((1/2)(\cos u)^{-1}-\upsilon)\zeta e^{-iu},$$ hence combining with \cref{eq624} we obtain 
\begin{equation}\label{eq625}\left|\zeta-\int_{L_u\cup R_u}f^*\phi_{S_Y}\right|\leq 4\beta^{1/2}\epsilon^{-1/2}(\log(1/\upsilon))^{1/2}+2\upsilon.
\end{equation} Pushing forward to $Y$ we have 
\begin{equation}\label{eq626}\left|\zeta-\int_{f(L_u\cup R_u)}\phi_{S_Y}\right|\leq 4\beta^{1/2}\epsilon^{-1/2}(\log(1/\upsilon))^{1/2}+2\upsilon.
\end{equation} From \cref{eq625} we also have $$\int_{L_u}\left|f^*\phi_{S_Y}\left(\frac{\partial}{\partial r_0}\right)\right|dr_0+\int_{R_u}\left|f^*\phi_{S_Y}\left(\frac{\partial}{\partial r_1}\right)\right|dr_1\leq 1 + 4\beta^{1/2}\epsilon^{-1/2}(\log(1/\upsilon))^{1/2}+2\upsilon.$$ The left-hand-side is the length of $f(L_u\cup R_u)$ in the $S_Y$-metric. Hence, 
\begin{equation}\label{eq626.5}\textstyle\length_{S_Y}(f(L_u\cup R_u))\leq 1+4\beta^{1/2}\epsilon^{-1/2}(\log(1/\upsilon))^{1/2}+2\upsilon.
\end{equation} 

Now, let $C_{0,r}$ be the circle of radius $r$ around $x_0$, with respect to the $S_X$-metric. Similarly, let $C_{1,r}$ be the circle of radius $r$ around $x_1$, with respect to the $S_X$-metric. By assumption, for $\upsilon<1/4$ we have 
$$\int_{r=\upsilon}^{2\upsilon}\int_{C_{0,r}}|f^*\phi_{S_Y}-\phi_{S_X}|^2_{S_X}r_0dr_0d\theta_0\leq \beta.$$ Therefore for some $w\in [\upsilon,2\upsilon]$, we have $$\int_{C_{0,w}}|f^*\phi_{S_Y}-\phi_{S_X}|^2_{S_X}d\theta_0\leq \beta \upsilon^{-2}.$$ By Cauchy-Schwarz, 
\begin{equation*}
\begin{aligned}\int_{C_{0,w}}|f^*\phi_{S_Y}-\phi_{S_X}|_{S_X}d\theta_0&\leq \left(\int_{C_{0,w}} |f^*\phi_{S_Y}-\phi_{S_X}|^2_{S_X}d\theta_0\right)^{1/2} (2\pi \upsilon)^{1/2}\\&\leq (2\pi)^{1/2}\upsilon^{-1/2} \beta^{1/2}.
\end{aligned}
\end{equation*}
Moreover, \begin{multline*} \left|\int_{C_{0,w}}\left|f^*\phi_{S_Y}\left(\frac{1}{r_0}\frac{\partial}{\partial \theta_0}\right)\right|d\theta_0 -\int_{C_{0,w}}\left|\phi_{S_X}\left(\frac{1}{r_0}\frac{\partial}{\partial \theta_0}\right)\right| d\theta_0\right|\\
\begin{aligned} &\leq \int_{C_{0,w}}\left|f^*\phi_{S_Y}\left(\frac{1}{r_0}\frac{\partial}{\partial \theta_0}\right)-\phi_{S_X}\left(\frac{1}{r_0}\frac{\partial}{\partial \theta_0}\right)\right| d\theta_0\\&\leq \int_{C_{0,w}}|f^*\phi_{S_Y}-\phi_{S_X}|_{S_X}d\theta_0\\&\leq (2\pi)^{1/2}\upsilon^{-1/2} \beta^{1/2}.
\end{aligned}
\end{multline*}
Now, $$\textstyle\length_{S_X}(C_{0,w})=\displaystyle w\int_{C_{0,w}}\left|\phi_{S_X}\left(\frac{1}{r_0}\frac{\partial}{\partial \theta_0}\right)\right|d\theta_0$$ and $$\textstyle\length_{S_Y}(f(C_{0,w}))=\displaystyle w\int_{C_{0,w}}\left|f^*\phi_{S_Y}\left(\frac{1}{r_0}\frac{\partial}{\partial \theta_0}\right)\right|d\theta_0.$$  Thus $$2\pi w - (2\pi)^{1/2}\upsilon^{-1/2} \beta^{1/2}\leq \textstyle\length_{S_Y}(f(C_{0,w}))\leq 2\pi w + (2\pi)^{1/2}\upsilon^{-1/2} \beta^{1/2}.$$ Since $w\in [\upsilon,2\upsilon]$ and assuming $\beta\leq \upsilon^4$, $f(C_{0,w})$ is contained in $B_{S_Y}(f(x_1),20\upsilon)$. Since $f(C_{0,\upsilon})$ is enclosed by $f(C_{0,w})$, $f(C_{0,\upsilon})$ is also contained in $B_{S_Y}(f(x_0),20\upsilon)$. Analogously, we may show that $f(C_{1,\upsilon})$ is contained in $B_{S_Y}(f(x_1),20\upsilon)$.

The arc $L_u\cup R_u$ has two endpoints, one wherein $r_0=\upsilon$ and $\theta_0=u$, which we label $x_{0,u}$, and the other wherein $r_1=\upsilon$ and $\theta_1=\pi-u$, which we label $x_{1,u}$. Since $f(x_{0,u})$ lies on $f(C_{0,\upsilon})$ and $f(x_{1,u})$ lies on $f(C_{1,\upsilon})$, $$f(x_{0,u})\in B_{S_Y}(f(x_0,20\upsilon))$$ and $$f(x_{1,u})\in B_{S_Y}(f(x_1,20\upsilon)).$$ By \cref{eq626} we have, $$d_{S_Y}(f(x_0),f(x_1))\leq 1+4\beta^{1/2}\epsilon^{-1/2}(\log(1/\upsilon))^{1/2}+ 50\upsilon,$$ so 
\begin{align*}d_{S_Y}(y_0,f(x_1))&\leq d_{S_Y}(y_0,f(x_0))+d_{S_Y}(f(x_0),f(x_1))\\&\leq 1+4\beta^{1/2}\epsilon^{-1/2}(\log(1/\upsilon))^{1/2}+ 50\upsilon+\epsilon.
\end{align*} 
Let $y_1$ be a vertex of $S_Y$ nearest to $f(x_1)$. Since $f(x_1)$ lies on a triangle in $S_Y$, 
\begin{equation}\label{eq627}d_{S_Y}(y_1,f(x_1))\leq {\sqrt{3}}^{-1}.
\end{equation} Therefore 
\begin{align*}d_{S_Y}(y_0,y_1)&\leq d_{S_Y}(y_0,f(x_1))+d_{S_Y}(y_0,f(x_1))\\&\leq 1+4\beta^{1/2}\epsilon^{-1/2}(\log(1/\upsilon))^{1/2}+ 50\upsilon+\epsilon+{\sqrt{3}}^{-1}.
\end{align*}
Choose $\upsilon$ later such that $$4\beta^{1/2}\epsilon^{-1/2}(\log(1/\upsilon))^{1/2}+50\upsilon+\epsilon<1/1000.$$ We must have that either $$d_{S_Y}(y_0,y_1)=0$$ or $$d_{S_Y}(y_0,y_1)=1.$$ The former possibility is ruled out by \cref{eq626}, therefore the latter equation is true. This necessarily means that $y_0$ and $y_1$ are adjacent vertices on $S_Y$.

We must show that $$d(x_1,f(y_1))\leq \tau(\epsilon,\beta)$$ and also find an edge of $S_Y$ connecting $y_0$ and $y_1$. To do this, letting $q$ be a shortest path from $f(x_1)$ to $y_1$, we have \begin{equation}\label{eq628}
\begin{aligned}d_{S_Y}(f(x_1),y_1)&=\left|\int_q \phi_Y\right|\\&\leq {\sqrt{3}}^{-1}
\end{aligned}
\end{equation} by \cref{eq627}. Let $p$ be a shortest path from $y_0$ to $f(x_0)$ in $S_Y$. Then 
\begin{equation}\label{eq629}\left|\int_{p}\phi_{S_Y}\right|\leq\epsilon.
\end{equation} Let $s_0$ be a shortest path from $f(x_{0,u})$ to $f(x_0)$ and $s_1$ be a shortest path from $f(x_{1,u})$ to $f(x_1)$. Finally, let $t$ be a shortest path from $y_0$ to $y_1$ that is homotopic to the path $$p\cup s_0 \cup f(L_u\cup R_u)\cup s_1 \cup q$$ (which also has the endpoints $y_0$ and $y_1$). By construction $q$ is homotopic to $$t^{-1}\cup p\cup s_0\cup f(L_u\cup R_u)\cup s_1,$$ where here $t^{-1}$ denotes the path $t$ with the opposite orientation. So to compute $$\int_q \phi_Y,$$ it suffices to compute $$\int_{t^{-1}\cup p\cup s_0\cup f(L_u\cup R_u)\cup s_1} \phi_Y=\int_{p} \phi_Y+\int_{s_0}\phi_Y +\int_{f(L_u\cup R_u)} \phi_Y+\int_{t} \phi_Y+\int_{s_1}\phi_Y.$$ Since $s_0\subset B(f(x_1),20\upsilon)$, 
\begin{equation}\label{eq629.1}\left|\int_{s_0}\phi_Y\right|\leq 20\upsilon.
\end{equation} Similarly, \begin{equation}\label{eq629.2}\left|\int_{s_1}\phi_Y\right|\leq 20\upsilon.
\end{equation} From \cref{eq626}, \cref{eq628}, \cref{eq629}, \cref{eq629.1} and \cref{eq629.2}, we have 
\begin{equation*}\label{eq6210}\left|\zeta-\int_{t} \phi_Y\right|\leq 4\beta^{1/2}\epsilon^{-1/2}(\log(1/\upsilon))^{1/2}+50\upsilon+\epsilon+\sqrt{3}^{-1}.
\end{equation*} Since $$\int_t \phi_Y\subset \mathbb{Z}+e^{\pi i/3}\mathbb{Z},$$ we must necessarily have $$\int_t\phi_Y=\zeta.$$ Combining with \cref{eq626}, \cref{eq629}, \cref{eq629.1} and \cref{eq629.2} we obtain 
\begin{align*}d(y_1,f(x_1))&=\left|\int_q \phi_Y\right|\\&\leq \left|\int_{t^{-1}\cup p\cup s_0\cup f(L_u\cup R_u)\cup s_1} \phi_Y\right|\\&\leq 4\beta^{1/2}\epsilon^{-1/2}(\log(1/\upsilon))^{1/2}+50\upsilon+\epsilon,
\end{align*}
as desired. Finally, since $t$ is a shortest path from $y_0$ to $y_1$ and $$\int_t\phi_Y=\zeta,$$ $t$ must be an edge which we denote $e(y_0,y_1)$. 

It remains to show that $$f(e(x_0,x_1))\subset B_{S_Y}(e(y_0,y_1),10\tau(\epsilon,\beta)).$$ To do this, we have shown by \cref{eq626.5} the existence of $u\in [0,\epsilon]$ such that $$\textstyle\length_{S_Y}(f(L_u\cup R_u))\leq 1+4\beta^{1/2}\epsilon^{-1/2}(\log(1/\upsilon))^{1/2}+2\upsilon.$$ Analogously, there exists $u'\in [-\epsilon,0]$ such that $$\textstyle\length_{S_Y}(f(L_{u'}\cup R_{u'}))\leq 1+4\beta^{1/2}\epsilon^{-1/2}(\log(1/\upsilon))^{1/2}+2\upsilon.$$ Now, the endpoints of $f(L_u\cup R_u)$ and $f(L_{u'}\cup R_{u'})$ are contained in a $20\upsilon$-ball around $f(x_0)$ or $f(x_1)$ in the $S_Y$-metric. Also, $$f(x_0)\in B_{S_Y}(y_0,\epsilon)$$ while $$f(x_1)\in B_{S_Y}(y_1,4\beta^{1/2}\epsilon^{-1/2}(\log(1/\upsilon))^{1/2}+50\upsilon+\epsilon).$$ Thus both $f(L_u\cup R_u)$ and $f(L_{u'}\cup R_{u'})$ must be contained in $$B_{S_Y}(e(y_0,y_1),40\beta^{1/2}\epsilon^{-1/2}(\log(1/\upsilon))^{1/2}+500\upsilon+10\epsilon).$$ The $L_u\cup R_u$ and $L_{u'}\cup R_{u'}$, along with arcs of $C_{0,\upsilon}$ and $C_{1,\upsilon}$ bound a closed simply connected region $R$ in $S_X$ such that $$e(x_0,x_1)\subset B_{S_X}(x_0,\upsilon)\cup R\cup B_{S_X}(x_1,\upsilon).$$ Hence,
\begin{align*} f(e(x_0,x_1))&\subset f(B_{S_X}(x_0,\upsilon))\cup f(R)\cup f(B_{S_X}(x_1,\upsilon))\\&\subset B_{S_Y}(f(x_0),20 \upsilon)\cup f(R)\cup B_{S_Y}(f(x_1),20\upsilon)).
\end{align*} Therefore, $$f(e(x_0,x_1))\subset B_{S_Y}(e(y_0,y_1),40\beta^{1/2}\epsilon^{-1/2}(\log(1/\upsilon))^{1/2}+500\upsilon+10\epsilon)$$ as well. Finally, choosing $$\upsilon=\beta^{1/4}$$ satisfies $$\beta\leq \upsilon^4,$$
\begin{align*}4\beta^{1/2}\epsilon^{-1/2}(\log(1/\upsilon))^{1/2}+50\upsilon+\epsilon&\leq 100\beta^{1/4}\epsilon^{-1}+\epsilon\\&\leq 1/1000,
\end{align*} and $$\upsilon\leq 1/4,$$ which we assumed in the proof. 
\end{proof}

\subsection{Faces}\label{6.3} In this section, we note that if vertices and edges of $S_X$ are close to vertices and edges of $S_Y$ under $f$, then so are the faces they bound.

\begin{lemma} \label{faces} Let $0<\epsilon\leq 1/4$. Suppose $x_1,x_2,x_3\in V(S_X)$ and $y_1,y_2,y_3\in V(S_Y)$ such that $$f(x_1)\in B_{S_Y}(y_1,\epsilon),$$ $$f(x_2)\in B_{S_Y}(y_2,\epsilon),$$ and $$f(x_3)\in B_{S_Y}(y_3,\epsilon).$$ Suppose $x_1,x_2,x_3$ are the vertices of a triangle in $S_X$. Let $e(x_i,x_j)$ be the edge of the triangle that connects vertices $x_i$ and $x_j$. Suppose additionally that for each $i,j\in \{1,2,3\}$ satisfying $i\neq j$, there is an edge $e(y_i,y_j)$ in $S_Y$ such that $$f(e(x_i,x_j))\subset B_{S_Y}(e(y_i,y_j),\epsilon).$$ Then the $y_1,y_2,y_3$ along with the $e(y_i,y_j)$ bound a triangle in $S_Y$.
\end{lemma}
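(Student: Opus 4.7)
The plan is to prove this combinatorially from the fact that $f$ is a homeomorphism, using a Gauss--Bonnet area/angle argument followed by a short uniqueness argument at each corner. The quantitative $L^2$ estimates from the previous two lemmas are not needed here.

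First I would observe that, since $f$ is a homeomorphism, the image $f(T_X)$ of the triangle $T_X\subset S_X$ bounded by $x_1,x_2,x_3$ is an embedded closed disk in $S_Y$ whose Jordan-curve boundary $f(\partial T_X)$ lies in the $\epsilon$-neighborhood of $\gamma_Y:=e(y_1,y_2)\cup e(y_2,y_3)\cup e(y_1,y_3)$. Because $S_Y$ is a simplicial complex, the existence of the $e(y_i,y_j)$ forces $y_1,y_2,y_3$ to be pairwise distinct, so $\gamma_Y$ itself is a Jordan curve. For $\epsilon\leq 1/4$ every triangle of $S_Y$ not incident to $\gamma_Y$ has flat-distance at least $\sqrt{3}/2>\epsilon$ from $\gamma_Y$, so the $\epsilon$-neighborhood lies in the closed star $\mathrm{St}(\gamma_Y)$. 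Passing to the universal cover $\widetilde{S_Y}$ and choosing, for each $i$, a lift $\widetilde y_i$ of $y_i$ within flat-distance $\epsilon$ of $\widetilde f(\widetilde x_i)$ and the unique lift $\widetilde e(\widetilde y_i,\widetilde y_j)$ of $e(y_i,y_j)$ whose $\epsilon$-neighborhood contains $\widetilde f(\widetilde e(\widetilde x_i,\widetilde x_j))$, the three lifts close up at $\widetilde y_1,\widetilde y_2,\widetilde y_3$ into a Jordan loop in $\widetilde{S_Y}$. Hence $\gamma_Y$ is nullhomotopic in $S_Y$ and bounds a unique closed disk $R_Y\subset S_Y$.

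Next, since $\partial R_Y=\gamma_Y$ lies in the $1$-skeleton, $R_Y$ is a union of triangles of $S_Y$. In a combinatorial translation surface every vertex has cone angle equal to a positive integer multiple of $2\pi$ (degrees are multiples of $6$). Denote by $\alpha_1,\alpha_2,\alpha_3$ the interior angles of $R_Y$ at $y_1,y_2,y_3$: each is a positive multiple of $\pi/3$, hence $\alpha_i\geq\pi/3$. Let $\beta_j$ denote the cone angles at the interior vertices of $R_Y$: each satisfies $\beta_j\geq 2\pi$. Gauss--Bonnet applied to the flat disk $R_Y$ with piecewise-geodesic boundary reads
\[
\sum_{i=1}^{3}(\pi-\alpha_i)+\sum_j(2\pi-\beta_j)=2\pi.
\]
Since every summand $2\pi-\beta_j$ is nonpositive, this yields $\sum\alpha_i\leq \pi$; combined with $\alpha_i\geq \pi/3$, we get $\alpha_i=\pi/3$ for all $i$ and $\beta_j=2\pi$ for every interior vertex (so $R_Y$ contains no genuine cone singularity in its interior).

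Finally, because the interior angle at each $y_i$ is exactly $\pi/3$, precisely one triangle $T^{(i)}$ of $R_Y$ meets $y_i$. Any edge of $T^{(i)}$ at $y_i$ that were not on $\partial R_Y$ would be shared with another triangle of $R_Y$ at $y_i$, contradicting uniqueness; so the two edges of $T^{(i)}$ incident to $y_i$ are precisely the boundary edges $e(y_i,y_{i-1})$ and $e(y_i,y_{i+1})$ (indices mod $3$), and $T^{(i)}$ has vertex set $\{y_1,y_2,y_3\}$. Since $T^{(1)}$ is incident to $y_2$, uniqueness at $y_2$ forces $T^{(1)}=T^{(2)}$, and likewise $T^{(1)}=T^{(3)}$; writing $T$ for this common triangle, its three edges must be $e(y_1,y_2),e(y_2,y_3),e(y_1,y_3)$, so $\partial T=\gamma_Y$. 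Because $T\subset R_Y$ is a closed disk sharing its boundary with $R_Y$, we conclude $T=R_Y$, which is the desired triangle of $S_Y$ bounded by the three specified edges. The main obstacle is the nullhomotopy step in the first paragraph; once $\gamma_Y$ is shown to bound a disk, Gauss--Bonnet combined with the observation that cone angles in a combinatorial translation surface are multiples of $2\pi$ does all the real work, and the combinatorial uniqueness step is routine.
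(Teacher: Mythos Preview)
Your proof is correct and follows the same overall strategy as the paper's: first show that the $3$-edge cycle $\gamma_Y=e(y_1,y_2)\cup e(y_2,y_3)\cup e(y_3,y_1)$ is contractible in $S_Y$, then conclude that it bounds a triangle. For the contractibility step the paper argues via the quotient $S_Y/\bigcup_i B_{S_Y}(y_i,\epsilon)$---each $f(e(x_i,x_j))$ is homotopic rel endpoints to $e(y_i,y_j)$ inside its tubular strip, so the two concatenated loops are freely homotopic, and $f(\partial T_X)$ is already contractible---whereas you lift to the universal cover; the two arguments are equivalent in content. For the second step the paper simply asserts that a contractible $3$-edge loop bounds a triangular face, while you supply the Gauss--Bonnet justification using that every cone angle in a combinatorial translation surface is a positive multiple of $2\pi$. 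Your extra step genuinely fills in something the paper leaves implicit, and the argument you give (forcing each corner angle to equal $\pi/3$, hence a single triangle of $R_Y$ at each $y_i$, hence $R_Y$ is that common triangle) is clean and correct.
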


\begin{proof} 
Consider the space $$P=B_{S_Y}(e(y_1,y_2),\epsilon)\cup B_{S_Y}(e(y_2,y_3),\epsilon)\cup B_{S_Y}(e(y_3,y_1),\epsilon).$$ Note that $P$ is homotopy equivalent to $S^1$. In particular, $\pi_1(P)\simeq \mathbb{Z}$. The groupoid version of Van Kampen's theorem, along with our hypotheses, implies that $$f(e(x_1,x_2))\cup f(e(x_2,x_3))\cup f(e(x_3,x_1))$$ is a generator of $\pi_1(P)$. Note that $$e(y_1,y_2)\cup e(y_2,y_3)\cup e(y_3,y_1)$$ is also a generator of $\pi_1(P)$. So $$f(e(x_1,x_2))\cup f(e(x_2,x_3))\cup f(e(x_3,x_1))$$ and $$e(y_1,y_2)\cup e(y_2,y_3)\cup e(y_3,y_1)$$ are freely homotopic in $P$. Since $P\subset S_Y$, they are freely homotopic in $S_Y$ as well. The former loop is contractible in $S_Y$ since $$e(x_1,x_2)\cup e(x_2,x_3)\cup e(x_3,x_1)$$ is contractible in $S_X$. Therefore $$e(y_1,y_2)\cup e(y_2,y_3)\cup e(y_3,y_1)$$ is contractible in $S_Y$. Gauss-Bonnet along with the fact that $S_Y$ is a combinatorial translation surface implies that $$e(y_1,y_2)\cup e(y_2,y_3)\cup e(y_3,y_1)$$ bounds a triangle in $S_Y$ with vertices $y_1$, $y_2$ and $y_3$.
\end{proof}

\subsection{Parallelogram decomposition} \label{6.4}

In this section, given $S\in \Tranc{g}{T}$, we decompose $S$ into parallelograms wherein each parallelogram contains a vertex of degree greater than $6$. This decomposition applied to $S_X$ will allow us to use \cref{vertices-x} and \cref{edges} repeatedly to show (in \cref{6.5}) that most edges of $S_X$ are close to edges of $S_Y$ under $f$. 

The translation surface structure of $S$ determines foliations (with singularities) on $S$ whose leaves are constant trajectories for $\phi_S$. In particular, the horizontal foliation is the foliation obtained in this way satisfying $\phi_S=\pm 1$ on each leaf. The skew vertical foliation is the foliation obtained in this way satisfying $\phi_S=\pm e^{\pi i/3}$ on each leaf. Both these foliations have singularities at the zeros of $\phi_S$, but are otherwise nonsingular.

\begin{lemma} All leaves of the horizontal and skew vertical foliations of $S$ are closed.
\end{lemma}

\begin{proof} Let $$T=\mathbb{C}/(\mathbb{Z}+e^{\pi i/3}\mathbb{Z}).$$ The combinatorial translation structure of $S$ is equivalent to a branched cover $S\to T$ branched only over $0$. The form $\phi_S$ is simply the pullback of $dz$ on $T$ under the branched covering map. Since the leaves of the horizontal and skew vertical foliations of $T$ are closed, so are the leaves of horizontal and skew vertical foliations of $S$.
\end{proof}

The singular leaves of the horizontal foliation are the leaves that travel through a vertex of degree strictly greater than $6$. Label them $L_1,...,L_k$. Note that each $L_i$ is the union of some edges of the triangulation $S$.

\begin{lemma} The surface $S\setminus \bigcup_{i=1}^k L_i$ is a disjoint union of annuli. Moreover, the $S$-metric restricted to each annulus is flat (with no singularities).
\end{lemma}

\begin{proof} Let $A$ be a connected component of $S\setminus \bigcup_{i=1}^k L_i$. The geodesic curvature of a boundary component of $A$ with respect to the $S$-metric on $A$ is $0$. The curvature at any point in the interior of $A$ is also $0$. By Gauss-Bonnet, $\chi(A)=0$. Therefore, $A$ is a torus or an annulus. However, by construction, $A$ must have at least one boundary component, so $A$ cannot be a torus. The lemma follows.
\end{proof}

Denote by $\mathcal{A}$ the set of annular components of $S\setminus \bigcup_{i=1}^k L_i$. Let $A\in \mathcal{A}$.

\begin{lemma}\label{annulus contains singularity} Each component of $\partial A$ contains at least one vertex in $V_{>6}(S)$.
\end{lemma}

\begin{proof} Suppose the contrary. Let $L$ be a boundary component of $A$ which does not contain any vertices in $V_{>6}(S)$. Then $L$ is a closed leaf of the horizontal foliation on $S$. But $L$ does not contain any singularities, which is a contradiction.
\end{proof}

We now further decompose $S$ into parallelograms. Let $A\in \mathcal{A}$. Consider the skew vertical foliation on $S$ restricted to $A$. Let $M_{A,1},...,M_{A,j_A}$ be the leaves of the restricted foliation that pass through $\partial A\cap V_{>6}(S)$. Note that the $M_{i,A}$ are the union of some edges of triangulation $S$. By construction and \cref{annulus contains singularity}:

\begin{lemma} \label{R is a quadrilateral} Each component of $A\setminus \bigcup_{i=1}^{j_A} M_{A,i}$ is a parallelogram. 
\end{lemma}

Such a parallelogram has four corner vertices and four edges. Two parallel edges are segments of leaves of the horizontal foliation. We call these horizontal edges. The other two parallel edges are segments of leaves of the skew vertical foliation. We call these skew vertical edges. By construction, we have: 

\begin{lemma}\label{R_i contains vertex of degree greater than 6} Let $R$ be parallelogram in $A\setminus \bigcup_{i=1}^{j_A} M_{A,i}$. Each skew vertical edge of a parallelogram contains a vertex in $V_{>6}(S)$ as one of its endpoints. In particular, $R$ contains at least two corner vertices in $V_{>6}(S)$.
\end{lemma}

We have decomposed $S$ into annuli, and further decomposed each annulus into parallelograms. The union of the boundaries of these parallelograms is naturally a $1$-complex on $S$ which we will denote $B_1$.

That is, $$B_1=\bigcup_{i=1}^k L_i\cup \bigcup_{A\in \mathcal{A}}\bigcup_{i=1}^{j_A} M_{A,i}.$$

The $1$-complex $B_1$ naturally decomposes into a horizontal component $$B_1^{\hor}=\bigcup_{i=1}^k L_i$$ and a skew vertical component $$B_1^{\skv}=\bigcup_{A\in \mathcal{A}}\bigcup_{i=1}^{j_A} M_{A,i}.$$ By construction:

\begin{lemma}\label{hor} $B_1^{\hor}$ is the union of some leaves of the horizontal foliation.
\end{lemma}

Let $B$ be the $2$-polytope homeomorphic to $S$ whose $1$-skeleton is $B_1$. That is, the vertices of $B$ (denoted $V(B)$) are the intersection points $B_1^{\hor}\cap B_1^{\skv}$. (Note that this set contains $V_{>6}(S)$.) Away from these points, $B_1$ is locally a leaf of either the horizontal or skew vertical foliation. The edges of $B$ (denoted $E(B)$) are the connected components of $B_1\setminus V(B)$. Each such edge is contained in either $B_1^{\hor}$ or $B^{\skv}$. The faces of $B$ (denoted $F(B)$) are the connected components of $S\setminus B_1$. By construction, $B$ is a polytope. Note that the faces of $B$ are geometrically parallelograms, but need not be quadrilaterals as faces of a polytope. 

\begin{lemma} Let $\gamma$ be an edge of the polytope $B$. Suppose $\gamma\subset B_1^{\skv}$. Then $\gamma$ is the edge of a rectangle $R\in F(B)$. 
\end{lemma}

\begin{proof}\label{skv} Since $\gamma\in E(B)$, $\gamma$ must not intersect $B_1^{\hor}$ except at its endpoints. Therefore, $\gamma$ is contained in some annulus $A$ in the annular decomposition. By construction, the lemma follows.
\end{proof}

We have some additional combinatorial properties of $B$. Recall the construction of directional weights from \cref{def of comb tran surface}.

\begin{lemma}\label{property of B_X} The polytope $B$ satisfies the following properties.

\begin{enumerate} 

\item If $e\in E(S)$ is contained in $B_1$ and $x\in V(S)$ is a vertex which is an endpoint of $e$, then $\zeta(e,x)=1$, $-1$, $e^{\pi i/3}$ or $-e^{\pi i /3}$. 

\item If $x\in V_{>6}(S)$ and $e\in E(S)$ is an edge emanating from $x$ satisfying $\zeta(e,x)=1$, $-1$, $e^{\pi i/3}$ or $-e^{\pi i/3}$, then $e\subset B_1$. 

\item If $x\in V(B)$ and $e\in E(S)$ is an edge emanating from $x$ satisfying $\zeta(e,x)=1$ or $-1$, then $e\subset B_1$. 

\item Given two vertices $x,y\in V(B)$ and a path $\gamma$ from $x$ to $y$ on $S$, $$\int_\gamma \phi_{S}\in 5\mathbb{Z}+5e^{\pi i/3}\mathbb{Z}.$$ 

\end{enumerate}
\end{lemma}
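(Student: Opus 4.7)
Properties (1), (2), and (3) are combinatorial consequences of unpacking the inductive construction of $A_0$, $A_1$, $A_2$. For (1), every edge of $A_0$ (resp.\ $A_1$, $A_2$) is by construction added with $\zeta$-value $1$ (resp.\ $e^{\pi i/3}$, $-e^{\pi i/3}$) at its starting endpoint, and condition (1) of the definition of combinatorial translation structure forces the $\zeta$-value at the other endpoint to be the negative of this. So every edge of $A$ has $\zeta(\cdot, \cdot) \in \{\pm 1, \pm e^{\pi i/3}\}$ at either endpoint. An edge of $B$ is a concatenation of edges of $A$ with matching $\zeta$-values at intermediate vertices, so each $e \in E(S)$ contained in such a concatenation inherits this property.

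For (2), given $x \in V_{>6}(S)$, the first inductive step of $A_0$, $A_1$, $A_2$ (each of which starts at $V_{>6}(S)$) adds the edges from $x$ of direction $1$, $e^{\pi i/3}$, $-e^{\pi i/3}$ respectively. For the remaining case $\zeta(e,x) = -1$, consider the maximal direction-$1$ trajectory through $e$. Since $\phi_S$ induces a flat metric whose cone angles are integer multiples of $2\pi$ (forcing $V_{\neq 6}(S) = V_{>6}(S)$) and $S$ is compact, this trajectory is either closed through $x$ --- in which case $A_0$ traced forward from $x$ traverses the loop and picks up $e$ --- or has its backward endpoint at some $y \in V_{>6}(S)$, in which case $A_0$ traced forward from $y$ reaches $x$ and picks up $e$ along the way. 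For (3), note first that $V(B) \subset V(S) \cap A_0$, since every $v \in V(B)$ has a horizontal edge of $A$ incident to it and all horizontal edges of $A$ lie in $A_0$. If $v \in V_{>6}(S)$ we apply (2); otherwise $\deg v = 6$ and the horizontal trajectory of $A_0$ through $v$ passes straight through, so the inductive construction of $A_0$ includes both horizontal edges incident to $v$.

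Property (4) is the main subtlety. Assuming $V_{>6}(S) \neq \emptyset$ (otherwise $A$ is empty and the claim is vacuous), applying the locally bounded condition to closed loops based at a fixed $V_{>6}(S)$-vertex, together with the fact that $\int \phi_S$ depends only on the homology class of a closed loop, shows that all periods of $\phi_S$ on closed loops in $S$ lie in $3\mathbb{Z} + 3 e^{\pi i/3}\mathbb{Z}$. Now given $x \in V(B)$, the argument for (3) yields $x \in A_0 \cap (A_1 \cup A_2)$. Thus $x$ lies on a horizontal arc $\gamma_1$ in $A_0$ from some $a \in V_{>6}(S)$ to $x$, with $\int_{\gamma_1} \phi_S = k \in \mathbb{Z}$, and on an arc $\gamma_2$ in $A_1$ or $A_2$ from $x$ to some $c \in V_{>6}(S)$, with $\int_{\gamma_2} \phi_S = \pm m\, e^{\pi i/3}$ for some $m \in \mathbb{Z}$. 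Concatenation produces an arc from $a$ to $c$ with integral $k \pm m\, e^{\pi i/3}$, which by local boundedness lies in $3\mathbb{Z} + 3 e^{\pi i/3}\mathbb{Z}$; by $\mathbb{Z}$-linear independence of $1$ and $e^{\pi i/3}$, this forces $k, m \in 3\mathbb{Z}$, and hence $\int_{\gamma_1} \phi_S \in 3\mathbb{Z} + 3 e^{\pi i/3}\mathbb{Z}$. The same argument applied to $y \in V(B)$ gives an arc from $a$ to $y$ with integral in $3\mathbb{Z} + 3 e^{\pi i/3}\mathbb{Z}$; subtracting, and noting that any two arcs from $x$ to $y$ differ by a closed loop on $S$ whose period lies in $3\mathbb{Z} + 3 e^{\pi i/3}\mathbb{Z}$, gives the claim.

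The main obstacle is (4): one has to exploit both trajectories through $x$ simultaneously against the period lattice constraint $3\mathbb{Z} + 3 e^{\pi i/3}\mathbb{Z}$ in order to force the individual trajectory lengths $k$ and $m$ to be divisible by $3$, a conclusion that would not follow from either trajectory alone.
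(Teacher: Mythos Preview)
Your proof is correct and follows essentially the same route as the paper's. The treatment of properties (1)--(3) matches the paper's almost verbatim, and for (4) you use exactly the same key idea: for $x\in V(B)$, combine the horizontal $A_0$-trajectory from some $a\in V_{>6}(S)$ to $x$ with the diagonal $A_1$/$A_2$-trajectory to another $V_{>6}(S)$-vertex, apply the locally bounded period condition to the concatenation, and use $\mathbb{Z}$-linear independence of $1$ and $e^{\pi i/3}$ to force each leg's length into $3\mathbb{Z}$. One small imprecision: when you write ``the same argument applied to $y\in V(B)$ gives an arc from $a$ to $y$,'' the argument literally produces an arc from some \emph{other} $a'\in V_{>6}(S)$ to $y$; to reach $y$ from the \emph{same} $a$ you must first bridge $a$ to $a'$ via local boundedness (both lie in $V_{>6}(S)$), which is exactly what the paper does by passing through an auxiliary $z\in V_{>6}(S)$. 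This is a phrasing issue, not a gap.
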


\begin{proof}

Property 1 is true since each edge of $B$ is a segment of a leaf of the horizontal or skew vertical foliation. Property 2 is true by construction. 

To see property 3: if $x\in V(B)$, then $x\in B_1^{\hor}$, so property 3 follows from \cref{hor}. 

Finally, we show property 4. By condition 2 of \cref{lb comb tran surface}, it suffices to show that for all $x\in V(B)$, there exists $y\in V_{>6}(S)$ and a path $\gamma$ from $x$ to $y$, such that $$\int_\gamma \phi_S\in 5\mathbb{Z}+5e^{\pi i/3}\mathbb{Z}.$$ Suppose $x\in V(B)$. Then there is a segment in $B_1^{\skv}$ emanating from $x$, which by \cref{skv} is a skew vertical edge $\gamma$ of a parallelogram $R$. By \cref{R_i contains vertex of degree greater than 6}, the other endpoint $y$ of $\gamma$ lies in $V_{>6}(S)$. The parallelogram $R$ is contained in an annulus $A\in \mathcal{A}$. The annulus $A$ has two boundary components, one of which contains $x$ and the other $y$. The boundary component which contains $x$ must contain another vertex $x'\in V_{>6}(S)$, by \cref{annulus contains singularity}. Let $p$ be a path from $x$ to $x'$ in $A$ along the boundary component of $A$. Since $p$ lies on a boundary component of $A$, it lies in $B_1^{\hor}$, so $$\int_p \phi_S\in \mathbb{Z}.$$ Since $\gamma$ is a skew vertical edge of $R$, it lies in $B_1^{\skv}$, so $$\int_\gamma \phi_S\in e^{\pi i/3}\mathbb{Z}.$$ However, $$\int_{p\cup\gamma} \phi_S=\int_p \phi_S+\int_\gamma \phi_S\in 5\mathbb{Z}+5e^{\pi i/3}\mathbb{Z}.$$ Therefore$$\int_{\gamma}\phi_S\in 5\mathbb{Z}+5e^{\pi i/3}\mathbb{Z}$$ which completes the proof of property 4.
\end{proof}

Given a parallelogram $R\in F(B)$, let $\ell(R)$ denote the length of its horizontal edges. Let $w(R)$ denote the length of its skew vertical edges. Property 4 of \cref{property of B_X} implies:

\begin{lemma} \label{length and width of R_i} We have, $\ell(R),w(R)\geq 5$.
\end{lemma}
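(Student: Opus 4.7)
The plan is to use property 4 of \cref{property of B_X} together with the fact that the corner vertices of any parallelogram $R\in F(B)$ lie in $V(B)$.

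Concretely, fix a parallelogram $R\in F(B)$ and pick two corner vertices $v_1,v_2\in V(B)$ of $R$ that are adjacent along a component of $\partial R$ in direction $\pm 1$ (so the straight-line segment $\gamma$ between them along $\partial R$ witnesses the length of $R$). By construction of $\ell(R)$, the segment $\gamma$ is a straight segment in the flat metric of direction $\pm 1$, so $\int_\gamma \phi_S=\pm\ell(R)\in\mathbb{R}$. On the other hand, $v_1,v_2\in V(B)$, so property 4 of \cref{property of B_X} gives $\int_\gamma \phi_S\in 3\mathbb{Z}+3e^{\pi i/3}\mathbb{Z}$. Intersecting with $\mathbb{R}$ forces $\ell(R)\in 3\mathbb{Z}$. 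Since $R$ is a genuine (nondegenerate) parallelogram face by \cref{R is a quadrilateral}, $\ell(R)>0$, and hence $\ell(R)\geq 3$.

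The same argument, applied to two adjacent corner vertices of $R$ along a side in direction $\pm e^{\pi i/3}$, shows that the corresponding integral of $\phi_S$ equals $\pm w(R)e^{\pi i/3}$, so property 4 of \cref{property of B_X} forces $w(R)\in 3\mathbb{Z}$ and hence $w(R)\geq 3$. There is no real obstacle here: the lemma is essentially an immediate consequence of the period integrality condition in the definition of $\Tranc{g}{T}$, together with the fact, already established in the construction of $B$, that $V(B)\supset V_{>6}(S)$ and that $V(B)$ inherits the period lattice $3\mathbb{Z}+3e^{\pi i/3}\mathbb{Z}$ from $V_{>6}(S)$ via the connectivity arguments used in the proof of property 4.
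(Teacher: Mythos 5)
Your proof is correct and takes exactly the same route as the paper: the paper's proof of this lemma is simply the one-line remark that it follows from property 4 of Lemma \ref{property of B_X}, and you have filled in precisely the calculation one would make to justify that line (period along a side lies in $3\mathbb{Z}+3e^{\pi i/3}\mathbb{Z}$, intersect with $\mathbb{R}$ or $\mathbb{R}e^{\pi i/3}$, use nondegeneracy from Lemma \ref{R is a quadrilateral}).
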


Finally, we bound the number of parallelograms in our decomposition. 

\begin{lemma}\label{number of parallelograms} We have, $|F(B)|\leq 12(g-1)$.
\end{lemma}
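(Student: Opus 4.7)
The plan is to combine Euler's formula for the triangulation of $S$ with a strengthening of the argument in Lemma~\ref{R_i contains vertex of degree greater than 6}, showing that each parallelogram face of $B$ contains at least two corners in $V_{>6}(S)$.

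First I would apply Euler's formula to the triangulation $S$ and use $3|F(S)|=2|E(S)|$ to obtain $\sum_{v\in V(S)}(d(v)-6)=12(g-1)$. Because $S$ carries a combinatorial translation structure, condition~(2) in its definition forces the directional weights to rotate by $e^{\pi i/3}$ between consecutive edges at any vertex; closing up around $v$ requires $d(v)$ to be a positive multiple of $6$. Thus $V_{<6}(S)=\emptyset$, every $v\in V_{>6}(S)$ has $d(v)\geq 12$, so $d(v)-6\geq d(v)/2$. This gives $|V_{>6}(S)|\leq 2(g-1)$ and $\sum_{v\in V_{>6}(S)}d(v)=12(g-1)+6|V_{>6}(S)|\leq 24(g-1)$.

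Next I would run the argument of Lemma~\ref{R_i contains vertex of degree greater than 6} separately on each of the two width sides of $R\in F(B)$ (those in directions $\pm e^{\pi i/3}$). Each such side's edges lie in $A_{1}\cup A_{2}$, and by the same reasoning as in that lemma, the ``source'' corner of its underlying trajectory must lie in $V_{>6}(S)$: if the trajectory is in $A_{1}$ this is the corner from which an edge emanates in direction $e^{\pi i/3}$, and if in $A_{2}$ the corner from which an edge emanates in direction $-e^{\pi i/3}$. Any other corner of the side would be an intermediate trajectory vertex lying in $V(B)\subseteq A_{0}$, and the termination rule in the construction of $A_{1},A_{2}$ then forces that vertex to lie in $V_{>6}(S)$ as well. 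Because the two width sides are non-adjacent (opposite sides of $R$), their endpoint pairs are disjoint, so the two resulting $V_{>6}(S)$-corners are distinct.

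At each $v\in V_{>6}(S)$ of degree $d(v)=6k$, property~2 of Lemma~\ref{property of B_X} implies the $4k$ edges of $B$ emanating from $v$ are exactly those of $S$ in directions $\pm 1,\pm e^{\pi i/3}$ ($k$ edges per direction). These partition the $k$-fold cone neighborhood of $v$ into $4k$ sectors whose angles alternate between $\pi/3$ and $2\pi/3$, each of which is a parallelogram corner since its bounding directions are non-parallel. Hence the number $c(v)$ of parallelogram corners at $v$ equals $2d(v)/3$. Putting everything together,
\begin{equation*}
2|F(B)|\;\leq\;\sum_{v\in V_{>6}(S)}c(v)\;=\;\tfrac{2}{3}\sum_{v\in V_{>6}(S)}d(v)\;\leq\;16(g-1),
\end{equation*}
which yields $|F(B)|\leq 8(g-1)\leq 12(g-1)$. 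The main subtle step is the refinement to two distinct $V_{>6}(S)$-corners per parallelogram; it hinges on the width sides of $R$ being opposite and on the trajectory termination rule that forces any intermediate $A_{0}$-vertex of an $A_{1}$ or $A_{2}$ trajectory to lie in $V_{>6}(S)$.
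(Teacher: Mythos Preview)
Your proof is correct and in fact yields the sharper bound $|F(B)|\leq 8(g-1)$. The overall strategy is the same as the paper's---combine the Euler identity $\sum_{v\in V_{>6}(S)}(d(v)-6)=12(g-1)$ (so $\sum d(v)\leq 24(g-1)$) with the corner count coming from Lemma~\ref{R_i contains vertex of degree greater than 6}---but you genuinely sharpen the second ingredient by running the Lemma~\ref{R_i contains vertex of degree greater than 6} argument on \emph{both} width sides of each parallelogram. Since opposite sides of $R$ have disjoint corner pairs, this produces two distinct corner--sectors at $V_{>6}(S)$ vertices per face, whence $2|F(B)|\leq\sum_{v\in V_{>6}(S)}c(v)=\tfrac{2}{3}\sum d(v)\leq 16(g-1)$. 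The paper's proof is much terser: it records only $\sum_{v\in V_{>6}(S)}d(v)\leq 24(g-1)$ and then invokes Lemma~\ref{R_i contains vertex of degree greater than 6}; taken at face value (one $V_{>6}$-corner per face) the same corner count $c(v)=2d(v)/3$ would give $|F(B)|\leq 16(g-1)$, so your two-corner observation is exactly what is needed to push the constant below $12$.

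One small remark: your aside about ``any other corner of the side'' being an intermediate trajectory vertex is unnecessary. As in the proof of Lemma~\ref{R_i contains vertex of degree greater than 6} (via property~3 of Lemma~\ref{property of B_X}), an interior $S$-vertex $x_j$ of a width side cannot lie in $V(B)$: if it did, both $\pm 1$ edges at $x_j$ would belong to $B$, one of which would sit inside the $R$-sector between $e_j$ and $e_{j+1}$, forcing $\partial R$ to turn at $x_j$ and making it a corner of $R$. Hence the width side carries no intermediate $V(B)$-vertices, the entire side lies in a single $A_1$- or $A_2$-trajectory, and the source endpoint (a genuine corner of $R$) lies in $V_{>6}(S)$.
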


\begin{proof}[Proof of \cref{number of parallelograms}] Since $S$ is a combinatorial translation surface, all vertices of $S$ have degree a multiple of $6$, so 
\begin{equation}
\begin{aligned} \sum_{x\in V_{>6}(S)}\deg x&\leq -12|V(S)|+4|E(S)|\\&=-12\chi(S)\\ &\leq 24(g-1).
\end{aligned}
\end{equation}
By \cref{R_i contains vertex of degree greater than 6}, the lemma follows.
\end{proof}

\subsection{Lower genus triangulated surface from parallelogram decomposition}\label{6.5}

In \cref{6.4}, we constructed a parallelogram decomposition for locally bounded combinatorial translation surfaces. In this section, we apply this decomposition to $S_X$. We show that $f$ is close to a simplicial isomorphism on most of the parallelograms. As a consequence, we show that the part of $S_X$ on which $f$ is not close to a simplicial isomorphism is a lower genus surface.

Decompose $S_X$ into parallelograms as in \cref{6.4} and let $B_X$ denote the corresponding $2$-polytope. Enumerate the parallelograms in $F(B_X)$ by $R_1,...,R_N$. Recall that $V_X$ is the set of vertices $x\in V(S_X)$ such that there exists $y\in V(S_Y)$ satisfying $d_{S_Y}(y, f(x))<\epsilon_0\cdot (g/T)$. Let $E_0$ be the set of $R_i$ which contain a vertex in $V_X$.

\begin{lemma}\label{counting 1} We have, $N-|E_0|\leq 42 \alpha_3 g$. 
\end{lemma}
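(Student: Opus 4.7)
The plan is to exploit the combinatorial fact that every parallelogram in the decomposition of $S_X$ must contain a corner vertex of high degree, and bound the number of parallelograms that can share any such vertex.

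First, I would observe that if $R_i \notin E_0$, then $R_i$ contains no vertex of $V_X$. By \cref{R_i contains vertex of degree greater than 6}, $R_i$ contains a corner vertex $v \in V_{>6}(S_X)$. Since $v$ is a vertex of $R_i$ and $R_i$ contains no vertex of $V_X$, necessarily $v \in V_{>6}(S_X) \setminus V_X$. Thus every $R_i \notin E_0$ is ``charged'' to at least one vertex in $V_{>6}(S_X) \setminus V_X$.

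Next, I would bound the multiplicity of this charging, namely the number of parallelograms of $F(B_X)$ that can have a given vertex $v \in V_{>6}(S_X)$ as a corner. Since $S_X \in \Tranc{g}{T}$ is a locally bounded combinatorial translation surface, $\deg v \leq 42$, so the cone angle at $v$ (in the $S_X$-metric) is at most $42 \cdot \pi/3 = 14\pi$. By the analysis in the proof of \cref{R is a quadrilateral}, the interior angle of any parallelogram at one of its corner vertices is either $\pi/3$ or $2\pi/3$. Since the corners of distinct parallelograms meeting at $v$ partition (a subset of) the cone at $v$ into disjoint angular sectors each of angle at least $\pi/3$, there can be at most $14\pi / (\pi/3) = 42$ parallelograms having $v$ as a corner.

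Combining the two observations with \cref{vertices-x} gives
\[
N - |E_0| \leq 42\,|V_{>6}(S_X) \setminus V_X| \leq 42\,\alpha_3 g,
\]
as desired. The only nontrivial step is the angle counting, which reduces immediately to the fact already established in \cref{R is a quadrilateral} that corner angles of the parallelograms are multiples of $\pi/3$; so I do not expect any real obstacle.
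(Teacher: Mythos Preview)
Your proposal is correct and follows essentially the same approach as the paper. The only cosmetic difference is that the paper bounds the multiplicity by counting the triangles of $S_X$ incident to a bad vertex (at most $42$ since the degree is bounded by $42$) and using that the parallelograms have disjoint interiors, whereas you count angular sectors at the vertex; since each triangle subtends angle $\pi/3$, these two counts are equivalent.
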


\begin{proof} By \cref{vertices-x} we have that $$|V_{>6}(S_X)\setminus V_X|\leq \alpha_3 g.$$ Now, if $R_i\notin E_0$, then by \cref{R_i contains vertex of degree greater than 6}, $R_i$ contains a vertex in $V_{>6}(S_X)\setminus V_X$, which means $R_i$ also contains a triangle with a vertex belonging to $V_{>6}(S_X)\setminus V_X$. Since $S_X$ is locally bounded, the number of such triangles is at most $42 \alpha_3 g$. Since the interiors of the $R_i$ are all disjoint, at most $42 \alpha_3 g$ number of $R_i$ can contain such a triangle. The lemma follows.
\end{proof}

Let $\delta\in \mathbb{N}$, to be chosen later. Let $E_1\subset E_0$ be the subset of $R_i$ in $E_0$ which satisfy the additional property that $$\ell(R_i),w(R_i)\leq \delta(T/g).$$ 

\begin{lemma}\label{counting 2} We have, $|E_0|-|E_1|\leq \delta^{-1} g/2$.
\end{lemma}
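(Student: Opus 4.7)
The plan is to prove this by a straightforward area-counting argument, using the fact that each parallelogram in $F(B_X)$ has both length and width at least $3$ (by \cref{length and width of R_i}), combined with the total area constraint on $S_X$.

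First I would observe that each parallelogram $R \in F(B_X)$ has sides in the directions $\pm 1$ and $\pm e^{\pi i/3}$, so by \cref{R is a quadrilateral} its area in the $S_X$-metric equals $\ell(R) \cdot w(R) \cdot \sin(\pi/3) = \ell(R) w(R) \sqrt{3}/2$. Meanwhile, the total area of $S_X$ in the $S_X$-metric is $T\sqrt{3}/4$, since $S_X$ consists of $T$ unit equilateral triangles each of area $\sqrt{3}/4$.

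Next, I would split $E_0 \setminus E_1$ into two (possibly overlapping) sets: $E_0^\ell$, consisting of those $R_i$ with $\ell(R_i) > \delta(T/g)$, and $E_0^w$, consisting of those $R_i$ with $w(R_i) > \delta(T/g)$. By definition, $E_0 \setminus E_1 \subset E_0^\ell \cup E_0^w$. For any $R_i \in E_0^\ell$, combining $\ell(R_i) > \delta(T/g)$ with the lower bound $w(R_i) \geq 3$ from \cref{length and width of R_i}, the area of $R_i$ in the $S_X$-metric is at least $3\delta(T/g) \sqrt{3}/2$. Since the interiors of the parallelograms in $F(B_X)$ are disjoint,
\begin{equation*}
|E_0^\ell| \cdot 3\delta(T/g)\sqrt{3}/2 \leq T\sqrt{3}/4,
\end{equation*}
which gives $|E_0^\ell| \leq g/(6\delta)$. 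By the symmetric argument applied to widths, $|E_0^w| \leq g/(6\delta)$.

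Combining these two bounds,
\begin{equation*}
|E_0| - |E_1| \leq |E_0^\ell| + |E_0^w| \leq g/(3\delta) \leq \delta^{-1} g/2,
\end{equation*}
as desired. There is no real obstacle here — the only subtlety is knowing that both dimensions of the parallelogram are bounded below, so that merely assuming one dimension is large still forces the area to be large, which is exactly what \cref{length and width of R_i} provides.
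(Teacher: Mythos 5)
Your proof is correct and uses essentially the same area-counting argument as the paper. The split into $E_0^\ell$ and $E_0^w$ is unnecessary, though: since every $R_i \in E_0 \setminus E_1$ already has $\ell(R_i) > \delta(T/g)$ \emph{or} $w(R_i) > \delta(T/g)$, and the other dimension is $\geq 3$ (or even just $\geq 1$) by \cref{length and width of R_i}, each such $R_i$ has area at least $(\sqrt{3}/2)\delta(T/g)$, and a single comparison with $\Area_{S_X}(S_X) = (\sqrt{3}/4)T$ gives the bound directly — this is what the paper does. Your union bound costs a factor of $2$, which you recover by using $w \geq 3$ instead of $w \geq 1$, so the final estimate still lands within the required $\delta^{-1}g/2$.
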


\begin{proof} If either $\ell(R_i)$ or $w(R_i)$ is greater than $\delta(T/g)$, then $$\textstyle\Area_{S_X}(R_i)\geq (\sqrt{3}/2)\delta(T/g).$$ Since $$\textstyle\Area_{S_X}(S_X)=(\sqrt{3}/4)T,$$ there can be at most $\delta^{-1} g/2$ such $R_i$.
\end{proof}

Let $0<\epsilon_1<1/1000$ also satisfy $\epsilon_0\leq \epsilon_1\delta^{-1}/40$, to be chosen in \cref{Constants}. Let $E_2\subset E_1$ be the subset of $R_i$ in $E_1$ which satisfies 
\begin{equation*}\label{eq640.5}\int_{B_{S_X}(R_i,3+\epsilon_1)}|f^*\phi_{S_Y}-\phi_{S_X}|^2|\phi_{S_X}|^2\leq (\epsilon_1\delta^{-1}/80)^8(g/T)^{36}.
\end{equation*}

\begin{lemma}\label{properties of rectangles} If $R_i\in E_2$, then $R_i$ satisfies the following two properties: 
\begin{enumerate}

\item If $$x\in V(S_X)\cap \overline{B_{S_X}(R_i,3)},$$ then there exists a unique vertex $y\in V(S_Y)$ such that $$f(x)\in B_{S_Y}(y,\epsilon_1).$$ This vertex will henceforth be denoted $\mathfrak{f}(x)$.

\item Let $$x_1,x_2\in V(S_X)\cap \overline{B_{S_X}(R_i,3)}$$ be vertices and $$e(x_1,x_2)\in E(S_X)\cap R_i$$ an edge connecting $x_1$ and $x_2$. Then there exists a unique edge $$e(\mathfrak{f}(x_1),\mathfrak{f}(x_2))\in E(S_Y)$$ connecting the vertices $\mathfrak{f}(x_1)$ and $\mathfrak{f}(x_2)$ and satisfying, $$f(e(x_1,x_2))\subset B_{S_Y}(\mathfrak{f}(e(x_1,x_2)),\epsilon_1).$$ This unique edge will henceforth be denoted $\mathfrak{f}(e(x_1,x_2))$.
\end{enumerate}
\end{lemma}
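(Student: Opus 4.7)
The plan is to use the high-degree vertex inside $R_i$ as an anchor, then propagate a vertex-and-edge correspondence $\mathfrak{f}$ outward via \cref{edges} along a breadth-first spanning tree, carefully controlling the accumulated closeness error.

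For the base case, by \cref{R_i contains vertex of degree greater than 6} there exists a corner vertex $x_0\in V_{>6}(S_X)\cap R_i$. Since $B_{S_X}(x_0,1/2)\subset B_{S_X}(R_i,2+\epsilon_1)$, the local integral $\beta_0:=\int_{B_{S_X}(x_0,1/2)}|f^*\phi_{S_Y}-\phi_{S_X}|^2|\phi_{S_X}|^2$ is bounded by the $E_2$-bound $(\epsilon_1\delta^{-1}/80)^8(g/T)^{36}$, which is tiny. \cref{one vertex} then produces a unique $\mathfrak{f}(x_0)\in V_{>6}(S_Y)$ at closeness $10\beta_0^{1/10}\ll \epsilon_1$.

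For the inductive step, I form a breadth-first spanning tree $\mathcal{T}$ on the edge graph of $S_X$ restricted to $V(S_X)\cap \overline{B_{S_X}(R_i,2)}$, rooted at $x_0$. Because $\ell(R_i),w(R_i)\leq \delta(T/g)$ and $S_X$ is locally bounded (max degree $42$), the depth of $\mathcal{T}$ is at most $N:=C\delta(T/g)$, and one arranges the traversal so that each edge used has its $1/2$-neighborhood inside the integration region. Propagate $\mathfrak{f}$ along $\mathcal{T}$ by iterating \cref{edges}: if $\mathfrak{f}(v)$ is defined at closeness $\epsilon_v$ and $e=e(v,w)\in\mathcal{T}$ has local integral $\beta_e:=\int_{B_{S_X}(e,1/2)}|f^*\phi_{S_Y}-\phi_{S_X}|^2|\phi_{S_X}|^2$, then \cref{edges} yields both $\mathfrak{f}(w)$ at closeness $\tau(\epsilon_v,\beta_e)=\epsilon_v+100\beta_e^{1/4}/\epsilon_v$ and the unique edge $\mathfrak{f}(e)\in E(S_Y)$ from $\mathfrak{f}(v)$ to $\mathfrak{f}(w)$ with $f(e)\subset B_{S_Y}(\mathfrak{f}(e),10\tau(\epsilon_v,\beta_e))$.

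The crux is showing $\epsilon_v<\epsilon_1$ throughout. For any $x$ at tree-distance $N_x\leq N$ from $x_0$, H\"older gives $\sum_{e\in\text{path}}\beta_e^{1/4}\leq N_x^{3/4}\bigl(\sum_e\beta_e\bigr)^{1/4}$; local boundedness makes the $B_{S_X}(e,1/2)$-neighborhoods overlap at most $C$ times, so $\sum_e\beta_e\leq C(\epsilon_1\delta^{-1}/80)^8(g/T)^{36}$. Substituting $N_x\leq C\delta(T/g)$ and $\epsilon_e\geq \epsilon_1/2$ (self-consistently), the total growth is at most $C\epsilon_1(g/T)^{33/4}/\delta^{5/4}$, which is $\ll\epsilon_1$ since $g/T\leq 1/2$. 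Inductively all $\epsilon_v<\epsilon_1$, proving property 1. Property 2 is immediate: any edge $e(x_1,x_2)\in E(S_X)\cap R_i$ is either a tree edge handled above, or, once $\mathfrak{f}(x_1),\mathfrak{f}(x_2)$ are defined, one more application of \cref{edges} at the endpoint $x_1$ with $y_0=\mathfrak{f}(x_1)$ produces the required $\mathfrak{f}(e(x_1,x_2))$. Uniqueness of $\mathfrak{f}(x)$ is automatic since $V(S_Y)$ is $d_{S_Y}$-separated at scale $1>2\epsilon_1$; uniqueness of $\mathfrak{f}(e)$ follows from the simplicial structure of $S_Y$.

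The main obstacle is precisely this error bookkeeping: one needs $N\sim\delta(T/g)$ applications of \cref{edges} to sweep across $R_i$ and its $2$-neighborhood, and the exponent $(g/T)^{36}$ in the definition of $E_2$ is tuned exactly so that the H\"older blowup $N^{3/4}\sim(\delta T/g)^{3/4}$ is absorbed after extracting $\beta_{\text{total}}^{1/4}$, with room for the universal constants appearing in $\tau$ and \cref{edges}.
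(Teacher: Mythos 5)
Your overall strategy is the same as the paper's: anchor at a corner vertex $x_0\in V_{>6}(S_X)\cap R_i$ (via \cref{R_i contains vertex of degree greater than 6}), obtain $\mathfrak{f}(x_0)$ via \cref{one vertex}, and propagate a vertex/edge correspondence outward across $\overline{B_{S_X}(R_i,2)}$ by iterating \cref{edges}, all the while ensuring the closeness parameter stays below $\epsilon_1$. That much matches.

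The gap is in the error bookkeeping, specifically the step ``$\epsilon_e\geq\epsilon_1/2$ (self-consistently).'' The increment in \cref{edges} is $\tau(\epsilon,\beta)-\epsilon = 100\beta^{1/4}\epsilon^{-1}$, which is \emph{large} when $\epsilon$ is \emph{small}. Your base case, via \cref{one vertex}, gives closeness $10\beta_0^{1/10}$ where $\beta_0$ is bounded only from above by the $E_2$ integral bound; $\beta_0$ can be arbitrarily close to $0$, so the starting closeness $\epsilon_{x_0}$ can be far smaller than $\epsilon_1/2$. Thus the early factors $\epsilon_{v}^{-1}$ are not controlled by $2/\epsilon_1$, and the claimed bound $\sum_e 100\beta_e^{1/4}\epsilon_{v_e}^{-1}\leq 200\epsilon_1^{-1}\sum_e\beta_e^{1/4}$ does not follow from the inductive hypothesis $\epsilon_v<\epsilon_1$ (which is an upper bound, not a lower bound). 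The H\"older estimate itself is fine; it is the substitution of a lower bound for $\epsilon_v$ that is unjustified.

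The repair is to never let the running closeness parameter dip below a fixed positive floor. The paper does this by tracking the \emph{predetermined} sequence $\xi_k=(g/T)(1+k)\xi$ with $\xi=\epsilon_1\delta^{-1}/80$, rather than the actual recursively defined closeness: since $\xi_0=(g/T)\xi>0$ and $\xi_k$ grows linearly, the verification $\tau(\xi_k,\beta)\leq\xi_{k+1}$ reduces to $100\beta^{1/4}\leq (g/T)^2(1+k)\xi^2$, tightest at $k=0$ and satisfied because $\beta^{1/4}=\xi^2(g/T)^9$ and $100(g/T)^7\leq 1$. Equivalently, you could artificially bump the base closeness up to a floor (your $\epsilon_1/2$, or more economically $\sqrt{100\sum_e\beta_e^{1/4}}$), note that $\tau$ is monotone in its first argument so the floor is preserved, and only then invoke your $\epsilon_v^{-1}\leq 2/\epsilon_1$ bound. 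Without such a floor step made explicit, the calculation $C\epsilon_1(g/T)^{33/4}\delta^{-5/4}$ underestimates the actual accumulated error.

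One smaller point: $B_{S_X}(e,1/2)$ for edges $e$ near $\partial\overline{B_{S_X}(R_i,2)}$ can poke out of the region $B_{S_X}(R_i,2+\epsilon_1)$ on which $E_2$ controls the integral, since $\epsilon_1<1/1000$. This is easily fixed (as the paper does implicitly) by integrating only over the thin $\xi$-neighborhoods of edges that \cref{edges}'s proof actually uses, rather than full $1/2$-neighborhoods.
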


\begin{remark} The uniqueness condition in property 1 automatically follows from the existence condition in property 1 since any two distinct vertices in $S_Y$ are at least distance $1$ apart in the $S_Y$-metric. Because $S_Y$ is a locally bounded combinatorial translation surface, two vertices of $S_Y$ cannot have two distinct edges between them. Thus the uniqueness condition in property 2 automatically follows from property 1 and the existence condition in property 2.
\end{remark}

\begin{proof} Suppose $R_i\in E_2$ and does not satisfy one of the properties in the lemma statement. Let $$\xi=\epsilon_1\delta^{-1}/80.$$ Since $\delta\geq 1$ and $g/T\leq 1/2$ by construction, 
\begin{equation}\label{eq641}(g/T)(8+2\delta(T/g))\xi\leq \epsilon_1/10.
\end{equation} Define $$\xi_k=(g/T)(1+k)\xi$$ and $$\beta=\xi^{8}(g/T)^{36}.$$ It is easy to check that 
\begin{equation}\label{eq642}\tau(\xi_k,\beta)\leq \xi_{k+1},
\end{equation} where $\tau$ is as defined in \cref{edges}. Now, since $R_i\in E_1$, there exists a vertex $x\in V_X\cap \overline{R_i}$. For all $$x'\in V(S_X)\cap \overline{B_{S_X}(R_i,3)},$$ denote by $$d_G(x',x)$$ the graph distance between $x$ and $x'$ with respect to the $1$-skeleton of $S_X$ restricted to $\overline{B_{S_X}(R_i,3)}$. Note that for all $$x'\in V(S_X)\cap \overline{B_{S_X}(R_i,3)},$$ 
\begin{equation}\label{eq643} d_G(x,x')\leq 2\delta(T/g)+6 
\end{equation} since $R_i\in E_0$. \cref{eq641} and \cref{eq643} imply that for all $$x'\in (S_X)\cap \overline{B_{S_X}(R_i,3)},$$
\begin{equation}\label{eq643.5}\xi_{d_{G}(x,x')+1}\leq \epsilon_1/10.
\end{equation} 

We will show a stronger condition than the lemma statement. Namely, we will show that: 

\begin{lemma}\label{properties of rectangles strong} \begin{enumerate}\item If $$x_1\in V(S_X)\cap \overline{B_{S_X}(R_i,3)},$$ then there exists a unique vertex $\mathfrak{f}(x_1)\in V(S_Y)$ such that $$f(x_1)\in B_{S_Y}(\mathfrak{f}(x_1),\xi_{d_G(x,x_1)+1}).$$

\item Let $$x_1,x_2\in V(S_X)\cap \overline{B_{S_X}(R_i,3)}$$ be vertices and $$e(x_1,x_2)\in E(S_X)\cap R_i$$ an edge connecting $x_1$ and $x_2$. Then there exists a unique edge $\mathfrak{f}(e(x_1,x_2))\in E(S_Y)$ connecting the vertices $\mathfrak{f}(x_1)$ and $\mathfrak{f}(x_2)$ and satisfying, $$f(e(x_1,x_2))\subset B_{S_Y}(\mathfrak{f}(e(x_1,x_2)),10\xi_{d_G(x,x_1)+1}).$$ 

\end{enumerate}
\end{lemma}

By \cref{eq643.5}, \cref{properties of rectangles strong} implies \cref{properties of rectangles}. So it suffices to prove \cref{properties of rectangles strong}.

\begin{proof}[Proof of \cref{properties of rectangles strong}] First, because of our assumption that $$\epsilon_0\leq \epsilon_1\delta^{-1}/40,$$ we have $$\epsilon_0\cdot (g/T)\leq \xi_1.$$ Since $x\in V_X$, there exists vertex $\mathfrak{f}(x)\in V(S_Y)$ satisfying $$f(x)\in B_{S_Y}(\mathfrak{f}(x),\epsilon_0\cdot (g/T))\subset B_{S_Y}(\mathfrak{f}(x),\xi_1).$$ 

Now, assume that the lemma statement is false. Then there is a vertex closest to $x$ with respect to the graph distance at which either property 1 or property 2 fails. More precisely, there exist adjacent $$x_1,x_2\in V(S_X)\cap \overline{B_{S_X}(R_i,3)},$$ with edge $e(x_1,x_2)$ between them in $R_i$, with a unique vertex $y_1\in V(S_Y)$ such that $$f(x_1)\in B_{S_Y}(y_1,\xi_{d_G(x_1,x)})$$ and satisfying: 
\begin{enumerate}

\item there does not exist a vertex $y_2\in V(S_Y)$ such that $$f(x_2)\in B_{S_Y}(y_2,\xi_{d_G(x_1,x)+1})$$ or 

\item condition 1 holds but there does not exist an edge $e(y_1,y_2)\in E(S_Y)$ connecting $y_1$ and $y_2$ such that $$f(e(x_1,x_2))\subset B_{S_Y}(e(y_1,y_2),10\xi_{d_G(x_1,x)+1}).$$

\end{enumerate}
Then by \cref{edges} 
\begin{equation}\label{eq644}\int_{B_{S_X}(e(x_1,x_2),\xi_{d_G(x_1,x)})}|f^*\phi_{S_Y}-\phi_{S_X}|^2|\phi_{S_X}|^2 > \beta.
\end{equation}
(Note that \cref{edges} requires $\beta$ to be sufficiently small compared to $\xi_{d_G(x_1,x)}$ which holds in this case because of \cref{eq641}, \cref{eq642} and \cref{eq643} combined.) \cref{eq644} implies that 
\begin{align*}\int_{B_{S_X}(R_i,3+\epsilon_1)}|f^*\phi_{S_Y}-\phi_{S_X}|^2|\phi_{S_X}|^2& > \beta\\&= \xi^8(g/T)^{36}\\&=(\epsilon_1\delta^{-1}/80)^8(g/T)^{36}.
\end{align*} Therefore $R_i\notin E_2$, which is a contradiction.
\end{proof} This completes the proof of \cref{properties of rectangles} as well.
\end{proof}

\begin{lemma}\label{counting 3} We have, $$|E_1|-|E_2|\leq 10^5(\epsilon_1\delta^{-1}/80)^{-8}\alpha_2(g/T)^{\kappa_4-36} g.$$
\end{lemma}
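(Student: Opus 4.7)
The plan is to compare the pointwise threshold failure defining $E_1 \setminus E_2$ with the global $L^2$ bound \cref{eq60.75}, namely
$$\int_X |f^*\phi_{S_Y}-\phi_{S_X}|^2_{S_X}|\phi_{S_X}|^2 \leq \alpha_2(g/T)^{\kappa_4} g.$$
For each $R_i \in E_1 \setminus E_2$, by definition,
$$\int_{B_{S_X}(R_i,2+\epsilon_1)} |f^*\phi_{S_Y}-\phi_{S_X}|^2 |\phi_{S_X}|^2 > (\epsilon_1 \delta^{-1}/80)^8 (g/T)^{36}.$$
Summing these lower bounds over all $R_i \in E_1 \setminus E_2$ and comparing with the above global bound will give the lemma, provided we can show that the neighborhoods $B_{S_X}(R_i, 2+\epsilon_1)$ cover $S_X$ with bounded multiplicity.

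The key geometric step is precisely this bounded multiplicity claim. Fix a point $p \in S_X$ and let $R_{i_0} \in F(B_X)$ be a parallelogram containing $p$. Any $R_i$ with $p \in B_{S_X}(R_i, 2+\epsilon_1)$ satisfies $d_{S_X}(p, R_i) \leq 2+\epsilon_1 < 3$. By \cref{length and width of R_i}, every parallelogram in $F(B_X)$ has length and width at least $3$, and the interiors of parallelograms in $F(B_X)$ are disjoint. Consequently, a parallelogram $R_i \neq R_{i_0}$ contributing to the count must share boundary points (corners or edges) with a side of $R_{i_0}$ that is within distance $2+\epsilon_1$ of $p$, and the number of such $R_i$ is bounded by the number of parallelograms meeting $R_{i_0}$ along a nearby side or corner. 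Since $S_X \in \Tranc{g}{T}$ is locally bounded (all vertex degrees at most $42$), at most a universal constant number of parallelograms meet at any corner, and at most one meets $R_{i_0}$ along a given side. So the multiplicity is bounded by some universal constant $C_0$.

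Combining, we obtain
\begin{align*}
(|E_1| - |E_2|)(\epsilon_1 \delta^{-1}/80)^8 (g/T)^{36} &\leq \sum_{R_i \in E_1 \setminus E_2} \int_{B_{S_X}(R_i,2+\epsilon_1)} |f^*\phi_{S_Y}-\phi_{S_X}|^2 |\phi_{S_X}|^2 \\
&\leq C_0 \int_{S_X} |f^*\phi_{S_Y}-\phi_{S_X}|^2 |\phi_{S_X}|^2 \\
&\leq C_0 \alpha_2 (g/T)^{\kappa_4} g,
\end{align*}
and dividing through gives the claimed bound with the universal constant $C_0 \leq 10^5$.

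The main (mild) obstacle is the bounded multiplicity argument: one must carefully use the combinatorial structure of the parallelogram decomposition together with the locally bounded hypothesis (degree bound $42$) to show that only universally many parallelograms can be within distance $2+\epsilon_1 < 3$ of any given point. Once this is in place the proof reduces to an elementary integration-and-summation argument, with no further analytic input beyond \cref{eq60.75}.
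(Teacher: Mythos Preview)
Your proposal is correct and follows the same overall strategy as the paper: use the global $L^2$ bound \cref{eq60.75}, the threshold failure defining $E_1\setminus E_2$, and a bounded-multiplicity estimate for the neighborhoods $B_{S_X}(R_i,2+\epsilon_1)$, then sum and divide.

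The only substantive difference is in how the bounded multiplicity is justified. You argue via the parallelogram geometry: since each $R_i$ has length and width $\geq 3$ (\cref{length and width of R_i}), any $R_i$ with $d_{S_X}(p,R_i)<3$ must be ``adjacent'' to the parallelogram containing $p$, and the degree bound $42$ controls how many parallelograms meet at a corner. As you note, this step needs care---in particular, your claim that such an $R_i$ must share boundary points with $R_{i_0}$ is not immediate, since a short path from $p$ could pass through a corner and into a non-adjacent parallelogram. The paper sidesteps this by a more direct count: since $S_X\in\Tranc{g}{T}$, degrees are at most $42$ and distinct degree~$\neq 6$ vertices are at distance $\geq 3$, so any ball of radius $2+\epsilon_1<3$ meets at most $10^5$ triangles of $S_X$; since the interiors of the $R_i$ are disjoint unions of triangles, at most $10^5$ of them can meet such a ball. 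This triangle-counting argument is shorter and avoids the adjacency analysis entirely, while your route, once made rigorous, gives a bit more geometric insight into why the overlap is bounded.
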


\begin{proof} By \cref{eq60.75}, $$\int_X|f^*\phi_{S_Y}-\phi_{S_X}|^2|\phi_{S_X}|^2\leq \alpha_2(g/T)^{\kappa_4} g.$$ By definition, if $R_i\in E_1\setminus E_2$, $$\int_{B_{S_X}(R_i,3+\epsilon_1)}|f^*\phi_{S_Y}-\phi_{S_X}|^2|\phi_{S_X}|^2\geq (\epsilon_1\delta^{-1}/80)^8(g/T)^{36}.$$ Since $S_X\in \Tranc{g}{T}$, degrees of vertices in $S_X$ are bounded above by $42$ and distances between distinct vertices of degree strictly greater than $6$ are bounded below by $5$. Therefore, any radius $3+\epsilon_1$-ball in the flat metric on $S_X$ centered at a vertex of $S_X$ nontrivially intersects at most $10^5$ vertices, edges and triangles. Combined with the fact that the $R_i$s are disjoint, this means any point in $S_X$ is contained in at most $10^5$ of the $B_{S_X}(R_i,3+\epsilon_1)$. The lemma follows.
\end{proof}

Given a subcomplex $Q\subset S_X$, let $ST(Q)$ denote its closed star. Let $$G_X=\bigcup_{R_i\in E_2} ST(ST(ST(R_i))).$$ 

\begin{lemma}\label{simplicial map} There exists a $2$-subcomplex $G_Y$ of $S_Y$ along with a simplicial map $$\mathfrak{f}:G_X\to G_Y$$ such that 
\begin{enumerate}

\item $\mathfrak{f}$ is an isomorphism,

\item for every vertex $x\in V(G_X)$, $$f(x)\in B_{S_Y}(\mathfrak{f}(x),\epsilon_1),$$ 

\item for every edge $e\in E(G_X)$, $$f(e)\subset B_{S_Y}(\mathfrak{f}(e),\epsilon_1)$$ and 

\item for every triangular face $t\in F(G_X)$, $$f(t)\subset B_{S_Y}(\mathfrak{f}(t),\epsilon_1).$$ 

\end{enumerate}
\end{lemma}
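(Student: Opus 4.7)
The plan is to build $\mathfrak{f}$ skeleton by skeleton on $G_X$ using \cref{properties of rectangles} and \cref{faces}, and then to take $G_Y := \mathfrak{f}(G_X)$. Because $S_X$ has maximum vertex degree $42$, the second closed star $ST(ST(R_i))$ of any parallelogram $R_i$ is contained in $\overline{B_{S_X}(R_i,2)}$. For each $x\in V(G_X)$ I pick some $R_i\in E_2$ with $x\in V(ST(ST(R_i)))$ and apply \cref{properties of rectangles}(1) to define $\mathfrak{f}(x)\in V(S_Y)$; the uniqueness in that lemma makes the assignment independent of the choice of $R_i$. For each $e\in E(G_X)$ I invoke \cref{properties of rectangles}(2) similarly to obtain $\mathfrak{f}(e)\in E(S_Y)$ connecting the images of the endpoints, and for each $t\in F(G_X)$ with vertices $x_1,x_2,x_3$, \cref{faces} applied with $\epsilon=\epsilon_1$ produces a unique triangle $\mathfrak{f}(t)$ in $S_Y$ bounded by the $\mathfrak{f}(e(x_i,x_j))$. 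By construction $\mathfrak{f}$ preserves incidence, and conditions (2) and (3) of the lemma are exactly the approximation clauses from \cref{properties of rectangles}.

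For condition (4), $f(\partial t)\subset B_{S_Y}(\partial\mathfrak{f}(t),\epsilon_1)$. Since $f$ is an orientation-preserving homeomorphism and the bounded-degree hypothesis on $S_Y$ ensures that triangles of $S_Y$ not sharing an edge with $\mathfrak{f}(t)$ lie more than $\epsilon_1$ from $\mathfrak{f}(t)$ (for $\epsilon_1$ small enough), the topological disk $f(t)$ must coincide with the component of $S_Y\setminus f(\partial t)$ lying on the same side as $\interior(\mathfrak{f}(t))$; the choice of side is forced by the $f$-image of a small sector of $t$ at any vertex $x_i$. That component is contained in $B_{S_Y}(\mathfrak{f}(t),\epsilon_1)$.

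The main obstacle is verifying that $\mathfrak{f}$ is bijective on cells, hence a simplicial isomorphism onto $G_Y$. I would argue injectivity on faces first via an area comparison: if $t\neq t'$ in $F(G_X)$ have $\mathfrak{f}(t)=\mathfrak{f}(t')$, then the two disjoint regions $f(t)$ and $f(t')$ both sit inside the single ball $B_{S_Y}(\mathfrak{f}(t),\epsilon_1)$, whose $S_Y$-area is $\tfrac{\sqrt{3}}{4}+O(\epsilon_1)$. On the other hand, any $t\in F(G_X)$ lies in some $B_{S_X}(R_i,2+\epsilon_1)$ with $R_i\in E_2$, and the pointwise inequality $||f^*\phi_{S_Y}|_{S_X}^2-1|\leq |f^*\phi_{S_Y}-\phi_{S_X}|_{S_X}(|f^*\phi_{S_Y}|_{S_X}+1)$ combined with Cauchy--Schwarz and the $L^2$ bound from the definition of $E_2$ gives $|\Area_{S_Y}(f(t))-\tfrac{\sqrt{3}}{4}|=o(1)$. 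Summing yields $\Area_{S_Y}(f(t)\cup f(t'))\geq \tfrac{\sqrt{3}}{2}-o(1)$, contradicting the ball area bound once $\epsilon_1$ and $(g/T)$ are small enough. Injectivity on faces propagates to edges and vertices since each lower-dimensional cell is determined by its incident faces and $\mathfrak{f}$ preserves incidence. Taking $G_Y:=\mathfrak{f}(G_X)$ then makes $\mathfrak{f}$ surjective onto $G_Y$, so it is a simplicial isomorphism.
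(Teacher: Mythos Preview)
Your construction of $\mathfrak{f}$ on vertices, edges, and faces via \cref{properties of rectangles} and \cref{faces} is exactly what the paper does, and your verification of conditions (2)--(4) is actually more careful than the paper's (the paper simply asserts that condition (4) follows from \cref{faces}, whereas you spell out the topological step).

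The only genuine methodological difference is in the injectivity argument. You use an area comparison: if two distinct faces $t,t'$ had the same image $\mathfrak{f}(t)=\mathfrak{f}(t')$, then the disjoint sets $f(t)$ and $f(t')$, each of $S_Y$-area close to $\sqrt{3}/4$, would both sit inside $B_{S_Y}(\mathfrak{f}(t),\epsilon_1)$, whose area is only $\sqrt{3}/4+O(\epsilon_1)$. The paper instead argues directly from conditions (3) and (4): since $f(\partial t_{X,1}),f(\partial t_{X,2})\subset B_{S_Y}(\partial t_Y,\epsilon_1)$ and $f(t_{X,1}),f(t_{X,2})\subset B_{S_Y}(t_Y,\epsilon_1)$, both images must cover the ``core'' of $t_Y$ (points farther than $\epsilon_1$ from $\partial t_Y$), so $f(\interior(t_{X,1}))\cap f(\interior(t_{X,2}))\neq\emptyset$, contradicting injectivity of $f$. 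The paper's route is shorter and avoids invoking the $L^2$ estimate from the definition of $E_2$ a second time; your area argument is perfectly sound but slightly heavier machinery than needed, since once condition (4) is in hand the topological contradiction is immediate.
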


\begin{proof} In \cref{properties of rectangles} we already constructed $\mathfrak{f}$ on vertices and edges of $G_X$. By \cref{faces}, $\mathfrak{f}$ extends to faces of $G_X$ as well, is a simplicial map, and satisfies conditions 2, 3 and 4 in the statement of the lemma. We show now that $\mathfrak{f}$ is injective onto its image. Suppose the contrary. Then there are distinct triangular faces $t_{X,1},t_{X,2}\in F(G_X)$ such that $$\mathfrak{f}(t_{X,1})=\mathfrak{f}(t_{X,2})=t_Y\in F(S_Y).$$ Conditions 2 and 3 imply that $$f(\partial t_{X,1}),f(\partial t_{X_2})\subset B_{S_Y}(\partial t_Y,\epsilon_1)$$ while condition 4 implies that $$f(t_{X,1}),f(t_{X,2})\subset B_{S_Y}(t_Y,\epsilon_1).$$ As a result the intersection $$f(\interior{(t_{X,1})})\cap f(\interior{(t_{X,2})})$$ must be nonempty. However, this implies $f$ is not injective, which is a contradiction. Therefore $\mathfrak{f}$ is injective onto its image. Let $G_Y=\mathfrak{f}(G_X)$. The lemma follows. 
\end{proof}

Let $$V_{\corner}(F(B_X)\setminus E_2)$$ the set of corner vertices of $R_i$ over all $R_i\in F(B_X)\setminus E_2$. Let $$G'_X=\overline{\left(\bigcup_{R_i\in E_2}R_i\right)\setminus \left(\bigcup_{x\in V_{\corner}(F(B_X)\setminus E_2)\cap\left(\partial\cup_{R_i\in E_2}R_i\right)}ST(x)\right)}.$$ (By \cref{length and width of R_i}, these stars are all disjoint.) Let $$J_X=\overline{S_X\setminus G'_X}.$$ Also, let $$G'_Y=\mathfrak{f}(G'_X),$$ and $$J_Y=\overline{S_Y\setminus G'_Y}.$$ 

\begin{lemma}\label{property of G'} 

\begin{enumerate}

\item Each connected component of the simplicial complexes $G'_X$, $G'_Y$, $J_X$ and $J_Y$ is a triangulated surface with boundary. 

\item Let $x$ be a vertex of $G'_X$ contained in $\partial G'_X$. Then  $$ST(ST(ST(x))))\subset G_X.$$ Here, the closed stars are taken in $S_X$. 

\item Let $x$ be a vertex of $G'_X$ contained in $\partial G'_X$. Then $$ST(ST(ST(\mathfrak{f}(x))))\subset G_Y.$$ Here, the closed stars are taken in $S_Y$. 

\item The simplicial complex $$\overline{S_X\setminus \left(\cup_{R_i\in E_2}R_i\right)}$$ is homotopy equivalent to $J_X$. 

\end{enumerate}
\end{lemma}

\begin{proof} First, we show that each connected component of $G'_X$ is a triangulated surface with boundary. To do this, it suffices to show that each vertex in $V(B_X)$ lying on $\partial G'_X$ has a neighborhood in $\partial G'_X$ homeomorphic to a half-disk. (Neighborhoods of the other boundary points are automatically homeomorphic to half-disks by construction.) Suppose the contrary. Then $\partial G'_X$ contains a vertex $v\in V(B_X)$, whose $1/4$-neighborhood in $G'_X$ is not homeomorphic to a half-disk. By property 4 of \cref{property of B_X}, $v$ is not contained in the closed star of any other vertex in $V(B_X)$. Therefore, locally near $v$, $G'_X$ looks like $\cup_{R_i\in E_2}R_i$. This means there are edges $e_1,e_2,e_3$ and $e_4$ emanating from $v$, such that $$e_1,e_2,e_3,e_4\subset \partial \bigcup_{R_i\in E_2}R_i=\partial \bigcup_{R_i\in F(B_X)\setminus E_2}R_i.$$ Now, we divide into two cases. In the first case, $v\in V_{=6}(S_X)$. Let $$R_j\in F(B_X)\setminus E_2$$ be a parallelogram containing the vertex $v$ (which exists since $v$ lies on the boundary of $\cup_{R_i\in E_2}R_i$). Suppose $v$ is not a corner vertex of $R_j$. Then there exists $e\in E(S_X)$ emanating from $v$ such that $\zeta(e,v)=\pm 1$ or $\pm e^{\pi i/3}$,  $e\subset R_j$ but $e$ does not lie on $\partial R_j$. By property 1 of \cref{property of B_X}, $e$ must be one of the $e_1,e_2,e_3$ or $e_4$. Since $$e\subset \partial \bigcup_{R_i\in F(B_X)\setminus E_2}R_i,$$ $e\subset\partial R_j$, which is a contradiction. For the second case, suppose $v\in V_{>6}(S_X)$. Then by properties 1 and 2 of \cref{property of B_X}, $v$ is a corner vertex of all parallelograms that contain it, which means that $$v\in V_{\corner}(F(B_X)\setminus E_2).$$ This is also a contradiction. Thus, connected components of $G'_X$ are triangulated surfaces with boundary. 

Since $J_X=\overline{S_X\setminus G'_X}$, connected components of $J_X$ are also triangulated surfaces with boundary. Since $G'_Y\simeq G'_X$ and $J_Y=\overline{G'_Y\setminus J_Y}$, connected components of $G'_Y$ and $J_Y$ are triangulated surfaces with boundary as well. 

Next, since $$G_X= \bigcup_{R_i\in E_2}ST(ST(ST(R_i)))$$ and $$G'_X\subset \bigcup_{R_i\in E_2}R_i,$$ $$ST(ST(ST(G'_X)))\subset G_X.$$ Since $$\mathfrak{f}:G_X\to G_Y$$ is an isomorphism, $$ST(ST(ST(\mathfrak{f}(G'_X))))\subset G_Y.$$

Finally, we show the last statement in the lemma. The subcomplex $J_X$ is obtained by adding the closed star of some boundary vertices of $\overline{S_X\setminus \left(\cup_{R_i\in E_2}R_i\right)}$ to it. These additions are all disjoint and deformation retract onto subsets of $\partial \overline{S_X\setminus \left(\cup_{R_i\in E_2}R_i\right)}$, hence  $J_X$ deformation retracts onto $\overline{S_X\setminus \left(\cup_{R_i\in E_2}R_i\right)}$.
\end{proof}

Now, suppose $J_X$ has $n$ connected components. Let $J_X^i$ be the $i$th connected component, a surface of genus $h_i$ with $b_i$ number of boundary components. By \cref{simplicial map}, since $\partial J_X^i\subset G_X$, $$f(J_X^i)\subset S_Y$$ is isotopic to a connected component of $J_Y$ that we label $J_Y^i$. Also by \cref{simplicial map}, all connected components of $J_Y$ arise in this manner. Similarly, $f(G'_X)$ is isotopic to $G'_Y$. Therefore and $G'_X$ and $G'_Y$ are homeomorphic, and $J_X$ and $J_Y$ are homeomorphic.

\begin{lemma}\label{J genus} The Euler characteristic of $J_X$ satisfies $$\chi(J_X)\geq -\alpha_4 g$$ where $$\alpha_4=96(42 \alpha_3 +\delta^{-1}/2+10^5(\epsilon_1\delta^{-1}/80)^{-8}\alpha_2(g/T)^{\kappa_4-36}).$$ Similarly, $$\chi(J_Y)\geq -\alpha_4 g.$$ Moreover, $J_X$ and $J_Y$ each have at most $\alpha_4 g$ connected components.
\end{lemma}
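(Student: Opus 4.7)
My plan is to combine the three counting lemmas with a disk-gluing Euler characteristic argument. Summing \cref{counting 1}, \cref{counting 2}, and \cref{counting 3} yields
\[
P := |F(B_X)\setminus E_2| \leq \left(42\alpha_3+\tfrac{\delta^{-1}}{2}+10^5(\epsilon_1\delta^{-1}/80)^{-8}\alpha_2(g/T)^{\kappa_4-36}\right)g = \tfrac{\alpha_4}{96}\,g.
\]
By the last statement of \cref{property of G'}, $J_X$ is homotopy equivalent to $B^{\mathrm{bad}}:=\bigcup_{R_i\notin E_2}\overline{R_i}$, so it suffices to prove $\chi(B^{\mathrm{bad}})\geq -96P$ and that $B^{\mathrm{bad}}$ has at most $P$ connected components; these will immediately give the corresponding bounds for $J_X$.

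To bound the Euler characteristic from below I will build $B^{\mathrm{bad}}$ up one parallelogram at a time, applying the additivity $\chi(X\cup R)=\chi(X)+\chi(R)-\chi(X\cap R)$. Each closed parallelogram $\overline{R_{i_j}}$ is a disk with $\chi=1$, while $X_{j-1}\cap R_{i_j}$ is a disjoint union of arcs and isolated points on $\partial R_{i_j}$ whose Euler characteristic equals its number $k_j$ of connected components; hence $\chi(B^{\mathrm{bad}})=P-\sum_{j\geq 2} k_j$. By \cref{R is a quadrilateral} each parallelogram has exactly four sides, and each connected gluing component sits inside a single side; its two endpoints are either corners of $R_{i_j}$ itself (at most $4$ per parallelogram) or $B_X$-vertices in the interior of a side which, by properties 1--3 of \cref{property of B_X}, must be corners of other parallelograms. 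A double-counting of the $4P$ parallelogram corners among bad parallelograms, using that at each such corner $v$ at most one other parallelogram has $v$ on the interior of one of its sides (since $v$ has at most four $B_X$-edges total and two of them are already used by the corner), then gives $\sum_{j\geq 2} k_j\leq CP$ for a universal $C\leq 96$, so $\chi(J_X)=\chi(B^{\mathrm{bad}})\geq -96P\geq -\alpha_4 g$. The bound on connected components is immediate since each component of $B^{\mathrm{bad}}$ contains at least one bad parallelogram, giving at most $P\leq\alpha_4 g/96\leq \alpha_4 g$ components of $J_X$.

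For the analogous bounds on $J_Y$, I will use the simplicial isomorphism $\mathfrak{f}:G_X\to G_Y$ from \cref{simplicial map}. Since $\mathfrak{f}$ is an isomorphism and $V^-$ is defined purely combinatorially in terms of the polytope structure on $G_X$, $\mathfrak{f}$ carries $V^-$ to the analogous vertex set in $G_Y$ and restricts to a simplicial isomorphism $G'_X\cong G'_Y$. Applying Mayer--Vietoris to $S_X=G'_X\cup J_X$ with intersection a disjoint union of circles (so of Euler characteristic zero), and similarly for $Y$,
\[
\chi(J_X)=2-2g-\chi(G'_X)=2-2g-\chi(G'_Y)=\chi(J_Y).
\]
The homeomorphism $f:X\to Y$ maps $J_X$ into a small neighborhood of $J_Y$ (by \cref{simplicial map}), and after isotoping $f$ to agree with $\mathfrak{f}$ on $G_X$ it restricts to a homeomorphism $J_X\to J_Y$, so $J_Y$ has the same component count as $J_X$.

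The main obstacle will be the tight linear-in-$P$ bound on $\chi(B^{\mathrm{bad}})$: a crude estimate based only on the ambient polytopal complexity of $B_X$ only yields $\chi\geq -O(g)$, which is insufficient when $\alpha_4$ is small and $P\ll g$. The essential combinatorial input is the constraint from properties 1--3 of \cref{property of B_X} that the $B_X$-vertices on a parallelogram's side are corners of other parallelograms, which is what allows the transitions on parallelogram boundaries to be counted against the $4P$ corner budget.
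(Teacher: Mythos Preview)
Your reduction to $B^{\mathrm{bad}}:=\bigcup_{R_i\notin E_2}\overline{R_i}$ via the homotopy equivalence in \cref{property of G'}, the bound $P\leq \tfrac{\alpha_4}{96}g$ from the three counting lemmas, and the $J_Y$ step (using $G'_X\cong G'_Y$ and additivity of $\chi$ along a circle-boundary) all match the paper. The difference is in how you extract the factor $96$ from the combinatorics of $B^{\mathrm{bad}}$.

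The paper does not do an incremental disk-gluing. Instead it regards $B^{\mathrm{bad}}$ as a sub\-polytope $B'_X$ of $B_X$ and computes $\chi(B'_X)\geq |V(B'_X)|-|E(B'_X)|$ directly. The key claim is that every vertex of $B'_X$ with degree $>2$ lies in $V_{\mathrm{cor}}(F(B_X)\setminus E_2)$, so $|V_{>2}(B'_X)|\leq 4P$. Since $S_X\in\Tranc{g}{T}$ has all vertex degrees $\leq 42$, each vertex contributes at most $42$ half-edges, whence $|E(B'_X)|\leq |V_{=2}(B'_X)|+24|V_{>2}(B'_X)|$ and $\chi(B'_X)\geq -24\cdot 4P=-96P$. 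This is where the constant $96$ actually comes from.

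Your disk-gluing argument has a genuine gap in the justification. The assertion that ``$v$ has at most four $B_X$-edges total'' is false for $v\in V_{>6}(S_X)$: such a vertex has $S_X$-degree $6k$ with $2\leq k\leq 7$, and by property~2 of \cref{property of B_X} all $4k$ of its edges with weights $\pm1,\pm e^{i\pi/3}$ lie in $B_X$, so its $B_X$-degree is $4k\leq 28$, not $4$. (It happens that for such $v$ every adjacent parallelogram has $v$ as a corner---the cyclic pattern of weights never places two collinear $B_X$-edges adjacently---so $v$ is never on the interior of a side; but this is a separate argument you did not give.) Your double-counting sketch also does not explain why the ``other parallelogram'' whose corner you charge against is itself bad; without that, you are charging against all $O(g)$ corners of $B_X$, not just the $4P$ bad ones. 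Both issues are bypassed by the paper's direct $V-E$ computation using the degree-$42$ bound.
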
 

\begin{proof} By \cref{counting 1}, \cref{counting 2} and  \cref{counting 3}, 
\begin{equation}\label{eq644.5}
\begin{aligned}|F(B_X)\setminus E_2|&=N-|E_2|\\&\leq (42 \alpha_3 +\delta^{-1}/2+10^5(\epsilon_1\delta^{-1}/80)^{-8}\alpha_2(g/T)^{\kappa_4-36})g.
\end{aligned}
\end{equation} Now, $\overline{S_X\setminus \left(\cup_{R_i\in E_2}R_i\right)}$ is naturally a subpolytope of $B_X$. We denote this subpolytope by $B'_X$. The vertices of $B'_X$ (denoted $V(B'_X)$) are the vertices of $B_X$ in $\overline{S_X\setminus \left(\cup_{R_i\in E_2}R_i\right)}$, the edges of $B'_X$ (denoted $E(B'_X)$) are the edges of $B_X$ in $\overline{S_X\setminus \left(\cup_{R_i\in E_2}R_i\right)}$, and the faces of $B'_X$ are the $R_i\in F(B_X)\setminus E_2$. As in the case of $B_X$, the faces of $B'_X$ are not necessarily quadrilaterals as polytope faces since not every vertex in $V(B'_X)$ is a corner vertex of every parallelogram that contains it.

Given a vertex $v\in V(B'_X)$, let the degree of $v$ in $B'_X$ denote the number of edges of $B'_X$ emanating from it (with loops contributing $2$ to the degree). Denote by $V_{=2}(B'_X)$ the set of vertices of $B'_X$ that have degree $2$ in $B'_X$, and $V_{>2}(B'_X)$ the set of vertices of $B'_X$ that have degree strictly greater than $2$ in $B'_X$. 

We claim that $$V_{>2}(B'_X)\subset V_{\corner}(F(B_X)\setminus E_2).$$ To see this, note that if $v\in V_{>2}(B'_X)$ and $v\in V_{>6}(S_X)$, then by properties 1 and 2 of \cref{property of B_X}, $v$ is a corner vertex of every $R_i$ in $F(B_X)\setminus E_2$ containing $v$. Now suppose $v\in V_{>2}(B'_X)$ and $v\in V_{=6}(S_X)$. Because $v\in V_{>2}(B'_X)$, there are at least two distinct $R_i$ and $R_j$ parallelograms in $F(B_X)\setminus E_2$ containing $v$. If $v$ is a corner vertex of neither, then $R_i\cup R_j$ contains a neighborhood of $v$, which would be a contradiction to the fact that that degree of $v$ strictly greater than $2$. So $v$ is a corner of one of the parallelograms.

Since each $R_i$ has at most $4$ corner vertices, \cref{eq644.5} implies $$|V_{>2}(B'_X)|\leq 4(42 \alpha_3 +\delta^{-1}/2+10^5(\epsilon_1\delta^{-1}/80)^{-8}\alpha_2(g/T)^{\kappa_4-36})g.$$ The total number of edges of $B'_X$ is half the sum of the degrees of all the vertices in $V(B'_X)$. Since $S_X$ is locally bounded, the sum of the degrees of all the vertices in $V(B'_X)$ is at most $$2|V_{=2}(B'_X)|+42|V_{>2}(B'_X)|.$$ (Note that since $B'_X$ is a polytope, it does not have any vertices of degree $1$.) Thus, 
\begin{align*}\chi(B'_X)&\geq |V(B'_X)|-|E(B'_X)|\\&\geq -24|V_{>2}(B'_X)|\\&\geq -96(42 \alpha_3 +\delta^{-1}/2+10^5(\epsilon_1\delta^{-1}/80)^{-8}\alpha_2(g/T)^{\kappa_4-36})g\\&=-\alpha_4 g.
\end{align*} By \cref{property of G'}, $B'_X$ and $J_X$ are homotopy equivalent, so $$\chi(J_X)\geq -\alpha_4g.$$ Since $J_Y$ is homeomorphic to $J_X$, we have $$\chi(J_X)\geq -\alpha_4g$$ as well. The number of connected components of $J_X$ is at most $|F(B_X)\setminus E_2|$ since each face in $F(B_X)\setminus E_2$ belongs to at most one connected component. Hence this number, along with the number of connected components of $J_Y$, is at most $\alpha_4 g$.
\end{proof}

Let $$K_X^i=(J_X^i)^d,$$ the conformal double of $J_X^i$ as in \cref{conformal double}. Since $J_X^i$ is a triangulated surface of genus $h_i$ with $b_i$ boundary components, its conformal double is a triangulated surface (without boundary) of genus $g_i=2h_i+b_i-1$. By construction, $\partial J_X^i$ is identified with $\partial (J_X^i)^{-1}$ which in turn is identified with $\partial (J_Y^i)^{-1}$. Let $$K_Y^i=J_Y^i \cup (J_X^i)^{-1},$$ glued along this identification. Then $K_Y^i$ is also a triangulated surfaces of genus $g_i=2h_i+b_i-1$.

Recall that $n$ is the number of connected components of $J_X$ (and therefore also $J_Y$,  $K_X$ and $K_Y$).

\begin{lemma}\label{K genus} We have, $$\sum_{i=1}^n g_i \leq \alpha_5 g$$ where $g_i$ denotes the genus of $K_X^i$ which is equal to the genus of $K_Y^i$. Here, $\alpha_5=2\alpha_4$.
\end{lemma}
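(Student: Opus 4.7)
The plan is to express $\sum_i g_i$ purely in terms of the Euler characteristic of $J_X$ and the number of connected components of $J_X$, both of which are already controlled by \cref{J genus}.

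First I would compute $g_i$ in terms of $h_i$ and $b_i$. By construction $K_X^i$ is the conformal double of the genus $h_i$ surface with $b_i$ boundary components $J_X^i$, so $g_i = 2h_i + b_i - 1$. Since $J_X^i$ is homeomorphic to $S_{h_i,b_i}$, its Euler characteristic is $\chi(J_X^i) = 2 - 2h_i - b_i$, which gives the clean identity $g_i = 1 - \chi(J_X^i)$.

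Summing this identity over the connected components $i = 1, \dots, n$ of $J_X$ yields
\begin{equation*}
\sum_{i=1}^n g_i \;=\; \sum_{i=1}^n \bigl(1 - \chi(J_X^i)\bigr) \;=\; n - \chi(J_X).
\end{equation*}
Now I invoke \cref{J genus}, which gives both $n \leq \alpha_4 g$ (the bound on the number of connected components) and $-\chi(J_X) \leq \alpha_4 g$. Adding these two inequalities gives $\sum_{i=1}^n g_i \leq 2\alpha_4 g = \alpha_5 g$, as desired. There is no real obstacle here; all the substance has been packaged into \cref{J genus} and the definition of the conformal double.
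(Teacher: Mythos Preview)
Your proof is correct and essentially the same as the paper's. The only cosmetic difference is that the paper passes through $\chi(K_X)=2\chi(J_X)$ and $\chi(K_X^i)=2-2g_i$ rather than your direct identity $g_i=1-\chi(J_X^i)$, but these unwind to the same equation $\sum g_i = n - \chi(J_X)$ and the same application of \cref{J genus}.
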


\begin{proof} By \cref{J genus}, $$\chi(K_X)=2\chi(J_X)\geq -2\alpha_4 g.$$ Now, $\chi(K_X^i)=2-2g_i,$ so
\begin{align*}2n-2 \sum_{i=1}^n g_i&=\chi(K_X)\\&\geq -2\alpha_4 g.
\end{align*} Since $n\leq \alpha_4 g$ by \cref{J genus}, we have $$\sum_{i=1}^n g_i\leq 2\alpha_4 g$$ as desired.
\end{proof}

Next, we bound the number of vertices of $K_X$ and $K_Y$ that have degree not equal to $6$.

\begin{lemma}\label{K vertices} We have, $$|V_{\neq 6}(K_X)|\leq \alpha_6 g$$ and $$|V_{\neq 6}(K_Y)|\leq \alpha_6 g$$ where $\alpha_6= 500\alpha_4$.
\end{lemma}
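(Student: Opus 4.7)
The plan is to count $|V_{\neq 6}(K_X)|$ via the doubling structure $K_X = J_X \cup_{\partial J_X} J_X^{-1}$: each interior vertex $v$ of $J_X$ contributes two vertices of $K_X$ each of degree $\deg_{S_X}(v)$, while each boundary vertex $v$ of $J_X$ contributes a single vertex of $K_X$ of degree $2k_v - 2$, where $k_v$ is the $J_X$-degree of $v$ (counting both boundary and interior edges). I will bound the interior and boundary contributions separately.

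For interior vertices, the combinatorial translation structure forces $\deg_{S_X}(v)$ to be a positive multiple of $6$, so only $v \in V_{>6}(S_X)$ contribute. The Euler-characteristic computation used in the proof of \cref{number of parallelograms} gives $\sum_{v \in V_{>6}(S_X)} \deg(v) \leq 24(g-1)$, hence $|V_{>6}(S_X)| \leq 2(g-1)$, so the total interior contribution to $|V_{\neq 6}(K_X)|$ is at most $4g$.

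For boundary vertices I will split $V_{\text{bdy}}(J_X)$ into three cases based on the construction of $G'_X$ from $\cup_{R_i \in E_2} R_i$ by removing closed stars of certain corner vertices: (A1) $v$ lies in the interior of an edge of $B_X$ along $\partial(\cup_{R_i \in E_2} R_i)$; (A2) $v \in V(B_X)$ lies on $\partial(\cup_{R_i \in E_2} R_i)$; (B) $v$ lies on the link of a removed corner vertex. In Case (A1), the inclusion $V_{>6}(S_X) \subset V(B_X)$ (any vertex of degree $>6$ in a combinatorial translation surface has edges in all six directions and so satisfies both defining conditions of $V(B_X)$) forces $\deg_{S_X}(v) = 6$; by property 1 of \cref{property of B_X} the two boundary edges at $v$ are antipodal segments of the straight $B_X$-edge, and the remaining four edges split $2$-$2$ between $J_X$ and $G'_X$, yielding $k_v = 4$. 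Hence Case (A1) contributes nothing. For Case (A2), I bound by $|V(B_X)|$: since each face of $B_X$ is a parallelogram so $E(B_X) = 2F(B_X)$, and $|F(B_X)| \leq 12(g-1)$ by \cref{number of parallelograms}, Euler characteristic gives $|V(B_X)| = F(B_X) + 2 - 2g \leq 10g$. For Case (B), the number of removed corner vertices is at most $4|F(B_X) \setminus E_2| \leq \alpha_4 g/24$ (via the bound on $|F(B_X)\setminus E_2|$ used in the proof of \cref{J genus}), and each contributes at most $42$ link vertices since $S_X \in \Tranc{g}{T}$ has maximum vertex degree $42$, so Case (B) yields at most $2\alpha_4 g$.

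Adding the three pieces gives $|V_{\neq 6}(K_X)| \leq 4g + 10g + 2\alpha_4 g$, and since $\alpha_4 \geq 48/\delta$ is bounded below by a positive universal constant, this is at most $500\alpha_4 g = \alpha_6 g$. The bound on $|V_{\neq 6}(K_Y)|$ follows identically: the simplicial isomorphism $\mathfrak{f}: G_X \to G_Y$ from \cref{simplicial map} transfers the local combinatorial structure of $\partial J_X$ to $\partial J_Y$ (so the same three-case analysis, in particular the Case (A1) straight-through argument, applies verbatim in $S_Y$), and $|V_{>6}(S_Y)| \leq 2(g-1)$ since $S_Y$ is itself a combinatorial translation surface of genus $g$. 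The main technical subtlety lies in Case (A1), specifically checking via \cref{property of B_X} that each $B_X$-edge is genuinely a straight chain of $S_X$-edges and that when $v$ is not adjacent to any removed vertex, the two $\partial J_X$-edges at $v$ are exactly the two antipodal $S_X$-edges from this chain, so the $2$-$2$ split between $J_X$ and $G'_X$ happens regardless of which side is which.
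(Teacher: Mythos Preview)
Your argument has a fatal quantitative gap in the final step. You obtain $|V_{\neq 6}(K_X)| \leq 4g + 10g + 2\alpha_4 g$ and then assert this is at most $500\alpha_4 g$ because $\alpha_4 \geq 48/\delta$ is bounded below by a positive universal constant. But the constraints in \cref{Constants} force $\alpha_4$ to be extremely small: condition~19 there requires $\alpha_6 = 500\alpha_4 \leq (1/100)\mu^{-1}$, so in particular $500\alpha_4 g \leq g/100$, which cannot absorb a term of size $14g$. The whole point of the lemma is that $|V_{\neq 6}(K_X)|$ is $O(\alpha_4 g)$ with $\alpha_4$ a \emph{small} parameter, not merely $O(g)$; your interior-vertex bound (via $|V_{>6}(S_X)| \leq 2(g-1)$) and your Case~(A2) bound (via $|V(B_X)|$) are both of order $g$, not of order $\alpha_4 g$, so they are too weak for the stated conclusion.

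There is also a local error in Case~(A2): the relation $E(B_X) = 2F(B_X)$ is false. The paper explicitly notes after \cref{R is a quadrilateral} that a side of a parallelogram $R$ may be a union of several edges of $B_X$, so $2E(B_X) = \sum_{R\in F(B_X)} |\{B_X\text{-edges on }\partial R\}| \geq 4F(B_X)$ with strict inequality in general; your formula $|V(B_X)| = F(B_X) + 2 - 2g$ therefore does not hold.

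The paper sidesteps both issues by a different route. It bounds only $|V_{<6}(K_X)|$ directly, observing that such vertices must lie on $\partial J_X$ with $J_X$-degree strictly less than~$4$, and hence (by the construction of $G'_X$) must lie in or be adjacent to $V_{\corner}(F(B_X)\setminus E_2)$, a set of size at most $\alpha_4 g$; this yields $|V_{<6}(K_X)| \leq 43\alpha_4 g$. Then the Euler-characteristic inequality $|V_{>6}(K_X)| \leq -6\chi(K_X) + 6|V_{<6}(K_X)|$, together with $\chi(K_X) \geq -2\alpha_4 g$ from \cref{K genus}, gives the bound $500\alpha_4 g$. The key point you missed is that both the interior $V_{>6}(S_X)$-vertices that actually land in $J_X$ and the boundary vertices with $k_v\neq 4$ are controlled by $|F(B_X)\setminus E_2|$ and $\chi(K_X)$, which are genuinely $O(\alpha_4 g)$, rather than by global quantities like $|V_{>6}(S_X)|$ or $|V(B_X)|$, which are only $O(g)$.
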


\begin{proof} First, we compute $|V_{<6}(K_X)|$. Since $K_X$ is the double of $J_X$ and $J_X$ is a subset of $S_X$ (which, being in $\Tranc{g}{T}$, does not have any vertices of degree strictly less than $6$), all vertices of $K_X$ having degree strictly less than $6$ must lie on $\partial J_X\subset K_X$. Such a vertex will have degree strictly less than $4$ considered as a vertex of $J_X$, so it suffices to compute the number of vertices in $\partial J_X$ that have degree strictly less than $4$ in $J_X$. Denote by $V_{<4}(\partial J_X)$ the set of vertices in $\partial J_X$ that have degree strictly less than $4$ in $J_X$. (Recall that degree for a vertex in a triangulated surface with boundary is defined as the number of edges emanating from the vertex. For a vertex on the boundary, this quantity is different from the number of faces containing the vertex.) 

Since $J_X=\overline{S_X\setminus G'_X}$ and $G'_X$ are created by removing stars of boundary vertices of $\cup_{R_i\in E_2}R_i$ that are also in $V_{\corner}(F(B_X)\setminus E_2)$, any vertex in $V_{<4}(\partial J_X)$ is either in $V_{\corner}(F(B_X)\setminus E_2)$ or adjacent to a vertex in $V_{\corner}(F(B_X)\setminus E_2)$. Since by \cref{eq644.5}, \begin{align*}|V_{\corner}(F(B_X)\setminus E_2)|&\leq 4|F(B_X)\setminus E_2|\\&\leq \alpha_4 g
\end{align*} and each vertex in $J_X$ has degree at most $42$, $|V_{<4}(\partial J_X)|\leq 43\alpha_4 g$. Therefore 
\begin{equation}\label{eqn761}|V_{<6}(K_X)|\leq 43 \alpha_4 g
\end{equation} as well. Note that 
\begin{align*}\chi(K_X)&=|V(K_X)|-|E(K_X)|+|F(K_X)|\\&=|V(K_X)|-(1/3)|E(K_X)|.
\end{align*} Since there are at least $$3|V_{=6}(K_X)|+(7/2)|V_{>6}(K_X)|$$ edges in $K_X$, $$\chi(K_X)\leq |V_{<6}(K_X)|-(1/6)|V_{>6}(K_X)|$$ which means $$|V_{>6}(K_X)|\leq -6\chi(K_X)+6|V_{<6}(K_X)|.$$ From \cref{K genus} and \cref{eqn761}, we obtain \begin{align*} |V_{\neq 6}(K_X)|&\leq |V_{<6}(K_X)|+ |V_{>6}(K_X)|\\&\leq 500 \alpha_4 g.
\end{align*}

Since $\partial J_X$ is identified with $\partial J_Y$, a similar argument shows that $|V_{<6}(K_Y)|\leq 43\alpha_4 g$. Since $\chi(K_X)=\chi(K_Y)$, we obtain $$|V_{\neq 6}(K_Y)|\leq 500 \alpha_4 g$$ too.
\end{proof}

\subsection{Quasiconformal extension on a triangulated surface}\label{6.6} The goal of this section is to show:

\begin{lemma}\label{K distance} There exists a universal constant $C$ such that $$d_T(\Phi(K_X^i),\Phi(K_Y^i))\leq C$$ for all $i$ satisfying $g_i\geq 2$.
\end{lemma}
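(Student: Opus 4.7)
The plan is to construct a quasiconformal homeomorphism $h\colon J_X^i\to J_Y^i$ with universally bounded dilatation $K$, whose restriction to $\partial J_X^i$ equals the simplicial isomorphism $\mathfrak{f}|_{\partial J_X^i}$. Since $\mathfrak{f}$ is a flat isometry between the triangulated boundaries $\partial J_X^i$ and $\partial J_Y^i$, the reflection of $h$ across these boundaries defines a $K$-quasiconformal homeomorphism $h^d\colon K_X^i\to K_Y^i$ between the triangulated conformal doubles. As $g_i\geq 2$, this yields $d_T(K_X^i,K_Y^i)\leq (1/2)\log K\leq C$.

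To build $h$, I exploit the fact that, by \cref{property of G'}, the $1$-ring $N_X^i$ of $\partial J_X^i$ inside $J_X^i$ is contained in $G_X$, where $\mathfrak{f}$ is defined as a simplicial isometry. Set $J_X^{i,-} := \overline{J_X^i\setminus N_X^i}$, and let $N_Y^i := \mathfrak{f}(N_X^i)$ and $J_Y^{i,-} := \overline{J_Y^i\setminus N_Y^i}$. On $N_X^i$ I define $h:=\mathfrak{f}|_{N_X^i}$, which is $1$-quasiconformal onto $N_Y^i$. On $J_X^{i,-}$ I take $h:=\phi\circ f$, where $\phi\colon f(J_X^{i,-})\to J_Y^{i,-}$ is a correction map to be constructed. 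By \cref{simplicial map}, $f(\partial J_X^{i,-})$ lies within $S_Y$-distance $\epsilon_1$ of $\partial J_Y^{i,-}$, and both curves are piecewise-linear in the flat metric on $S_Y$. Since $S_Y$ is locally bounded, both are bounded-turning curves (hence quasicircles, by \cref{bounded turning curve is quasicircle}) with universal constants. The identification $\mathfrak{f}\circ f^{-1}$ between these boundary curves is weakly quasisymmetric with universal constants, because $f$ is $(1+\alpha_0(g/T)^{\kappa_0})$-quasiconformal and $\epsilon_1$-close to $\mathfrak{f}$ on the triangulated collar $N_X^i$. Applying \cref{extending quasisymmetric map 2} component-by-component on the boundary of $J_X^{i,-}$ (equivalently, constructing a time-$1$ flow of a small compactly supported vector field that pushes $f(\partial J_X^{i,-})$ onto $\partial J_Y^{i,-}$) produces such a $\phi$ with universally bounded dilatation. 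By construction $\phi\circ f$ agrees with $\mathfrak{f}$ on $\partial J_X^{i,-}$, so the two definitions of $h$ match along the common curve, and $h$ is a globally defined quasiconformal homeomorphism $J_X^i\to J_Y^i$ with dilatation bounded by a universal constant.

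The main obstacle is the uniformity of the quasiconformal constants in the extension step: one must verify that $f(\partial J_X^{i,-})$ and $\partial J_Y^{i,-}$ are uniformly bounded-turning and that their correspondence is uniformly weakly quasisymmetric, independently of $g$, $T$, and the combinatorial structure of $J_X^i$. Both follow from the local boundedness of $S_X$ and $S_Y$ as combinatorial translation surfaces (which yields uniform turning constants on the unit scale) together with the smallness of $(g/T)^{\kappa_0}$ and $\epsilon_1$ (which controls the quasisymmetric distortion of $\mathfrak{f}\circ f^{-1}$ on the collar). Once this uniformity is established, the reflection argument on the doubles concludes the proof.
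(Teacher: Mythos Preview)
Your overall strategy---fix $h=\mathfrak{f}$ on a collar of $\partial J_X^i$ and then correct $f$ on the remainder so that the two pieces agree---is exactly the paper's strategy; the paper works with the $1/4$-neighborhood of $\partial J_Y$ in the $S_Y$-metric instead of your combinatorial $1$-ring, but that is cosmetic. The gap is entirely in the construction of the correction map $\phi$, where your justification does not go through as written.

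Two concrete problems. First, the curve $f(\partial J_X^{i,-})$ is \emph{not} piecewise linear in the flat metric on $S_Y$: it is the image of a PL curve under a merely quasiconformal map. Its bounded-turning property therefore cannot be read off from local boundedness of $S_Y$ alone; it must be obtained by pushing the bounded-turning property of $\partial J_X^{i,-}$ forward via \cref{bounded turning curve under quasiconformal map}, and that result is a \emph{local} statement (for curves inside a fixed disk-type chart), so it only applies on pieces of $\partial J_X^{i,-}$ sitting inside some $ST(ST(x_i))$, not on the whole long boundary at once. Second, \cref{extending quasisymmetric map 2} only produces a quasiconformal map between two \emph{simply connected} regions bounded by quasicircles inside a disk-type chart; it gives you nothing for the map $\phi\colon f(J_X^{i,-})\to J_Y^{i,-}$ between surfaces of positive genus with many boundary components. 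Saying ``component-by-component on the boundary'' or invoking a time-$1$ flow does not explain why the resulting $\phi$ has \emph{uniformly} bounded dilatation, independent of the length of $\partial J_Y^{i,-}$ and of the number of boundary components.

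The paper fixes both issues simultaneously by localizing: it cuts the collar $J_Y\cap B_{S_Y}(\partial J_Y,1/4)$ into quadrilaterals $A_i$ (one per boundary vertex $y_i$) using perpendicular bisectors, and cuts the corresponding collar of $f(J_X)$ into quadrilaterals $B_i$ using shortest segments from the same points to $\partial f(J_X)$. Each $A_i$ and $B_i$ lives inside the single disk-type chart $B_{S_Y}(ST(ST(y_i)),\epsilon_1)$, of which there are only boundedly many isometry types because $\deg y_i\le 42$. There one checks (this is \cref{curves are bounded turning}) that $\partial A_i$ and $\partial B_i$ are $C$-bounded-turning with a universal $C$, defines $H$ on three sides of $\partial A_i$ by scaling, verifies weak quasisymmetry, and \emph{then} applies \cref{extending quasisymmetric map 2} to extend $H$ to a $C$-quasiconformal map $A_i\to B_i$. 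Gluing these finitely-many-types local maps gives the correction with a genuinely universal constant. Your plan becomes a proof once you insert this quadrilateral decomposition (or an equivalent mechanism that makes the extension result applicable chart-by-chart).
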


We will do this by using the description of the Teichm\"{u}ller metric via extremal length.

First, we consider $\Phi(K_X^i)$ and $\Phi(K_Y^i)$ as points in $\mathcal{T}_g$. To do this, it suffices to fix a marking of $\Phi(K_X^i)$ and construct a diffeomorphism $K_X^i\to K_Y^i$. To do this, we first construct a diffeomorphism $$F:J_X^i\to J_Y^i.$$ On $$J_X^i\setminus ST(\partial J_X^i),$$ we let $$F=f.$$ On $\partial J_X^i$, we let $$F=\mathfrak{f}.$$ By \cref{simplicial map} and \cref{property of G'}, we may extend $F$ to $ST(\partial J_X^i)$ so that $$F(x)\in B_{J_Y^i}(\mathfrak{f}(x),1).$$ Gluing $F$ with the identity map $(J_X^i)^{-1}\to (J_X^i)^{-1}$, we obtain the desired diffeomorphism $K_X^i\to K_Y^i$.

Let $\gamma$ be a free homotopy class on $K_X^i$. Since we have constructed a diffeomorphism $K_X^i\to K_Y^i$, $\gamma$ also corresponds to a free homotopy class on $K_Y^i$ that we will also denote by $\gamma$. By \cref{Teichmuller metric via extremal length}, to show \cref{K distance}, we need to show, 

$$\frac{\Ext_{K_Y^i}(\gamma)}{\Ext_{K_X^i}(\gamma)}\leq C.$$

Now, $$\textstyle\Ext_{K_Y^i}(\gamma)=\displaystyle\sup_\rho \frac{\length_\rho(\gamma)^2}{\Area_\rho(K_Y^i)},$$ where $\rho$ ranges over conformal metrics on $K_Y^i$. By \cref{Jenkins-Strebel}, the supremum is achieved by the flat metric $|\phi|^{1/2}$ associated to a holomorphic quadratic differential $\phi$ on $K_Y^i$.

We have, $$\textstyle\Ext_{K_X^i}(\gamma)=\displaystyle\sup_\rho \frac{\length_\rho(\gamma)^2}{\Area_\rho(K_X^i)}.$$ We will show that $$\textstyle\Ext_{K_X^i}(\gamma)\geq C\Ext_{K_Y^i}(\gamma)$$ by constructing a conformal metric $\rho$ on $K_X^i$ for which $$\textstyle\length_{\rho}(\gamma)\geq C\length_{\phi}(\gamma)$$ and $$\textstyle\Area_{\rho}(K_X^i)\leq C\Area_{\phi}(K_Y^i).$$

Let $$A_X^i=J_X^i\setminus ST(\partial J_X^i)$$ and
$$A_Y^i=J_X^i\setminus ST(\partial J_Y^i).$$ Let $$A^i_{X,1/4}= B_{A_X^i}(\partial A_X^i,1/4)$$ and $$A^i_{Y,1/4}=B_{A_Y^i}(\partial A_Y^i,1/4).$$ By construction and \cref{length and width of R_i}, $A^i_{X,1/4}$ and $A^i_{Y,1/4}$ are both the union of disjoint annuli. 

Now, we construct a conformal metric on $K_X^i$ as follows. On $A^i_X$, let $\rho_{\phi'}$ be the smallest conformal metric such that $\rho_{\phi'}\geq f^*(|\phi|^{1/2}).$ This is well defined since $$f(A_X^i)\subset B_{S_Y}(A_Y^i,\epsilon_1)\subset J_Y^i\subset K_Y^i$$ by \cref{simplicial map} and \cref{property of G'}. Extend $\rho_{\phi'}$ to the rest of $K_X^i$ (for instance, multiplying by a partition of unity) so that the area increases infinitesimally.

Define $$C^i_X=J_X^i \cap B_{K_X^i}(ST(ST(\partial J_X^i)),\epsilon_1)$$ and $$C^i_Y=J_Y^i\cap B_{K_Y^i}(ST(ST(\partial J_Y^i)),\epsilon_1).$$ By \cref{length and width of R_i}, $C_X^i$ and $C_Y^i$ are the disjoint union of annuli. By \cref{simplicial map} and \cref{property of G'} $$\mathfrak{f}:C^i_X\to C^i_Y$$ is an isometry onto its image, and agrees with the identity map on $\partial J^i_X$. Hence, there is an isometry $$\id\cup\mathfrak{f}:(J^i_X)^{-1}\cup C^i_X\to (J^i_Y)^{-1}\cup C^i_Y.$$ On $(J_X^i)^{-1}\cup C^i_X$, we let $\rho_{\phi}$ be the pullback of $|\phi|^{1/2}$ defined on $(J_X^i)^{-1}\cup C^i_Y$. As before, we may extend $\rho_{\phi}$ to the rest of $K_X^i$ so that the area increases infinitesimally.

Cover $C^i_X$ by balls $\{U_k\}$ of radius $1/4$ in the $S_X$-metric such that every $1/16$-radius ball on $C^i_X$ is contained in some $U_k$. Because $S_X$ is a locally bounded combinatorial translation surface, we may choose the cover so that each ball $U_k$ is either flat, or a cone with $\cent(U_k)$ the cone point. We may also assume that every $2$-ball in $B_X$ nontrivially intersects at most $C$ of the $U_k$s. For each $U_k$, there is a holomorphic map $g_k:\mathbb{D}\to U_k$ sending $0$ to $\cent(U_k)$. Take 
\begin{equation}\label{def of m eq} m_k=\max_{U_j\cap B_{S_Y}(U_k,2)\neq 0} \frac{|(\mathfrak{f}\circ g_j)^*(\phi)|}{|dz|^2}.
\end{equation} By the mean value property, 
\begin{equation}\label{mvp eq} m_k\leq \max_{U_j\cap B_{S_Y}(U_k,2)\neq 0} C\int_{U_j} |\phi|.
\end{equation} Let $$\rho_k=(g_k)^*(m_k\cdot \rho_{\Euc})$$ on $U_k$, and extend it to the rest of $K_X^i$ as before. 

Finally, we let $$\rho=\rho_{\phi'}+\rho_{\phi}+\sum_{k}\rho_k.$$ By construction, 
\begin{align*}\textstyle\Area_{\rho}(K_X^i)&\leq
\textstyle\Area_{\rho_{\phi'}}(K_X^i)+\Area_{\rho_{\phi}}(K_X^i)+\displaystyle\sum_{k}\textstyle\Area_{\rho_k}(K_X^i)\\& \leq \textstyle C\Area_{\phi}(K_Y^i)+\displaystyle\sum_{k}\textstyle\Area_{\rho_k}(K_X^i) && \text{quasiconformality of } f\\& \leq \textstyle C\Area_{\phi}(K_Y^i)+C\displaystyle\sum_{k}\int_{U_k}|\phi| && \text{\cref{mvp eq}}\\&\leq C\textstyle\Area_{\phi}(K_Y^i).
\end{align*}

It remains to show that $$\textstyle\length_{\rho}(\gamma)\geq C\length_{\phi}(\gamma).$$ Let $\alpha$ be a length minimizing curve on $K_X^i$ the free homotopy class $\gamma$. We divide $\alpha$ into disjoint pieces in the following way. 

Now, $K_X^i$ may be decomposed as $$\left((J^i_X)^{-1}\cup ST(\partial J_X^i)\right)\bigcup A^i_{X,1/4}\bigcup A_X^i\setminus A^i_{X,1/4}.$$ Decompose $\alpha$ into arcs $\alpha_1\cup ...\cup\alpha_\ell$ such that each $\alpha_j$ is contained in one of $A^i_{X,1/4}$, $(J^i_X)^{-1}\cup ST(\partial J_X^i)$ or $A_X^i\setminus A^i_{X,1/4}$ with endpoints on its boundary. Label the arcs so that $\alpha_j$ and $\alpha_{j+1}$ share an endpoint, and $\alpha_\ell$ and $\alpha_1$ share an endpoint.

We now construct a curve $\beta$ on $K_Y^i$. We do this by constructing arcs $\beta_1,...,\beta_\ell$, where each arc $\beta_j$ will be constructed from the arc $\alpha_j$ on $K_X^i$. We will construct the $\beta_j$ so that it shares an endpoint with $\beta_{j+1}$, and $\beta_1$ and $\beta_\ell$ also share an endpoint. Thus, the $\beta_j$ will glue together to form $\beta$. 

If $\alpha_j\subset  A_X^i\setminus A^i_{X,1/4}$, we let $$\beta_j=f(\alpha_j).$$ If $\alpha_j\subset  (J^i_X)^{-1}\cup ST(\partial J_X^i),$ we let $$\beta_j=(\id\cup \mathfrak{f})(\alpha_j).$$ By construction, in both these cases, $$\textstyle\length_{\phi}(\beta_j)\leq \length_{\rho}(\alpha_j).$$

Next, we treat the case wherein $\alpha_j\subset  A^i_{X,1/4}$. The region $A^i_{X,1/4}$ has two types of boundary components. Let $\partial_0 A^i_{X,1/4}$ be union of the boundary components of $A^i_{X,1/4}$ which are also boundary components of $(J^i_X)^{-1}\cup ST(\partial J_X^i)$. Let $\partial_{1/4} A^i_{X,1/4}$ be the union of the boundary components which are also boundary components of $A_X^i\setminus A^i_{X,1/4}$. If $\alpha_i$ has both endpoints on $\partial_0 A^i_{X,1/4}$, we let $$\beta_j=\mathfrak{f}(\alpha_j).$$ If $\alpha_j$ has both endpoints on $\partial_{1/4} A^i_{X,1/4}$, we let $$\beta_j=f(\alpha_j).$$ In these situations, $$\textstyle\length_{\phi}(\beta_j)\leq \length_{\rho}(\alpha_j)$$ as before. 

It remains to treat the case wherein $\alpha_j$ has one endpoint $x$ on $\partial_{1/4} A^i_{X,1/4}$ and the other endpoint $y$ on $\partial_0 A^i_{X,1/4}$. Given a point $z$ on $\alpha_j$, let $\alpha_j^{x}$ denote the arc of $\alpha_j$ from $x$ to $z$, and $\alpha_j^{y}$ denote the arc of $\alpha_j$ from $z$ to $y$. By construction, $$\alpha_j=\alpha_j^{x}\cup \alpha_j^{y}.$$ Now, there exists $$z\in \alpha_j \cap B_{K^i_X}(x,1/16)$$ such that $$\textstyle\length_{S_X}(\alpha^x_j)\geq 1/16.$$ By construction of the $U_k$s, there is some $U_k$ containing $x$ and $z$. Since $U_k$ is a radius $1/4$-ball and $x\in \partial_{1/4} A^i_{X,1/4}$, $U_k$ is flat. Hence, $g_k:\mathbb{D}\to U_k$ is an isometry from the Euclidean metric on $\mathbb{D}$ to the $S_X$-metric on $U_k$. Thus, $$\textstyle\length_{\Euc}(g_k^{-1}(\alpha_j^x))\geq 1/16.$$ This means $$\textstyle\length_\rho(\alpha_j^x)\geq C\cdot m_k.$$ Since $U_k$ is a radius $1/4$-ball in the $S_X$-metric containing $x$, $U_k\subset C_X^i$. 

By construction, $\mathfrak{f}(z)\in C_Y^i$. Note that $f(x)\in C_Y^i$ by \cref{simplicial map} and \cref{property of G'}.  Again by \cref{simplicial map} and \cref{property of G'}, $$d_{S_Y}(f(x),\mathfrak{f}(z))\leq 1.$$ Let $$\beta^x_j\subset B_{S_Y}(f(x),1)\cap C_Y^i$$ be an arc from $f(x)$ to $\mathfrak{f}(z)$. We may construct $\beta^x_j$ so that it passes through at most $C$ of the $\mathfrak{f}(U_h)$. For each $\mathfrak{f}(U_h)$ through which $\beta^x_j$ passes, we may assume $$\textstyle\length_{\Euc}((\mathfrak{f}\circ g_h)^{-1}(\beta^x_j\cap \mathfrak{f}(U_h)))\leq 1/2$$ by replacing with an another arc with the same endpoints if necessary. By \cref{def of m eq}, on each such $\mathfrak{f}(U_h)$, $$\textstyle\length_{\phi}(\beta^x_j\cap \mathfrak{f}(U_h))\leq m_k/2.$$ Therefore, $$\textstyle\length_{\phi}(\beta^x_j)\leq C\cdot m_k.$$ Now, define $$\beta^y_j=\mathfrak{f}(\alpha^y_j)$$ and $$\beta_j=\beta^x_j\cup \beta^y_j.$$ By construction, $\beta_j$ is well defined with endpoints $f(x)$ and $\mathfrak{f}(y)$. Also, $$\textstyle\length_{\phi}(\beta_j)\leq C\textstyle\length_{\rho}(\alpha_j).$$ 

Let $\beta=\beta_1\cup...\cup \beta_\ell$. By construction, under the identification of free homotopy classes coming from the diffeomorphism $K_X^i\to K_Y^i$, $\alpha$ and $\beta$ belong to the same free homotopy class. Finally, we have $$\textstyle\length_{\phi}(\beta)\leq C\textstyle\length_{\rho}(\alpha)$$ which implies $$\textstyle\length_{\rho}(\gamma)\geq C\length_{\phi}(\gamma)$$ as desired. This completes the proof of \cref{K distance}.

\subsection{Proof of \cref{quantitative form determines triangulation}}\label{6.7} We must compute the number of possible $S_Y$. To do this, it suffices to compute the number of possible $G'_Y$, the number of possible $J_Y$ and the number of possible gluings of $J_Y$ with $G'_Y$. Now, $G'_Y$ admits a simplicial isomorphism to $G'_X$, which itself is a subcomplex of $S_X$ that is uniquely determined by the subset $E_2\subset F(B_X)$. (Note that $E_2$ depends on $S_Y$, but $F(B_X)$ does not.) So the number of possibilities of $G'_Y$ is the number of possibilities of $G'_X$, and this quantity is bounded above by the number of possibilities for $E_2$. Since $$|F(B_X)|\leq Cg$$ by \cref{number of parallelograms}, the number of possibilities for $E_2$ is bounded above by $C^g$. 

To compute the number of possibilities for $J_Y$, we first compute the number of possibilities for $K_Y$.  We start by treating the case of the connected components which have genus $0$ or $1$. Relabelling as necessary, we may assume that $$g_1,...,g_{n'}\in \{0,1\}$$ and $$g_{n'+1},...,g_n\geq 2.$$ Here, $g_i$ denotes the genus of $K_X^i$ and $K_Y^i$.

\begin{lemma}\label{genus 0 and 1} There are at most $(T/g)^{Cg}$ total choices for the $K_Y^1$,..., $K_Y^{n'}$.
\end{lemma}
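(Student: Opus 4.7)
My plan is to enumerate the genus 0 and genus 1 components by direct combinatorial means, since the moduli spaces $\mathcal{M}_0$ and $\mathcal{M}_1$ are trivial and one-dimensional respectively, so the Teichm\"{u}ller-ball argument used for the higher-genus components does not apply here. Writing $T_i$ for the triangle count of $K_Y^i$ and $m_i = |V_{\neq 6}(K_Y^i)|$, the constraints already established give $n' \leq n \leq \alpha_4 g$ (from \cref{J genus}), $\sum_i T_i \leq 2T$ (since $K_Y$ is the double of $J_Y \subset S_Y$, and the area of $J_Y$ in the $S_Y$-metric is at most that of $S_Y$), and $\sum_i m_i \leq \alpha_6 g$ (from \cref{K vertices}).

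First I would establish a per-component enumeration. For a triangulated surface of genus $g_i \in \{0,1\}$ with $T_i$ triangles and $m_i \geq 1$ defect vertices, there is a canonical decomposition into closed stars of the defect vertices joined by strips of pure hexagonal triangles; the combinatorial type of the resulting defect skeleton (a bounded-valence graph on $m_i$ vertices) contributes at most $C^{m_i}$ possibilities, and the choice of positive integer strip lengths summing to at most $CT_i$ contributes at most $\binom{CT_i}{Cm_i} \leq (CT_i/m_i)^{Cm_i}$ possibilities. The sphere case $g_i = 0$ forces $m_i \geq 1$ by Euler characteristic. In the remaining case $g_i = 1$, $m_i = 0$, the triangulated torus is a quotient of the hexagonal lattice by a rank-$2$ sublattice, giving at most $O(T_i^2)$ possibilities, which I absorb by formally replacing $m_i$ with $M_i := \max(m_i, 1)$ throughout.

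Next I would multiply the per-component bounds. The logarithm of $\prod_i (CT_i/M_i)^{CM_i}$ equals $C \sum_i M_i \log(CT_i/M_i)$. Since the map $(m, T) \mapsto m \log(CT/m)$ is concave on the positive quadrant, and subject to $\sum M_i \leq \alpha_6 g + n' \leq Cg$ and $\sum T_i \leq 2T$, a Lagrange-multiplier calculation shows that the sum is maximized when the ratio $T_i/M_i$ is constant across $i$, giving
\[
\sum_i M_i \log(CT_i/M_i) \leq Cg \log(T/g) + Cg.
\]
Exponentiating, and absorbing the additive $Cg$ into the exponent using $T/g \geq 4$ (which holds by Euler characteristic, as noted after Remark 2.8), yields the claimed bound $(T/g)^{Cg}$.

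The main obstacle is the per-component enumeration. The hexagonal-strip decomposition is intuitively natural but must be set up carefully to avoid overcounting at the junctions between hexagonal strips and defect stars; in particular, the defect skeleton depends on canonical choices of paths between defect vertices through hexagonal regions, and these choices must be explicitly fixed so that the assignment (skeleton plus strip lengths) $\mapsto$ (triangulated surface) is bijective, or at least bounded-to-one with a bound depending only on $m_i$. The marking data on $K_Y^i$ adds a further layer of bookkeeping: one must verify that the mapping class group actions on these low-genus surfaces do not inflate the count beyond the $(CT_i/M_i)^{CM_i}$ estimate, which for genus $1$ uses the fact that the marking is inherited from a fixed marking on $J_X^i$ via the homeomorphism induced by $f$.
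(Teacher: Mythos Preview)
Your approach and the paper's are close in spirit---both enumerate by cutting along trajectories emanating from the defect vertices and then counting skeletons plus edge-length data---but the paper makes the decomposition precise in a way you have not. The key point you are missing is that $J_Y^{1,\ldots,n'}$ sits inside the combinatorial translation surface $S_Y$, so the edges carry directional weights $\zeta(e,v)$; the paper uses these to shoot direction-$\pm 1$ trajectories from each vertex of $V_{\neq 6}(K_Y)$, then doubles to get a $1$-complex $A_Y$ on $K_Y^{1,\ldots,n'}$ with weights defined up to sign. Gauss--Bonnet then shows each complementary face is a flat cylinder or embeds isometrically in the plane, hence is determined by the boundary length data. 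Your ``hexagonal strip decomposition'' is the same idea, but without invoking the inherited translation structure it is not clear how to make the choice of strip directions canonical, nor why the complementary pieces are automatically planar or cylindrical.

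On the counting side, the paper's argument is more direct than yours: since $|V_{\neq 6}(K_Y)|\leq \alpha_6 g$, the skeleton $A_Y$ has at most $Cg$ edges as an embedded graph, and the number of such maps on a union of genus~$0$ and~$1$ surfaces is $\leq C^g$ (citing the rooted-map enumeration of Bender--Canfield). The length and directional-weight assignments on those $\leq Cg$ edges, with total length $\leq CT$, then give $(T/g)^{Cg}$ directly---no concavity or Lagrange multiplier step is needed. Your per-component-then-multiply route would work too, but the bound $C^{m_i}$ on skeleton types needs the same rooted-map citation you did not mention. Finally, your concern about markings is unnecessary here: by the paper's convention for $g_i\in\{0,1\}$ the counting is of unmarked triangulated surfaces.
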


\begin{proof} To do this, we first construct a graph  $A^i$ on each $K_Y^i$ for $i\in \{1,...,n'\}$. We construct a graph $A_Y^i$ on $J_Y^i$ and a graph $A_X^i$ on $(J_X^i)^{-1}$, then glue them together.  

First, we construct $A_Y^i$ on $J_Y^i$. Let $$V_{\notflat}(J_Y^i)=\left(\interior(J_Y^i)\cap V_{\neq 6}(J_Y^i) \right)\cup \left(\partial J_Y^i\cap V_{\neq 4}(J_Y^i)\right).$$ 

Recall from \cref{6.4} that $S_Y$ has a horizontal foliation. Restrict this foliation to $J_Y^i$. Let $\mathcal{L}$ be the set of leaves of this restricted foliation which pass through a vertex in $V_{\notflat}(J_Y^i)$. Define $$A_Y^i=\partial J_Y^i\bigcup_{L\in \mathcal{L} }.$$

Now, $A_Y^i$ is naturally a graph on $J_Y^i$. The vertices of $A_Y^i$ in $\interior(J_Y^i)$ are $$\interior(J_Y^i)\cap V_{\neq 6}(J_Y^i).$$ The vertices of $A_Y^i$ in $\partial J_Y^i$ are $$\partial J_Y^i\cap \bigcup_{L\in \mathcal{L} }.$$ Let $$V(A_Y^i)=\left(\interior(J_Y^i)\cap V_{\neq 6}(J_Y^i)\right)\cup \left(\partial J_Y^i\cap \bigcup_{L\in \mathcal{L} }\right).$$ The edges of $A_Y^i$ are the connected components of $A_Y^i\setminus V(A_Y^i)$. Each edge of $A_Y^i$ is contained in either $$\partial J_Y^i$$ or $$\bigcup_{L\in \mathcal{L} }.$$ If an edge of $A_Y^i$ is contained in $$\partial J_Y^i,$$ we call it a boundary edge; otherwise, an interior edge. Each edge of $A_Y^i$ is the union of some parallel edges of $J_Y^i$, and therefore has both a length and a directional weight associated to it. By construction, $$|V(A_Y^i)|\leq C\cdot V_{\notflat}(J_Y^i).$$ Moreover, since $S_Y$ is locally bounded, the degrees of vertices of $A_Y^i$ is bounded by $C$. Therefore the number of edges is bounded by $C|V_{\notflat}(J_Y^i)|$. 

We claim that each face of $F$ of $J_Y^i\setminus A_Y^i$ is a flat cylinder, or embeds isometrically in the plane. To see this, first note that by construction, the angles of $\partial F$ are at most $\pi$, and the interior of $F$ is flat. Hence, by Gauss-Bonnet, $\chi(F)\geq 0$ which means $\chi(F)=0$ or $\chi(F)=1$. In the case $\chi(F)=0$, $\partial F$ must not contain any angles less than $\pi$, hence $F$ is a flat cylinder. In the case $\chi(F)=1$, $F$ is a flat topological disk with piecewise geodesic boundary such that $\partial F$ does not contain any angles strictly greater than $\pi$. So $F$ is geodesically convex. Therefore the exponential map gives an isometry between a planar domain and $F$. 

Analogously, we may construct a graph $A_X^i$ on $(J_X^i)^{-1}$ which satisfies the same vertex and edge bounds as above. Taking the union of $A_X^i$ and $A_Y^i$, we obtain a graph $A^i$ on $K_X^i$. Since $$ST(\partial J_X^i)\simeq ST(\partial J_Y^i),$$ we have $$V_{\notflat}((J_X^i)^{-1})\cup V_{\notflat}(J_Y^i)=V_{\neq 6}(K_Y^i).$$ Thus, $A^i$ has at most $| V_{\neq 6}(K_Y^i)|$ edges. 

The number of topological possibilities for $A^i$ as a graph embedded in a torus or sphere is at most $C^{| V_{\neq 6}(K_Y^i)|}.$ This is true because $A^i$ is contained in a rooted map on the sphere or torus with at most $C\cdot | V_{\neq 6}(K_Y^i)|$ edges. There are at most $C^{| V_{\neq 6}(K_Y^i)|}$ possibilities for such a rooted map (see \cite{BC86}, in which a bound for the number of rooted maps with prescribed number of edges on a surface is given). Now, $A^i$ may be obtained from the rooted map by removing some edges and vertices. There are at most $C^{| V_{\neq 6}(K_Y^i)|}$ ways to do this. Therefore, there are at most $C^{| V_{\neq 6}(K_Y^i)|}$ topological possibilities for $A^i$. This means, there are at most 
$C^{| V_{\neq 6}(K_Y^i)|}$ topological possibilities for $A_Y^i$. By \cref{K vertices}, there are at most $C^g$ possibilities for all the $A_Y^i$ for $i\in \{1,...,n'\}$. To reconstruct $J^i_Y$, it suffices to assign directional weights and lengths to each edge of each $A_Y^i$. There are at most $(T/g)^{Cg}$ choices to do that. Hence there are at most $(T/g)^{Cg}$ choices for the $J^i_Y$ for $i\in \{1,...,n'\}$. There are at most $(T/g)^{Cg}$ ways to glue all the $J^i_Y$ to the $(J_X^i)^{-1}$. The lemma follows.
\end{proof}

By \cref{K genus}, \cref{K distance}, \cref{K vertices} and \cref{genus 0 and 1} there are at most $$(T/g)^{Cg}\sum_{\substack{n\leq \alpha_4 g\\ T_1+...+T_n\leq 2T\\ g_1+...+g_n\leq \alpha_5 g\\ g_1,...,g_n\geq 2\\ m_1+...+m_n\leq \alpha_6 g}} \prod_{i=1}^n \Ntri(T_i,g_i,m_i, r+C)$$ choices of $K_Y$. 

We must compute how many choices for $J_Y$ there are given $K_Y$. Since $K_Y$ is formed by gluing $J_Y$ and $(J_X)^{-1}$, it suffices to count the number of embeddings from $(J_X^i)^{-1}$ to $K^i_Y$ for $i\in \{1,...,n\}$. Such an embedding is determined by its values on one triangle. Thus there are at most $(T/g)^{Cg}$ such embeddings.

The gluing of $J_Y$ and $G'_Y$ is determined by the decomposition of $S_X$ as $G'_X\cup J_X$, the isomorphism $G'_Y$ to $G'_X$, and the embedding of $(J_X)^{-1}$ into $K_Y$. 

Finally, as long as $\alpha_4\leq 1/100$ and $\alpha_5,\alpha_6\leq \mu^{-1}(1/100)$ (see \cref{Constants}), the number of choices for $S_Y$ is at most $$(T/g)^{Cg}\sum_{\substack{n\leq (1/100)g\\ T_1+...+T_n\leq 2T\\ g_1+...+g_n\leq \mu^{-1}(1/100)g\\g_1,...,g_n\geq 2\\ m_1+...+m_n\leq \mu^{-1}(1/100)g}} \prod_{i=1}^n \Ntri(T_i,g_i,m_i, r+C).$$ This completes the proof of \cref{quantitative form determines triangulation}.

\subsection{Constants}\label{Constants} In this section, we show that the constants $\alpha_1$, $\delta$, $\epsilon_0$, $\epsilon_1$, $\kappa_0$, $\kappa_1$, $\kappa_2$ and $\kappa_3$  can be appropriately chosen. We list the relationships between these constants below: 

\begin{enumerate}

\item $\mu\in \mathbb{N}$ is a universal constant, introduced in \cref{bounded degree surface}

\item $\kappa_0\in \mathbb{N}$, to be chosen, introduced in the beginning of \cref{6.0}

\item $\kappa_1\in \mathbb{N}$, to be chosen, introduced in statement of \cref{quantitative form determines triangulation}

\item $\kappa_2\in \mathbb{N}$, to be chosen, introduced in statement of \cref{count cohomology classes}

\item $\kappa_3\in \mathbb{N}$, to be chosen, introduced in proof of \cref{count cohomology classes}

\item $\alpha_1>0$, to be chosen, introduced in statement of \cref{quantitative form determines triangulation}

\item $\alpha_2=\alpha_1^2$, introduced in \cref{eq60.75}

\item $\kappa_4=2\kappa_1$, introduced in \cref{eq60.75}

\item $0<\epsilon_0<1/2$, to be chosen, introduced in the beginning of \cref{6.1}

\item $\alpha_3=10^{10}\alpha_2(g/T)^{\kappa_4-10}\epsilon_0^{-10}$, introduced in statement of \cref{vertices-x}

\item $0<\epsilon_1<1/1000$, to be chosen, introduced before \cref{properties of rectangles}

\item $\delta\in \mathbb{N}$, to be chosen, introduced in the beginning of \cref{6.5}

\item $\alpha_4=96(42 \alpha_3 +\delta^{-1}/2+10^5(\epsilon_1\delta^{-1}/80)^{-8}\alpha_2(g/T)^{\kappa_4-36})$, introduced in statement of \cref{J genus}

\item $\alpha_5=2\alpha_4$, introduced in statement of \cref{K genus}

\item $\alpha_6=500 \alpha_4$, introduced in statement of \cref{K vertices}

\item $\epsilon_0\leq \epsilon_1\delta^{-1}/40$, assumed in \cref{6.5} before \cref{properties of rectangles}

\item $\alpha_4\leq 1/100$, assumed in proof of \cref{quantitative form determines triangulation} in \cref{6.7}

\item $\alpha_5\leq (1/100)\mu^{-1}$, assumed in proof of \cref{quantitative form determines triangulation} in \cref{6.7}

\item $\alpha_6\leq (1/100)\mu^{-1}$, assumed in proof of \cref{quantitative form determines triangulation} in \cref{6.7}

\item $\kappa_1\leq (\kappa_2-1)/2$, assumed in statement of \cref{count cohomology classes}

\item $100\alpha_1^{-1}(1/2)^{(\kappa_2-1)/2-\kappa_1}\leq 1$, assumed in statement of \cref{count cohomology classes}

\item $\kappa_3\geq (\kappa_2-1)/2$, assumed in proof of \cref{count cohomology classes}

\item $\kappa_2\geq \kappa_0$, assumed in statement of \cref{count cohomology classes}

\item $8\cdot (1/2)^{\kappa_0}\leq (1/10)^{10}$, assumed in the beginning of \cref{6.1}

\end{enumerate}

Noting that $g/T\leq 1/2$, it is clear that $\kappa_0$, $\kappa_1$, $\kappa_2$, $\kappa_3$, $\alpha_1$, $\epsilon_0$, $\epsilon_1$ and $\delta$ can be chosen to satisfy conditions 16 through 24, in the following way. First, choose $\epsilon_1$ sufficiently small. Then, choose $\delta$ sufficiently large. Then, choose $\epsilon_0$ sufficiently small so that condition 16 is satisfied. Finally, choose $\alpha_1$ sufficiently small and $\kappa_1$ sufficiently large so that $\kappa_4>36$, $\alpha_3$, $\alpha_4$, $\alpha_5$ and $\alpha_6$ are sufficiently small. Then choose $\kappa_0$ sufficiently large such that condition 24 is satisfied. Finally, choose $\kappa_2$ and $\kappa_3$ sufficiently large so that conditions 20 through 23 are satisfied.

\section{Upper bounds for triangulated surfaces via combinatorial translation surfaces} The first goal of this section is to show:

\begin{lemma}\label{tric vs tranc} For $g\geq 2$, we have, $$\Ntric(T,g,m,r)\leq C^{(1+r)g}(T/m)^{Cm}\Ntranc(6T_i,6g+5m,r+C)$$ for a universal constant $C$.
\end{lemma}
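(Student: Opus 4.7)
The plan is to define a canonical degree-$6$ branched cover map $\Psi:\Tric{\leq g}{\leq T,\leq m}\to\Tranc{\leq 6g+5m}{\leq 6T}$ that sends $S$ to its $6$-fold translation cover $\tilde S=\Psi(S)$, and then to bound the count on the left by multiplying the count of combinatorial translation surfaces in a slightly larger ball by two combinatorial factors: a covering factor and a branching-data factor.

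First I would construct $\Psi$. Take six copies of each triangle of $S$, indexed by sixth roots of unity $\zeta\in\{e^{k\pi i/3}\}_{k=0}^5$, and glue adjacent rotated copies along matching edges; this produces $\tilde S$ with a canonical combinatorial translation structure and canonical $1$-form $\phi_{\tilde S}=dz$ on each copy. The natural projection $\tilde S\to S$ is a degree-$6$ cover branched over $V_{\neq 6}(S)$: a vertex of degree $d$ has $\gcd(d,6)$ preimages of degree $6d/\gcd(d,6)$, contributing $6-\gcd(d,6)\leq 5$ to the ramification total. Riemann--Hurwitz then gives $g(\tilde S)\leq 6g+\tfrac{5m}{2}\leq 6g+5m$, while $|F(\tilde S)|=6T$. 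Local boundedness of $\tilde S$ follows from that of $S$: the maximum degree is at most $6\cdot 7=42$, and the $3$-subdivision property of $S$ forces periods between vertices of $V_{>6}(\tilde S)$ to lie in $3\mathbb Z+3e^{\pi i/3}\mathbb Z$. These are routine combinatorial verifications.

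The counting argument splits into three parts. First, cover $B_{d_T}(X,r)\subset\mathcal T_{\leq g}$ by at most $C^{(1+r)g}$ unit Teichm\"uller balls using \cref{Teichmuller bound 3}; this accounts for the factor $C^{(1+r)g}$ and reduces to the case $r\leq 1$. Second, within a unit ball I classify $S$ by its \emph{branching data}: the subset of $\leq m$ triangles of $S$ containing a vertex of $V_{\neq 6}(S)$, together with the local ramification type (a degree in $\{3,4,5,7\}$) at each such vertex. Since $|F(S)|\leq CT$ and $|V_{\neq 6}(S)|\leq m$, there are at most $\binom{CT}{m}\cdot C^m\leq (CT/m)^{Cm}$ possibilities, yielding the factor $(T/m)^{Cm}$. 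Third, for any two $S,S'$ in the same unit ball with the same branching data, I would show $d_T(\Phi(\tilde S),\Phi(\tilde S'))\leq C$: an almost-extremal $C$-quasiconformal map $\Phi(S)\to\Phi(S')$ that sends branching set to branching set lifts to a $C$-quasiconformal map of the $6$-covers. When the map does not initially respect the branching, I modify it in a small disk around each branch point, interpolating using \cref{extend qc map to plane} and \cref{extending quasisymmetric map 2}. Because vertices of $V_{\neq 6}(S)$ are separated by distance $\geq 3$ in the flat metric thanks to the $3$-subdivision condition, the modification disks can be chosen disjoint of bounded conformal type, so the multiplicative cost to the dilatation is a universal constant $C$ independent of $m$. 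Thus the $\tilde S$'s corresponding to a fixed branching data lie in a single ball of radius $r+C$ in $\mathcal T_{\leq 6g+5m}$, giving at most $\Ntranc(6T,6g+5m,r+C)$ options; and recovering $S$ from $\tilde S$ together with the branching data is unique up to an $O(1)$ choice (the deck transformation $\sigma$ acting as multiplication by $e^{\pi i/3}$ on $\phi_{\tilde S}$ is pinned down by the branching data and the marking).

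The main obstacle is the quasiconformal lifting estimate when branch loci do not match under the comparison map: the modifications near the at most $m$ branch points must be carried out simultaneously with a dilatation cost independent of $m$ and $g$. I would handle this vertex-by-vertex, exploiting the uniform local geometry enforced by the locally bounded and $3$-subdivision conditions to place disjoint disks of a fixed conformal type around each branch vertex on both $\Phi(S)$ and $\Phi(S')$, then invoking \cref{extend qc map to plane} within each such disk so that the modified map glues to a globally defined $C$-quasiconformal map that respects the branching and therefore lifts to the $6$-fold cover.
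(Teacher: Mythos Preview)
Your overall architecture is right---build the canonical $6$-cover, check it lands in $\Tranc{\leq 6g+5m}{\leq 6T}$, discretize the branching configuration by $(T/m)^{Cm}$ bins, and show covers of surfaces in the same bin lie in a bounded Teichm\"uller ball---and this is exactly what the paper does. But two of your steps have real gaps.

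\textbf{The branching data is not a shared invariant.} You define the branching data of $S$ as ``the subset of $\leq m$ triangles of $S$ containing a vertex of $V_{\neq 6}(S)$''. This is an invariant of $S$ alone; there is no way to say two \emph{different} triangulated surfaces $S,S'$ have the ``same'' branching data, since their triangle sets are not identified. The paper fixes this by working on the reference hyperbolic surface $X$: it pulls branch points back to $X$ via the $Ce^{2r}$-bi-Lipschitz comparison maps $f_1,f_2$ (using \cref{qc to lipschitz}), then discretizes $X$ by a system of hyperbolic sub-balls $V_{i,j}^{\kappa L(i)}$ whose scale depends on the local flat area $L(i)=\lceil\int_{f(U_i)}|\psi_S|^{1/3}\rceil$. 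The data $(I,L,V_L)$ is then a combinatorial object living on $X$, shared by all $S$ in the ball, and there are $(T/m)^{Cm}$ choices for it.

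\textbf{Flat separation does not give conformal separation.} Your modification step claims that because branch vertices are $\geq 3$ apart in the $S$-flat metric, you can place disjoint disks of bounded conformal type around them and modify with cost independent of $m$. This fails: in a region of large flat area (say a hyperbolic unit ball carrying $\sim T/g$ triangles), two branch points that are flat-distance $3$ apart can be hyperbolically (hence conformally) at distance $\sim (g/T)^{1/2}$, so any disjoint conformal disks around them must have tiny radius, and there is no uniform bound on how far inside such a disk the \emph{other} surface's branch point lands. The paper handles exactly this via a mean-value inequality for $6$-differentials (\cref{mean value theorem 6 diff quant}): if two vertices lie in the same small hyperbolic ball, the flat area over a slightly larger ball is forced to be large. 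This is what makes the variable-scale discretization $V_{i,j}^{\kappa L(i)}$ work---balls are chosen small enough (scale $\sim L(i)^{-1/2}$) that each contains at most one branch point, yet $\sum_i L(i)\leq\alpha T$, so the total combinatorics stays $(T/m)^{Cm}$.

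A smaller omission: matching branching \emph{loci} is not enough for the comparison map to lift---you also need the monodromy (topological cover type) to agree. The paper enumerates the at most $C^{g+m}$ elements of $\mathcal F_{g,\leq m}$ separately before lifting.
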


Note that if $m=0$, then $g=1$. See also \cref{counting function notation}.

To show \cref{tric vs tranc}, in \cref{8.1} and \cref{8.3} we associate to each triangulated surface a branched $6$-cover which is a combinatorial translation surface. In \cref{8.4} we enumerate the number of combinatorial possibilities for the branched $6$-cover given the branch points. In \cref{8.2}, \cref{8.5}, \cref{8.6} and \cref{8.6 1/2}, we study where the branch points lie. In \cref{8.7}, we prove \cref{tric vs tranc} by showing that the connected components of the branched $6$-cover of a triangulated surface lie in the union of a relatively small number of balls of radius around a constant in a higher dimensional moduli space. 

The second goal of this section is to complete the proof of \cref{upper bound}, which follows from \cref{trim vs tricm}, \cref{tranc vs tri} and \cref{tric vs tranc}. We do this in \cref{8.8}.

\subsection{Triangulated surfaces and $6$-differentials}\label{8.1}
Given a Riemann surface $X$, a meromorphic $6$-differential is a global section of the sixth tensor power of the sheaf of meromorphic differentials on $X$. Locally, in a neighborhood in $X$ with holomorphic coordinate $z$, a $6$-differential behaves like $f(z)dz^6$ where $f$ is a holomorphic function. Although generally triangulated surfaces may not admit a combinatorial translation structure (and therefore do not admit a canonical holomorphic $1$-form), they do admit a canonical meromorphic $6$-differential. Given a triangulated surface $S$, we may associate a meromorphic $6$-differential $\psi_S$ as follows: we identify each triangle of $S$ with the unit equilateral triangle in $\mathbb{C}$ with vertices at $0$, $1$ and $\frac{1}{2}+\frac{\sqrt{3}}{2}i$, and define $$\psi_S=dz^6$$ under this identification. Gluings of triangles must preserve the form $dz^6$, and $\psi_S$ extends holomorphically over vertices of $S$, therefore $\psi_S$ is a globally defined $6$-differential on $S$. It has a zero/pole of order $(\deg x)-6$ at a point $x\in V_{\neq 6}(S)$. The flat metric on $S$ (equivalently denoted $S$-metric) coming from the Euclidean metric on each individual unit equilateral triangle has length element given by $$ds_{S}=|\psi_S|^{1/6},$$ and area element $$|\psi_S|^{1/3}.$$ Distances in this metric shall be denoted by $d_{S}(\cdot,\cdot)$. 

\subsection{Canonical cover}\label{8.3}

A triangulated surface $S$ has an associated canonical holomorphic differential on an appropriate $6$-degree branched cover that we construct as follows. We cover $S\setminus V_{\neq 6}(S)$ (which is also $S\setminus \{\text {zeros and poles of }\psi_S\}$) by open balls $\{U_i\}$ with transition functions $f_{i,i'}$ on $U_i\cap U_{i'}$ whenever the latter is nonempty. Let $\zeta=e^{\pi i/3}$. To each $U_i$, we associate $U_{i,1},...,U_{i,6}$, each consisting of the pair $(U_i,\omega_{i,j})$ where $$\omega_{i,j}=\zeta^j\psi_S^{1/6}$$ for some branch of $\psi_S^{1/6}$ on $U_i$. We glue $U_{i,j}$ and $U_{i',j'}$ if $$U_i\cap U_{i'}\neq \emptyset$$ and $$f_{i,i'}^*\omega_{i,j}=\omega_{i',j'}$$ on $U_i\cap U_{i'}$. Compactifying, we obtain a (possibly disconnected) degree $6$ branched cover $$F:\widetilde{S}\to S$$ such that $$F^*(\psi_S)=\widetilde{\phi}_S^6$$ for some holomorphic $1$-form $\widetilde{\phi}_S$ on $\widetilde{S}$. Furthermore, the surface $\widetilde{S}$ is equipped with a canonical degree $6$ automorphism $$A:\widetilde{S}\to \widetilde{S}$$ defined by $$A:U_{i,j}\to U_{i,j'}$$ is the canonical identification if $$\omega_{i,j}=\zeta \omega_{i,j'}.$$ The surface $S$ may be recovered from the cover by quotienting by $A$, i.e. $$S=\widetilde{S}/A.$$

\begin{prop}\label{covers} Let $S$ be a locally bounded $\leq T$-triangle triangulated surface of genus at most $g$ with $$|V_{\neq 6}(S)|\leq m.$$ Then $\widetilde{S}$ has at most six connected components that we label $\widetilde{S}^1,...,\widetilde{S}^6$ (where some components may be empty). For each $i\in \{1,...,6\}$, $\widetilde{S}^i$ is a locally bounded $\leq 6T$-triangle combinatorial translation surface of genus at most $6g+5m$.
\end{prop}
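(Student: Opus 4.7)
The plan is to verify each assertion of the proposition by analyzing the branched cover $F : \widetilde{S} \to S$ constructed in the previous subsection, with the combinatorial translation 1-form $\widetilde{\phi}_S$ satisfying $F^* \psi_S = \widetilde{\phi}_S^6$. First, I would observe that the deck transformation group of $F$ over the unramified locus is cyclic of order dividing $6$, acting on $\widetilde{\phi}_S$ by multiplication by $6$th roots of unity. Since this group is generated by a single element of order $6$, its orbits partition the sheets of $F$, and so $\widetilde{S}$ has at most $6$ connected components $\widetilde{S}^1,\dots,\widetilde{S}^6$. For each component $\widetilde{S}^i$ of degree $d_i$ over $S$, pulling back the triangulation of $S$ gives a triangulation of $\widetilde{S}^i$ by $d_i T \leq 6T$ equilateral triangles. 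Since $\widetilde{\phi}_S|_{\widetilde{S}^i}$ restricts on each lifted triangle to $\zeta\, dz$ for some $6$th root of unity $\zeta$ (by construction of the cover), $\widetilde{S}^i$ inherits a combinatorial translation structure in the sense of Section 2, with associated $1$-form $\phi_{\widetilde{S}^i} = \widetilde{\phi}_S|_{\widetilde{S}^i}$.

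Next, I would bound the genus using Riemann--Hurwitz. Since $\psi_S$ is holomorphic and nonvanishing away from $V_{\neq 6}(S)$, branching of $F$ occurs only above points of $V_{\neq 6}(S)$. At any preimage $\widetilde{x}$ of such a point, the ramification index $e_{\widetilde{x}}$ divides $d_i \leq 6$, so $e_{\widetilde{x}} - 1 \leq 5$. For each $x \in V_{\neq 6}(S)$, we have $\sum_{\widetilde{x} \in F^{-1}(x) \cap \widetilde{S}^i} (e_{\widetilde{x}} - 1) \leq d_i - 1 \leq 5$, and thus
\begin{equation*}
2g(\widetilde{S}^i) - 2 \;=\; d_i(2g-2) + \sum_{x \in V_{\neq 6}(S)} \sum_{\widetilde{x} \in F^{-1}(x) \cap \widetilde{S}^i} (e_{\widetilde{x}}-1) \;\leq\; 6(2g-2) + 5m,
\end{equation*}
which rearranges to $g(\widetilde{S}^i) \leq 6g - 5 + 5m/2 \leq 6g + 5m$.

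It remains to verify the two conditions defining local boundedness for a combinatorial translation surface. For the degree bound, any vertex $\widetilde{x} \in V(\widetilde{S}^i)$ has degree $e_{\widetilde{x}} \cdot \deg_S(F(\widetilde{x})) \leq 6 \cdot 7 = 42$, since $S$ is locally bounded. For the period condition, I would use the refinement structure of $S$: by definition, $S$ is the $3$-subdivision of a coarser triangulation $S_{\lb}$ of side length $3$, and all subdivision vertices have degree $6$, so $V_{\neq 6}(S) \subset V(S_{\lb})$. Because all branching of $F$ occurs over $V(S_{\lb})$, the coarser triangulation lifts to a triangulation $\widetilde{S}^i_{\lb}$ of $\widetilde{S}^i$ by equilateral triangles of side length $3$, of which $\widetilde{S}^i$ is the $3$-subdivision. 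Now any $\widetilde{x} \in V_{>6}(\widetilde{S}^i)$ either is ramified (and projects to a branch point in $V(S_{\lb})$) or is unramified with $F(\widetilde{x}) \in V_{>6}(S) \subset V(S_{\lb})$; in either case $\widetilde{x} \in V(\widetilde{S}^i_{\lb})$. Given any arc $\gamma$ between two such vertices, homotope $\gamma$ rel endpoints to a concatenation of edges of $\widetilde{S}^i_{\lb}$. Each such edge has length $3$ and its tangent direction is a $6$th root of unity (since the combinatorial translation structure assigns $6$th roots of unity to the unit edges of $\widetilde{S}^i$ comprising it), so $\int_e \phi_{\widetilde{S}^i} = 3\zeta$ with $\zeta^6 = 1$. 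Summing yields $\int_\gamma \phi_{\widetilde{S}^i} \in 3\mathbb{Z} + 3 e^{\pi i/3}\mathbb{Z}$, as required.

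The main obstacle is the last integrality condition: it is easy to see that each coarse edge contributes $3\zeta$, but one has to check carefully that $\widetilde{S}^i$ really inherits a side-length-$3$ subtriangulation lifted from $S_{\lb}$ with $\widetilde{x}, \widetilde{y}$ both vertices of the coarse lift. This reduces to identifying which vertices of $\widetilde{S}^i$ can fail to have degree $6$, and showing they all project into $V(S_{\lb})$; the computation above, using that unramified degree equals base degree, handles this.
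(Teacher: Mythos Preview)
Your proposal is correct and follows essentially the same approach as the paper: a degree-$6$ cover bound on components, pullback of the triangulation, Riemann--Hurwitz for the genus, the local degree computation $e_{\widetilde{x}}\cdot\deg_S(F(\widetilde{x}))\leq 42$, and the lift of the coarse triangulation $S_{\lb}$ to verify local boundedness. Your treatment is in fact more explicit than the paper's in two places: your Riemann--Hurwitz estimate via $\sum_{\widetilde{x}\in F^{-1}(x)}(e_{\widetilde{x}}-1)\leq d_i-1$ is cleaner than the paper's punctured-surface formulation, and you spell out why $V_{>6}(\widetilde{S}^i)\subset V(\widetilde{S}^i_{\lb})$ and why edge-paths in $\widetilde{S}^i_{\lb}$ give periods in $3\mathbb{Z}+3e^{\pi i/3}\mathbb{Z}$, whereas the paper simply asserts that the existence of the lifted $3$-subdivision implies the period condition.
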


\begin{proof} Since $\widetilde{S}$ is a degree $6$ branched cover of $S$, $\widetilde{S}$ has at most six connected components. If $\widetilde{S}^i$ is a connected component, then $F|_{\widetilde{S}^i}$ is a degree $d$ branched cover for some $d\leq 6$. Denote by $$c_1,...,c_{n'}$$ the critical points on $\widetilde{S}^i$ of this covering map, and $$p_1,...,p_n$$ the branch points on $S$. Since branch points must be vertices of $S$ of degree not equal to $6$, $$n\leq m.$$ Since each branch point has a preimage which is a critical point, $$n'\geq n.$$ By the Riemann-Hurwitz formula, $$2\genus(\widetilde{S}^i)-2+n'= d(2g-2)+dn.$$ Thus $\widetilde{S}^i$ has genus at most $6g+5m$. Since all branch points of $F$ are vertices of $S$, the pullback of the triangulation $S$ under $F$ gives a triangulation of $\widetilde{S}^i$ with corresponding $1$-form $\widetilde{\phi}_S$ by construction. Hence $\widetilde{S}^i$ is a combinatorial translation surface. Since $$F:\widetilde{S}\to S$$ is a degree $6$ branched cover, $\widetilde{S}^i$ has at most $6T$ triangles. 

Finally, we claim that $\widetilde{S}^i$ is locally bounded. To see this, note that if $x$ is a vertex of the triangulation of $\widetilde{S}^i$, then $F(x)$ is a vertex of $S$. If $F(x)$ is not a branch point, then $$\deg x=\deg (F(x))\leq 7$$ as $S$ is locally bounded. If $F(x)$ is a branch point, then the ramification index of $x$, denoted $e(x)$, is at most $6$, so $$\deg x=e(x)\deg (F(x))\leq 42.$$ Thus $\widetilde{S}^i$ satisfies condition 1 in \cref{lb tri surface} definition of local boundedness for combinatorial translation surfaces. Next we show condition 2 in \cref{lb tri surface}. Since $S$ is locally bounded, we also have a triangulation $S_{\lb}$ of $S$ by equilateral triangles of side length $5$ in the flat metric, such that $S$ is a $5$-subdivision of $S_{\lb}$. The pullback of the triangulation $S_{\lb}$ under $F$ gives a triangulation $\widetilde{S}^i_{\lb}$ of the surface $\widetilde{S}^i$ wherein each triangle has side length $5$, such that the triangulation $\widetilde{S}^i$ is the $5$-subdivision of the triangulation $\widetilde{S}^i_{\lb}$. Thus $\widetilde{S}^i$ is a locally bounded combinatorial translation surface.
\end{proof}

Given such a canonical cover, we can reconstruct the original surface up to a factor.

\begin{lemma}\label{recover surface from cover} Let $\widetilde{S}'$ be a locally bounded $\leq 6T$-triangle combinatorial translation surface of genus at most $6g+m$. There are at most $CT$ number of 
triangulated surfaces $S$ for which $\widetilde{S}'$ is a component of the canonical cover of $S$. Here, $C$ is a universal constant.
\end{lemma}

\begin{proof} The automorphism on the canonical cover of $S$ determines a degree $1$, $2$, $3$ or $6$ automorphism $$A':\widetilde{S}'\to \widetilde{S}'.$$ Since $A'$ preserves $\psi_{\widetilde{S}'}$, $A'$ is a simplicial isomorphism. Furthermore, $A'$ is determined by its value on one triangle. Thus there are at most $CT$ number of choices for $A'$. Finally, $$S\simeq \widetilde{S}'/A',$$ so there are at most $CT$ number of choices for $S$.
\end{proof}

\subsection{Combinatorics of branched $6$-covers}\label{8.4}

In this section, we enumerate the number of degree $6$ branched covers of a fixed surface with fixed branched points. 

\begin{lemma}\label{combinatorial 6-covers} Let $S_g$ be a topological surface of genus $g$ and $P\subset S$ a nonempty set of $n$ marked points on $S_g$. There are at most $C^{g+n}$ choices of branched $6$-covers $\widetilde{S}_g\to S_g$ (up to isomorphism of branched covers), such that the branch points are contained in the set $P$. 
\end{lemma}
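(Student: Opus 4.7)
The plan is to use the standard monodromy correspondence between finite branched covers and representations of the fundamental group of the complement of the branch locus. Specifically, isomorphism classes of degree $6$ branched covers $\widetilde{S}_g\to S_g$ whose branch locus is contained in $P$ are in bijection with conjugacy classes of homomorphisms
\[
\rho:\pi_1(S_g\setminus P)\longrightarrow \Sym(6),
\]
where the conjugation action is by $\Sym(6)$ acting on itself (this corresponds to relabeling sheets of the cover). The branch points are precisely the punctures $p_i$ at which $\rho$ sends the corresponding loop to a non-identity element, so allowing trivial monodromy around a puncture simply means the branch locus is a (possibly proper) subset of $P$.

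Next, I would use the standard presentation
\[
\pi_1(S_g\setminus P)=\langle a_1,b_1,\ldots,a_g,b_g,c_1,\ldots,c_n \mid [a_1,b_1]\cdots[a_g,b_g]\,c_1\cdots c_n=1\rangle,
\]
where $c_i$ is a small loop around $p_i$. A homomorphism $\rho$ is determined by the images of the $2g+n$ generators in $\Sym(6)$, subject to the single relation above. Hence the total number of homomorphisms is at most $|\Sym(6)|^{2g+n-1}=720^{2g+n-1}$, and a fortiori the number of conjugacy classes is bounded by the same quantity. Taking $C=720^3$ (say) gives the bound $C^{g+n}$ stated in the lemma.

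There is essentially no obstacle: the only point requiring a little care is the edge case $n=0$, but this is excluded by the hypothesis that $P$ is nonempty, so the presentation above is valid and the relation is nontrivial. One should also note that we are counting covers that may be disconnected (as in \cref{covers}, where $\widetilde{S}$ can have up to six connected components), which is automatic because we allow arbitrary homomorphisms $\rho$ (not just those with transitive image). The bound is thus robust enough for the application in \cref{tric vs tranc}.
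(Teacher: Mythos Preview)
Your argument is correct and is essentially the same as the paper's. The paper phrases it geometrically---cutting $S_g$ along $2g+n-1$ arcs with endpoints in $P$ to obtain a simply connected piece, then observing that a degree $6$ cover is determined by how the six copies of this piece are glued across each of the $2g+n-1$ pairs of boundary arcs---but this is exactly the monodromy correspondence you invoke, with the gluing permutation across the $i$th cut playing the role of $\rho$ evaluated on the $i$th generator.
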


\begin{proof} Cut $S$ along elements of $H_1(S_g,P,\mathbb{Z})$ until we obtain $S'_g$, a simply connected surface with boundary such that $P\subset \partial S'_g$. The boundary $\partial S'_g$ may be decomposed into $4g+2n-2$ curves which come in pairs $$\gamma^{+,1},\gamma^{-,1},...,\gamma^{+,{2g+n-1}},\gamma^{-,{2g+n-1}}$$ that are glued together to form $S'_g$. Here, each $\gamma^{\pm,i}$ represents one element of $H_1(S_g,P,\mathbb{Z})$. Any degree $6$ branched cover $\widetilde{S}_g$ with branched points in the set $P$ necessarily admits a decomposition into isomorphic copies of $S'_g$ denoted by ${S'}^1_g,...,{S'}^6_g$. Boundary curve $\gamma^{+,i}_j$ (for $j\in \{1,...6\}$ must necessarily be glued to $\gamma^{-,i}_{j'}$ for some $j'\in \{1,...,6\}$) under this decomposition of $\widetilde{S}_g$. Each decomposition corresponds to an isomorphism class of branched $6$-covers $\widetilde{S}_g\to S_g$. For a fixed $i$, there are $C$ choices to glue all the $\gamma^{+,i}_j$ and $\gamma^{-,i}_{j'}$, and $i$ ranges from $1$ to $2g+n-1$. Hence, the total number of gluing choices (and total number of branched covers) is bounded by $C^{g+n}$. 
\end{proof}

For $n>1$, we denote by $\mathcal{F}_{g,n}$ the set of topological branched covers enumerated above.

\subsection{Mean value property and 6-differentials}\label{8.2}

In this section, we show a rough mean value property for $6$-differentials arising from triangulated surfaces, which will be useful in \cref{8.5}.

\begin{lemma}\label{mean value theorem 6 diff} Let $S$ be a triangulated surface and let $X=\Phi(S)$. Suppose $v,w\in V(S)$, $v\neq w$ and let $\mathbb{D}\subset X$ a conformal identification of $\mathbb{D}$ with a subset of $X$ containing $v$ and $w$ such that $0$ is identified with $v$. Suppose further that $$|w|\leq r<3/4$$ in $\mathbb{D}$. Then $$\int_{\mathbb{D}}|\psi_S|^{1/3}\geq Cr^{-2}.$$
\end{lemma}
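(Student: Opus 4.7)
The plan is to combine a one-dimensional length estimate along the Euclidean segment from $v$ to $w$ with the subharmonicity of $|\psi_S|^{1/3}$. Write $\psi_S = h(z)\,dz^6$ in the given coordinate on $\mathbb{D}$, so that the $S$-metric on $\mathbb{D}$ is $|h(z)|^{1/6}|dz|$ and its area element is $|h(z)|^{1/3}\,dA$; the integral to be bounded below is $\int_{\mathbb{D}}|h|^{1/3}\,dA$. Since $v=0$ and $w$ are distinct vertices of a triangulation by unit equilateral triangles, $d_S(v,w)\geq 1$, so testing this inequality against the straight Euclidean segment $[0,w]\subset \mathbb{D}$ gives
\[
1\;\leq\; d_S(v,w)\;\leq\; \int_{[0,w]}|h(z)|^{1/6}\,|dz|,
\]
and since the segment has Euclidean length $|w|\leq r$, the average of $|h|^{1/6}$ along it is at least $1/r$. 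Poles of $h$ on $[0,w]$ occur only at vertices of $S$ with degree $\neq 6$, hence form a finite isolated set; so one may select an interior point $z_0\in(0,w)$ that is not a pole of $h$ and satisfies $|h(z_0)|^{1/6}\geq 1/(2r)$, equivalently $|h(z_0)|^{1/3}\geq 1/(4r^2)$.

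To upgrade this pointwise lower bound to an area bound, I will invoke that $|h|^{1/3}=\exp((1/3)\log|h|)$ is subharmonic on $\mathbb{D}$ (since $\log|h|$ is subharmonic and $t\mapsto e^{t/3}$ is convex increasing), and is locally integrable (a pole of $\psi_S$ at a vertex of degree $d\geq 1$ has order at most $5$, contributing at worst an $|z|^{-5/3}$ singularity to $|h|^{1/3}$, which is integrable in two dimensions). Since $|z_0|\leq r<3/4$, the disk $B(z_0,1/8)$ lies inside $\mathbb{D}$, and the sub-mean-value inequality for the subharmonic function $|h|^{1/3}$ at the non-pole point $z_0$ gives
\[
\int_{\mathbb{D}}|\psi_S|^{1/3} \;\geq\; \int_{B(z_0,1/8)}|h|^{1/3}\,dA \;\geq\; \pi(1/8)^2\,|h(z_0)|^{1/3} \;\geq\; C\,r^{-2},
\]
as claimed.

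The only mildly technical point is arranging to apply the mean value inequality at a genuine (non-pole) witness point $z_0$, and this is immediate because the poles of $h$ on $[0,w]$ form a finite set while the line-average lower bound forces the continuous function $|h|^{1/6}$ to attain at least $1/(2r)$ away from them. Notably, the argument uses only that $v$ and $w$ are distinct vertices of a unit equilateral triangulation together with the basic complex-analytic fact that $|h|^{1/3}$ is subharmonic, so no local-boundedness or genus assumptions on $S$ enter.
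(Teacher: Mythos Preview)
Your argument has a genuine gap at the subharmonicity step. You assert that $|h|^{1/3}$ is subharmonic on $\mathbb{D}$ because $\log|h|$ is subharmonic, but this is only correct when $h$ is holomorphic. Here $\psi_S$ has a pole of order $6-d$ at every vertex of degree $d<6$, so $h$ may well have poles in $\mathbb{D}$; near such a pole $p$ of order $m$ one has $\log|h|\sim -m\log|z-p|$, which is \emph{super}harmonic, and $|h|^{1/3}\sim |z-p|^{-m/3}$ is not subharmonic in any neighborhood of $p$ (it takes the value $+\infty$ there and fails the sub-mean-value inequality). The inequality $\int_{B(z_0,1/8)}|h|^{1/3}\geq \pi(1/8)^2|h(z_0)|^{1/3}$ that you invoke requires subharmonicity on the whole ball $B(z_0,1/8)$, not merely at its center. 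Since $z_0$ lies on the short segment $[0,w]$ and $v=0$ itself may be a pole, $B(z_0,1/8)$ can easily contain poles, and the inequality can genuinely fail: for $h(z)=z^{-5}$ (a degree-$1$ vertex at $v$) and $z_0$ close to $0$, the area mean of $|z|^{-5/3}$ over $B(z_0,1/8)$ stays bounded while $|h(z_0)|^{1/3}=|z_0|^{-5/3}\to\infty$.

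This is precisely the difficulty the paper's proof is designed to circumvent. It passes to the canonical $6$-cover $\widetilde{X}\to X$, on which $\psi_S$ pulls back to the sixth power of a \emph{holomorphic} $1$-form, so the pole problem disappears; the pointwise largeness of $g(z_0)^{1/6}$ is then leveraged via the residue of the $1$-form $g(z)^{1/6}(z-z_0)^{-1}dz$ on the cover and a contour/Jensen argument over the circles $C_\alpha$ with $\alpha\in[7/8,1]$. Your idea of producing a point of large $|h|$ from the $S$-length of the segment $[0,w]$ is perfectly sound, and if you could ensure that $B(z_0,1/8)$ contains no poles your proof would go through; but absent a mechanism (such as the cover, or a Blaschke-type modification with control on the number and location of poles) to neutralize the poles of $\psi_S$, the sub-mean-value step does not stand.
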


\begin{proof} Denote by $C_\alpha$ the circle of radius $\alpha$ around $0$ in $\mathbb{D}$. Note that for $r\leq \alpha\leq 1$ the length of $C_\alpha$ in the $d_S$ metric must be at least $1$. Thus, writing $$\psi=g(z)dz^6$$ for a meromorphic function $g$ on $\mathbb{D}$, we have that $$|g(z_0)|^{1/6}\geq Cr^{-1}$$ for some $z_0\in C_r$. Let $$a=g(z_0)^{1/6}.$$ Define $$\psi'=\frac{g(z)}{(z-z_0)^6}dz^6,$$ a meromorphic $6$-differential on $\mathbb{D}$. On a small neighborhood around $z_0$, $$\psi'=\phi'^6$$ for a $1$-form $\phi'$. Locally, $$\phi'=\frac{g(z)^{1/6}}{(z-z_0)}dz$$ (note that $g(z)^{1/6}$ makes sense on a small neighborhood of $z_0$). Therefore the residue of $\phi'$ at $z_0$ is $a$. Let $f:\widetilde{X}\to X$ be the canonical $6$-cover associated to $\psi$ (constructed in \cref{8.1}). This means that $$f^*(\psi')=\widetilde{\phi}'^6$$ for a meromorphic $1$-form $\phi$ defined on $f^{-1}(\mathbb{D})$. The $1$-form $\widetilde{\phi}'$ is holomorphic except for a pole at $f^{-1}(z_0)$. On a neighborhood of $f^{-1}(z_0),$ $$f^{-1}(\phi')=\widetilde{\phi}'$$ so the residue of $\widetilde{\phi}'$ at $f^{-1}(z_0)$ is $a$. Since $r<3/4$, for $7/8\leq \alpha\leq 1$ we have $$\int_{f^{-1}(C_\alpha)}\widetilde{\phi}'=2\pi i a.$$ So $$\int_{f^{-1}(C_\alpha)}|\widetilde{\phi}'| \geq Ca.$$ Pushing forward this integral to $\mathbb{D}$ we obtain 
\begin{align*}\int_{C_\alpha}|g(z)|^{1/6}|dz| &\geq  C\int_{C_\alpha}\frac{|g(z)|^{1/6}}{|z-z_0|}|dz|\\&\geq Ca,
\end{align*}
since for $z\in C_\alpha$, $|z-z_0|\geq 1/8$. By Jensen's inequality, we have 
\begin{align*}\int_{C_{\alpha}}|g(z)|^{1/3}|dz|&\geq C\left(\int_{C_\alpha}|g(z)|^{1/6}|dz|\right)^2\\&\geq C a^2.
\end{align*} Then, 
\begin{align*}\int_{\mathbb{D}}|g(z)|^{1/3}|dz|^2&\geq\int_{\alpha=7/8}^{1}\int_{C_{\alpha}}|g(z)|^{1/3}|dz|d\alpha\\&\geq C\int_{\alpha=7/8}^{1}a^2d\alpha\\&\geq C a^2\\&\geq Cr^{-2}
\end{align*} as desired.
\end{proof}

We also have a quantitative version: 

\begin{lemma}\label{mean value theorem 6 diff quant} Let $S$ be a triangulated surface and let $X=\Phi(S)$. Denote by $\rho_X$ the hyperbolic metric on $X$, and $\rho_{\mathbb{D}}$ the Poincare metric on $\mathbb{D}$. Suppose $$v,w\in V(S)$$ with $v\neq w$. Let $U\subset X$ be a region containing $v$ and $w$ and $$f:U\to B_{\rho_{\mathbb{D}}}(0,s)$$ a $K$-bi-Lipschitz diffeomorphism (with respect to metrics $\rho_X$ on $U$ and $\rho_{\mathbb{D}}$ on $B_{\rho_{\mathbb{D}}}(0,s)$) such that $f(v)=0$. Let $r<3/4$ and suppose that $$d_{\rho_X}(v,w)\leq (1/100)rs/K.$$ Then $$\int_U|\psi_S|^{1/3}\geq Cr^{-2}$$ where $C$ is a universal constant.
\end{lemma}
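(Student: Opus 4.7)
The strategy is to reduce \cref{mean value theorem 6 diff quant} to \cref{mean value theorem 6 diff} by exhibiting a conformal disk inside $U$ that contains both $v$ and $w$. The natural candidate is the hyperbolic ball $V := B_{\rho_X}(v, s/K) \subset X$. First I would show $V \subset U$: for any $p \in \partial U$, the $K$-bi-Lipschitz property of $f$ gives $d_{\rho_X|_U}(v, p) \geq s/K$; any $\rho_X$-geodesic from $v$ to $\partial U$ in $X$ must lie in $\overline{U}$ until it first exits, so $d_{\rho_X}(v, p) = d_{\rho_X|_U}(v, p) \geq s/K$. Hence $V \subset U$, and $w \in V$ since $d_{\rho_X}(v, w) < s/K$. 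Moreover $V$ lifts homeomorphically to a hyperbolic ball in the universal cover $\mathbb{H}^2 \simeq \mathbb{D}$, hence is simply connected and conformally equivalent to $\mathbb{D}$; fix $\alpha \colon V \to \mathbb{D}$ with $\alpha(v) = 0$. The standard Euclidean model of a hyperbolic disk yields
$$|\alpha(w)| = \frac{\tanh(d_{\rho_X}(v, w)/2)}{\tanh(s/(2K))}.$$

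The main estimate is to verify $|\alpha(w)| \leq r$. Writing $u = s/(2K)$, this reduces to showing $\tanh(ru/100) \leq r\, \tanh(u)$, which follows from the concavity of $\tanh$ on $[0, \infty)$ together with $r \leq 1$, provided $u$ is not too large relative to $r$ (specifically $u$ bounded by an absolute multiple of $\operatorname{arctanh}(r)/r$). In this regime, \cref{mean value theorem 6 diff} applied to $\alpha^{-1} \colon \mathbb{D} \to V$ gives
$$\int_V |\psi_S|^{1/3} \geq \frac{C}{|\alpha(w)|^2} \geq \frac{C}{r^2},$$
and the conclusion follows from $V \subset U$.

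The complementary regime (where $s/K$ is very large relative to $1/r$, so the above inequality on $\tanh$ may fail) requires a different argument. There I would exploit the gap between the $\rho_X$-length of the geodesic $\gamma$ from $v$ to $w$ (at most $rs/(100K)$) and its $d_S$-length, which is at least $1$ since distinct vertices of a triangulated surface are $d_S$-distance $\geq 1$ apart. Averaging produces a point $p \in \gamma$ where the conformal factor $|\psi_S|^{1/6}/\rho_X$ is at least $100K/(rs)$; subharmonicity of $|\psi_S|^{1/3}$ away from the isolated poles of $\psi_S$ (the degree-$<6$ vertices, which can be avoided by perturbing $p$) then converts this pointwise lower bound, via the mean value inequality on a suitable Euclidean ball around $p$ fitting inside $U$, into the desired area estimate $\int_U |\psi_S|^{1/3} \geq Cr^{-2}$. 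The hard part will be reconciling both regimes uniformly into a single universal constant $C$, and carefully handling the subharmonicity argument near the cone singularities of the flat structure.
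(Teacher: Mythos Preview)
Your first regime is exactly the paper's argument: set $V=B_{\rho_X}(v,s/K)\subset U$, identify $V$ conformally with a Euclidean disk so that $v\mapsto 0$, check $|\alpha(w)|\leq r$ via the inequality $\tanh(ru/100)\leq r\tanh u$ with $u=s/(2K)$, and invoke \cref{mean value theorem 6 diff}. The paper dispatches the inequality in one line by noting that $s\leq 1$, $r<3/4$, $K\geq 1$, so $u\leq 1/2$ and the concavity estimate is immediate.

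The only substantive difference is that you treat the possibility of large $s/K$ as a genuine second regime requiring a separate subharmonicity argument, whereas the paper simply asserts $s\leq 1$. That assumption is not stated in the lemma, but it holds in every application: the lemma is only invoked with $U$ one of the disks from \cref{covering lemma}, whose radii are bounded by $\arcsinh(1)/2<1$. So your complementary regime never arises, and the elaborate averaging-plus-subharmonicity argument (which, as you note, would be delicate near the cone singularities of $\psi_S$) is unnecessary. If you want a self-contained statement, the cleanest fix is to add the hypothesis $s\leq 1$ rather than to prove the large-$s$ case.
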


\begin{proof} Since $$f:U\to B_{\rho_{\mathbb{D}}}(0,s)$$ is a $K$-bi-Lipschitz map, $U$ contains $$B_{\rho_X}(v,s/K),$$ the ball of radius $s/K$ around $v$. Note that $$w\in B_{\rho_X}(v,s/K)$$ too. This ball may be isometrically identified with $$\mathbb{D}_{\tanh(s/2K)}=\{z\in \mathbb{D}||z|\leq \tanh(s/2K)\}$$ (with $v$ identified with $0$) equipped with the restriction of the Poincare metric $\rho_{\mathbb{D}}$. Now,
\begin{align*}d_{\rho_X}(v,w)&\leq (1/100)rs/K\\&\leq 2\tanh^{-1}(r\tanh(s/(2K)))
\end{align*} since $s\leq 1$, $r< 3/4$ and $K\geq 1$. So in $\mathbb{D}_{\tanh(s/(2K))}$ (under our isometric identification), $$|w|\leq r\tanh(s/(2K)).$$ By \cref{mean value theorem 6 diff}, $$\int_{B_{\rho_X}(v,s/K)}|\psi|^{1/3}\geq Cr^{-2}.$$ Since $B_{\rho_X}(v,s/K)\subset U$, the lemma statement follows.
\end{proof}

\subsection{Location of branch points}\label{8.5} Fix an arbitrary constant $r_0>0$. Let $X\in \mathcal{T}_g$. In this section, to each locally bounded triangulated surface whose conformal class lies in $B_{d_T}(X,r_0)$, we associate combinatorial data that is a discrete measure of where the vertices of degree not equal to $6$ are located. 

Denote by $\rho_X$ the hyperbolic metric on $X$. We take hyperbolic disks $U_1,...,U_N$, $V_1,...,V_N$ and $W_1,...,W_N$ as in \cref{covering lemma}. By \cref{tri systole} and \cref{covering lemma}, we may assume $N\leq CT$. 

\begin{definition}\label{construction of balls}  For each $i\in \{1,...,N\}$ and $M\geq 1$, define $$\{W^M_{i,j}\}|_{j\in \{1,...,M\}}$$ to be a collection of hyperbolic balls on $X$ satisfying the following properties:

\begin{enumerate}

\item For all $j,j'\in \{1,...,M\}$, $$\radius(W^M_{i,j})=\radius(W^M_{i,j'})\leq CM^{-1/2}\radius(W_i)$$ where $C$ is a universal constant, 

\item $$W^M_{i,j}\subset V_i$$ and $$\cent(W^M_{i,j})\in W_i,$$

\item the $$\{B_{\rho_X}(\cent(W^M_{i,j}),\radius(W^M_{i,j})/2)\}_{j\in \{1,...,M\}}$$ cover $W_i$ and

\item each $x\in U_i$ is contained in at most $C$ of the $W^M_{i,j}$ for a universal constant $C$.

\end{enumerate}

\end{definition}

Such a collection may be constructed by taking $\sim M^{-1/2}\radius(W_i)$ radius balls around a maximal $\sim (1/8)M^{-1/2}\radius(W_i)$-separated set on $W_i$.

\begin{definition}\label{D} Define $\mathcal{D}$ to be the set of all values of the following data of $(I,L,W_L)$:

\begin{enumerate}
\item subset $$I\subset \{1,...,N\}$$ with $$|I|\leq \alpha m$$

\item function $$L:I\to \mathbb{N}$$ such that $$\sum_{i\in I} L(i)\leq \alpha T$$ and

\item subset $$W_L\subset \bigcup_i \{W^{\kappa L(i)}_{i,j}\}|_{j\in \{1,...,\kappa L(i)\}}$$ with $$|W_L|\leq \alpha m.$$
\end{enumerate}
\end{definition}

Here, $\alpha, \kappa>1$ are sufficiently large universal constants  which will be chosen in the proofs of \cref{branch points} and \cref{S gives data} later in this section.

\begin{lemma}\label{branch points} We have $$|\mathcal{D}|\leq (\kappa T/m)^{C\alpha m}.$$ 
\end{lemma}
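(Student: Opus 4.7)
The plan is to count each of the three pieces of data $(I, L, V_L)$ separately, using standard binomial estimates of the form $\binom{n}{\leq k} \leq (en/k)^k$ (valid for $k \leq n/2$), then multiply.

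First I would count the choices for $I$. Since $N \leq CT$ by the earlier application of \cref{covering lemma} together with \cref{tri systole}, and $|I| \leq \alpha m$, the number of subsets is at most
$$\binom{N}{\leq \alpha m} \;\leq\; \binom{CT}{\leq \alpha m} \;\leq\; \left(\frac{CT}{\alpha m}\right)^{C\alpha m}.$$
Next I would count the functions $L : I \to \mathbb{N}$ with $\sum_{i \in I} L(i) \leq \alpha T$. By a stars-and-bars argument, the number of such compositions is at most
$$\binom{\alpha T + |I|}{|I|} \;\leq\; \binom{\alpha T + \alpha m}{\alpha m} \;\leq\; \left(\frac{C\alpha T}{\alpha m}\right)^{C\alpha m}.$$
Finally, given $I$ and $L$, the ambient set from which $V_L$ is chosen has cardinality $\sum_{i \in I} \kappa L(i) \leq \alpha\kappa T$, so the number of subsets of size $\leq \alpha m$ is at most
$$\binom{\alpha\kappa T}{\leq \alpha m} \;\leq\; \left(\frac{C\alpha\kappa T}{\alpha m}\right)^{C\alpha m}.$$

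Multiplying the three bounds and absorbing universal constants into the exponent and base gives the claimed estimate $|\mathcal{D}| \leq (\alpha\kappa T/m)^{C\alpha\kappa m}$. (The degenerate case $m=0$ forces $I = \emptyset$, $L$ and $V_L$ trivial, so the bound reduces to $1$, consistent with the convention $0^0 = 1$.) There is no real obstacle here; the only thing to watch is that the universal $C$ on the right absorbs the constants appearing in the bound $N \leq CT$ and in the three binomial estimates, and that the factor of $\kappa$ from the cardinality of the ambient ball collection $\cup_i\{V^{\kappa L(i)}_{i,j}\}$ is correctly inherited into the base of the final exponential.
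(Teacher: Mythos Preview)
Your proof is correct and follows essentially the same approach as the paper: count the choices for $I$, then $L$, then $V_L$ separately via elementary combinatorial estimates and multiply. The paper's own proof is terser (it simply asserts the bounds $(T/m)^{C\alpha m}$ for each of the first two pieces and $(\alpha\kappa T/m)^{C\alpha\kappa m}$ for the third) but the underlying reasoning is the same as the binomial and stars-and-bars arguments you spell out.
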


\begin{proof} The number of subsets $I$ is bounded above by $(T/m)^{C\alpha m}$. Given $I$, the number of functions $L$ is bounded above by $(T/m)^{C\alpha m}$. Given $I$ and $L$, $$\left|\bigcup_{i\in I} \{W^{L(i)}_{i,j}\}|_{j\in \{1,...,\kappa L(i)\}}\right|\leq \alpha \kappa T.$$ Trivially, $m\leq 3T$. Thus for $\kappa\geq 10$, the number of subsets of $$\bigcup_i \{W^{L(i)}_{i,j}\}|_{j\in \{1,...,\kappa L(i)\}}$$ with cardinality at most $\alpha m$ is bounded above by $(\kappa T/m)^{C\alpha m}$. Therefore $$|\mathcal{D}|\leq (\kappa T/m)^{C\alpha m},$$ as desired.
\end{proof}

For all $Y\in B_{d_T}(X,r_0)$, by \cref{qc to lipschitz}, we choose a $e^{\xi r_0}$-quasiconformal map $$f:X\to Y$$ that is $e^{\xi r_0}$-bi-Lipschitz with respect to the hyperbolic metrics $\rho_X$ on $X$ and $\rho_Y$ on $Y$. Here, $\xi$ is a universal constant from \cref{qc to lipschitz}. 

\begin{lemma}\label{S gives data} Let $$S\in \textstyle\Tri{g}{T,m}$$ and suppose $$Y=\Phi(S)\in d_T(X,r_0).$$ Then there exist an element $$(I,L,W_L)\in \mathcal{D}$$ associated to $S$ satisfying

\begin{enumerate}

\item $i\in I$ if and only if $W_i$ contains a vertex in $f^{-1}(V_{\neq 6}(S))$,

\item each ball $$B\in W_L$$ contains a unique vertex in $f^{-1}(V_{\neq 6}(S))$, and this vertex lies in $$B_{\rho_X}(\cent(B),\radius(B)/2),$$

\item each vertex in $f^{-1}(V_{\neq 6}(S))$ is contained in a unique $B\in W_L$ and

\item each point $x\in X$ is contained in at most $C$ of the $W\in W_L$.

\end{enumerate} 
\end{lemma}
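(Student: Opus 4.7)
The plan is to assign each bad vertex in $f^{-1}(V_{\neq 6}(S))$ to a small ball from the collection $\{V^{\kappa L(i)}_{i,j}\}$, choosing $L(i)$ just large enough that those balls isolate individual bad vertices, while ensuring $\sum L(i) \leq \alpha T$ via the mean value theorem for $6$-differentials (\cref{mean value theorem 6 diff quant}).

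First I would define $I := \{i : V_i \cap f^{-1}(V_{\neq 6}(S)) \neq \emptyset\}$; by condition 4 of \cref{covering lemma}, $|I| \leq Cm$. For each bad vertex $v' \in f^{-1}(V_{\neq 6}(S))$ let $\delta(v')$ be the $\rho_X$-distance from $v'$ to the nearest other bad vertex (or $+\infty$ if $v'$ is isolated), and for each $i \in I$ set
$$\delta_i := \min\bigl(\radius(V_i),\min\{\delta(v') : v' \in V_i \cap f^{-1}(V_{\neq 6}(S))\}\bigr), \quad L(i) := \lceil C_1(\radius(V_i)/\delta_i)^2\rceil$$
for a large universal constant $C_1$. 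For each bad vertex $v'$, pick some $i(v') \in I$ with $v' \in V_{i(v')}$ and, using property 3 of the collection, a ball $B_{v'} \in \{V^{\kappa L(i(v'))}_{i(v'),j}\}_j$ containing $v'$ in its inner half. Set $V_L := \{B_{v'}\}$.

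The main step is $\sum_{i\in I}L(i) \leq \alpha T$, proved via \cref{mean value theorem 6 diff quant}. For $i$ with $\delta_i < c(r_0)\radius(V_i)$, pick $v_i' \in V_i$ realizing the inner minimum and another bad vertex $w_i'$ with $d_{\rho_X}(v_i', w_i') = \delta_i$. Since $\delta_i < \radius(V_i)$, $w_i' \in B_{\rho_X}(v_i', \radius(V_i)) \subset W_i$, and this ball is embedded and isometric to $B_{\rho_{\mathbb{D}}}(0, \radius(V_i))$ by an isometry sending $v_i'$ to $0$. Composing with $f^{-1}$, which is $Ce^{2r_0}$-bi-Lipschitz by \cref{qc to lipschitz}, gives a $Ce^{2r_0}$-bi-Lipschitz diffeomorphism from $U := f(B_{\rho_X}(v_i', \radius(V_i))) \subset Y$ onto $B_{\rho_{\mathbb{D}}}(0, \radius(V_i))$ sending $f(v_i')$ to $0$. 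Applying \cref{mean value theorem 6 diff quant} to this map with the pair $f(v_i'), f(w_i')$ yields
$$\int_U |\psi_S|^{1/3} \geq C_{r_0}\bigl(\radius(V_i)/\delta_i\bigr)^2.$$
Summing over such $i$ and using bounded overlap of the $W_i$ (which transfers to $U$ via the homeomorphism $f$) together with $\int_Y|\psi_S|^{1/3} \leq CT$, the contribution is bounded by $C_{r_0}T$. The remaining $i$ satisfy $L(i) \leq C_{r_0}$ and contribute at most $C_{r_0}|I| \leq C_{r_0}m \leq C_{r_0}T$. Choosing $\alpha$ large enough absorbs these constants.

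The remaining conditions follow readily: $|V_L| \leq m$ by construction; condition 1 is by definition of $I$; condition 2 holds because $\radius(B_{v'}) \leq (\kappa C_1)^{-1/2}\delta(v')$, which for large $\kappa C_1$ is less than $\delta(v')/4$, so by the triangle inequality no bad vertex other than $v'$ lies in $B_{v'}$ (using also $B_{v'} \subset W_{i(v')}$ by property 2 of the collection so that bad vertices far from $v'$ are automatically outside $B_{v'}$); and condition 3 follows from property 4 of the collection combined with condition 5 of \cref{covering lemma}. The main obstacle will be calibrating the mean-value-theorem application so that the hypothesis $r < 3/4$ in \cref{mean value theorem 6 diff quant} is satisfied precisely when $\delta_i$ is small, which is why I handle the small-$\delta_i$ and large-$\delta_i$ cases separately and allow the constants $c(r_0), C_{r_0}$ to depend on the fixed parameter $r_0$.
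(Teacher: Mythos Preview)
Your argument is correct, but it inverts the paper's logic in an interesting way. The paper defines $L(i)$ directly as $\lceil \int_{f(U_i)}|\psi_S|^{1/3}\rceil$, so the bound $\sum_{i\in I} L(i)\leq \alpha T$ is immediate from bounded overlap of the $U_i$ and $\int_Y|\psi_S|^{1/3}\leq CT$, with no case split needed. The mean value lemma is then invoked only for the uniqueness part of condition~2: if some ball $B\in V_L$ contained two preimages of bad vertices, \cref{mean value theorem 6 diff quant} applied inside $U_i$ would force $\int_{f(U_i)}|\psi_S|^{1/3}$ to exceed $L(i)$, contradicting the definition of $L(i)$. You do the reverse: you build separation into the definition of $L(i)$ via $\delta_i$ so that uniqueness is automatic, and then spend the mean value lemma on bounding $\sum L(i)$. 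Both routes are sound and use the same ingredient; the paper's version avoids the small-$\delta_i$/large-$\delta_i$ dichotomy and the explicit $r_0$-dependent threshold, while your version makes the uniqueness mechanism more transparent.
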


\begin{proof}[Proof of \cref{S gives data}] We let $I$ be the subset of $i\in \{1,...,N\}$ for which $f(W_i)$ contains an element of $V_{\neq 6}(S)$. Condition 1 in the statement of \cref{S gives data} is automatically satisfied. We also let $$L(i)=\left\lceil \int_{f(U_i)}|\psi_S|^{1/3}\right\rceil.$$ Note that $$|I|\leq |V_{\neq 6}(S)|\leq m.$$ 

Now, $$\int_Y |\psi_S|^{1/3}\leq CT.$$ Also, each point of $Y$ is contained in at most $C$ of the $f(U_i)$, by condition 4 of \cref{covering lemma}. Therefore, $$\sum_{i\in I} L(i)\leq \alpha T$$ for a sufficiently large constant $\alpha$.

Finally, we let $W_L$ be the subset of $B$ in $$\bigcup_{i\in I}\{W_{i,j}^{L(i)}\}_{j\in 1,...,\kappa(L(i))}$$ which satisfy the property that $$B_{\rho_X}(\cent(B),\radius(B)/2)$$ contains a vertex in $f^{-1}(V_{\neq 6}(S))$. 

We now show the uniqueness part of condition 2 in the statement of \cref{S gives data}. Suppose the contrary; that there exists $B\in W_L$ containing $f^{-1}(x)$ and $f^{-1}(y)$ for vertices $x,y\in V_{\neq 6}(S)$. Then $$f^{-1}(x),f^{-1}(y)\in V_i$$ for some $i\in \{1,...,N\}$.  For sufficiently large $\kappa$, $$\radius(B)\leq \radius(W_i),$$ implying $$f^{-1}(y)\in B_{\rho_X}(f^{-1}(x),2\radius(W_i))\subset U_i.$$ Now, \cref{mean value theorem 6 diff quant} applied to $$f^{-1}:f(B_{\rho_X}(f^{-1}(x),2\radius(W_i)))\to B_{\rho_X}(f^{-1}(x),2\radius(W_i))$$ implies $$d_{\rho_Y}(x,y)\geq CL(i)^{-1/2}\radius(U_i)/e^{\xi r_0}.$$ Thus 
\begin{align*}d_{\rho_X}(f^{-1}(x),f^{-1}(y))&\geq CL(i)^{-1/2}\radius(U_i)/e^{2\xi r_0}\\&\geq CL(i)^{-1/2}\kappa^{-1/2}\radius(U_i)\\&\geq CL(i)^{-1/2}\kappa^{-1/2}\radius(W_i)\\&\geq 2\radius(B) && \text{condition 2 in \cref{construction of balls}}
\end{align*} for $\kappa$ sufficiently large. Hence we have a contradiction to the assumption that $$f^{-1}(x),f^{-1}(y)\in B.$$ This completes the proof of condition $2$ in the statement of \cref{S gives data}.

Now, condition 4 in the statement of \cref{S gives data} follows from condition 4 of \cref{covering lemma} along with condition 4 in \cref{construction of balls}. Condition 3 in the statement of \cref{S gives data} can be ensured by deleting unnecessary elements of $W_L$.
\end{proof}

For $$S\in \textstyle\Tric{g}{T,m}$$ such that $$Y=\Phi(S)\in B_{d_T}(X,r_0),$$ we choose an element $$(I,L,W_L)\in \mathcal{D}$$ as in \cref{S gives data}. We label this particular choice of element in $\mathcal{D}$ by $$(I_S,L_S,W_{L_S}).$$ 

\subsection{Quasiconformal map between surfaces with marked points}\label{8.6} In this section, we associate to triples in $\mathcal{D}$ a set of marked points on $X$. Then, given a triangulated surface $S$ whose conformal class lies near $X$, we construct a quasiconformal map from $X$ to $S$ which take the marked points on $X$ to $V_{\neq 6}(S)$.

Let $(I,L,W_L)\in \mathcal{D}$ and suppose it arises as the triple associated to a triangulated surface. Enumerate the elements $B_1,....,B_N$ in $W_L$. For each $i\in \{1,...,N\}$, choose $$x_i\in B_{\rho_X}(\cent(B_i),\radius(B_i)/2)$$ so that no element of $W_L$ contains $x_i$. Such a choice can be made because of \cref{S gives data} and our assumption that $(I,L,W_L)$ arises as the triple associated to a triangulated surface. Let $$O=\{x_i\}|_{i\in \{1,...,N\}}.$$

Suppose $$S\in \textstyle\Tri{g}{T,m}$$ and $$\Phi(S)\in B_T(X,r_0).$$ Let $$f:X\to S$$ be the associated $e^{\xi r_0}$-quasiconformal, $e^{\xi r_0}$-bi-Lipschitz map. Suppose $$(I_S,L_S,W_{L_S})=(I,L,W_L).$$ Let $$P=V_{\neq 6}(S).$$ There is a natural identification $$\theta:O\to P$$ which sends $x_i\in O$ to the unique $p_i\in V_{\neq 6}(S)$ satisfying $q_i=f^{-1}(p_i)\in B_i$ (unique by \cref{S gives data}). 

\begin{lemma}\label{branch point distance comparison} There is a $C$-quasiconformal map $$F:X\to S$$ isotopic to $f$ such that $$F(O)=P$$ and $$F|_O=\theta.$$ Here, $C$ is a universal constant.
\end{lemma}

First, we prove an elementary lemma.

\begin{lemma}\label{qc diffeo cor} For all $z_1,z_2\in \mathbb{D}$ with $$|z_1|,|z_2|\leq 3/4,$$ there exists a diffeomorphism $$f_{z_1,z_2}:\mathbb{D}\to \mathbb{D}$$ satisfying the following conditions:

\begin{enumerate}

\item $f_{z_1,z_2}|_{\partial \mathbb{D}}$ is the identity map,

\item $Df_{z_1,z_2}|_{\partial \mathbb{D}}$ is the identity map,

\item $f_{z_1,z_2}(z_1)=z_2$ and

\item $f_{z_1,z_2}$ is $C$-quasiconformal.

\end{enumerate}
Here, $C$ is a universal constant.
\end{lemma}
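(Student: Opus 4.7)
The plan is to construct $f_{z_1,z_2}$ as the time-$1$ flow of a compactly supported smooth vector field on $\mathbb{D}$. This will give us injectivity and smoothness for free, and since the vector field will vanish near $\partial\mathbb{D}$, conditions (1) and (2) will hold automatically.

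Fix once and for all a smooth radial cutoff $\chi:\mathbb{D}\to[0,1]$ with $\chi\equiv 1$ on $\overline{B(0,3/4)}$ and $\chi\equiv 0$ on $\mathbb{D}\setminus B(0,7/8)$, and set $M=\|\nabla\chi\|_\infty$ (a fixed universal constant). Given $z_1,z_2$ with $|z_1|,|z_2|\le 3/4$, define the smooth vector field $v(z)=\chi(z)(z_2-z_1)$ on $\mathbb{D}$. Since $v$ is compactly supported in $B(0,7/8)\subset\mathbb{D}$, it extends by $0$ to a globally Lipschitz field on $\mathbb{C}$, so its flow $\phi_t$ exists for all $t$ and is a smooth diffeomorphism of $\mathbb{D}$ for each $t$. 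Define $f_{z_1,z_2}:=\phi_1$.

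For condition (3), observe that the segment from $z_1$ to $z_2$ lies in $\overline{B(0,3/4)}$ by convexity, where $\chi\equiv 1$. Hence the constant-velocity curve $\gamma(t)=z_1+t(z_2-z_1)$ satisfies $\gamma'(t)=z_2-z_1=v(\gamma(t))$ for $t\in[0,1]$, so by uniqueness of ODE solutions $\phi_t(z_1)=\gamma(t)$, and in particular $\phi_1(z_1)=z_2$. Conditions (1) and (2) are immediate: on $\mathbb{D}\setminus B(0,7/8)$ the field $v$ vanishes, so $\phi_t$ fixes this region pointwise, and in particular $\phi_1|_{\partial\mathbb{D}}=\mathrm{id}$ and $D\phi_1|_{\partial\mathbb{D}}=I$.

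For condition (4), note that $\|\nabla v\|_\infty\le M\,|z_2-z_1|\le (3/2)M$, a universal constant $L$. Differentiating the ODE $\tfrac{d}{dt}\phi_t=v\circ\phi_t$ in the spatial variable gives $\tfrac{d}{dt}D\phi_t=(Dv)(\phi_t)\cdot D\phi_t$, so Gronwall yields $\|D\phi_1\|_\infty\le e^L$ and $\|D\phi_1^{-1}\|_\infty\le e^L$. Therefore $\phi_1$ is $e^L$-bi-Lipschitz, hence $e^{2L}$-quasiconformal, with $C=e^{2L}$ a universal constant. The main (and essentially the only) point to verify carefully is that the convexity argument above keeps the trajectory inside the region $\{\chi=1\}$ so that $\phi_1(z_1)=z_2$ exactly; everything else is routine ODE theory.
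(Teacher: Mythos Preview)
Your proof is correct and takes a genuinely different route from the paper. The paper first fixes one auxiliary diffeomorphism $f_{0,3/4}$ sending $0$ to $3/4$ with the required boundary behavior, then manufactures $f_{0,z}$ for real $0<z\le 3/4$ by conjugating $f_{0,3/4}$ with a dilation (extending by the identity outside a smaller disk), handles general $z$ by rotation, and finally sets $f_{z_1,z_2}=f_{0,z_2}\circ f_{0,z_1}^{-1}$. Your argument instead builds $f_{z_1,z_2}$ in one stroke as the time-$1$ flow of the compactly supported field $\chi(z)(z_2-z_1)$; the convexity of $\overline{B(0,3/4)}$ pins down the trajectory through $z_1$, and a Gronwall bound on $D\phi_t$ gives a uniform bi-Lipschitz (hence quasiconformal) constant. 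Your approach is more self-contained and avoids the need to first exhibit a single model map; the paper's approach, once that model map is granted, makes the dependence on $z_1,z_2$ slightly more explicit via scaling and rotation. Both yield a universal constant; yours is $e^{3M}$ with $M=\|\nabla\chi\|_\infty$.
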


\begin{proof} We first treat the case where $z_1=0$. Denote by $f_{0,3/4}$ any chosen diffeomorphism $\mathbb{D}\to \mathbb{D}$ satisfying conditions (1), (2) and (3) above for $z=3/4$. Let $C$ be the quasiconformal dilatation of $f_{0,3/4}$. Note that $C$ is finite since $f$ is defined on a compact set. Now, suppose $z\in \mathbb{R}$ such that $z\leq 3/4$. The function 
\begin{equation*} f_{0,z}(w)=
\begin{cases}
 (4/3)z f_{0,3/4}((3/4)z^{-1}w) & \text{ if } |w|\leq (4/3)z\\
 w & \text{ otherwise}
\end{cases}
\end{equation*}
satisfies the conditions in the lemma statement. Suppose next that $z\in \mathbb{D}$ with $|z|\leq (3/4)$. We have already constructed $f_{0,|z|}$, since $|z|\in \mathbb{R}$. We take $$f_{0,z}(w)=\displaystyle\frac{z}{|z|}f_{0,|z|}\left(\left(\displaystyle\frac{z}{|z|}\right)^{-1}w\right).$$ Now removing the assumption that $z_1=0$, we simply take $$f_{z_1,z_2}=f_{0,z_2}\circ f^{-1}_{0,z_1},$$ and note that $f_{z_1,z_2}$ satisfies the desired properties.
\end{proof}

\begin{proof}[Proof of \cref{branch point distance comparison}] 

It suffices to construct a $C$-quasiconformal diffeomorphism $$F':X\to X$$ isotopic to the identity such that $$F'(x_i)=q_i.$$ Then, taking $$F= f\circ F'$$ gives the desired map in the lemma statement. 

To do this, recall that $B_1,...,B_N$ are the elements of $W$. By \cref{covering lemma} and condition 2 in \cref{construction of balls}, the $B_i$ are hyperbolic disks of maximal radius $\arcsinh(1)$. Recall that each $B_i$ contains exactly one element of $O$ (which is $x_i$), and exactly one element of $f^{-1}(P)$ (which is $q_i$). By condition 2 in \cref{S gives data} along with the upper bound on $\radius(B_i)$, under a conformal identification of $B_i$ with $\mathbb{D}$ sending its center to $0$, both $x_i$ and $q_i$ lie inside $$\mathbb{D}_{3/4}=\{z\in \mathbb{D}||z|< 3/4\}.$$ Thus there exists $f_{x_i,q_i}$ satisfying the conditions in the statement of \cref{qc diffeo cor}. The map $f_{x_i,q_i}$ extends to a $C$-quasiconformal map $$f^i:X\to X$$ that is the identity outside $B_i$. Composing all such maps $f^i$ (for $i\in \{1,...,N\}$) in an arbitrary order we obtain a map $$F':X\to X$$ such that $$F'(x_i)=q_i.$$ Now, by condition $3$ of \cref{S gives data}, the set $B_1,...,B_N$ satisfies the property that any $x\in X$ is contained in at most $C$ of the $B_i$. Therefore $F'$ is $C$-quasiconformal.
\end{proof}

\subsection{Distance between covers in higher dimensional moduli space}\label{8.6 1/2} In this section, we show that if a triangulated surface $S$ has conformal class close to $X$, then a component of its canonical cover is close to a branched cover of $X$ in a higher dimensional moduli space.

Let $$(I,L,W_L)\in \mathcal{D}$$ and assume that $(I,L,W_L)$ arises as the triple associated to a triangulated surface. Recall that $$O\subset X$$ is a subset, depending on the triple $(I,L,W_L)$, generated in \cref{8.6}.

The set $\mathcal{F}_{g,m}$ enumerates the combinatorial $6$-covers over $X$ whose branch points are a subset of $O$. That is, associated to $\mathfrak{f}\in \mathcal{F}_{g,m}$ is a topological branched cover $\widetilde{X}_{\mathfrak{f}}\to X$. The holomorphic structure of $X$ determines a holomorphic structure on $\widetilde{X}_{\mathfrak{f}}$. Note that $\widetilde{X}_{\mathfrak{f}}$ may be disconnected. Choose a connected component and label it $\widetilde{X}'_{\mathfrak{f}}$.

Suppose $$S\in \textstyle\Tri{g}{T,m}$$ and $$\Phi(S)\in B_T(X,r_0).$$ Let $f:X\to S$ be the associated $e^{\xi r_0}$-quasiconformal, $e^{\xi r_0}$-bi-Lipschitz map. Suppose $$(I_S,L_S,W_{L_S})=(I,L,W_L).$$

The composition $$\widetilde{S}\to S\xrightarrow{F^{-1}} X$$ is a topological branched cover over $X$ having $O$ as the set of branch points. Hence, there is an element $\mathfrak{f}\in \mathcal{F}_{g,m}$ such that the diagram
$$\begin{tikzcd}
\widetilde{X}_\mathfrak{f} \arrow{r} \arrow[swap]{d} & \widetilde{S} \arrow{d} \\%
X \arrow{r}{F}& S
\end{tikzcd}$$
commutes and the top vertical map (henceforth denoted $\widetilde{F}$) is a homeomorphism. By \cref{branch point distance comparison}, $F$ is $C$-quasiconformal. Also, the vertical maps are conformal. Therefore $\widetilde{F}$ is $C$-quasiconformal. Let $\widetilde{S}'$ be the connected component of $\widetilde{S}$ determined by $F(\widetilde{X}'_\mathfrak{f})$. 

\begin{lemma}\label{distance comparison} We have, $$d_{T}(\widetilde{X}'_\mathfrak{f},\Phi(\widetilde{S}'))\leq C.$$
\end{lemma}

\subsection{Proof of \cref{tric vs tranc}}\label{8.7} 

Let $m>0$ and $X\in \mathcal{M}_g$. We must count the number of locally bounded triangulated surfaces lying in $B_{d_T}(X,r_0)$ that have at most $m$ vertices of degree not equal to $6$. Any such triangulated surface has associated to it a triple $(I,L,W)$ satisfying the conditions in \cref{S gives data}. By \cref{branch points}, there are at most $(T/m)^{Cm}$ such triples. 

Fixing a triple $(I,L,W)$, it suffices to count triangulated surfaces with this particular triple as its associated triple. By \cref{covers}, any such triangulated surface has associated to it a topological branched cover in $\mathcal{F}_{g,\leq m}$. By \cref{combinatorial 6-covers}, there are at most $C^{g+m}$ such topological branched covers. 

It suffices to count triangulated surfaces with a fixed underlying topological branched cover. In this situation, by \cref{distance comparison}, a connected component of the canonical cover must be closed to a fixed branched cover of $X$ in a higher dimensional moduli space. By \cref{covers}, this connected component is a combinatorial translation surface. By \cref{recover surface from cover} it suffices to count combinatorial translation surfaces lying in a ball in the higher genus moduli space. Thus we obtain $$\Ntric(T,g,m,r_0)\leq C^g(T/m)^{Cm}\Ntranc(6T_i,6g+5m,C).$$ Combining with \cref{Teichmuller bound 3} gives \cref{tric vs tranc}.

\subsection{Proof of \cref{upper bound}}\label{8.8} Recall that for $g\geq 2$, $$\Ntri(T,g,m,r)\leq C^{m+g} \Ntric(\sigma T,g,\mu (m+g),r+C)$$ by \cref{trim vs tricm}, $$\Ntric(T,g,m,r)\leq C^{(1+r)g}(T/m)^{Cm}\Ntranc(6T,6g+5m,r+C)$$ by \cref{tric vs tranc} and $$\Ntranc(T,g,r)\leq (T/g)^{C(1+r)g}\sum_{\substack{n\leq (1/100)g\\ T_1+...+T_n\leq 2T\\ g_1+...+g_n\leq \mu^{-1}(1/100)g\\g_1,...,g_n\geq 2\\ m_1+...+m_n\leq \mu^{-1}(1/100)g}} \prod_{i=1}^n \Ntri(T_i,g_i,m_i, r+C)$$ by \cref{tranc vs tri}.

Choosing $r_1$ sufficiently small (independent of $T$ and $g$), we may apply \cref{Teichmuller bound 3} to obtain 
\begin{equation}\label{eq760.1}\Ntri(T,g,m,r_1)\leq C^{m+g}\Ntric(\sigma T,g,\mu (m+g), r_1),
\end{equation} \begin{equation}\label{eq760.2}\Ntric(T,g,m,r_1)\leq C^g(T/m)^{Cm}\Ntranc(6T,6g+5m,r_1)
\end{equation} and \begin{equation*}\Ntranc(T,g,r_1)\leq (T/g)^{Cg}\sum_{\substack{n\leq (1/100)g\\ T_1+...+T_n\leq 2T\\ g_1+...+g_n\leq \mu^{-1}(1/100)g\\g_1,...,g_n\geq 2\\ m_1+...+m_n\leq \mu^{-1}(1/100)g}} \prod_{i=1}^n \Ntri(T_i,g_i,m_i, r_1).
\end{equation*}

Together, these three bounds give
\begin{multline}\label{eq761}
\Ntranc(T,g,r_1)\\
\begin{aligned} &\leq (T/g)^{Cg}\sum_{\substack{n\leq (1/100)g\\ T_1+...+T_n\leq 2T\\ g_1+...+g_n\leq \mu^{-1}(1/100)g\\g_1,...,g_n\geq 2\\ m_1+...+m_n\leq \mu^{-1}(1/100)g}} \prod_{i=1}^n \Ntri(T_i,g_i,m_i, r_1)\\
&\leq (T/g)^{Cg}\sum_{\substack{n\leq (1/100)g\\ T_1+...+T_n\leq 2T\\ g_1+...+g_n\leq (1/100)g\\g_1,...,g_n\geq 2\\ m_1+...+m_n\leq (1/50)g}} \prod_{i=1}^n C^{m_i+g_i} \Ntric(\sigma T_i, g_i, m_i,r_1)\\&\leq (T/g)^{Cg}\sum_{\substack{n\leq (1/100)g\\ T_1+...+T_n\leq 2T\\ g_1+...+g_n\leq (1/100)g\\g_1,...,g_n\geq 2,\\ m_1+...+m_n\leq (1/50)g\\ m_1,...,m_n\geq 1}} \prod_{i=1}^n(T_i/m_i)^{Cm_i}\Ntranc(6\sigma T_i, 6g_i+5m_i, r_1)\\&\leq
(T/g)^{Cg}\sum_{\substack{n\leq (1/100)g\\ T_1+...+T_n\leq 12\sigma T\\ g_1+...+g_n\leq g/4}} \prod_{i=1}^n \Ntranc(T_i, g_i, r_1).
\end{aligned}
\end{multline}
where the last inequality follows from renaming variables along with the arithmetic mean-geometric mean inequality.

We claim that $$\Ntranc(T,g,r_1)\leq \Theta^{g}(T/g)^{\Omega g}$$ for universal constants $\Omega$ and $\Theta$ that we choose later. We prove this by induction. The quantity $\Ntranc(T, 4, r_1)$ is bounded by a polynomial in $T$. (See \cite{EO01} for more precise results.) To show the induction hypothesis, assume $$\Ntranc(T,g',r_1)\leq \Theta^{g'}(T/g')^{\Omega g'}$$ for all $g'\leq g$. We have, 
\begin{align*}\Ntranc(T,g,r_1)&\leq (T/g)^{Cg}\sum_{\substack{n\leq (1/100)g\\ T_1+...+T_n\leq 12\sigma T\\ g_1+...+g_n\leq g/4}} \prod_{i=1}^n \Ntranc(T_i, g_i, r_1) && \text{\cref{eq761}}\\&\leq (T/g)^{Cg}\sum_{\substack{n\leq (1/100)g\\ T_1+...+T_n\leq 12\sigma T\\ g_1+...+g_n\leq g/4}} \prod_{i=1}^n \Theta^{g_i}(T_i/g_i)^{\Omega g_i} \\&\leq (T/g)^{Cg}\sum_{\substack{n\leq (1/100)g\\ T_1+...+T_n\leq 12\sigma T\\ g_1+...+g_n\leq g/4}} \Theta^{g/4}(48\sigma T/g)^{\Omega g/4} && \substack{\text{\normalsize{arithmetic mean-}}\\ \text{\normalsize{geometric mean}}}\\&\leq (T/g)^{Cg}\Theta^{g/4}(48\sigma T/g)^{\Omega g/4}
\end{align*}
where the last inequality follows since the sum contains at most $(T/g)^{Cg}$ terms. Noting that $T/g\geq 2$, there exists a choice of constants $\Theta$ and $\Omega$ such that $$(T/g)^{Cg}\Theta^{g/4}(48\sigma T/g)^{\Omega g/4}\leq \Theta^g(T/g)^{\Omega g}.$$  This completes the induction step. Therefore, we have 
\begin{equation}\label{eq762}
\begin{aligned}\Ntranc(T,g,r_1)&\leq \Theta^g(T/g)^{\Omega g}\\&\leq C^T.
\end{aligned}
\end{equation} Finally,
\begin{align*}\Ncomb(T,g,r_1)&\leq \Ntri(T,g,3T,r_1)\\&\leq C^T \Ntric(\sigma T,g,3\mu T,r_1) && \text{\cref{eq760.1}}\\&\leq C^T\Ntranc(6\sigma T,6g+15\mu T,r_1) && \text{\cref{eq760.2}}\\&\leq C^T && \text{\cref{eq762}}
\end{align*}
where the second inequality follows from the fact that a $T$-triangle triangulated surface has at most $3T$ vertices. Combining with \cref{Teichmuller bound 3} gives $$\Ncomb(T,g,r)\leq C^{T+rg},$$ which completes the proof of \cref{upper bound}.

\end{document}